\documentclass{amsart}
\usepackage{amssymb,xspace,cmap, bbm, stmaryrd}

\newtheorem*{thma}{Theorem~A}
\newtheorem*{thmb}{Theorem~B}
\newtheorem*{thmc}{Theorem~C}
\newtheorem*{thmd}{Theorem~D}

\newtheorem{thm}{Theorem}[section]
\newtheorem{cor}[thm]{Corollary}
\newtheorem{conj}[thm]{Conjecture}
\newtheorem{prop}[thm]{Proposition}
\newtheorem{fact}[thm]{Fact}
\newtheorem{lemma}[thm]{Lemma}
\newtheorem{claim}{Claim}[thm]
\newtheorem{subclaim}{Subclaim}[claim]

\theoremstyle{definition}
\newtheorem{defn}[thm]{Definition}
\newtheorem{q}[thm]{Question}

\theoremstyle{remark}
\newtheorem{remark}{Remark}

\newenvironment{cproof}{\paragraph{\emph{Proof.\,}}}{\hfill{$\boxtimes$}\par\vspace{2mm}}
\newenvironment{scproof}{\paragraph{\emph{Proof.\,}}}{\hfill{$\boxminus$}\par\vspace{2mm}}

\newcommand*\axiomfont[1]{\textsf{\textup{#1}}\xspace}

\newcommand\zfc{\axiomfont{ZFC}}

\newcommand\sq{\sqsubseteq}
\newcommand\s{\subseteq}
\newcommand\conc{{^\smallfrown}}
\newcommand\br{\blacktriangleright}

\renewcommand{\restriction}{\mathbin\upharpoonright}
\renewcommand\mid{\mathrel{|}\allowbreak}
\newcommand\Mid{\mathrel{}\middle|\mathrel{}}

\fboxsep0.01mm

\newcommand\diagonal{\bigtriangleup}

\hyphenation{in-access-ible}

\DeclareMathOperator{\spec}{Cspec}

\DeclareMathOperator{\crit}{crit}

\DeclareMathOperator{\id}{id}

\DeclareMathOperator{\ind}{ind}
\DeclareMathOperator{\reg}{Reg}

\DeclareMathOperator{\card}{Card}
\DeclareMathOperator{\cf}{cf}
\DeclareMathOperator{\cl}{cl}
\DeclareMathOperator{\Tr}{Tr}
\DeclareMathOperator{\tr}{tr}
\DeclareMathOperator{\im}{Im}
\DeclareMathOperator{\otp}{otp}
\DeclareMathOperator{\dom}{dom}
\DeclareMathOperator{\add}{Add}
\DeclareMathOperator{\acc}{acc}
\DeclareMathOperator{\nacc}{nacc}

\DeclareMathOperator{\U}{U}
\DeclareMathOperator{\pr}{Pr}
\DeclareMathOperator{\ssup}{ssup}

\author{Chris Lambie-Hanson}
\address{Department of Mathematics and Applied Mathematics, Virginia Commonwealth University,
Richmond, VA 23284, USA}
\urladdr{http://people.vcu.edu/~cblambiehanso}

\author{Assaf Rinot}
\address{Department of Mathematics, Bar-Ilan University, Ramat-Gan 5290002, Israel.}
\urladdr{http://www.assafrinot.com}

\thanks{The second author is partially supported by the European Research Council (grant agreement ERC-2018-StG 802756) and by the Israel Science Foundation (grant agreement 2066/18)}
\subjclass[2010]{Primary 03E35; Secondary 03E05, 03E55, 06E10}
\begin{document}
\title[Knaster and friends II]{Knaster and friends II: The C-sequence number}
\begin{abstract}
  Motivated by a characterization of weakly compact cardinals due to Todorcevic,
  we introduce a new cardinal characteristic,
  the $C$-sequence number, which can be seen as a measure of the compactness
  of a regular uncountable cardinal. We prove a number of $\zfc$ and
  independence results about the $C$-sequence number and its relationship with
  large cardinals, stationary reflection, and square principles. We then
  introduce and study the more general $C$-sequence spectrum and uncover some tight
  connections between the $C$-sequence spectrum and the strong coloring
  principle $\U(\ldots)$, introduced in Part~I of this series.
\end{abstract}
\date{\today}

\maketitle
\tableofcontents
\newpage

\section{Introduction}

A common theme in modern set theory, running through the study of large cardinals,
combinatorial set theory, and inner model theory, is the investigation into the
compactness properties of uncountable cardinals and the extent to which large
cardinal properties can hold at ``small" cardinals.
Two prominent compactness properties of an uncountable cardinal $\kappa$,
each of which is equivalent to weak compactness, are:
\begin{enumerate}
  \item[(P1)] the partition relation $\kappa \rightarrow (\kappa)^n_\theta$ holds for all $n < \omega$ and $\theta < \kappa$;
  \item[(P2)] every $C$-sequence $\langle C_\beta \mid \beta < \kappa \rangle$ is
  trivial (see Theorem \ref{todorcevic_thm} for a precise statement).
\end{enumerate}
Another compactness property which holds at every weakly compact cardinal $\kappa$ but, by \cite{MR3620068},
does not characterize weak compactness is:
\begin{enumerate}
  \item[(P3)] the class of $\kappa$-Knaster posets is closed under $\nu$-support
  products for every $\nu < \kappa$.
\end{enumerate}

This series of papers, in which the present work forms Part~II, is devoted
to the investigation into the ways in which graded families of strong negations of
the above properties (P1)--(P3) can serve to measure the incompactness of regular
uncountable cardinals, and to the exploration of the network of implications and independences
that exist among these families of incompactness properties. Part~I of the series
\cite{paper34} is concerned with strong negations of properties (P1) and (P3),
and their connection. Let us recall some of the relevant definitions and results
from \cite{paper34} here. In what follows and throughout the paper, $\kappa$
denotes a regular uncountable cardinal, and $\chi,\theta,$ and $\mu$ denote cardinals $\le\kappa$.

\begin{defn}[\cite{paper34}]
  $\U(\kappa, \mu, \theta, \chi)$ asserts the existence of a coloring $c:[\kappa]^2
  \rightarrow \theta$ such that for every $\chi' < \chi$, every family
  $\mathcal{A} \s [\kappa]^{\chi'}$ consisting of $\kappa$-many pairwise disjoint
  sets, and every $i < \theta$, there exists $\mathcal{B} \in [\mathcal{A}]^\mu$
  such that $\min(c[a \times b]) > i$ for all $(a, b) \in [\mathcal{B}]^2$.\footnote{We refer the reader to the `Notation
and conventions' subsection at the end of this introduction for explanation of
any nonstandard notation, and in particular
for our conventions regarding elements of $[\mathcal{B}]^2$ for a set $\mathcal{B}$.}
\end{defn}

\begin{fact}[\cite{paper34}]\label{knasterlemma}
  Suppose that $\chi, \theta$ are regular and that $\kappa$ is $({<}\chi)$-inaccessible.
  For every coloring $c:[\kappa]^2\rightarrow\theta$ witnessing $\U(\kappa,\mu,\theta,\chi)$,
  there exists a corresponding poset $\mathbb P$ such that
  \begin{enumerate}
    \item $\mathbb P$ is well-met and $\chi$-directed closed with greatest lower bounds;
    \item if $\mu=2$, then $\mathbb P^\tau$ is $\kappa$-cc for all $\tau<\min(\{\chi, \theta\})$;
    \item if $\mu=\kappa$, then $\mathbb P^\tau$ has precaliber $\kappa$ for all $\tau<\min(\{\chi, \theta\})$;
    \item $\mathbb P^\theta$ is not $\kappa$-cc.
  \end{enumerate}
\end{fact}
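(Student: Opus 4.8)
The plan is to exhibit $\mathbb P=\mathbb P(c)$ explicitly as a poset of approximations of size ${<}\chi$, verify clauses~(1) and~(4) by direct inspection, and then derive clauses~(2) and~(3) from the defining property of $\U(\kappa,\mu,\theta,\chi)$ after a $\Delta$-system reduction. Let $\mathbb P$ be the set of all functions $p$ with $\dom(p)\in[\kappa]^{<\chi}$ and $\im(p)\s\theta$ such that $\max\{p(\alpha),p(\beta)\}\le c(\{\alpha,\beta\})$ for every $\{\alpha,\beta\}\in[\dom(p)]^2$, ordered by reverse inclusion. For clause~(1): if $D\s\mathbb P$ is directed with $|D|<\chi$, then $\dom(\bigcup D)\in[\kappa]^{<\chi}$ since $\chi$ is regular, and by directedness any two ordinals in $\dom(\bigcup D)$ lie in the domain of a single member of $D$, so the defining inequality is inherited and $\bigcup D\in\mathbb P$; it is visibly the greatest lower bound of $D$. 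The same computation applied to two compatible conditions $p,q$ (their union being a function, as it is contained in a common lower bound, and satisfying the inequality for the same reason) shows $\mathbb P$ is well-met. Hence $\mathbb P$ is $\chi$-directed closed, well-met, and has greatest lower bounds.

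For clause~(4): for $\gamma<\kappa$ set $\vec p^{\,\gamma}:=\langle\{(\gamma,i)\}\mid i<\theta\rangle\in\mathbb P^\theta$. Given distinct $\gamma,\delta<\kappa$, let $i:=c(\{\gamma,\delta\})+1<\theta$; any $r\in\mathbb P$ extending $\{(\gamma,i),(\delta,i)\}$ would satisfy $i=\max\{r(\gamma),r(\delta)\}\le c(\{\gamma,\delta\})$, which is absurd, so $\{(\gamma,i)\}\perp\{(\delta,i)\}$ and therefore $\vec p^{\,\gamma}\perp\vec p^{\,\delta}$ in $\mathbb P^\theta$. As $\gamma\mapsto\vec p^{\,\gamma}$ is injective, $\{\vec p^{\,\gamma}\mid\gamma<\kappa\}$ is an antichain of size $\kappa$, so $\mathbb P^\theta$ is not $\kappa$-cc.

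For clauses~(2) and~(3): fix $\tau<\min\{\chi,\theta\}$ and a sequence $\langle\vec p^{\,\gamma}=\langle p^\gamma_j\mid j<\tau\rangle\mid\gamma<\kappa\rangle$ from $\mathbb P^\tau$, and put $x_\gamma:=\bigcup_{j<\tau}\dom(p^\gamma_j)\in[\kappa]^{<\chi}$. Using the $\Delta$-system lemma together with the $({<}\chi)$-inaccessibility of $\kappa$ (which bounds below $\kappa$ the number of possible types of the data attached to $\vec p^{\,\gamma}$, namely the order type of $x_\gamma$, the pattern of the $\dom(p^\gamma_j)$ within it, and---after a further reduction using regularity of $\theta$---the relevant values), I pass to $T\in[\kappa]^\kappa$ and $i^*<\theta$ such that, along $T$: the $x_\gamma$ form a $\Delta$-system with root $R$; the tails $x_\gamma\setminus R$ are pairwise disjoint and strictly increasing, so $\max(x_\gamma\setminus R)<\min(x_\delta\setminus R)$ whenever $\gamma<\delta$; the order isomorphism $x_\gamma\to x_\delta$ fixes $R$ pointwise and sends each $p^\gamma_j$ to $p^\delta_j$; and $\im(p^\gamma_j\restriction(x_\gamma\setminus R))\s i^*$ for all $\gamma\in T$ and $j<\tau$. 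After these homogenizations, for distinct $\gamma,\delta\in T$ and each $j<\tau$ the function $p^\gamma_j\cup p^\delta_j$ has, apart from pairs already inside $\dom(p^\gamma_j)$ or $\dom(p^\delta_j)$, only ``cross'' pairs $\{\alpha,\beta\}$ with $\alpha\in x_\gamma\setminus R$ and $\beta\in x_\delta\setminus R$; hence $p^\gamma_j\cup p^\delta_j\in\mathbb P$, and is then the greatest lower bound of $p^\gamma_j$ and $p^\delta_j$, as soon as $c(\{\alpha,\beta\})>i^*$ for all such $\alpha,\beta$. Now apply $\U(\kappa,\mu,\theta,\chi)$, witnessed by $c$, to the family $\{x_\gamma\setminus R\mid\gamma\in T\}$ of $\kappa$-many pairwise disjoint sets of the fixed size $|x_\gamma\setminus R|<\chi$ and to the color $i^*$: one obtains $\mathcal{B}$ of size $\mu$ with $\min(c[a\times b])>i^*$ for all $(a,b)\in[\mathcal{B}]^2$. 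If $\mu=2$ this produces distinct $\gamma,\delta\in T$ with $\vec p^{\,\gamma}\parallel\vec p^{\,\delta}$, so every $\kappa$-sequence in $\mathbb P^\tau$ has two compatible terms and $\mathbb P^\tau$ is $\kappa$-cc, which is clause~(2). If $\mu=\kappa$ one gets $S\in[T]^\kappa$ with $\vec p^{\,\gamma}\parallel\vec p^{\,\delta}$ for all distinct $\gamma,\delta\in S$; since in $\mathbb P$ every pair occurring in a union of pairwise compatible conditions already occurs in the union of two of them, $\{\vec p^{\,\gamma}\mid\gamma\in S\}$ is centered, so $\mathbb P^\tau$ has precaliber $\kappa$, which is clause~(3).

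The main obstacle is the bookkeeping in the reduction: one must choose the list of invariants to homogenize so that $({<}\chi)$-inaccessibility really does force fewer than $\kappa$ types, and then verify that after homogenization the cross-tail pairs are the unique obstruction to amalgamation, so that a single application of $\U$ at one color $i^*$ closes the argument. Securing the uniform bound $i^*<\theta$ on the tail-values is the most delicate point: when $\theta<\kappa$ it follows from the regularity of $\theta$ by pigeonhole, but when $\theta$ is large---in particular when $\theta=\kappa$---it must be arranged instead along a club of $\kappa$ (or via an elementary submodel argument). Everything else---clauses~(1) and~(4), and the passage from the conclusion of $\U$ back to the (pre)caliber statements about $\mathbb P^\tau$---is routine.
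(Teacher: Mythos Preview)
This statement is quoted in the paper as a Fact from \cite{paper34} and is not proved here, so there is no in-paper argument to compare your attempt against. Your poset $\mathbb P$ is a natural choice, and your verifications of clauses~(1) and~(4) are correct; the $\Delta$-system reduction for~(2) and~(3) is the right strategy.

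There is, however, a genuine gap in the reduction, and your diagnosis of where it lies is off. You locate the delicate point at $\theta=\kappa$; the real obstruction is the case $\chi>\theta$ (which is exactly the regime used in Theorem~A(1), where $\theta=\omega$ and $\chi=\chi(\kappa)$). When $\chi>\theta$, a condition $p\in\mathbb P$ may have $|\dom(p)|\ge\theta$, and nothing in your definition forces $\im(p)$ to be bounded in~$\theta$: for $\theta=\omega$, if $c(\alpha_m,\alpha_n)=\max\{m,n\}$ along some injective $\omega$-sequence $\langle\alpha_n\mid n<\omega\rangle$, then $p(\alpha_n):=n$ is a perfectly legitimate condition with $\sup\im(p)=\omega$. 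Your ``regularity of~$\theta$ plus pigeonhole'' step tacitly assumes that each individual $i_\gamma:=\sup\bigcup_{j<\tau}\im\bigl(p^\gamma_j\restriction(x_\gamma\setminus R)\bigr)$ is already below~$\theta$, and this can fail. Homogenising the full value-type (which $({<}\chi)$-inaccessibility does allow when $\theta<\kappa$, since then $\theta^{<\chi}<\kappa$) only makes the tail-value set the \emph{same} across~$T$; it can still be cofinal in~$\theta$, so no single colour $i^*<\theta$ is available to feed into~$\U$. By contrast, when $\theta=\kappa$ you automatically have $\chi\le\theta$, so the per-$\gamma$ bound $i_\gamma<\theta$ is free and only the uniformisation across~$\gamma$ requires the club/elementary-submodel device you mention.

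To repair the argument for $\chi>\theta$ you will need either a modification of $\mathbb P$ that builds in a bound on images while preserving $\chi$-directed closure, or an amalgamation argument that handles position-dependent thresholds rather than a single global~$i^*$; a single invocation of $\U$ at one colour will not suffice as written.
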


Much of Part~I is devoted to analyzing situations in which
$\U(\ldots)$ necessarily holds and, moreover, is witnessed
by \emph{closed} colorings. As a corollary, we obtained:
\begin{fact}[\cite{paper34}] If the class of $\kappa$-Knaster posets is closed under $\omega$-powers,
then $\kappa$ is inaccessible and every stationary subset of $\kappa$ reflects at an inaccessible cardinal.
\end{fact}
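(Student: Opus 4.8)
The plan is to argue by contraposition. Negating the conclusion gives two (non-exclusive) cases: either (i) $\kappa$ is not inaccessible, or (ii) there is a stationary $S\s\kappa$ reflecting at no inaccessible cardinal below $\kappa$; in case~(ii) we may further assume $\kappa$ is inaccessible, since otherwise case~(i) applies. In each case I will produce a single $\kappa$-Knaster poset $\mathbb P$ whose $\omega$-power $\mathbb P^{\omega}$ is not $\kappa$-cc; as $\kappa$-Knaster implies $\kappa$-cc, this refutes closure of the class of $\kappa$-Knaster posets under $\omega$-powers.

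The passage from colorings to posets is Fact~\ref{knasterlemma}, instantiated with $\mu=\kappa$ and $\theta=\chi=\omega$. This is legitimate: $\omega$ is regular, and $\kappa$ is vacuously $({<}\omega)$-inaccessible, as $\lambda^{n}=\lambda<\kappa$ for every infinite $\lambda<\kappa$ and $n<\omega$. Hence, from any $c:[\kappa]^2\to\omega$ witnessing $\U(\kappa,\kappa,\omega,\omega)$ we obtain a poset $\mathbb P$ for which clause~(3) with $\tau=1$ yields that $\mathbb P$ has precaliber $\kappa$ --- in particular $\mathbb P$ is $\kappa$-Knaster --- while clause~(4) yields that $\mathbb P^{\omega}=\mathbb P^{\theta}$ is not $\kappa$-cc. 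So it suffices to establish $\U(\kappa,\kappa,\omega,\omega)$ in cases~(i) and~(ii). The constructions from Part~I in fact deliver the formally stronger statement that in each case $\U(\kappa,\kappa,\omega,\omega)$ is witnessed by a \emph{closed} coloring (in the sense of Part~I); the closedness is not needed to extract the present corollary from Fact~\ref{knasterlemma}, but it is what makes the constructions work.

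For case~(i), one splits according to how inaccessibility fails. If $\kappa=\lambda^{+}$ is a successor, the coloring comes from minimal walks on $\lambda^{+}$: fix a $C$-sequence on $\lambda^{+}$ adapted to a non-reflecting stationary set (for $\lambda$ regular one may take $\{\alpha<\lambda^{+}\mid\cf(\alpha)=\lambda\}$; the singular case is handled by the standard $\zfc$ strong-coloring theorems on $\lambda^{+}$) and read the coloring off a suitable characteristic of the walks, the non-reflection driving the core verification that for every $\kappa$-sized family of pairwise disjoint finite subsets of $\kappa$ and every colour-threshold $i$ one may thin to a $\kappa$-sized subfamily on which $c$ stays above $i$; closedness is arranged by taking the walks coherent at limit ordinals outside the stationary set. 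If instead $\kappa$ is weakly but not strongly inaccessible, fix $\lambda<\kappa$ with $2^{\lambda}\ge\kappa$; a standard coding argument --- e.g.\ transferring a strong coloring from $\lambda^{+}$ along an injection $\kappa\hookrightarrow{}^{\lambda}2$ --- then yields the (closed) witness.

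Case~(ii) is the main obstacle. Here $\kappa$ is inaccessible and we are given a stationary $S\s\kappa$ reflecting at no inaccessible $\delta<\kappa$. The strategy is to build from $S$ a $C$-sequence $\langle C_{\beta}\mid\beta<\kappa\rangle$ that is incompact relative to $S$ --- arranged so that the walks it generates detect the non-reflection of $S$ --- and then extract the coloring from those walks, using the non-reflection of $S$ to block the amalgamations that would otherwise violate the $\U(\ldots)$ clause, while choosing each $C_{\beta}$ (hence the walks) coherently at limit points outside $S$ to obtain closedness. The difficulty is that when $\kappa$ is Mahlo, $S$ typically reflects at many singular ordinals and at many non-weakly-compact regulars, so $S$ cannot be treated as globally non-reflecting; the construction must confine the incompactness to the inaccessible ordinals and still yield a single coloring that is simultaneously closed and strong enough to give $\mu=\kappa$ rather than merely $\mu=2$ in $\U(\kappa,\kappa,\omega,\omega)$. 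Reconciling the combinatorics of walks with the non-reflection hypothesis so as to verify the $\U(\ldots)$ clause uniformly over all finite $\chi'$ is the crux of the argument.
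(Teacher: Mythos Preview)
This statement is quoted in the present paper as a Fact from Part~I \cite{paper34} and is not proven here, so there is no proof in this paper to compare against. Your reduction via Fact~\ref{knasterlemma} is correct and is exactly the mechanism used in \cite{paper34}: instantiating with $\mu=\kappa$ and $\theta=\chi=\omega$ turns any witness to $\U(\kappa,\kappa,\omega,\omega)$ into a poset with precaliber~$\kappa$ (hence $\kappa$-Knaster) whose $\omega$-power is not $\kappa$-cc, so the task is precisely to produce a closed witness to $\U(\kappa,\kappa,\omega,\omega)$ whenever the conclusion fails.

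That said, your proposal does not actually carry out the main step. In case~(ii) you accurately identify the obstacle---$S$ may well reflect at singular ordinals and at non-inaccessible regulars, so one cannot simply choose each $C_\beta$ disjoint from $S$---but you then stop, declaring that ``reconciling the combinatorics of walks with the non-reflection hypothesis \dots\ is the crux of the argument.'' That is a description of where the work lies, not a proof; the actual construction (choosing $C_\beta$ disjoint from $S$ only when $\beta$ is inaccessible, of small order type when $\beta$ is singular, and then verifying via a $\rho_2$-style analysis that the resulting walks witness $\U(\kappa,\kappa,\omega,\omega)$) still has to be written out and checked. Your weakly-but-not-strongly-inaccessible subcase of~(i) is similarly underspecified: an injection $\kappa\hookrightarrow{}^{\lambda}2$ does not by itself transfer a strong coloring on $\lambda^{+}$ to one on $\kappa$, and you have not said what additional argument bridges the gap. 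As written, the proposal is a correct outline with the substantive combinatorics deferred.
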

 In this paper, which can be read largely independently of \cite{paper34},
we take our investigations of $\U(\ldots)$ in a slightly different direction,
by bringing property~(P2) into the picture.
We introduce and study a cardinal characteristic for regular, uncountable cardinal $\kappa$,
which we denote by $\chi(\kappa)$ and call the \emph{$C$-sequence number} of $\kappa$.
This new cardinal characteristic is connected with our coloring principle in the sense that
it serves as a natural candidate for the fourth parameter of $\U(\kappa,\kappa,\theta,\chi)$,
especially, in the case $\theta=\omega$.
Yet, as time passes by, it becomes evident that it is of interest in its own right.

To motivate our definition, let us recall Todorcevic's characterization of weakly compact cardinals.
\begin{defn}\label{defcsequence}
  A \emph{$C$-sequence} over $\kappa$ is a sequence $\langle C_\beta \mid \beta
  < \kappa \rangle$ such that, for all $\beta < \kappa$, $C_\beta$ is a closed subset of $\beta$ with $\sup(C_\beta)=\sup(\beta)$.
\end{defn}

\begin{thm}[Todorcevic, {\cite[Theorem~1.8]{MR908147}}] \label{todorcevic_thm}
  For every strongly inaccessible cardinal $\kappa$, the following are equivalent.
  \begin{enumerate}
    \item $\kappa$ is weakly compact.
    \item For every $C$-sequence $\langle C_\beta\mid \beta<\kappa\rangle$ there
      exist $\Delta\in[\kappa]^\kappa$ and $b:\kappa\rightarrow\kappa$ such that
      $\Delta\cap\alpha=C_{b(\alpha)}\cap\alpha$ for every $\alpha<\kappa$.
  \end{enumerate}
\end{thm}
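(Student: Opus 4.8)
For the implication (1)$\Rightarrow$(2), the plan is to invoke the elementary embedding characterization of weak compactness. Given a $C$-sequence $\langle C_\beta\mid\beta<\kappa\rangle$, fix a transitive model $M$ with $\kappa,\langle C_\beta\mid\beta<\kappa\rangle\in M$, $|M|=\kappa$, and ${}^{<\kappa}M\s M$; since $\kappa$ is weakly compact there are a transitive $N$ and an elementary $j\colon M\to N$ with $\crit(j)=\kappa$. Write $j(\langle C_\beta\mid\beta<\kappa\rangle)=\langle D_\beta\mid\beta<j(\kappa)\rangle$ and set $\Delta:=D_\kappa$. First I would check that $\Delta$ is a club in $\kappa$: the model $N$ believes $D_\kappa$ to be a closed, cofinal subset of $\kappa$, and both assertions pass between the transitive model $N$ and $\V$; since $\kappa$ is regular this gives $\Delta\in[\kappa]^\kappa$. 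Next, fixing $\alpha<\kappa$, I would use $\crit(j)=\kappa$ together with ${}^{<\kappa}M\s M$ to see that $\mathcal P(\alpha)$ is computed the same in $M$, in $N$, and in $\V$, with $j$ fixing all its elements; in particular $\Delta\cap\alpha\in M$ and $j$ fixes it. Then $N$ satisfies ``$\exists\beta<j(\kappa)\ (D_\beta\cap\alpha=\Delta\cap\alpha)$'' --- witnessed by $\beta=\kappa$ --- so by elementarity $M$ satisfies ``$\exists\beta<\kappa\ (C_\beta\cap\alpha=\Delta\cap\alpha)$'', and letting $b(\alpha)$ be the least such $\beta$ defines the required $b\colon\kappa\to\kappa$.

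For (2)$\Rightarrow$(1) I would argue the contrapositive. Assume $\kappa$ is strongly inaccessible but not weakly compact, so that $\kappa$ fails the tree property; fix a $\kappa$-Aronszajn tree $T$. Enumerating the nodes of $T$ level by level in order type $\kappa$, I arrange that the set of nodes of height ${<}\gamma$ is an ordinal $\delta_\gamma<\kappa$, with $\langle\delta_\gamma\mid\gamma<\kappa\rangle$ continuous and strictly increasing; thus the nodes of height exactly $\gamma$ fill the interval $[\delta_\gamma,\delta_{\gamma+1})$, and $E:=\{\delta_\gamma\mid\gamma<\kappa\}$ is a club. Writing $\lceil t\rceil$ for the ordinal attached to a node $t$, I would build $\langle C_\beta\mid\beta<\kappa\rangle$ so that, whenever $\beta$ is a node of limit height $\gamma$, the set $C_\beta$ codes the branch $\langle\beta\restriction\gamma'\mid\gamma'<\gamma\rangle$ of $\beta$: concretely, $C_\beta$ would consist of the ordinals $\lceil\beta\restriction(\gamma'+1)\rceil$ for $\gamma'<\gamma$, together with $E\cap\delta_\gamma$ (a padding that is forced on us, since an Aronszajn tree carries no coherent thread of ``first'' nodes, so the raw branch need not be closed), together with the interval $[\delta_\gamma,\beta)$ (to make $C_\beta$ cofinal in $\beta$). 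For every other $\beta$ I would take $C_\beta$ to be an ``inert'' closed cofinal subset of $\beta$ --- say an end-segment of $\beta$ avoiding a final segment of $E\cap\beta$ --- chosen so that no arbitrarily long initial segment of it coincides with an initial segment of a club.

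Now suppose towards a contradiction that some $\Delta\in[\kappa]^\kappa$ and $b\colon\kappa\to\kappa$ witness clause~(2) for this $C$-sequence. Since each $\Delta\cap\alpha=C_{b(\alpha)}\cap\alpha$ is closed in $\alpha$, the set $\Delta$ is closed, and being of size $\kappa$ it is unbounded, hence a club. Running over the club of $\alpha\in\acc(\Delta)\cap\acc(E)$: the equation $\Delta\cap\alpha=C_{b(\alpha)}\cap\alpha$ forces $b(\alpha)$ to be a node of some limit height $\gamma_\alpha$ (the inert $C_\beta$'s are excluded by their construction, and the $E$-padding and end-segment do no damage because $\alpha\le\delta_{\gamma_\alpha}$), and reading off the ordinals stored in $\Delta\cap\alpha$ recovers level by level an initial segment of the branch of $b(\alpha)$. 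Since $\Delta\cap\alpha$ is an initial segment of $\Delta\cap\alpha'$ whenever $\alpha<\alpha'$, these decoded branch segments cohere, and their union is a cofinal branch through $T$ --- contradicting that $T$ is Aronszajn.

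The routine parts are the absoluteness and $\mathcal P(\alpha)$-agreement facts in the first implication. I expect the main obstacle to lie in the second implication, in the bookkeeping behind the coding: one must arrange that branch-codes are genuinely \emph{closed} in their index (hence the $E$-padding) and \emph{cofinal} in it, and that no $C_\beta$ other than a genuine branch-code can be mistaken for an initial segment of the hypothetical club --- getting these invariants right across the successor/limit edge cases is precisely what makes the decoding argument close.
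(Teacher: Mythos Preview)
The paper does not supply its own proof of this theorem: it is quoted as a result of Todorcevic (with a citation to \cite{MR908147}) and used purely to motivate Definition~\ref{c_seq_num_def}. So there is no in-paper proof to compare against.

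That said, your strategy is the standard one and is essentially correct. The implication $(1)\Rightarrow(2)$ via an elementary embedding $j:M\to N$ with $\crit(j)=\kappa$, taking $\Delta$ to be the $\kappa^{\text{th}}$ club of the image sequence and reflecting the witnessing of $\Delta\cap\alpha$ back to $M$, is exactly how this direction is usually argued. For $(2)\Rightarrow(1)$, your idea of encoding a $\kappa$-Aronszajn tree $T$ into a $C$-sequence so that any trivializing $\Delta$ decodes to a cofinal branch is also the right approach; the level-by-level enumeration with the club $E=\{\delta_\gamma\mid\gamma<\kappa\}$ and the separation of ``branch codes'' from ``padding'' via membership in $E$ is a workable coding scheme. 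One small point worth making explicit in your write-up: for $\alpha\in\acc(\Delta)\cap E$, say $\alpha=\delta_{\gamma^*}$, you should verify that the height of $b(\alpha)$ is at least $\gamma^*$ --- this follows because nodes of smaller height are ordinals below $\delta_{\gamma^*}=\alpha$, so their clubs are bounded below $\alpha$, contradicting $\alpha\in\acc(\Delta)$. With that in hand, the decoded branches at different $\alpha$ cohere (they are all read off from the single set $\Delta$) and reach every level, giving the contradiction. Your own remark that the ``inert'' $C_\beta$'s and the closure/cofinality bookkeeping need care is accurate, but these are genuinely routine once the scheme is fixed.
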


We are now ready for the main definition of the paper, which, in light of the preceding theorem,
suggests a way of measuring how far
an inaccessible cardinal $\kappa$ is from being weakly compact. As we
shall see, though, it is of interest for successors of singular cardinals as well.

\begin{defn}[The $C$-sequence number of $\kappa$] \label{c_seq_num_def}
  If $\kappa$ is weakly compact, then let $\chi(\kappa):=0$. Otherwise, let
  $\chi(\kappa)$ denote the least (finite or infinite) cardinal $\chi\le\kappa$
  such that, for every $C$-sequence $\langle C_\beta\mid\beta<\kappa\rangle$,
  there exist $\Delta\in[\kappa]^\kappa$ and $b:\kappa\rightarrow[\kappa]^{\chi}$
  with $\Delta\cap\alpha\s\bigcup_{\beta\in b(\alpha)}C_\beta$
  for every $\alpha<\kappa$.
\end{defn}

With this definition in hand, it is also natural to consider the \emph{$C$-sequence spectrum} of a
cardinal $\kappa$, which can provide additional information.

\begin{defn}[The $C$-sequence spectrum of $\kappa$]\hfill
  \begin{enumerate}
    \item For every $C$-sequence $\vec{C} = \langle C_\beta \mid \beta < \kappa \rangle$,
      $\chi(\vec{C})$ is the least cardinal $\chi \leq \kappa$ such that there exist
      $\Delta \in [\kappa]^\kappa$ and $b:\kappa\rightarrow[\kappa]^\chi$ with
      $\Delta\cap\alpha\s\bigcup_{\beta\in b(\alpha)}C_\beta$
      for every $\alpha<\kappa$.
    \item $\spec(\kappa) := \{\chi(\vec{C}) \mid \vec{C}$ is a $C$-sequence over $\kappa\} \setminus \omega$.
  \end{enumerate}
\end{defn}

In this paper, we present a number of both $\zfc$ results and consistency results regarding the
$C$-sequence number and $C$-sequence spectrum, in particular exploring
how these concepts interact
with large cardinal notions and square principles. Among these results are the
following.

\begin{thma}
\begin{enumerate}
\item If the class of $\kappa$-Knaster posets is closed under $\omega$-powers, then $\chi(\kappa)\le1$.
\item If $\chi(\kappa)\le 1$, then $\kappa$ is greatly Mahlo.
\item If $\chi(\kappa)>1$, then $\min(\spec(\kappa))=\omega$ and $\max(\spec(\kappa))=\chi(\kappa)$.
\item Every finite family of stationary subsets of $E^\kappa_{>\chi(\kappa)}$ reflects simultaneously.
\end{enumerate}
\end{thma}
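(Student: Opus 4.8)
The plan is to argue by contradiction, manufacturing from a failure of simultaneous reflection a $C$-sequence that no pair $(\Delta,b)$ as in Definition~\ref{c_seq_num_def} can trivialize, contradicting the value of $\chi(\kappa)$. Write $\chi:=\chi(\kappa)$; we may assume $\chi^+<\kappa$ (else $E^\kappa_{>\chi}=\emptyset$ and there is nothing to prove) and $\chi\geq\omega$ (if $\chi\leq 1$ then $\kappa$ is greatly Mahlo by clause~(2), and the argument below adapts with only cosmetic changes). Suppose toward a contradiction that $S_0,\dots,S_{n-1}$ are stationary subsets of $E^\kappa_{>\chi}$ for which there is no $\alpha<\kappa$ with $S_i\cap\alpha$ stationary in $\alpha$ for all $i<n$. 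Two observations will be used freely. First, replacing each $S_i$ by $S_i\cap D$ for a club $D$ avoiding the would-be reflection points shows that if the family reflects simultaneously anywhere then it does so on a stationary set, so the blanket hypothesis ``no common reflection point'' is harmless. Second, a club of $\alpha$ of order type $\cf(\alpha)$ has all of its accumulation points of cofinality $<\cf(\alpha)$, so if a stationary subset of $E^\kappa_{>\chi}$ is stationary in some $\alpha$ then $\cf(\alpha)>\chi$; in particular any common reflection point already lies in $E^\kappa_{>\chi}$. Finally, perform the routine reduction of replacing each $S_i$ by $S_i\cap\bigcap_{j<n}\{\gamma<\kappa\mid\sup(S_j\cap\gamma)=\gamma\}$, so that every point of each $S_i$ is an accumulation point of every $S_j$.

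Next I would build a $C$-sequence $\vec C=\langle C_\beta\mid\beta<\kappa\rangle$. For $\beta$ with $\cf(\beta)\leq\chi$ (in particular for successor $\beta$), let $C_\beta$ be a closed cofinal subset of $\beta$ of order type $\cf(\beta)$; by the observation above, $\acc(C_\beta)\cap E^\kappa_{>\chi}=\emptyset$. For $\beta$ with $\cf(\beta)>\chi$, set $I(\beta):=\{\,i<n\mid S_i\cap\beta\text{ is nonstationary in }\beta\,\}$, which is nonempty by our hypothesis; fix for each $i\in I(\beta)$ a club $d^\beta_i\subseteq\beta$ disjoint from $S_i$, and let $C_\beta$ be a club in $\beta$ contained in $\bigcap_{i\in I(\beta)}\acc(d^\beta_i)\cap\bigcap_{i\notin I(\beta)}\{\gamma<\beta\mid\sup(S_i\cap\gamma)=\gamma\}$. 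Then $C_\beta\cap\bigcup_{i\in I(\beta)}S_i=\emptyset$; every $\gamma\in C_\beta$ of uncountable cofinality has $I(\gamma)\supseteq I(\beta)$ (because $d^\beta_i\cap\gamma$ witnesses the nonreflection of $S_i$ at $\gamma$); and every $\gamma\in C_\beta$ is an accumulation point of each $S_i$ with $i\notin I(\beta)$.

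Now apply Definition~\ref{c_seq_num_def} (using $\chi(\kappa)=\chi$) to $\vec C$ and fix $\Delta\in[\kappa]^\kappa$ and $b\colon\kappa\to[\kappa]^{\chi}$ with $\Delta\cap\alpha\subseteq\bigcup_{\beta\in b(\alpha)}C_\beta$ for all $\alpha$; set $E:=\{\alpha<\kappa\mid\sup(\Delta\cap\alpha)=\alpha\}$, a club. Fix $\alpha\in E$ with $\cf(\alpha)>\chi$. Since $|b(\alpha)|\leq\chi<\cf(\alpha)$, the contributions to $\Delta\cap\alpha$ from $\beta\in b(\alpha)$ with $\beta<\alpha$ are bounded below $\alpha$, so one of the following holds: \textbf{(A)} some $\beta\in b(\alpha)$ with $\beta>\alpha$ has $\sup(C_\beta\cap\alpha)=\alpha$; or \textbf{(B)} $\alpha\in b(\alpha)$ and $\Delta\cap[\rho,\alpha)\subseteq C_\alpha$ for some $\rho<\alpha$. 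The order-type computation rules out~(A) with $\cf(\beta)\leq\chi$, so in case~(A) we get $\beta(\alpha)>\alpha$ with $\cf(\beta(\alpha))>\chi$ and $\alpha\in\acc(C_{\beta(\alpha)})\subseteq C_{\beta(\alpha)}$; hence $\alpha\notin\bigcup_{i\in I(\beta(\alpha))}S_i$, $I(\alpha)\supseteq I(\beta(\alpha))$, and $\alpha$ is an accumulation point of each $S_i$ with $i\notin I(\beta(\alpha))$. A Fodor-style argument confines case~(B) to a nonstationary set: in case~(B) the club $C_\alpha$ (hence $E\cap(\rho,\alpha)$, by closedness) is disjoint from some $S_i$ with $i\in I(\alpha)$, so if the set of $\alpha\in E\cap E^\kappa_{>\chi}$ in case~(B) with such a fixed $i$ were stationary, stabilizing $\rho$ to a constant value would force $S_i\cap E$ to be bounded in $\kappa$, contradicting that $S_i\cap E$ is stationary. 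Since $I(\alpha)\neq\emptyset$ always, case~(B) holds on a nonstationary set, so there is a club $E'$ on which~(A) holds throughout $E'\cap E^\kappa_{>\chi}$.

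It remains to derive a contradiction from this ``catching'' structure, and this is the main obstacle. For $n=1$ it is immediate: there $I(\beta(\alpha))=\{0\}$ for every $\alpha\in E'\cap E^\kappa_{>\chi}$, so $\alpha\notin S_0$, whence $S_0\cap E'=S_0\cap E'\cap E^\kappa_{>\chi}=\emptyset$, contradicting the stationarity of $S_0\cap E'$. For $n>1$ I would proceed by induction, the hypothesis being that every family of fewer than $n$ stationary subsets of $E^\kappa_{>\chi}$ reflects simultaneously on a stationary set. This produces, for each $i<n$, a stationary set $R_i=\{\alpha\in E^\kappa_{>\chi}\mid I(\alpha)=\{i\}\}$ of common reflection points of $\{S_j\mid j\neq i\}$, and the $R_i$ are pairwise disjoint; feeding the $R_i$ back into the construction of $\vec C$ (for $\beta\in R_i$ one takes $C_\beta$ disjoint from $S_i$ and accumulating at every $S_j$, $j\neq i$), the catching map $\alpha\mapsto\beta(\alpha)$ then has the property that for every $i_0<n$ and every $\alpha\in S_{i_0}\cap E'$, $S_{i_0}$ is stationary in $\beta(\alpha)$. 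Pushing this forward along the club of closure points of $\alpha\mapsto\beta(\alpha)$, and then reflecting the (stationary) set of reflection points of $S_{i_0}$ via the $n=1$ case, one obtains ordinals $\delta$ in which $S_{i_0}$ is stationary. The delicate point, on which the argument hinges, is to arrange the bookkeeping over the finitely many indices $i_0<n$ and over the finitely many possible values of $I(\beta(\alpha))\subseteq I(\alpha)\subseteq n$ so that a \emph{single} such $\delta$ serves all $i_0$ simultaneously, yielding a common reflection point and the desired contradiction.
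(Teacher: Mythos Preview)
Your proposal addresses only clause~(4); the paper derives clauses (1)--(3) from other results (Fact~\ref{knasterlemma}, Lemma~\ref{cnontrivial}, Lemma~\ref{lemma29}, Lemma~\ref{prop53}, Corollary~\ref{cor519}, Theorem~\ref{realized}), so let me focus on clause~(4).

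Your overall strategy matches the paper's: assume a failure of simultaneous reflection, build a $C$-sequence tailored to the non-reflecting family, and contradict $\chi(\kappa)=\chi$. Your $n=1$ case is essentially correct. The difficulty is your inductive step for $n>1$: you set up an induction on $n$, invoke the hypothesis to produce the stationary sets $R_i=\{\alpha: I(\alpha)=\{i\}\}$, and then sketch a ``catching'' argument that is supposed to manufacture a single common reflection point for all $S_i$. But you yourself flag the crux --- arranging the bookkeeping so that one $\delta$ works simultaneously for every $i_0<n$ --- as unresolved. As written this is a genuine gap: the argument does not terminate, and it is not clear that your catching map $\alpha\mapsto\beta(\alpha)$, together with the $R_i$, can be organized to yield a common reflection point rather than $n$ separate ones.

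The paper's proof (Lemma~\ref{prop53}(4)) avoids this difficulty entirely by running a different induction. Rather than inducting on $n$ and seeking a common reflection point, the paper fixes $n$, chooses for each limit $\beta$ a \emph{single} index $i(\beta)<n$ with $\acc(C_\beta)\cap S_{i(\beta)}=\emptyset$, and sets $T_i:=\{\beta:i(\beta)=i\}$. The induction is then over $i\le n$: one produces $\Delta_i\in[\kappa]^\kappa$ and a progressive $b_i:\kappa\to[\kappa]^{\chi}$ with $\Delta_i\cap\alpha\subseteq\bigcup_{\beta\in b_i(\alpha)}C_\beta$ and, crucially, $b_i(\alpha)\cap\bigcup_{j<i}T_j=\emptyset$. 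The passage from $i$ to $i+1$ is a single Fodor argument on $S_i$: since $b_i(\alpha)$ already avoids $T_0,\dots,T_{i-1}$, and since no $\beta\in T_i$ with $\beta>\alpha$ can accumulate at $\alpha\in S_i$, one stabilizes the bounded contributions and drops $T_i$ from the range. At $i=n$ the range of $b_n$ lies entirely in $\nacc(\kappa)$, so $|\bigcup_{\beta\in b_n(\alpha)}C_\beta|\le\chi$, contradicting $|\Delta_n\cap\alpha|>\chi$ for large $\alpha$. This ``peel off one $T_i$ at a time'' device is the missing idea in your approach: it replaces the search for a common reflection point by a finite sequence of Fodor applications, each of which is routine.
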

\begin{proof} (1) follows from Fact~\ref{knasterlemma} and Lemma~\ref{cnontrivial}.
(2) follows from Lemma~\ref{lemma29}(3).
(3) follows from Lemma~\ref{prop53}(3), Corollary~\ref{cor519} and Theorem~\ref{realized}.
(4) is Lemma~\ref{prop53}(4).
\end{proof}
\begin{thmb} Any of the following implies that $\reg(\kappa)\s\spec(\kappa)$:
\begin{enumerate}
\item $\square(\kappa,{<}\omega)$ holds;
\item $\kappa$ is a successor of a regular cardinal;
\item $\kappa$ is an inaccessible cardinal which is not Mahlo.
\end{enumerate}
\end{thmb}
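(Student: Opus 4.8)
The plan is to show in each of the three cases that for every regular $\nu \le \kappa$ there is a $C$-sequence $\vec C$ over $\kappa$ with $\chi(\vec C) = \nu$, so that $\nu \in \spec(\kappa)$; combined with the monotonicity of the covering requirement this yields $\reg(\kappa) \subseteq \spec(\kappa)$. The common strategy for getting a $C$-sequence of prescribed ``width'' $\nu$ is to take an object that already decomposes $\kappa$ (or a club in it) into $\nu$ many pieces and feed it into the $C_\beta$'s so that no club $\Delta$ can be covered by fewer than $\nu$ of them at every level $\alpha$, while $\nu$ of them always suffice.

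\emph{Case (1), $\square(\kappa, {<}\omega)$.} Here I would start from a $\square(\kappa,{<}\omega)$-sequence $\langle \mathcal C_\beta \mid \beta < \kappa\rangle$, each $\mathcal C_\beta$ a nonempty collection of at most finitely many clubs in $\beta$ coherent in the usual sense, with no thread. Given the target $\nu$, partition the relevant set of points into $\nu$ pieces $\langle S_i \mid i < \nu\rangle$ and use the coherence to manufacture a single $C$-sequence $\vec C$ by intersecting the square-clubs with these pieces; the non-threadability of the square sequence is exactly what prevents a single $C_\beta$ (indeed any set of size ${<}\nu$) from covering a club initial segment, while the finite branching gives the upper bound. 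The point is that $\square(\kappa,{<}\omega)$ is the weakest hypothesis that still yields enough incoherence, and earlier in the series such non-trivial $C$-sequences have been built; I expect to invoke a lemma in the paper (presumably the one underlying Theorem~A(1), e.g.\ \texttt{Lemma~\ref{cnontrivial}}) that extracts from a non-trivial $C$-sequence a lower bound on $\chi(\vec C)$.

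\emph{Case (2), $\kappa = \lambda^+$ with $\lambda$ regular.} Here $E^\kappa_\lambda$ is stationary and, since $\lambda$ is regular, one has a partition of $E^\kappa_\lambda$ into $\lambda = \nu$ (for the relevant $\nu \le \kappa$, also $\nu$) many stationary sets, or more directly one uses that $\square(\kappa)$-like behavior is automatic at a successor of a regular via a sequence of clubs $C_\beta$ of order type $\le \lambda$. I would arrange the $C_\beta$'s so that for $\beta$ of cofinality $\lambda$ the set $C_\beta$ avoids all but one block of a fixed partition $\langle A_i \mid i < \nu\rangle$ of $\lambda$ transported into $\beta$; then a club $\Delta$ meets $\nu$ many blocks cofinally, forcing $b(\alpha)$ to have size $\ge \nu$ cofinally often, while order type $\le \lambda$ gives the matching upper bound.

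\emph{Case (3), $\kappa$ inaccessible but not Mahlo.} Fix a club $D \subseteq \kappa$ of singular cardinals (exists since $\kappa$ is not Mahlo). Then each $\beta \in D$ has a club $C_\beta \subseteq \beta$ of order type $\cf(\beta) < \beta$; splicing these with the desired partition of width $\nu$ and using regularity of $\nu$ one again blocks covering by ${<}\nu$ sets. The main obstacle in all three cases is the \textbf{lower bound}: showing that no $\Delta \in [\kappa]^\kappa$ and $b : \kappa \to [\kappa]^{<\nu}$ can work. I expect to handle this uniformly: if such $b$ existed, a pressing-down / pigeonhole argument on a club where $\Delta$ behaves regularly would collapse the $\nu$-fold partition into ${<}\nu$ pieces, contradicting either the non-threadability of the square sequence (case 1) or the stationarity of the partition blocks (cases 2--3). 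The upper bound $\chi(\vec C) \le \nu$ is comparatively routine in each case, coming from the bounded order types / finite branching built into the construction, together with the trivial fact that $\kappa \in \spec(\kappa)$ whenever $\kappa$ is not weakly compact.
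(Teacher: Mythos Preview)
Your proposal takes a fundamentally different route from the paper, and the route has a real gap.

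The paper does \emph{not} build, for each regular $\theta<\kappa$, a $C$-sequence $\vec C$ with $\chi(\vec C)=\theta$ directly. Instead it first proves Theorem~D (Corollary~\ref{connection}): for $\theta\in\reg(\kappa)$, $\theta\in\spec(\kappa)$ if and only if there is a closed witness to $\U(\kappa,\kappa,\theta,\theta)$. Theorem~B is then Corollary~\ref{cor521}, which verifies the coloring side of this equivalence in each case: for $\kappa=\lambda^+$ with $\lambda$ regular, one observes that $E^\kappa_\lambda$ is non-reflecting and invokes Corollary~\ref{cor515} (itself resting on \cite[Cor.~4.10]{paper34}); for $\kappa$ inaccessible non-Mahlo, one quotes \cite[Thm.~4.23]{paper34}; for $\square(\kappa,{<}\omega)$, one first upgrades the square sequence via Theorem~\ref{thm518} so that for each $i<\theta$ the set $\{\alpha\in E^\kappa_\theta\mid\forall C\in\mathcal C_\alpha[\min(C)=i]\}$ is stationary, and then applies \cite[Thm.~4.11]{paper34}. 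In every case one obtains a closed witness to $\U(\kappa,\kappa,\theta,\theta)$, and Corollary~\ref{connection} converts this into $\theta\in\spec(\kappa)$.

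Your direct approach is most problematic in Case~(1). You write that ``the non-threadability of the square sequence is exactly what prevents \dots any set of size ${<}\nu$ from covering a club initial segment.'' This is not so: non-threadability is a statement about a \emph{single} thread, and the amenability arguments that flow from it (as in Lemma~\ref{lemma47} and Corollary~\ref{cor48}) only give $\chi(\vec C)\ge\omega$ for a transversal $\vec C$. There is no evident mechanism by which a $\square(\kappa,{<}\omega)$-sequence alone, or a transversal of it intersected with pieces of a partition (which would not even produce clubs), forces $\chi(\vec C)\ge\nu$ for an \emph{arbitrary} regular $\nu<\kappa$. The paper's detour through $\U(\kappa,\kappa,\theta,\theta)$, with the delicate Theorem~\ref{thm518} preparing the square sequence so that the walks-based coloring of \cite{paper34} applies, is exactly what supplies this missing lower bound. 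Your sketches for Cases~(2) and~(3) are more plausible in spirit, but the pressing-down arguments you allude to for the lower bound are not spelled out, and in any event the paper handles those cases too via the $\U$-equivalence rather than by bespoke $C$-sequence constructions.
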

\begin{proof} This is Corollary~\ref{cor521}.
\end{proof}

\begin{thmc}
\begin{enumerate}
\item If $\chi(\kappa)=0$, then for every $\chi\in\{1\}\cup\reg(\kappa+1)$, there is a  $({<}\kappa)$-distributive forcing extension in which $\chi(\kappa)=\chi$ and $\reg(\chi)\s \spec(\kappa)$.
\item It is consistent that $\chi(\aleph_{\omega+1})=\aleph_\omega$.
Assuming the consistency of a supercompact cardinal, it is also consistent that $\chi(\aleph_{\omega+1})=\omega$.
\item If $\lambda$ is a singular limit of strongly compact cardinals,
then $\chi(\lambda^+)=\cf(\lambda)$ and $\reg(\cf(\lambda))\s\spec(\lambda^+)$.
\item If $\lambda$ is a singular limit of supercompact cardinals, then, for every $\chi\in\reg(\lambda)\setminus\cf(\lambda)$, there is a ${<}\lambda$-distributive forcing extension in which $\chi(\lambda^+)=\chi$ and $\reg(\chi)\s\spec(\lambda^+)$.
\end{enumerate}
\end{thmc}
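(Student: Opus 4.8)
The plan is to establish each clause separately, but all four follow a common two‑sided template. On one side one needs an \emph{upper bound} $\chi(\kappa)\le\chi$, which is extracted from a compactness property of $\kappa$ --- weak compactness in the ground model, or strongly/supercompact cardinals cofinal in $\lambda$ --- shown to survive the relevant forcing via elementary embeddings and master conditions. On the other side one needs a matching \emph{lower bound} $\chi(\kappa)\ge\chi$ together with $\reg(\chi)\s\spec(\kappa)$, obtained by producing, for each regular $\theta<\chi$, a $C$-sequence $\vec{C}$ with $\chi(\vec{C})=\theta$; the value $\chi$ itself is then $\max(\spec(\kappa))$ for free by Theorem~A(3). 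The witnessing $C$-sequences for the lower bounds will be coherent, $\square$-like objects, available either outright ($\square_{\aleph_\omega}$, PCF scales, or Theorem~B in disguise) or added by a ${<}\kappa$-strategically closed poset; such a poset is automatically ${<}\kappa$-distributive and preserves all cardinals and cofinalities $\le\kappa$, as demanded in clauses (1) and (4).

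For clause~(1), begin with $\kappa$ weakly compact. When $\chi=1$ we have $\reg(1)=\emptyset$, so it suffices to force $\chi(\kappa)=1$: I would use the standard ${<}\kappa$-strategically closed poset adding a $\square(\kappa)$-sequence. This destroys weak compactness, since a master condition for a lift of a weakly compact embedding would thread the generic sequence; yet a $\Pi^1_1$-indescribability argument in the ground model, transported through the forcing, still assigns every $C$-sequence in the extension a trace of width $1$, so $\chi(\kappa)\le 1$, and then Theorem~A(2) pins $\chi(\kappa)=1$. For $\chi\in\reg(\kappa+1)$ I would instead use a ${<}\kappa$-strategically closed poset $\mathbb{P}_\chi$ whose generic object is a non-trivial $C$-sequence of ``width $\chi$'' --- a $\square(\kappa,{<}\chi)$-like system of local clubs. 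The same lifting argument, now carrying along the $<\chi$ local clubs at each level, gives $\chi(\kappa)\le\chi$ in the extension; the generic object realizes $\chi\in\spec(\kappa)$; and restricting $\mathbb{P}_\chi$ to ``width $\theta$'' for each regular $\theta<\chi$ produces the auxiliary $C$-sequences needed for $\reg(\chi)\s\spec(\kappa)$.

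For clause~(2): for the first assertion, pass to a model of $\square_{\aleph_\omega}$ together with suitable instances of GCH (both hold in $L$, and can be arranged by a ${<}\aleph_{\omega+1}$-strategically closed poset over any ground model); the standard analysis of $\square_{\aleph_\omega}$ --- threadlessness forced by the bound $\otp(C_\alpha)\le\aleph_\omega$ together with coherence --- shows that no $\Delta\in[\aleph_{\omega+1}]^{\aleph_{\omega+1}}$ admits a trace of width $<\aleph_\omega$ along a cofinal set of coordinates, so $\chi(\aleph_{\omega+1})=\aleph_\omega$ (the reverse inequality being trivial). For the second assertion, run Magidor's construction from a supercompact cardinal to reach a model in which stationary subsets of $\aleph_{\omega+1}$ reflect and approachability fails; this reflection is precisely what is needed to assign every $C$-sequence over $\aleph_{\omega+1}$ a countable-width trace, whence $\chi(\aleph_{\omega+1})=\omega$ --- the ``local'' counterpart of clause~(3).

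For clauses~(3) and~(4), let $\langle\lambda_i\mid i<\cf(\lambda)\rangle$ be (strongly, resp.\ super-) compact and cofinal in $\lambda$. The heart of~(3) is the upper bound $\chi(\lambda^+)\le\cf(\lambda)$: given a $C$-sequence $\vec{C}$ over $\lambda^+$, for each $i$ take a fine measure on $P_{\lambda_i}(\lambda^+)$, i.e.\ an embedding $j_i\colon V\to M_i$ with $\crit(j_i)=\lambda_i$ and $j_i[\lambda^+]$ covered by a set of $M_i$-size $<j_i(\lambda_i)$; the club $j_i(\vec{C})_{\sup j_i[\lambda^+]}$, pulled back under $j_i$, meets each initial segment of $\lambda^+$ inside boundedly-altered pieces of $\vec{C}$, and amalgamating over the $\cf(\lambda)$ coordinates assembles the required $\Delta$ and $b\colon\lambda^+\to[\lambda^+]^{\cf(\lambda)}$. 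The lower bound $\chi(\lambda^+)\ge\cf(\lambda)$, together with $\reg(\cf(\lambda))\s\spec(\lambda^+)$, comes from the $C$-sequence derived from a scale on $\prod_i\lambda_i$, with the width controlled by the number of coordinates used --- paralleling the proof of Theorem~B and the $\spec$/$\U(\ldots)$ correspondence of Part~I --- and truncated to a regular $\theta<\cf(\lambda)$ to realize $\theta\in\spec(\lambda^+)$. Clause~(4) then layers over the supercompact set-up a ${<}\lambda$-distributive $\square(\lambda^+,{<}\chi)$-type poset pushing $\chi(\lambda^+)$ up from $\cf(\lambda)$ to a prescribed $\chi\in\reg(\lambda)\setminus\cf(\lambda)$, the partially preserved supercompactness keeping the width-$\chi$ traces and the intermediate spectrum intact. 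Throughout, the decisive difficulty is the upper bound: one must verify that the ground-model compactness of $\kappa$ (or of the $\lambda_i$) survives the forcing \emph{in the exact form needed to tame arbitrary $C$-sequences of the extension}, which means building master conditions for the lifted embeddings out of the strategic closure of the posets and then extracting traces of width \emph{exactly} $\chi$ --- wide enough to exist, narrow enough not to over-shoot --- while simultaneously certifying that every intermediate regular value in $\reg(\chi)$ really is attained as some $\chi(\vec{C})$.
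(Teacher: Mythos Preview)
Your overall template---upper bounds from residual compactness, lower bounds from coherent $C$-sequences---matches the paper's organization, and for clauses~(3) and~(4) you have the right shape (though the paper obtains $\reg(\cf(\lambda))\subseteq\spec(\lambda^+)$ not from scales but via the equivalence of Theorem~D: it builds closed witnesses to $\U(\lambda^+,\lambda^+,\theta,\theta)$ for each regular $\theta<\cf(\lambda)$ using club-guessing and Ulam matrices, then reads off $\theta\in\spec(\lambda^+)$). There are, however, two genuine gaps.

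Your treatment of the case $\chi=1$ in clause~(1) does not work. Forcing a $\square(\kappa)$-sequence yields $\chi(\kappa)=\sup(\reg(\kappa))$, not $\chi(\kappa)=1$: this is Corollary~\ref{cor27}, and more concretely $\square(\kappa)$ produces nonreflecting stationary sets at every cofinality, which by Lemma~\ref{prop53}(4) forces $\chi(\kappa)$ to be maximal. There is no ``$\Pi^1_1$-indescribability argument transported through the forcing'' that could survive here---the generic $\square(\kappa)$-sequence itself has $\chi(\vec C)\ge\omega$. The paper's approach is Kunen's model: add a coherent $\kappa$-Souslin tree $\mathbb T$ such that forcing with $\mathbb T$ \emph{resurrects} weak compactness. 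Given any $C$-sequence $\vec C$ in the extension, $\mathbb T$ forces a club $\dot D$ every initial segment of which lies in a single $C_\beta$; since $\mathbb T$ is $\kappa$-cc, a ground-model club $\Delta\subseteq\dot D$ exists and witnesses $\chi(\vec C)\le 1$. The resurrection mechanism is the missing idea, and it is equally what drives the case $\chi\in\reg(\kappa)$: the paper forces $\square^{\ind}(\kappa,\chi)$, and the upper bound comes not from lifting embeddings over the square forcing but from the fact that the $\chi$-many threading forcings $\mathbb T_i$ each resurrect weak compactness.

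Your approach to the second half of clause~(2) also has a gap. Stationary reflection at $\aleph_{\omega+1}$, even in Magidor's strong form, is not known to imply $\chi(\aleph_{\omega+1})=\omega$; Lemma~\ref{prop53}(4) gives reflection as a \emph{consequence} of small $\chi(\kappa)$, not conversely, and the paper explicitly remarks that standard tree-property models at $\aleph_{\omega+1}$ have $\chi(\aleph_{\omega+1})>\omega$. The paper instead performs Prikry forcing with interleaved collapses over a supercompact $\lambda$, and uses the supercompactness embedding $j$ directly: pulling back the club $j(\dot C)_{\sup j``\lambda^+}$ produces a set $\Delta\in V$ such that every $x\in[\Delta]^{<\lambda}\cap V$ is covered by a single $C_\beta$, after which the countable Prikry sequence decomposes each $\Delta\cap\alpha$ into countably many such $x$'s.
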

\begin{proof} (1) follows from Corollary~\ref{cor27},  Theorem~\ref{kunenmodel} and Theorem~\ref{thm51}.
(2) follows from Corollary~\ref{cor27} and Theorem~\ref{downtoalephw}.
(3) follows from Theorem~\ref{t55} and Theorem~\ref{thm524}(1).
(4) follows from Theorem~\ref{thm511} and Corollary~\ref{cor515}.
\end{proof}

We then go back to the theme of property~(P1) and uncover an unexpected connection between
the $C$-sequence spectrum and the third (and fourth) parameter of the principle $\U(\ldots)$.

\begin{thmd}
  For every $\theta\in\reg(\kappa)$, the following are equivalent:
\begin{enumerate}
\item $\theta\in\spec(\kappa)$;
\item There is a closed witness to $\U(\kappa,\kappa,\theta,\theta)$.
\end{enumerate}
 \end{thmd}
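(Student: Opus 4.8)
The plan is to establish both implications by unwinding the definitions and building the required objects explicitly. For the direction $(2)\Rightarrow(1)$, suppose $c:[\kappa]^2\to\theta$ is a closed coloring witnessing $\U(\kappa,\kappa,\theta,\theta)$. From $c$ I would manufacture a $C$-sequence $\vec C=\langle C_\beta\mid\beta<\kappa\rangle$ by taking, for each $\beta<\kappa$, the fibers of $c(\cdot,\beta)$ below the relevant level—roughly, $C_\beta$ should record, for each $i<\theta$, the set $\{\alpha<\beta\mid c(\alpha,\beta)\le i\}$ or a closed approximation thereof, exploiting the closedness of the coloring to ensure each $C_\beta$ is genuinely closed in $\beta$ with the correct supremum (one must be slightly careful at limit $\beta$ of cofinality below $\theta$, and may need to pad $C_\beta$ with a fixed cofinal closed set). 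The point is then that any $\Delta\in[\kappa]^\kappa$ witnessing $\chi(\vec C)<\theta$—i.e.\ coverable by fewer than $\theta$ many $C_\beta$'s at each level—would produce, via a pressing-down/Fodor argument on $\Delta$, a subfamily on which $c$ takes values bounded below $\theta$, contradicting that $c$ witnesses $\U(\kappa,\kappa,\theta,\theta)$ (the clause with $\chi'<\theta$ being used to absorb the $b(\alpha)$'s of size $<\theta$). Hence $\chi(\vec C)\ge\theta$; combined with the trivial upper bound $\chi(\vec C)\le\theta$ coming from the existence of the cover, we get $\chi(\vec C)=\theta$, so $\theta\in\spec(\kappa)$.

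For the harder direction $(1)\Rightarrow(2)$, suppose $\theta\in\spec(\kappa)$, witnessed by a $C$-sequence $\vec C$ with $\chi(\vec C)=\theta$. I would define a coloring $c:[\kappa]^2\to\theta$ by a walk-like recipe: for $\alpha<\beta$, let $c(\alpha,\beta)$ measure ``how deep'' into $\vec C$ one must go to capture $\alpha$ relative to $\beta$—e.g.\ using the standard machinery of $\rho$-functions or minimal walks along $\vec C$, truncated to take values in $\theta$, so that small values of $c$ correspond to $\alpha$ being coverable by few $C_\beta$'s. Closedness of the coloring (in the sense of Part~I) should follow from the closedness of the sets $C_\beta$ and continuity of the walk at limits. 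The key step is to verify the combinatorial property of $\U(\kappa,\kappa,\theta,\theta)$: given $i<\theta$, a $\chi'<\theta$, and a family $\mathcal A$ of $\kappa$ pairwise disjoint sets of size $\chi'$, I need $\mathcal B\in[\mathcal A]^\kappa$ with $\min(c[a\times b])>i$ for all pairs. Here I would argue by contradiction: if no such $\mathcal B$ exists, then one can extract from $\mathcal A$ a ``$\Delta$-system-like'' subfamily and use it to build $\Delta\in[\kappa]^\kappa$ and $b:\kappa\to[\kappa]^{<\theta}$ covering $\Delta$, contradicting $\chi(\vec C)=\theta$ (minimality).

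I expect the main obstacle to be the $(1)\Rightarrow(2)$ direction, specifically two intertwined points: first, pinning down the right walk-based definition of $c$ so that the numerical value of $c(\alpha,\beta)$ is genuinely controlled by the covering number of $\{\alpha\}$ by members of $\vec C$ indexed near $\beta$—the naive definition may not be $\theta$-valued or may not interact correctly with the spectrum; and second, the extraction argument turning a hypothetical bad family $\mathcal A$ into a witness for $\chi(\vec C)<\theta$. The latter requires choosing, for each $a\in\mathcal A'$ (a large subfamily), a single ``target'' ordinal and assembling the resulting $\kappa$-sized set into the $\Delta$, while the associated bounded sets of indices from the failure of $\min(c[a\times b])>i$ become the values $b(\alpha)$; making the bookkeeping uniform across levels—so that $b(\alpha)\in[\kappa]^{<\theta}$ holds at \emph{every} $\alpha<\kappa$, not just stationarily often—is the delicate part, and is likely where one invokes regularity of $\theta$ together with a reflection or coherence feature of the walk. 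I would also double-check the edge behavior when $\theta=\omega$, where ``$<\theta$'' means finite, matching the motivating remark that $\chi(\kappa)$ is a natural fourth parameter precisely when $\theta=\omega$.
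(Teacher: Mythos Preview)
Your overall architecture is right, but both directions miss a key idea that the paper supplies.

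For $(1)\Rightarrow(2)$: the walk-based approach you sketch is what the paper does when $\theta=\omega$ (via $\rho_2$, which is naturally $\omega$-valued), but for arbitrary regular $\theta$ this does not produce a $\theta$-valued coloring, as you yourself worry. The paper's actual construction (Theorem~\ref{thm411}) does not use walks along $\vec C$ at all. Instead, it exploits \emph{both} inequalities in $\chi(\vec C)=\theta$: from $\chi(\vec C)\le\theta$ one fixes $\Delta$ and a covering function $b:\kappa\to{}^\theta\kappa$, and then defines
\[
c(\gamma,\delta):=\min\{i<\theta\mid \min(\Sigma\setminus\gamma)\in C_{b(\min(\Sigma\setminus\delta))(i)}\},
\]
for a suitable stationary $\Sigma\subseteq\Delta$. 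This is well-defined precisely because $b$ covers $\Delta$. The verification that $c$ witnesses $\U(\kappa,\kappa,\theta,\theta)$ then uses $\chi(\vec C)\ge\theta$ in the way you anticipated. So the missing idea is: the $\theta$-valued coloring comes from the covering function, not from walks.

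For $(2)\Rightarrow(1)$: your $C$-sequence $C_\beta=\cl(D^c_{\le i(\beta)}(\beta))$ is exactly what the paper uses (Lemma~\ref{lemma511}), and your argument for $\chi(\vec C)\ge\theta$ is correct. But the upper bound $\chi(\vec C)\le\theta$ is \emph{not} trivial. You need, for each $\alpha$, a set $b(\alpha)\in[\kappa]^\theta$ of \emph{distinct} $\beta$'s whose $C_\beta$'s cover $\Delta\cap\alpha$; knowing that $\bigcup_{i<\theta}D^c_{\le i}(\beta)=\beta$ for a single $\beta>\alpha$ does not help, since those are $\theta$ fibers of one $\beta$, not $\theta$ clubs indexed by different ordinals. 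The paper isolates exactly this obstacle as the \emph{covering property}: for each $\alpha$ there is an injection $f_\alpha:\theta\to\kappa\setminus\alpha$ with $\alpha\setminus\bigcup_{i<\theta}D^c_{\le i}(f_\alpha(i))$ bounded. An arbitrary closed witness need not have this property; the paper proves a separate upgrading lemma (Lemma~\ref{lemma310}) showing that any somewhere-closed witness can be replaced by a closed one that does have it---and it is \emph{this} lemma where walks along a $C$-sequence with $\chi(\vec C)=\chi(\kappa)$ appear. So walks are used, but in the opposite direction from where you placed them.
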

\begin{proof} This follows from Corollary~\ref{connection}. \end{proof}

\subsection{Organization of this paper}
 In Section~2, we introduce the $C$-sequence number
and prove some basic results regarding it, in particular settling its behavior at successors of regular cardinals.
In Section~3, we present a number of consistency
results concerning the $C$-sequence numbers, the principle $\U(\ldots)$, and related matters,
at both inaccessible cardinals and successors of singular cardinals. These consistency results
will, among other things, indicate that certain results both from this paper and from
\cite{paper34} are sharp. In Section~4, we study the $C$-sequence spectrum, which
provides more information about a cardinal $\kappa$ than the $C$-sequence number alone.
In Section~5, we investigate connections between the $C$-sequence spectrum and $\U(\ldots)$,
in particular proving the theorems that yield the corollaries listed above.
In Section~6, we present some open questions and closing remarks.

\subsection{Notation and conventions}
  When constructing a $C$-sequence $\langle C_\beta \mid \beta < \kappa \rangle$, we
  automatically let $C_{\beta + 1} := \{\beta\}$ for all $\beta < \kappa$ unless
  we explicitly note otherwise.

We say that $\kappa$ is \emph{$({<}\chi)$-inaccessible} iff, for all $\nu<\chi$ and
$\lambda<\kappa$, $\lambda^{\nu}<\kappa$.
We denote by  $H_\Upsilon$
the collection of all sets of hereditary cardinality less $\Upsilon$,
where $\Upsilon$ is a regular cardinal
sufficiently large to satisfy that all objects of interest are in $H_\Upsilon$.

$\reg$ denotes the class of infinite regular
cardinals, and $\reg(\kappa)$ denotes $\reg \cap \kappa$. It will frequently
be convenient for us to refer to the cardinal $\sup(\reg(\kappa))$, where
$\kappa$ is an uncountable cardinal. Note that if $\kappa$ is a limit cardinal,
then $\sup(\reg(\kappa)) = \kappa$, and if $\kappa$ is a successor cardinal, then
$\sup(\reg(\kappa))$ is the immediate predecessor of $\kappa$. $E^\kappa_\chi$
denotes the set $\{\alpha < \kappa \mid \cf(\alpha) = \chi\}$, and
$E^\kappa_{\geq \chi}$, $E^\kappa_{>\chi}$, $E^\kappa_{\neq\chi}$, etc.\ are defined analogously.

For a set of ordinals $a$, we write $\ssup(a) := \sup\{\alpha + 1 \mid
\alpha \in a\}$, $\acc^+(a) := \{\alpha < \ssup(a) \mid \sup(a \cap \alpha) = \alpha > 0\}$,
$\acc(a) := a \cap \acc^+(a)$, $\nacc(a) := a \setminus \acc(a)$,
and $\cl(a):= a\cup\acc^+(a)$.
For sets of ordinals $a$ and $b$, we write $a < b$ if, for all $\alpha \in a$
and all $\beta \in b$, we have $\alpha < \beta$.
For a set of ordinals $a$ and an ordinal $\beta$, we write
$a < \beta$ instead of $a < \{\beta\}$ and $\beta < a$ instead of $\{\beta\} < a$.
For a set of ordinals $A$, $\Tr(A)$ denotes the set $\{ \beta \in E^{\ssup(A)}_{>\omega}
\mid A \cap \beta \text{ is stationary in } \beta\}$.

For any set $\mathcal A$, we write
$[\mathcal A]^\chi:=\{ \mathcal B\s\mathcal A\mid |\mathcal B|=\chi\}$ and
$[\mathcal A]^{<\chi}:=\{\mathcal B\s\mathcal A\mid |\mathcal B|<\chi\}$.
In particular, $[\mathcal{A}]^2$ consists of all unordered pairs from $\mathcal{A}$.
In some scenarios, we will also be interested in ordered pairs from $\mathcal{A}$.
In particular, if $\mathcal{A}$ is either an ordinal or a collection of sets
of ordinals, then we will abuse notation and write $(a,b) \in [\mathcal{A}]^2$
to mean $\{a,b\} \in [\mathcal{A}]^2$ and $a < b$.

\section{The $C$-sequence number}
In this section, we initiate our study of the $C$-sequence number.
Our first lemma will be useful in our later analysis and asserts that sets $\Delta$ and functions $b$ as in Definition~\ref{c_seq_num_def}
can always be chosen to have certain nice properties. In what follows, let us call a function $b:\kappa \rightarrow
[\kappa]^\chi$ \emph{progressive} if $\min(b(\alpha))\ge\alpha$ for all $\alpha < \kappa$.

\begin{lemma} \label{lemma53}
  Suppose that
  \begin{itemize}
  \item $\langle C_\beta\mid\beta<\kappa\rangle$ is a $C$-sequence;
  \item $\chi< \sup(\reg(\kappa))$ is a cardinal;
  \item $A,\Delta',\Gamma\in[\kappa]^\kappa$ are sets;
  \item $b':A\rightarrow[\Gamma]^{\le\chi}$ is a function satisfying $\Delta'\cap\alpha\s\bigcup_{\beta\in b'(\alpha)}C_\beta$ for all $\alpha\in A$.
  \end{itemize}
  Then the following two statements hold.
  \begin{enumerate}
    \item For every stationary $\Sigma\s E^\kappa_{>\chi}$, there exist $\Delta_0,\Delta_1\in[\kappa]^\kappa$,
      and a progressive function $b:\kappa \rightarrow [\Gamma]^{\le\chi}$
      such that
      \begin{itemize}
      \item $\Delta_0\s\Delta'$;
      \item $\acc^+(\Delta_1)\cap E^\kappa_{>\chi}\s\Delta_1$;
      \item $\Delta_0\cap \alpha \s \bigcup_{\beta \in b(\alpha)} C_\beta$ for all $\alpha < \kappa$;
      \item $\Delta_1\cap \alpha \s \bigcup_{\beta \in b(\alpha)}\acc(C_\beta)$ for all $\alpha < \kappa$;
      \item $\{\alpha\in\Sigma\mid \forall \beta\in b(\alpha)[\sup(C_\beta\cap\alpha)=\alpha]\}$ is stationary.
      \end{itemize}
    \item If $\chi$ is a positive integer, then there exist $\Delta\in[\Delta']^\kappa$
      and a function $b:\kappa \rightarrow \Gamma$
      such that $\Delta \cap \alpha\s C_{b(\alpha)}$ for all  $\alpha < \kappa$.
  \end{enumerate}
\end{lemma}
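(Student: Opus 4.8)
The plan is to prove the two parts essentially independently, extracting first the "progressive" and "cover-by-accumulation-points" features (part 1) and then handling the special combinatorics of finite $\chi$ (part 2).

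For part (1), I would start from the given data $A, \Delta', \Gamma, b'$ and first arrange progressivity. The point is that if $\beta \in b'(\alpha)$ with $\beta < \alpha$, then $C_\beta \subseteq \beta \subseteq \alpha$ contributes nothing new to $\Delta' \cap \alpha$ beyond $\Delta' \cap \beta$; more carefully, one can replace such indices by finitely (or $\le\chi$) many indices $\ge \alpha$ whose $C$-sets, restricted below $\alpha$, still cover $\Delta' \cap \alpha$ — indeed we may simply throw in any single $\gamma \in \Gamma$ with $\gamma \ge \alpha$ and $\sup(C_\gamma \cap \alpha) = \alpha$, noting that for the covering condition only the behavior of $C_\gamma$ below $\alpha$ matters, and we already know $\Delta'\cap\alpha$ is covered using the old (possibly small) indices, so it suffices to keep the old indices that happen to be $\ge\alpha$ and augment as needed. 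Thus without loss of generality $b'$ is progressive and $\dom(b') = \kappa$ after shrinking $\Delta'$ to $\Delta_0 \subseteq \Delta'$ on the relevant club. Next, to get the stationary set on which every $C_\beta$ with $\beta \in b(\alpha)$ is "continuous at $\alpha$" (i.e.\ $\sup(C_\beta\cap\alpha)=\alpha$), I would use a Fodor/pressing-down argument: for $\alpha$ in the given stationary $\Sigma \subseteq E^\kappa_{>\chi}$, if some $\beta \in b(\alpha)$ has $\sup(C_\beta \cap \alpha) < \alpha$, then since $|b(\alpha)| \le \chi < \cf(\alpha)$ we may press down to a common bound $\delta_\alpha < \alpha$; on a stationary set this bound is constant, and there the covering of $\Delta_0 \cap \alpha$ "localizes" below that constant $\delta$, forcing $\Delta_0 \cap [\delta,\alpha)$ to be small — a contradiction with $|\Delta_0|=\kappa$ once we also intersect with a club of closure points. (This is the step I expect to require the most care.) Finally, to obtain $\Delta_1$ with $\Delta_1 \cap \alpha \subseteq \bigcup_{\beta \in b(\alpha)} \acc(C_\beta)$ and $\acc^+(\Delta_1) \cap E^\kappa_{>\chi} \subseteq \Delta_1$, I would take $\Delta_1 := \acc^+(\Delta_0') \cap E^\kappa_{>\chi}$ for a suitable further club-shrinking $\Delta_0'$ of $\Delta_0$: a point $\gamma \in \acc^+(\Delta_0)$ of cofinality $> \chi$ is a limit of points of $\Delta_0 \cap \gamma$, each of which lies in some $C_\beta$ with $\beta$ ranging over the $\le\chi$-sized $b(\gamma)$ (after replacing $b$ on such $\gamma$ with $b(\gamma)$ itself, using progressivity to see $\gamma \le \min(b(\gamma))$), so by pigeonhole one $C_\beta$ contains cofinally many of them and hence $\gamma \in \acc(C_\beta)$; the self-coherence condition $\acc^+(\Delta_1)\cap E^\kappa_{>\chi}\subseteq\Delta_1$ then follows because $\acc^+$ of a set of the form $\acc^+(X)\cap E^\kappa_{>\chi}$, re-intersected with $E^\kappa_{>\chi}$, is absorbed.

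For part (2), with $\chi = n$ a positive integer, I would argue by induction on $n$, or directly: given the function $b': A \to [\Gamma]^{\le n}$, for each $\alpha \in A$ split into cases according to which $\beta \in b'(\alpha)$ "does the most work", i.e.\ pick $g(\alpha) \in b'(\alpha)$ maximizing $\sup(\Delta' \cap \alpha \cap C_{g(\alpha)})$, or more robustly, observe that $\Delta' \cap \alpha = \bigcup_{\beta \in b'(\alpha)}(\Delta'\cap\alpha\cap C_\beta)$ is a union of at most $n$ sets, so one of them, say $C_{\beta(\alpha)}$, satisfies $\Delta' \cap \alpha \cap C_{\beta(\alpha)}$ is cofinal in $\sup(\Delta'\cap\alpha)$ when $\cf(\sup(\Delta'\cap\alpha)) > n$ — but of course we want $\Delta\cap\alpha$ literally contained in $C_{b(\alpha)}$, not merely cofinally. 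The correct move is to thin $\Delta'$: build $\Delta \subseteq \Delta'$ of size $\kappa$ recursively so that whenever $\alpha \in \Delta$ is a limit point of $\Delta$, all the finitely many "types" stabilize below $\alpha$; concretely, pick an elementary submodel or use a closure argument to find a club $D$ of $\alpha < \kappa$ such that for every $\alpha \in D$ there is $\beta(\alpha) \in \Gamma$ with $\Delta' \cap D \cap \alpha \subseteq C_{\beta(\alpha)}$ — this uses that the finitely many candidate indices in $b'(\alpha)$ cover $\Delta'\cap\alpha$, combined with a Fodor argument on each $E^\kappa_{\cf=\rho}$ for $\rho$ regular to force one index to absorb a tail, then diagonalize over the $n$ possibilities. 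Setting $\Delta := \Delta' \cap D$ (intersected with the club where $\Delta'$ is "self-dense") and defining $b(\alpha) := \beta(\alpha)$ for $\alpha \in D$ and arbitrarily elsewhere yields $\Delta \cap \alpha \subseteq C_{b(\alpha)}$. The key obstacle in both parts is the same: ensuring that the pigeonhole selection among the $\le\chi$ indices can be made coherently across $\kappa$ so that the resulting cover is by a single (or progressive) index rather than merely cofinal, and this is handled by repeatedly shrinking $\Delta'$ along clubs and applying Fodor's lemma on the pieces $E^\kappa_{\cf=\rho}$ — legitimate precisely because $\chi < \sup(\reg(\kappa))$, so $\chi$ is smaller than cofinally many regular cardinals below $\kappa$.
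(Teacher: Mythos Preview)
Your plan for Part~(1) contains a genuine gap in the Fodor step. You write that if some $\beta\in b(\alpha)$ has $\sup(C_\beta\cap\alpha)<\alpha$, then pressing down gives a common bound $\delta$, whence ``the covering of $\Delta_0\cap\alpha$ localizes below $\delta$'', yielding a contradiction. This does not follow: the indices $\beta\in b'(\alpha)$ with $\sup(C_\beta\cap\alpha)=\alpha$ (the ``good'' ones) may well cover $\Delta_0\cap[\delta,\alpha)$, so nothing forces that interval to be small. At best your contradiction shows that not \emph{all} indices are bad, whereas the lemma asks for a stationary set on which \emph{every} $\beta\in b(\alpha)$ is good. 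The correct use of Fodor here is not to derive a contradiction but to uniformize: let $f(\alpha)$ be the sup of $\sup(C_\beta\cap\alpha)$ over the \emph{bad} $\beta\in b'(\alpha)$, press down to get $f\equiv\epsilon$ on a stationary $T\subseteq\Sigma$, then set $b(\alpha):=\{\beta\in b'(\min(T\setminus\alpha)):\sup(C_\beta\cap\min(T\setminus\alpha))=\min(T\setminus\alpha)\}$ and $\Delta_0:=\Delta'\setminus(\epsilon+1)$. This simultaneously gives progressivity for free (any good $\beta$ satisfies $\beta\ge\alpha$ since $C_\beta\subseteq\beta$), so your separate and somewhat muddled preliminary discussion of progressivity is unnecessary. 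Your construction of $\Delta_1$ via $\acc^+(\Delta_0)\cap E^\kappa_{>\chi}$ and pigeonhole is on the right track.

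For Part~(2) your sketch is too vague to constitute a proof; ``maximizing $\sup(\Delta'\cap\alpha\cap C_{g(\alpha)})$'' and ``elementary submodel or closure argument'' do not obviously produce a single index covering an unbounded $\Delta$. The paper's argument is a clean induction on $n$: either $D:=\{\delta\in\Delta':\forall\alpha>\delta\,[\delta\in\bigcap_{\beta\in b'(\alpha)}C_\beta]\}$ is cofinal (and we are done), or for each $\delta\in\Delta'\setminus D$ we pick $\alpha_\delta>\delta$ and $\beta_\delta\in b'(\alpha_\delta)$ with $\delta\notin C_{\beta_\delta}$, press down on $\delta\mapsto\sup(C_{\beta_\delta}\cap\delta)$ to stabilize at some $\varepsilon$ on a stationary $T$, and then for $\alpha<\kappa$ set $b(\alpha):=b'(\alpha_{\min(T\setminus\alpha)})\setminus\{\beta_{\min(T\setminus\alpha)}\}$, which covers $(\Delta'\setminus(\varepsilon+1))\cap\alpha$ with only $n-1$ indices. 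This ``drop one bad index'' move is the missing idea in your plan.
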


\begin{proof}
  It is easy to extend $\dom(b')$ to the whole of $\kappa$ by sending each $\alpha<\kappa$ to $b'(\min(A\setminus\alpha))$. Thus, we shall assume that $A=\kappa$.

  (1)   Let $\Sigma\s E^\kappa_{>\chi}$ be stationary. For all $\alpha\in\Sigma$, we have $\cf(\alpha)>\chi\ge|b'(\alpha)|$,
  and hence we may define a regressive function $f:\Sigma\rightarrow\kappa$ by letting, for all $\alpha\in\Sigma$,
  $$f(\alpha):=\sup(\{ \sup(C_\beta\cap\alpha)\mid \beta\in b'(\alpha), ~ \sup(C_\beta\cap\alpha)<\alpha\}).$$

  By Fodor's Lemma, let us fix an  $\epsilon < \kappa$ for which $T:=f^{-1}\{\epsilon\}$ is stationary.
  For all $\alpha<\kappa$, set $\alpha':=\min(T\setminus\alpha)$.
  Then, consider the progressive function $b:\kappa\rightarrow[\Gamma]^{\le\chi}$,
  defined by letting, for all $\alpha < \kappa$,
  $$b(\alpha) := \{ \beta\in b'(\alpha')\mid \sup(C_\beta\cap\alpha')=\alpha'\}.$$
  As $\alpha=\alpha'$ for all $\alpha\in T$, we have that $\{\alpha\in\Sigma\mid \forall \beta\in b(\alpha)[\sup(C_\beta\cap\alpha)=\alpha]\}$ covers the stationary set $T$.

  To see that $\Delta_0:=\Delta'\setminus(\epsilon+1)$ and $\Delta_1:=
  E^\kappa_{>\chi}\cap \acc^+(\Delta'\setminus \epsilon)$ are as sought,
  fix an arbitrary $\alpha<\kappa$.

  $\br$ Let $\delta\in\Delta_0\cap\alpha$ be arbitrary. As $\delta\in \Delta'\cap\alpha'$, we may find some $\beta\in b'(\alpha')$ such that $\delta\in C_\beta$.
  In particular, $\sup(C_\beta\cap\alpha')\ge\delta>\epsilon=f(\alpha')$, meaning that $\sup(C_\beta\cap\alpha')=\alpha'$, so $\beta\in b(\alpha)$.

  $\br$ Let $\delta\in\Delta_1\cap\alpha$ be arbitrary.
  Fix $d\s \Delta'\setminus\epsilon$ with $\ssup(d)=\delta$.
  As $d\s \Delta'\cap\alpha'$, we have $d\s \bigcup_{\beta \in b'(\alpha')}C_\beta$.
  But $\cf(\delta)>|b'(\alpha')|$,
  and hence we may find some $\beta\in b'(\alpha')$ such that $\sup(d\cap C_\beta)=\delta$.
  In particular, $\sup(C_\beta\cap\alpha')\ge\delta>\epsilon=f(\alpha')$, meaning that $\sup(C_\beta\cap\alpha')=\alpha'$.
  Consequently, $\beta\in b(\alpha)$ and $\delta\in\acc(C_\beta)$.

\medskip

  (2) By Clause~(1), we may assume that $\Delta'$ is a club and that $b'$ is progressive.
  To avoid trivialities, suppose also that $\chi>1$, and let $n:=\chi-1$.

  Clearly, if $D:=\{\delta\in\Delta'\mid \forall \alpha\in(\delta,\kappa)
  [\delta\in\bigcap_{\beta\in b'(\alpha)}C_\beta]\}$
  is cofinal in $\kappa$,  then we may simply take $\Delta:=D$, and then any
  $b:\kappa \rightarrow \Gamma$ satisfying $b(\alpha) \in b'(\alpha)$ for all $\alpha < \kappa$, will do.
  Thus, suppose that $D$ is bounded below $\kappa$.
  We will find an $\epsilon<\kappa$ and a function $b:\kappa\rightarrow[\Gamma]^n$
  such that $(\Delta'\setminus\epsilon)\cap\alpha\s\bigcup_{\beta\in b(\alpha)}C_\beta$ for all $\alpha<\kappa$.
  The result will then follow by induction.

  For each $\delta\in \Delta'\setminus D$, find $\alpha_\delta \in (\delta, \kappa)$
  and $\beta_\delta \in b'(\alpha_\delta)$ such that $\delta\notin C_{\beta_\delta}$.
  In particular, $\sup(C_{\beta_\delta}\cap\delta)<\delta$.
  Use Fodor's Lemma to find an $\varepsilon < \kappa$
  and a stationary $T \subseteq \Delta'\setminus D$ such that $\sup(C_{\beta_\delta}
  \cap \delta) = \varepsilon$ for all $\delta\in T$.

  Let $\epsilon:=\varepsilon+1$,
  and define $b:\kappa \rightarrow [\Gamma]^n$ by letting, for all $\alpha < \kappa$,
  $$b(\alpha) :=
  b'\left(\alpha_{\min(T\setminus \alpha)}\right) \setminus \{\beta_{\min(T\setminus \alpha)}\}.$$
  To see that $\epsilon$ and $b$ are as sought, fix arbitrary $\alpha<\kappa$ and $\delta\in(\Delta'\setminus\epsilon)\cap\alpha$.
  Set $\delta':=\min(T\setminus\alpha)$. As $\delta<\alpha\le\delta'<\alpha_{\delta'}$,
  we have $\delta\in\Delta\cap\alpha_{\delta'}$, and we may pick some $\beta\in b'(\alpha_{\delta'})$ such that $\delta\in C_\beta$.
  We know that $\sup(C_{\beta_{\delta'}}\cap\delta')=\varepsilon<\epsilon\le\delta<\alpha\le\delta'$, and hence $\delta\notin C_{\beta_{\delta'}}$.
  So $\beta\neq \beta_{\delta'}$, and $\beta\in b(\alpha)$.
\end{proof}

We are now ready to record a few basic facts about $\chi(\kappa)$.

\begin{lemma} \label{prop53}
  The $C$-sequence number satisfies the following properties.
  \begin{enumerate}
    \item $\chi(\kappa) \leq \sup(\reg(\kappa))$.
    \item For every infinite cardinal $\lambda$, we have $\cf(\lambda) \leq
      \chi(\lambda^+) \leq \lambda$. In particular, if $\lambda$ is regular,
      then $\chi(\lambda^+) = \lambda$.
    \item If $\chi(\kappa) > 1$, then $\chi(\kappa) \geq \omega$.
    \item Every finite family of stationary subsets of $E^\kappa_{>\chi(\kappa)}$ reflects simultaneously.
    \item If $V=L$, then $\chi(\kappa)\in\{0,\sup(\reg(\kappa))\}$.
  \end{enumerate}
\end{lemma}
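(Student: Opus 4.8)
The plan is to dispatch clauses (1) and (3) quickly from Lemma~\ref{lemma53} and the standing conventions, to prove (2) by a short bad-$C$-sequence argument, to treat (4) as the technical heart, and to deduce (5) from Jensen's square principles together with Theorems~A and~B. For (1): assume $\kappa$ is not weakly compact and put $\chi:=\sup(\reg(\kappa))$. Given any $C$-sequence $\langle C_\beta\mid\beta<\kappa\rangle$, take $\Delta:=\kappa$ and let $b(\alpha)$ be any element of $[\kappa]^{\chi}$ extending $\{\gamma+1\mid\gamma<\alpha\}$ (legitimate since $|\alpha|\le\chi\le\kappa$); by the standing convention $C_{\gamma+1}=\{\gamma\}$ we get $\bigcup_{\beta\in b(\alpha)}C_\beta\supseteq\alpha=\Delta\cap\alpha$, so $\chi(\kappa)\le\chi$. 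For (3): suppose toward a contradiction that $\chi(\kappa)=n$ for some integer $n\ge2$. Then $\kappa$ is not weakly compact, so for every $C$-sequence $\vec C$ there are $\Delta\in[\kappa]^\kappa$ and $b:\kappa\rightarrow[\kappa]^n$ witnessing Definition~\ref{c_seq_num_def}; feeding $\chi:=n$, $A:=\Gamma:=\kappa$, $\Delta':=\Delta$, $b':=b$ into Lemma~\ref{lemma53}(2) produces $\Delta\in[\kappa]^\kappa$ and $b:\kappa\rightarrow\kappa$ with $\Delta\cap\alpha\s C_{b(\alpha)}$ for all $\alpha<\kappa$. Since $\vec C$ was arbitrary, this shows $\chi(\kappa)\le1$, contradicting $\chi(\kappa)=n$; hence $\chi(\kappa)\notin\{2,3,\dots\}$.

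For (2): the bound $\chi(\lambda^+)\le\lambda$ is immediate from (1). For $\chi(\lambda^+)\ge\cf(\lambda)$, set $\theta:=\cf(\lambda)$ and suppose $\chi:=\chi(\lambda^+)<\theta$. Recall the classical fact that $E^{\lambda^+}_\theta$ carries a non-reflecting stationary set $S$ (for $\lambda$ regular one may simply take $S:=E^{\lambda^+}_\lambda$, which reflects nowhere since $\cf(\beta)\le\lambda$ for every $\beta<\lambda^+$). Fix a $C$-sequence with $C_\alpha\cap S=\emptyset$ for every $\alpha<\lambda^+$ (possible as $S$ is non-reflecting; arrange moreover $\otp(C_\alpha)=\theta$ for $\alpha\in S$), and apply Definition~\ref{c_seq_num_def} to obtain $\Delta$ and $b$. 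Running Lemma~\ref{lemma53}(1) on this data with $\Gamma:=\lambda^+$ and $\Sigma:=S\s E^{\lambda^+}_{>\chi}$ yields $\Delta_0\in[\lambda^+]^{\lambda^+}$, a progressive $b$, and the stationary set $T:=\{\alpha\in S\mid\forall\beta\in b(\alpha)\,[\sup(C_\beta\cap\alpha)=\alpha]\}$, with $\Delta_0\cap\alpha\s\bigcup_{\beta\in b(\alpha)}C_\beta$ for all $\alpha$. Since $\Delta_0$ is cofinal of order type $\lambda^+>\theta$ and $T$ is stationary, fix $\alpha\in T$ with $\otp(\Delta_0\cap\alpha)>\theta$; then $\Delta_0\cap\alpha\ne\emptyset$, so $b(\alpha)\ne\emptyset$, and by progressivity every $\beta\in b(\alpha)$ satisfies $\beta\ge\alpha$. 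If some $\beta\in b(\alpha)$ had $\beta>\alpha$, then $\sup(C_\beta\cap\alpha)=\alpha$ would make $\alpha$ a limit point of the closed set $C_\beta$, whence $\alpha\in C_\beta$; but $\alpha\in S$ and $C_\beta\cap S=\emptyset$, absurd. Hence $b(\alpha)=\{\alpha\}$, so $\Delta_0\cap\alpha\s C_\alpha$ and $\otp(\Delta_0\cap\alpha)\le\otp(C_\alpha)=\theta$, contradicting the choice of $\alpha$. Thus $\chi(\lambda^+)\ge\cf(\lambda)$, and in particular $\chi(\lambda^+)=\lambda$ when $\lambda$ is regular.

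For (4): I would argue by contradiction, in the only non-vacuous case $\chi:=\chi(\kappa)<\sup(\reg(\kappa))$. Suppose $S_0,\dots,S_{n-1}\s E^\kappa_{>\chi}$ are stationary but do not reflect simultaneously. For each limit $\alpha<\kappa$ pick $j(\alpha)<n$ with $S_{j(\alpha)}\cap\alpha$ non-stationary, together with a club $C_\alpha\s\alpha$ disjoint from $S_{j(\alpha)}$ (a cofinal $\omega$-sequence of successors when $\cf(\alpha)=\omega$), forming a $C$-sequence $\vec C$. Trivializing $\vec C$ via $\chi(\kappa)=\chi$ gives $\Delta,b$; after pressing down to fix $j^*$ with $R:=\{\alpha\mid j(\alpha)=j^*\}$ stationary and running Lemma~\ref{lemma53}(1) against a stationary subset of $S_{j^*}\cap E^\kappa_\theta$ for a suitable regular $\theta>\chi$, one extracts, as in (2), that along a stationary set of $\alpha\in S_{j^*}$ any $\beta\in b(\alpha)\cap R$ with $\beta>\alpha$ would force $\alpha\in C_\beta$, which is impossible. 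The main obstacle — and the reason (4) is harder than (2) — is that now the cofinalities of the relevant $\alpha$'s, hence the order types of the $C_\alpha$'s, need not be bounded, so the clean order-type contradiction of (2) is unavailable and one cannot directly dispose of the case $\beta=\alpha$; making the argument close appears to require building $\vec C$ with additional structure (e.g.\ controlling $\acc(C_\alpha)$ relative to the $S_i$'s and to the value $j(\alpha)$) and/or a further pigeonhole over cofinalities. I expect this to be the one genuinely delicate point in the lemma.

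For (5): assume $V=L$. If $\kappa$ is weakly compact then $\chi(\kappa)=0$ by definition. Otherwise, by Jensen's theorems $\square(\kappa,{<}\omega)$ holds — via $\square(\kappa)$ when $\kappa$ is inaccessible, and via $\square_\lambda$, which implies $\square(\lambda^+,{<}\omega)$, when $\kappa=\lambda^+$ — so Theorem~B gives $\reg(\kappa)\s\spec(\kappa)$. Then $\spec(\kappa)\ne\emptyset$ forces $\chi(\kappa)>1$, and Theorem~A(3) (i.e.\ $\max(\spec(\kappa))=\chi(\kappa)$ once $\chi(\kappa)>1$) together with (1) forces $\chi(\kappa)=\sup(\reg(\kappa))$.
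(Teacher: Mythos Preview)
Your arguments for (1) and (3) are fine (and your (1) is actually a bit slicker than the paper's). The serious problems are in (2), (4), and (5).

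\textbf{Clause (2), singular case.} Your ``classical fact'' that $E^{\lambda^+}_{\cf(\lambda)}$ carries a non-reflecting stationary set is simply not a \zfc\ theorem when $\lambda$ is singular. Indeed, the very paper you are working in shows (Theorem~\ref{t55}) that if $\lambda$ is a singular limit of strongly compact cardinals then $\chi(\lambda^+)=\cf(\lambda)$, and by clause~(4) this forces \emph{every} stationary subset of $E^{\lambda^+}_{>\cf(\lambda)}$ to reflect; combined with known reflection results at such $\lambda$, models exist in which every stationary subset of $E^{\lambda^+}_{\cf(\lambda)}$ reflects as well. So your proof of the lower bound collapses for singular $\lambda$. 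The paper's argument avoids reflection entirely: take any $C$-sequence with $\otp(C_\beta)\le\lambda$, pick $\alpha$ with $\otp(\Delta\cap\alpha)=\lambda^2$, and note that a union of fewer than $\cf(\lambda)$ sets each of order type $<\lambda^2$ has order type $<\lambda^2$.

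\textbf{Clause (4).} You correctly identify this as the delicate clause, but you do not prove it; your sketch stalls exactly where the real work lies. The paper's argument is an induction on $i\le n$: one maintains $\Delta_i$ and a progressive $b_i:\kappa\to[\kappa]^{\chi(\kappa)}$ with $b_i(\alpha)$ disjoint from $\bigcup_{j<i}T_j$, where $T_j=\{\beta:i(\beta)=j\}$ and $\acc(C_\beta)\cap S_{i(\beta)}=\emptyset$. At stage $i$, for $\alpha\in S_i$ and $\beta\in b_i(\alpha)\cap T_i$ one has $\alpha\notin\acc(C_\beta)$ (since $\beta>\alpha$ by progressivity), so $\sup(C_\beta\cap\alpha)<\alpha$; Fodor then lets one chop off a bounded initial segment of $\Delta_i$ and drop $T_i$ from the range, yielding $b_{i+1}$. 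After $n$ steps $b_n(\alpha)\s\nacc(\kappa)$, so $|\bigcup_{\beta\in b_n(\alpha)}C_\beta|\le\chi(\kappa)<\kappa$, a contradiction. Your idea of ``fixing $j^*$ by pressing down'' handles only one index and cannot close the argument; the key missing insight is this layered elimination.

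\textbf{Clause (5).} Your route via Theorems~A and~B is circular in the paper's logical order: those theorems are proved in Sections~4--5, long after this lemma, and Corollary~\ref{cor27} (the $\square(\kappa,{<}\omega)$ statement you really want) itself invokes Lemma~\ref{prop53}(4). The paper's proof is one line: in $L$, for every $\chi\in\reg(\kappa)$ the set $E^\kappa_\chi$ contains a non-reflecting stationary subset (Jensen), so by (4) one gets $\chi(\kappa)\ge\chi$ for all such $\chi$, hence $\chi(\kappa)=\sup(\reg(\kappa))$.
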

\begin{proof}
  (1) To avoid trivialities, suppose that $\sup(\reg(\kappa))<\kappa$.
  Then $\kappa=\lambda^+$ for $\lambda:=\sup(\reg(\kappa))$.
  To see that $\chi(\kappa)\le\lambda$, let $\vec C=\langle C_\beta\mid\beta<\kappa\rangle$
  be an arbitrary $C$-sequence, and set $\Delta:=\bigcup_{\beta\in\acc(\kappa)}C_\beta$.
  Clearly, $\Delta\in[\kappa]^\kappa$. Now, for every $\alpha<\kappa$, as $|\Delta\cap\alpha|\le\lambda$,
  it is trivial to find $b(\alpha)\in[\kappa]^{\lambda}$ such that
  $\Delta\cap\alpha\s\bigcup_{\beta\in b(\alpha)}C_\beta$.

  (2) By Clause~(1), it suffices to verify that $\cf(\lambda) \leq \chi(\lambda^+)$.
  To this end, let $\kappa:=\lambda^+$, and let $\vec{C} = \langle C_\beta\mid \alpha < \kappa \rangle$
  be a $C$-sequence  such that $\otp(C_\beta)\le\lambda$ for all $\beta < \kappa$.
  Fix $\Delta\in[\kappa]^\kappa$ and $b:\kappa\rightarrow[\kappa]^{\chi(\kappa)}$
  such that $\Delta\cap\alpha\s\bigcup_{\beta\in b(\alpha)}C_\beta$
  for every $\alpha<\kappa$.
  Let $\alpha\in\Delta$ be the unique ordinal to satisfy $\otp(\Delta \cap \alpha)=\lambda^2$,
  so $\otp(\bigcup_{\beta \in b(\alpha)}C_\beta)\ge\lambda^2$.
  As the union of fewer than $\cf(\lambda)$-many sets, each of order type less than $\lambda^2$,
  has order type less than $\lambda^2$, it follows that $\chi(\kappa)=|b(\alpha)|\ge\cf(\lambda)$.

  (3) This follows immediately from Lemma~\ref{lemma53}(2).

  (4) Suppose for sake of contradiction that $n$ is a positive integer and $\langle S_i \mid i< n\rangle$ is a
  sequence of stationary subsets of $E^\kappa_{>\chi(\kappa)}$ that does not reflect simultaneously.
  Let $\langle C_\beta\mid \beta < \kappa \rangle$
  be a $C$-sequence such that, for all $\beta<\kappa$, for some $i(\beta)< n$, $\acc(C_\beta)\cap S_{i(\beta)} = \emptyset$.
  For each $i< n$, let $T_i:=\{ \beta\in\acc(\kappa)\mid i(\beta)=i\}$.

  As $S_0$ is a stationary subset of $E^\kappa_{>\chi(\kappa)}$, we know that $\chi(\kappa)<\sup(\reg(\kappa))$.
  We shall show that, for all $i\le n$, there exist $\Delta_i\in[\kappa]^\kappa$
  and a function $b_i:\kappa\rightarrow[\kappa]^{\chi(\kappa)}$
  such that, for all $\alpha<\kappa$,
  \begin{itemize}
  \item $\Delta_i\cap \alpha \s \bigcup_{\beta \in b_i(\alpha)} C_\beta$;
  \item $b_i(\alpha)\cap\bigcup_{j<i} T_j=\emptyset$;
  \item $\min(b_i(\alpha))>\alpha$.
  \end{itemize}
  As $\bigcup_{j< n}T_j=\acc(\kappa)$, this will mean that $b_n(\alpha) \subseteq
  \nacc(\kappa)$, and hence $|\bigcup_{\beta \in b_n(\alpha)} C_\beta|=\chi(\kappa)$ for all $\alpha<\kappa$,
  contradicting the fact that $(\chi(\kappa))^+<\kappa$.

  The proof is by induction on $i$.

  $\br$ For $i=0$, we do the following.
  As $\chi(\kappa)<\sup(\reg(\kappa))$,
  by the definition of $\chi(\kappa)$ and by Lemma~\ref{lemma53},
  we may find  $\Delta\in[\kappa]^\kappa$ and a progressive function $b:\kappa \rightarrow [\kappa]^{\chi(\kappa)}$
  such that $\Delta \cap \alpha \s \bigcup_{\beta \in b(\alpha)} C_\beta$ for all $\alpha < \kappa$.
  Now, let $\Delta_0:=\Delta$ and define $b_0$ by letting $b_0(\alpha):=b(\alpha+1)$ for all $\alpha<\kappa$.
  Clearly, $\Delta_0$ and $b_0$ are as sought.

  $\br$ Suppose that $i< n$ and that $\Delta_i$, $b_i$ have been defined.
  Define $f:S_{i}\rightarrow\kappa$ by setting, for all $\alpha \in S_i$,
  $$f(\alpha):=\sup\{ \sup(C_\beta\cap\alpha)\mid \beta\in b_i(\alpha)\cap T_{i}\}.$$
  For all $\alpha\in S_{i}$ and $\beta\in b_i(\alpha)\cap T_{i}$,
  we have $\beta>\alpha$ and $\alpha\notin\acc(C_\beta)$, so $\sup(C_\beta\cap\alpha)<\alpha$.
  $f$ is therefore regressive, and we may fix some $\epsilon<\kappa$ for which $A:=f^{-1}\{\epsilon\}$ is stationary.
  Let $\Delta_{i+1}:=\Delta_i\setminus(\epsilon+1)$ and define $b_{i+1}$ by letting, for all $\alpha<\kappa$,
  $b_{i+1}(\alpha):=b_i(\min(A\setminus\alpha))\setminus T_{i}$.

  To see that $\Delta_{i+1}$ and $b_{i+1}$ are as sought,
  fix an arbitrary $\alpha<\kappa$.
  Set $\alpha':=\min(A\setminus\alpha)$, so that $\min(b_{i+1}(\alpha))\ge\min(b_i(\alpha'))>\alpha'\ge\alpha$.
  Finally, let $\tau\in\Delta_{i+1}\cap\alpha$ be arbitrary.
  As $\tau\in\Delta_{i+1}\cap\alpha\s\Delta_i\cap\alpha'$,
  we may pick $\beta\in b_i(\alpha')$ such that $\tau\in C_\beta$.
  Then $\sup(C_\beta\cap\alpha')\ge\tau>\epsilon=f(\alpha')$,
  so $\beta\notin T_i$, and hence $\beta\in b_{i+1}(\alpha)$.

  (5) If $V = L$ and $\kappa$ is a regular uncountable cardinal that is not
  weakly compact, then, for every $\chi\in\reg(\kappa)$, $E^\kappa_\chi$ contains a
  stationary subset that does not reflect. The result now follows from Clause~(4).
\end{proof}

\begin{remark}
\begin{itemize}
\item Clause~(2) is sharp, as witnessed by Theorem~\ref{t55} below.
\item Clause~(4) is sharp, as, by Clause~(2), for every $\kappa$ which is the successor of a regular cardinal,
$E^\kappa_{\chi(\kappa)}$ is stationary and non-reflecting.
It is also sharp in another sense;
by Theorem~\ref{thm51} below (using $\theta:=\omega$),
it is consistent that for some strongly inaccessible cardinal $\kappa$, $\chi(\kappa)=\omega$,
and there is a family $\mathcal S$ consisting of countably many stationary subsets of $E^\kappa_{>\omega}$
such that $\mathcal S$ does not reflect simultaneously.
\end{itemize}
\end{remark}

\begin{cor}
  If $\kappa=\lambda^+$ and $\square_{\lambda,{<}\cf(\lambda)}$ holds, then $\chi(\kappa) = \lambda$.
  In particular, it is consistent for $\chi(\kappa)$ to be a singular cardinal.
\end{cor}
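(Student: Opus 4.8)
The plan is to prove the two inequalities $\chi(\kappa)\le\lambda$ and $\chi(\kappa)\ge\lambda$ separately, the first being immediate and the second being where $\square_{\lambda,{<}\cf(\lambda)}$ is used. Since $\kappa=\lambda^+$, we have $\sup(\reg(\kappa))=\lambda$, so Lemma~\ref{prop53}(1) gives $\chi(\kappa)\le\lambda$, while Lemma~\ref{prop53}(2) gives $\cf(\lambda)\le\chi(\kappa)$; if $\lambda$ is regular this already yields $\chi(\kappa)=\lambda$ (with no appeal to $\square$), so we may assume $\lambda$ is singular and aim at $\chi(\kappa)\ge\lambda$. For the lower bound I would invoke the contrapositive of Lemma~\ref{prop53}(4): if $\theta$ is a regular cardinal such that $E^\kappa_\theta$ carries a non-reflecting stationary set $S$, then $\theta\le\chi(\kappa)$ — otherwise $\theta>\chi(\kappa)$ would give $S\s E^\kappa_\theta\s E^\kappa_{>\chi(\kappa)}$, whence the one-element family $\{S\}$ reflects, a contradiction. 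Since $\lambda$ is a singular cardinal, $\{\theta\in\reg(\lambda)\mid\theta>\cf(\lambda)\}$ is cofinal in $\lambda$, so it suffices to exhibit a non-reflecting stationary subset of $E^\kappa_\theta$ for each such $\theta$.

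This is exactly what $\square_{\lambda,{<}\cf(\lambda)}$ supplies, via the (classical) fact that a weak square with small guessing families produces non-reflecting stationary sets at each such cofinality. Fix a $\square_{\lambda,{<}\cf(\lambda)}$-sequence $\langle\mathcal C_\gamma\mid\gamma<\kappa\rangle$: for limit $\gamma$, $\mathcal C_\gamma$ is a nonempty family of clubs in $\gamma$ of order type $\le\lambda$, with $|\mathcal C_\gamma|<\cf(\lambda)$, coherent in the sense that $D\in\mathcal C_\gamma$ and $\bar\gamma\in\acc(D)$ imply $D\cap\bar\gamma\in\mathcal C_{\bar\gamma}$. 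Given regular $\theta$ with $\cf(\lambda)<\theta<\lambda$, put $S_\theta:=\{\gamma\in E^\kappa_\theta\mid\otp(D)<\theta^+\text{ for every }D\in\mathcal C_\gamma\}$. That $S_\theta$ does not reflect is the easy half: if $\gamma^*$ reflected it, then $\cf(\gamma^*)>\theta$, so fixing $D\in\mathcal C_{\gamma^*}$ the club $\acc(D)$ meets $S_\theta$ in a stationary subset of $\gamma^*$, hence of order type $\ge\cf(\gamma^*)\ge\theta^+$; but $\gamma\mapsto\otp(D\cap\gamma)$ is strictly increasing on $\acc(D)$, whereas $\otp(D\cap\gamma)<\theta^+$ for every $\gamma$ in that set (since $D\cap\gamma\in\mathcal C_\gamma$), which is impossible for a strictly increasing function on a set of order type $\ge\theta^+$.

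The substance is the \emph{stationarity} of $S_\theta$: one needs stationarily many $\gamma\in E^\kappa_\theta$ all of whose guessing clubs are short. The plan is to produce these by an elementary-submodel construction — build an increasing continuous $\in$-chain $\langle M_i\mid i<\theta\rangle$ of models of size $\theta$ (containing the $\square$-sequence and any prescribed club $E$), let $\gamma:=\sup_i(M_i\cap\lambda^+)$ so that $\cf(\gamma)=\theta$ and $\gamma\in E$, and then argue, using coherence together with $|\mathcal C_\gamma|<\cf(\lambda)<\theta$, that every $D\in\mathcal C_\gamma$ must already have order type below $\theta^+$. Granting this, $\theta\le\chi(\kappa)$ for cofinally many $\theta<\lambda$, hence $\chi(\kappa)\ge\lambda$, and with the upper bound $\chi(\kappa)=\lambda$.

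An alternative, more self-contained route would stay closer to the definition of $\chi(\kappa)$: extract a $C$-sequence by choosing some $C_\gamma\in\mathcal C_\gamma$, normalise a hypothetical witness $\Delta,b$ to $\chi(\kappa)\le\chi<\lambda$ via Lemma~\ref{lemma53}(1), and then use coherence plus a pigeonhole inside the small families $\mathcal C_\gamma$ to thread the relevant clubs into a single club of $\kappa$ all of whose proper initial segments have order type $\le\lambda$ — impossible for $\kappa=\lambda^+$; once more the restriction $|\mathcal C_\gamma|<\cf(\lambda)$ is precisely what powers the pigeonhole. For the ``in particular'' clause, $\square_\lambda$ implies $\square_{\lambda,{<}\cf(\lambda)}$, so the latter holds in $L$ for every $\lambda$; taking $\lambda:=\aleph_\omega$ yields the consistency of $\chi(\aleph_{\omega+1})=\aleph_\omega$, a singular cardinal (this also follows from Lemma~\ref{prop53}(5)). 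The one genuinely nontrivial step, which I expect to be the main obstacle, is the stationarity of $S_\theta$ (equivalently, the threading step of the alternative argument): this is the only place drawing on more than Lemma~\ref{prop53} and routine bookkeeping, and it uses both the coherence of the $\square_{\lambda,{<}\cf(\lambda)}$-sequence and the smallness of its families in an essential way.
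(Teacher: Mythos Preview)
Your overall strategy is exactly the paper's: use Lemma~\ref{prop53}(1),(2) for the upper bound and the regular case, then for singular $\lambda$ invoke Lemma~\ref{prop53}(4) by exhibiting non-reflecting stationary sets in $E^\kappa_\theta$ for cofinally many regular $\theta<\lambda$. The paper simply cites \cite[Lemma~2.2]{MR2811288} for this non-reflection fact rather than proving it inline.

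Your inline attempt, however, has a genuine error --- and not where you flagged it. You call non-reflection of $S_\theta:=\{\gamma\in E^\kappa_\theta\mid\forall D\in\mathcal C_\gamma\,[\otp(D)<\theta^+]\}$ ``the easy half'', but the argument fails at the boundary: a strictly increasing map into $\theta^+$ defined on a set of order type \emph{exactly} $\theta^+$ is perfectly possible, so you only rule out reflection at cofinalities $>\theta^+$. And $S_\theta$ \emph{does} reflect at cofinality $\theta^+$: for a $\square_\lambda$-sequence, take any $\gamma^*$ with $\otp(C_{\gamma^*})=\theta^+$ (obtained from any point of cofinality $\ge\theta^+$ by passing to a suitable accumulation point of its club); then every $\gamma\in\acc(C_{\gamma^*})\cap E^{\gamma^*}_\theta$ has $\otp(C_\gamma)=\otp(C_{\gamma^*}\cap\gamma)<\theta^+$ and hence lies in $S_\theta$, and this set is stationary in $\gamma^*$. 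Replacing ``$<\theta^+$'' by ``$=\theta$'' would repair non-reflection but makes stationarity harder still; and your elementary-submodel sketch for stationarity does not work as written either, since $|\mathcal C_\gamma|<\cf(\lambda)$ bounds the \emph{number} of clubs at $\gamma$, not their order types, and models of size $\theta$ give no handle on $\otp(D)$. The Cummings--Magidor argument runs differently.
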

\begin{proof}
  Suppose that $\kappa=\lambda^+$ and $\square_{\lambda,{<}\cf(\lambda)}$
  holds. By Clause~(2) of Lemma~\ref{prop53}, $\cf(\lambda)\le\chi(\kappa)$.
  In particular, we may assume that $\lambda$ is singular.
  By the proof of \cite[Lemma~2.2]{MR2811288}, $\square_{\lambda,{<}\cf(\lambda)}$
  implies that any stationary subset of $\lambda^+$ contains a stationary
  subset that does not reflect.
  Thus, by Clause~(4) of Lemma~\ref{prop53}, $\chi(\kappa)=\sup(\reg(\kappa))=\lambda$.
\end{proof}

\begin{remark}
  The preceding result is sharp in the sense that $\square_{\lambda, \cf(\lambda)}$
  does not imply $\chi(\lambda^+) = \lambda$; this follows from Theorem~\ref{thm513} below.
\end{remark}

\begin{cor}\label{cor27} If $\square(\kappa,{<}\omega)$ holds, then $\chi(\kappa)=\sup(\reg(\kappa))$.
\end{cor}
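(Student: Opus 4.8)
The plan is to produce a single $C$-sequence $\vec C$ with $\chi(\vec C)=\sup(\reg(\kappa))$; since $\chi(\kappa)\ge\chi(\vec C)$ for every $C$-sequence, while $\chi(\kappa)\le\sup(\reg(\kappa))$ by Lemma~\ref{prop53}(1), this gives the equality. If $\kappa$ is the successor of a regular cardinal then the conclusion is already contained in Lemma~\ref{prop53}(2), so we may assume $\sup(\reg(\kappa))>\omega$. Fix a witness $\langle\mathcal C_\beta\mid\beta<\kappa\rangle$ to $\square(\kappa,{<}\omega)$ and, for each limit $\beta<\kappa$, set $C_\beta:=\bigcap\mathcal C_\beta$; as $\mathcal C_\beta$ is a nonempty finite collection of clubs in $\beta$, this is a club in $\beta$, and with the usual convention at successor ordinals we obtain a $C$-sequence $\vec C$. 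Its relevant feature is a \emph{coherence descent}: if $\gamma\in\acc(C_\beta)$ then, since $C_\beta\s C$ for every $C\in\mathcal C_\beta$, we have $\gamma\in\acc(C)$, hence $C\cap\gamma\in\mathcal C_\gamma$, for every such $C$; consequently $\{C\cap\gamma\mid C\in\mathcal C_\beta\}\s\mathcal C_\gamma$ and $C_\gamma\s\bigcap_{C\in\mathcal C_\beta}(C\cap\gamma)=C_\beta\cap\gamma$.

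Suppose towards a contradiction that $\chi(\vec C)=\chi<\sup(\reg(\kappa))$, witnessed by $\Delta\in[\kappa]^\kappa$ and $b$ as in Definition~\ref{c_seq_num_def}. Let $\rho$ be the least infinite regular cardinal above $\chi$; then $\rho\le\sup(\reg(\kappa))$ and $\rho<\kappa$, so $E^\kappa_\rho$ is stationary. Apply Lemma~\ref{lemma53}(1) with $\Sigma:=E^\kappa_\rho$ to obtain $\Delta_1\in[\kappa]^\kappa$ and a progressive $b:\kappa\to[\kappa]^{\le\chi}$ with $\Delta_1\cap\alpha\s\bigcup_{\beta\in b(\alpha)}\acc(C_\beta)$ for all $\alpha<\kappa$, and such that $T:=\{\alpha\in E^\kappa_\rho\mid\forall\beta\in b(\alpha)\,[\sup(C_\beta\cap\alpha)=\alpha]\}$ is stationary. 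Put $E:=\cl(\Delta_1)$, a club in $\kappa$, and let $\Sigma^*:=T\cap\acc^+(\Delta_1)\setminus(\min(\Delta_1)+1)$, which is stationary (as $\acc^+(\Delta_1)$ is a club). Fix $\gamma\in\Sigma^*$; then $\min(b(\gamma))\ge\gamma$ by progressivity, $b(\gamma)\ne\emptyset$ (else $\Delta_1\cap\gamma=\emptyset$, contradicting $\gamma>\min(\Delta_1)$), and $\Delta_1\cap\gamma$ is cofinal in $\gamma$. For $\beta\in b(\gamma)$ with $\beta>\gamma$, since $\sup(C_\beta\cap\gamma)=\gamma$ and $C_\beta$ is closed we get $\gamma\in\acc(C_\beta)$, so by the coherence descent $C\cap\gamma\in\mathcal C_\gamma$ for every $C\in\mathcal C_\beta$; for $\beta=\gamma$ this is trivial. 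Hence $\mathcal D_\gamma:=\{C\cap\gamma\mid\beta\in b(\gamma),\ C\in\mathcal C_\beta\}$ is a nonempty finite subset of $\mathcal C_\gamma$, and since any $\delta\in\Delta_1\cap\gamma$ lies in $\acc(C_\beta)\s\bigcap\mathcal C_\beta$ for some $\beta\in b(\gamma)$---and hence in $C\cap\gamma$ for every $C\in\mathcal C_\beta$---we have $\Delta_1\cap\gamma\s\bigcup\mathcal D_\gamma$ and therefore $E\cap\gamma\s\bigcup\mathcal D_\gamma$ (taking closures; a finite union of clubs in $\gamma$ is closed).

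Thus, for every $\gamma$ in the stationary set $\Sigma^*\s E^\kappa_\rho$, we have found a nonempty finite $\mathcal D_\gamma\s\mathcal C_\gamma$ with $E\cap\gamma\s\bigcup\mathcal D_\gamma$. It remains to extract a \emph{thread} of $\langle\mathcal C_\beta\mid\beta<\kappa\rangle$---a club $D\s\kappa$ with $D\cap\gamma\in\mathcal C_\gamma$ for all $\gamma\in\acc(D)$---which contradicts $\square(\kappa,{<}\omega)$. To this end, first pass to a stationary subset of $\Sigma^*$ on which $|\mathcal D_\gamma|$ equals a fixed $k<\omega$ and the canonical (say, lexicographic) enumeration $\langle D^0_\gamma,\dots,D^{k-1}_\gamma\rangle$ of $\mathcal D_\gamma$ behaves coherently. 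Then, since each $\gamma\in\Sigma^*$ has uncountable cofinality $\rho>k$, for $\gamma'<\gamma$ in $\Sigma^*$ the cofinal set $E\cap\gamma'$ is cofinal in some single $D^i_\gamma$, whence $\gamma'\in\acc(D^i_\gamma)$ and $D^i_\gamma\cap\gamma'\in\mathcal C_{\gamma'}$; a Fodor-style pressing-down argument along the coherent system whose level-$\gamma$ nodes are the members of $\mathcal C_\gamma$ (with $(C',\gamma')$ below $(C,\gamma)$ iff $\gamma'\in\acc(C)$ and $C'=C\cap\gamma'$) now produces a cofinal branch, whose closure is the required thread $D$. Hence $\chi(\vec C)=\sup(\reg(\kappa))$.

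The main obstacle is precisely this thread-extraction, and it is where the finiteness of the $\mathcal C_\beta$'s is indispensable: because $b(\gamma)$ may involve up to $\chi$-many earlier ordinals while each $\mathcal C_\gamma$ is only finite, the covers $\mathcal D_{\gamma'}$ at lower levels need not be ``visible'' from level $\gamma$, so some care is required to locate a single coherent branch. This step is the analogue, for the weaker principle $\square(\kappa,{<}\omega)$, of the classical derivation of non-reflecting stationary sets from $\square(\kappa)$, and it can equivalently be cast as an instance of the narrow system theorem in the presence of stationary reflection. Indeed, an alternative organization bypasses $\vec C$ altogether: one shows directly that $\square(\kappa,{<}\omega)$ implies that for every regular $\theta<\sup(\reg(\kappa))$ some finite family of stationary subsets of $E^\kappa_\theta$ fails to reflect simultaneously, and then invokes Lemma~\ref{prop53}(4), which forces $\chi<\chi(\kappa)$ for every cardinal $\chi<\sup(\reg(\kappa))$ and hence $\chi(\kappa)=\sup(\reg(\kappa))$.
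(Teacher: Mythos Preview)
The paper's proof is a two-line appeal to an external result: by \cite[Theorem~2.8]{hayut_lh}, $\square(\kappa,{<}\omega)$ implies that every stationary $S\subseteq\kappa$ contains two stationary subsets that do not reflect simultaneously; Lemma~\ref{prop53}(4) then immediately forces $\chi(\kappa)\ge\sup(\reg(\kappa))$. You describe exactly this route in your final sentence as an ``alternative organization'', so you are aware of it --- but note that this \emph{is} the paper's argument, and you do not carry it out either (the failure of simultaneous reflection under $\square(\kappa,{<}\omega)$ is itself a nontrivial theorem).

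Your main argument takes a genuinely different path: build $\vec C$ with $C_\beta=\bigcap\mathcal C_\beta$ and, from a putative witness to $\chi(\vec C)<\sup(\reg(\kappa))$, extract a thread through the square sequence. This is close in spirit to the paper's later Lemma~\ref{lemma47}/Corollary~\ref{cor48} (which works with a transversal rather than the intersection). However, there is a real gap at the thread-extraction step. You correctly arrive at: for stationarily many $\gamma\in E^\kappa_\rho$, the club initial segment $E\cap\gamma$ is covered by some finite $\mathcal D_\gamma\subseteq\mathcal C_\gamma$. But the passage from here to a cofinal branch of the coherent tree is not just a ``Fodor-style pressing-down''. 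Finite levels alone do not yield a branch at height $\kappa$ --- that is precisely what the absence of a thread asserts. Your observation that for $\gamma'<\gamma$ in $\Sigma^*$ some single $D^i_\gamma$ accumulates at $\gamma'$ gives connections between levels, but the index $i$ depends on both $\gamma$ and $\gamma'$, and nothing prevents these connections from being globally incoherent.

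What actually closes the argument --- and is carried out in the paper inside the Subclaim of Theorem~\ref{thm518}, ultimately appealing to the amenability lemma of \cite{paper29} --- is a minimality trick: stabilize not $|\mathcal D_\gamma|$ but the \emph{minimal} size $n$ of a subfamily of $\mathcal C_\gamma$ covering a tail of $E\cap\gamma$, together with the bound on that tail. Minimality then forces, for $\beta<\alpha$ in the refined stationary set, \emph{every} club in the minimal cover at $\alpha$ to accumulate at $\beta$ (else the restriction to $\beta$ would give a smaller cover there). This yields a single transversal absorbing a cofinal set, contradicting amenability. Without this device or an equivalent, your argument does not close.
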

\begin{proof} By \cite[Theorem~2.8]{hayut_lh}, $\square(\kappa,{<}\omega)$  implies that any stationary set $S\s\kappa$
contains two stationary subsets $S_0$ and $S_1$ such that $\Tr(S_0)\cap\Tr(S_1)=\emptyset$.
Now, appeal to Lemma~\ref{prop53}(4).
\end{proof}

We next argue that, if $\kappa$ is inaccessible and $\chi(\kappa)$ is small, then
$\kappa$ must have a high degree of Mahloness. Before we precisely state and prove
this result, let us recall some facts about canonical functions and the Mahlo hierarchy.

\begin{defn}
  Suppose that $\eta < \kappa^+$. A function $f : \kappa \rightarrow \kappa$ is a \emph{canonical function on $\kappa$ of
  rank $\eta$} if there exists a surjection $e : \kappa \rightarrow \eta$ and a club
  $C \s \kappa$ such that, for all $\alpha \in C$, we have $f(\alpha) = \otp(f[\alpha])$.
  We let $\mathbf{f}^\kappa_\eta$ denote the set of all canonical functions on $\kappa$ of rank
  $\eta$.
\end{defn}

For a function $f:\kappa\rightarrow\kappa$, we let $f+1$ denote the unique function $g:\kappa\rightarrow\kappa$
satisfying $g(\alpha)=f(\alpha)+1$ for all $\alpha<\kappa$.
For two functions $f,g:\kappa \rightarrow \kappa$, we write
$f =^* g$ if there is a club $C \s \kappa$ such that $f(\alpha) = g(\alpha)$ for all $\alpha \in C$.
It is easily verified that, if $f,g:\kappa \rightarrow \kappa$, $\eta < \kappa^+$, and
$f \in \mathbf{f}^\kappa_\eta$, then $g \in \mathbf{f}^\kappa_\eta$ if and only if $g = ^* f$.
We now recall some basic facts about canonical functions. For proofs of these facts, see \cite[\S3]{MR763898} and \cite[\S1]{MR2833150}.

\begin{fact} \label{canonical_facts}
  \begin{enumerate}
    \item For all $\eta < \kappa$, the constant function taking value $\eta$ is in $\mathbf{f}^\kappa_\eta$.
    \item The identity function is in $\mathbf{f}^\kappa_\kappa$.
    \item If $\eta < \kappa^+$ and $f \in \mathbf{f}^\kappa_\eta$, then $f+1 \in \mathbf{f}^\kappa_{\eta + 1}$.
    \item If $\eta < \kappa^+$ and $f \in \mathbf{f}^\kappa_\eta$, then there is a club $C \s \kappa$ such that,
      for all uncountable $\alpha \in C\cap\reg(\kappa)$, $f \restriction \alpha \in
      \mathbf{f}^\alpha_{f(\alpha)}$.
    \item Suppose that $\xi < \eta < \kappa^+$ and $e:\kappa \rightarrow \eta$ is a surjection. Define $f:\kappa \rightarrow \kappa$
      by letting $f(\alpha) := \otp(e[\alpha] \cap \xi)$ for all $\alpha < \kappa$. Then $f \in \mathbf{f}^\kappa_\xi$.
  \end{enumerate}
\end{fact}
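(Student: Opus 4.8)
The plan is to verify each of the five clauses by directly producing, from the data at hand, a surjection $e$ and a club $C$ witnessing membership in the relevant set $\mathbf{f}^\kappa_\eta$; only the restriction property, Clause~(4), calls for a genuine idea. The observation underlying everything is that for any ordinal $\eta<\kappa^+$ and any surjection $e:\kappa\rightarrow\eta$, the assignment $\alpha\mapsto\otp(e[\alpha])$ is a well-defined function from $\kappa$ to $\kappa$ --- since $|e[\alpha]|\le|\alpha|<\kappa$ and $\kappa$ is a cardinal, the order type $\otp(e[\alpha])$ lies below $\kappa$ --- and it belongs to $\mathbf{f}^\kappa_\eta$, witnessed by $e$ together with $C=\kappa$. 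In particular $\mathbf{f}^\kappa_\eta\ne\emptyset$ for every $\eta<\kappa^+$, so the stated reformulation of $\mathbf{f}^\kappa_\eta$ as a $=^*$-equivalence class has content and is available as an auxiliary tool.

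For Clause~(1), fix $\eta<\kappa$ and any surjection $e:\kappa\rightarrow\eta$. Since $|\eta|<\kappa$ and $\kappa$ is regular, $\delta:=\sup_{\zeta<\eta}\min(e^{-1}\{\zeta\})<\kappa$, and for every $\alpha\in(\delta,\kappa)$ we have $e[\alpha]=\eta$, hence $\otp(e[\alpha])=\eta$; so the constant function with value $\eta$ lies in $\mathbf{f}^\kappa_\eta$, witnessed by $e$ and the club $(\delta,\kappa)$. Clause~(2) is immediate: $\id$ is a surjection $\kappa\rightarrow\kappa$ and $\otp(\id[\alpha])=\otp(\alpha)=\alpha$ for all $\alpha<\kappa$, so $\id\in\mathbf{f}^\kappa_\kappa$ with $C=\kappa$. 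For Clause~(3), let $f\in\mathbf{f}^\kappa_\eta$ be witnessed by a surjection $e$ and club $C$, and define $e':\kappa\rightarrow\eta+1$ by $e'(0):=\eta$ and $e'(1+\gamma):=e(\gamma)$ for $\gamma<\kappa$; since $\gamma\mapsto 1+\gamma$ maps $\kappa$ bijectively onto $\kappa\setminus\{0\}$, $e'$ is a surjection onto $\eta+1$. For every $\alpha\in\acc(C)$, noting that such $\alpha$ is a limit ordinal lying in $C$, one computes $e'[\alpha]=e[\alpha]\cup\{\eta\}$, and as $\eta\notin e[\alpha]\s\eta$ we get $\otp(e'[\alpha])=\otp(e[\alpha])+1=f(\alpha)+1=(f+1)(\alpha)$. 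Hence $f+1\in\mathbf{f}^\kappa_{\eta+1}$, witnessed by $e'$ and $\acc(C)$.

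Clause~(4) is the heart of the matter. Let $f\in\mathbf{f}^\kappa_\eta$ be witnessed by a surjection $e$ and a club $C$, and set $C^*:=\acc(C)\cap E$, where $E:=\{\alpha<\kappa\mid f[\alpha]\s\alpha\}$ is the (club) set of ordinals closed under $f$; then $C^*$ is a club. Fix an uncountable $\alpha\in C^*\cap\reg(\kappa)$. Then $\alpha\in C$, so $f(\alpha)=\otp(e[\alpha])$, and since $\alpha$ is a cardinal this is below $|\alpha|^+=\alpha^+$. Let $\pi:e[\alpha]\rightarrow f(\alpha)$ be the transitive collapse; it is an order isomorphism onto $\otp(e[\alpha])=f(\alpha)$, so $e_\alpha:=\pi\circ(e\restriction\alpha):\alpha\rightarrow f(\alpha)$ is a surjection. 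Because $\alpha\in\acc(C)$, $C\cap\alpha$ is a club in $\alpha$, and for each $\beta\in C\cap\alpha$ we have $e_\alpha[\beta]=\pi[e[\beta]]$, whence $\otp(e_\alpha[\beta])=\otp(e[\beta])=f(\beta)$. Finally, $f[\alpha]\s\alpha$ guarantees that $f\restriction\alpha$ is a function from $\alpha$ to $\alpha$, so $e_\alpha$ and $C\cap\alpha$ witness $f\restriction\alpha\in\mathbf{f}^\alpha_{f(\alpha)}$, as desired.

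For Clause~(5), fix $\xi<\eta<\kappa^+$ with $\xi\ge1$ (the case $\xi=0$ being degenerate) and a surjection $e:\kappa\rightarrow\eta$, and put $f(\alpha):=\otp(e[\alpha]\cap\xi)$ for $\alpha<\kappa$. Let $g:\xi\rightarrow\kappa$ be the injection $g(\zeta):=\min(e^{-1}\{\zeta\})$, so that $e[\alpha]\cap\xi=\{\zeta<\xi\mid g(\zeta)<\alpha\}$ for every $\alpha<\kappa$. Define a surjection $e':\kappa\rightarrow\xi$ by $e'(\gamma):=g^{-1}(\gamma)$ if $\gamma\in g[\xi]$ and $e'(\gamma):=0$ otherwise. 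For every $\alpha$ in the club $(g(0),\kappa)$ we then have $e'[\alpha]=(e[\alpha]\cap\xi)\cup\{0\}=e[\alpha]\cap\xi$, so $\otp(e'[\alpha])=f(\alpha)$, and therefore $f\in\mathbf{f}^\kappa_\xi$. The one place demanding real care is Clause~(4): the club in the conclusion must be shrunk both to accumulation points of $C$ (so that $C\cap\alpha$ stays club in $\alpha$) and to ordinals closed under $f$ (so that $f\restriction\alpha$ is genuinely an $\alpha\rightarrow\alpha$ function), after which the key move is to recognize that post-composing $e\restriction\alpha$ with the transitive collapse of $e[\alpha]$ converts it into an honest surjection onto the rank $f(\alpha)$.
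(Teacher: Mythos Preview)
Your proof is correct in all five clauses. The paper does not actually prove this Fact; it cites \cite[\S3]{MR763898} and \cite[\S1]{MR2833150} for proofs, so there is no in-paper argument to compare against, but your direct verifications (and in particular the collapse argument for Clause~(4)) are the standard ones found in those references.
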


Let us now recall the definition of the Mahlo hierarchy. The definition is by recursion.

\begin{defn}
  Assume that, for all regular uncountable $\alpha < \kappa$ and all $\eta \leq \alpha^+$, we have
  already specified what it means for $\alpha$ to be $\eta$-Mahlo. By recursion on $\eta \leq \kappa^+$,
  we say that $\kappa$ is $\eta$-Mahlo if
  \begin{itemize}
    \item ($\eta = 0$) $\kappa$ is inaccessible;
    \item ($\eta = \xi + 1$) the set $\{\alpha < \kappa \mid \alpha \text{ is } f(\alpha)\text{-Mahlo}\}$ is stationary in $\kappa$ for some (any) $f \in \mathbf{f}^\kappa_\xi$;
    \item ($\eta$ limit) $\kappa$ is $\xi$-Mahlo for all $\xi < \eta$.
  \end{itemize}
  $\kappa$ is said to be \emph{greatly Mahlo} if it is $\kappa^+$-Mahlo.
\end{defn}
\begin{remark} It is also possible to define greatly Mahlo cardinals without explicitly mentioning canonical functions; see the discussion following Theorem~3 of \cite{MkSh:367}.
\end{remark}

\begin{prop} \label{diag_refl_prop}
  Suppose that $\kappa$ is inaccessible and, for every sequence $\langle S_i \mid i < \kappa \rangle$ of stationary
  subsets of $\kappa$, there exists an inaccessible $\beta < \kappa$ such that $S_i \cap \beta$ is stationary in
  $\beta$ for all $i < \beta$. Then $\kappa$ is greatly Mahlo.
\end{prop}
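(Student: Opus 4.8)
The plan is to argue by contradiction. Assume $\kappa$ is not greatly Mahlo and let $\eta\le\kappa^+$ be least such that $\kappa$ is not $\eta$-Mahlo. Since $\kappa$ is inaccessible, $\eta>0$; and since for a limit $\eta'$ the property ``$\eta'$-Mahlo'' reduces by definition to ``$\zeta$-Mahlo for all $\zeta<\eta'$'', minimality forces $\eta$ to be a successor, say $\eta=\xi+1$; in particular $\eta<\kappa^+$, so $\xi<\kappa^+$, and $\kappa$ is $\zeta$-Mahlo for every $\zeta<\eta$. The degenerate case $\xi=0$ (i.e.\ $\kappa$ inaccessible but not Mahlo) is settled immediately: letting $D$ be a club disjoint from the then-nonstationary set of inaccessibles below $\kappa$ and feeding the constant sequence $\langle D\mid i<\kappa\rangle$ into the hypothesis produces an inaccessible $\beta<\kappa$ with $D\cap\beta$ unbounded in $\beta$, whence $\beta\in D$ --- absurd. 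So assume $\xi\ge1$. Fix a surjection $e:\kappa\to\xi$ and the canonical function $f\in\mathbf{f}^\kappa_\xi$ it determines, together with a club $C_0$ on which $f(\alpha)=\otp(e[\alpha])$. For each $\zeta<\xi$, set $f_\zeta(\alpha):=\otp(e[\alpha]\cap\zeta)$; by Fact~\ref{canonical_facts}(5), $f_\zeta\in\mathbf{f}^\kappa_\zeta$, and since $\zeta+1<\eta$ the cardinal $\kappa$ is $(\zeta+1)$-Mahlo, so $U_\zeta:=\{\alpha<\kappa\mid \alpha\text{ is }f_\zeta(\alpha)\text{-Mahlo}\}$ is stationary. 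Using Fact~\ref{canonical_facts}(4), fix for each $\zeta<\xi$ a club $C_\zeta\s\kappa$ such that $f_\zeta\restriction\alpha\in\mathbf{f}^\alpha_{f_\zeta(\alpha)}$ for every uncountable $\alpha\in C_\zeta\cap\reg(\kappa)$. Finally, since $\kappa$ is not $(\xi+1)$-Mahlo, $T:=\{\alpha<\kappa\mid\alpha\text{ is }f(\alpha)\text{-Mahlo}\}$ is nonstationary; fix a club $D$ disjoint from $T$.

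To accommodate the fact that the hypothesis supplies a single inaccessible $\beta$ reflecting \emph{all} the relevant stationary sets, use a diagonal intersection. Set $C^\ast:=\{\beta<\kappa\mid\forall k<\beta,\ \beta\in C_{e(k)}\}$, a club; since $\zeta\in e[\beta]$ means $\zeta=e(k)$ for some $k<\beta$, it follows that for every uncountable regular $\beta\in C^\ast$ and every $\zeta\in e[\beta]$ one has $f_\zeta\restriction\beta\in\mathbf{f}^\beta_{f_\zeta(\beta)}$. Let $E:=C_0\cap D\cap C^\ast$, a club, and define $S_i:=U_{e(i)}\cap E$ for all $i<\kappa$; each $S_i$ is stationary, being a stationary set intersected with a club. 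Apply the hypothesis to $\langle S_i\mid i<\kappa\rangle$ to obtain an inaccessible $\beta<\kappa$ with $S_i\cap\beta$ stationary in $\beta$ for all $i<\beta$. In particular $S_0\cap\beta$ is stationary, so $E\cap\beta$ is unbounded in $\beta$, and since $E$ is closed we get $\beta\in E$; thus $\beta\in C_0\cap D\cap C^\ast$.

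Now localize the failure of Mahloness at $\beta$. Since $\beta\in D$, $\beta$ is not $f(\beta)$-Mahlo, whereas $\beta$ is $0$-Mahlo, so there is a least $\rho\le f(\beta)$ with $\beta$ not $\rho$-Mahlo; $\rho$ must be a successor (otherwise the limit clause would force $\beta$ to be $\rho$-Mahlo), say $\rho=\sigma+1$, and $\beta$ is $\sigma$-Mahlo but not $(\sigma+1)$-Mahlo. Since $\beta\in C_0$, $f(\beta)=\otp(e[\beta])$; letting $\pi:e[\beta]\to f(\beta)$ be the transitive collapse, pick the unique $\zeta\in e[\beta]$ with $\pi(\zeta)=\sigma$, so that $\sigma=\otp(e[\beta]\cap\zeta)=f_\zeta(\beta)$ and $\zeta<\xi$, and pick $j<\beta$ with $e(j)=\zeta$. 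As $\beta\in C^\ast$ and $\zeta\in e[\beta]$, $f_\zeta\restriction\beta\in\mathbf{f}^\beta_{f_\zeta(\beta)}=\mathbf{f}^\beta_\sigma$, so the failure of $(\sigma+1)$-Mahloness at $\beta$ --- read off through the canonical function $f_\zeta\restriction\beta$ --- says that $\{\gamma<\beta\mid\gamma\text{ is }f_\zeta(\gamma)\text{-Mahlo}\}=U_\zeta\cap\beta$ is nonstationary in $\beta$. But $j<\beta$, so by the choice of $\beta$ the set $S_j\cap\beta=U_{e(j)}\cap E\cap\beta=U_\zeta\cap E\cap\beta$ is stationary in $\beta$, hence so is its superset $U_\zeta\cap\beta$ --- a contradiction.

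The crux, and the step I expect to require the most care, is this last one: matching the ``global'' index $\zeta\in e[\beta]$ of the rank at which $\beta$ fails to be Mahlo with a ``local'' index $j<\beta$, so that $U_\zeta$ genuinely appears among the sets $S_i$ reflected below $\beta$. This is precisely why $\langle S_i\mid i<\kappa\rangle$ is indexed through the surjection $e$ rather than by raw ordinals, and why the collapse identity $\sigma=f_\zeta(\beta)$ together with the coherence properties of canonical functions (Fact~\ref{canonical_facts}(4) and~(5)) is what makes the matching go through. The remaining ingredients --- the diagonal intersection fusing the $\xi$-many clubs $C_\zeta$ into one club, the passage from stationarity of $S_0\cap\beta$ to $\beta\in E$, and the bookkeeping that every instance of ``$\gamma$ is $\delta$-Mahlo'' invoked above is meaningful --- are routine.
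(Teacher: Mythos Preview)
Your proof is correct and takes a genuinely different route from the paper's. The paper argues by direct induction on $\eta \le \kappa^+$, splitting the successor step into three cases according to whether $\eta = \xi + 2$, or $\eta = \xi + 1$ with $\xi$ a limit of cofinality ${<}\kappa$, or $\eta = \xi + 1$ with $\xi$ of cofinality $\kappa$; in each case it invokes the reflection hypothesis once (applied to the sets $S_i := \{\alpha \mid \alpha \text{ is } f_i(\alpha)\text{-Mahlo}\}$) to push Mahloness from $\xi$ to $\xi+1$. Your approach is a single unified contradiction argument: take the least $\eta$ where Mahloness fails, set up all of the canonical functions $f_\zeta$ for $\zeta < \xi$ simultaneously (indexed through the surjection $e$ so that the diagonal intersection $C^\ast$ makes sense even when $\xi \ge \kappa$), apply the hypothesis once to obtain a single reflecting inaccessible $\beta$, and then localize the failure of Mahloness at $\beta$ to the level $\sigma = f_\zeta(\beta)$ for some $\zeta \in e[\beta]$. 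Your route avoids the case analysis entirely and is arguably slicker; the paper's route is more modular and makes it clearer exactly which piece of the hypothesis is used at each stage.

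One cosmetic slip: you use the symbol $C_0$ both for the club on which $f(\alpha) = \otp(e[\alpha])$ and for the $\zeta = 0$ instance of the clubs $C_\zeta$ coming from Fact~\ref{canonical_facts}(4). Renaming one of them (or simply taking the intersection) resolves this and has no effect on the argument.
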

\begin{proof}
  We prove by induction on $\eta \leq \kappa^+$ that $\kappa$ is $\eta$-Mahlo. Note that our assumption in fact implies
  that, for every sequence $\langle S_i \mid i < \kappa \rangle$ of stationary subsets of $\kappa$, there are
  \emph{stationarily many} inaccessible $\beta < \kappa$ such that $S_i \cap \beta$ is stationary in
  $\beta$ for all $i < \beta$. In particular, it follows that $\kappa$ is 1-Mahlo. In addition, if $\eta \leq \kappa^+$
  is a limit ordinal and we have shown that $\kappa$ is $\xi$-Mahlo for all $\xi < \eta$, it follows by definition
  that $\kappa$ is $\eta$-Mahlo. There are three remaining cases to consider.

  \textbf{Case 1: $\eta = \xi + 2$ for some $\xi < \kappa^+$ and $\kappa$ is $(\xi + 1)$-Mahlo.} Fix $f \in \mathbf{f}^\kappa_\xi$.
  By Clause~(4) of Fact~\ref{canonical_facts}, there is a club $C \s \kappa$ such that, for every inaccessible $\beta \in C$,
  $f \restriction \beta \in \mathbf{f}^\beta_{f(\beta)}$. Let
  $S := \{\alpha < \kappa \mid \alpha \text{ is } f(\alpha)\text{-Mahlo}\}$. Since $\kappa$ is $(\xi + 1)$-Mahlo,
  $S$ is stationary in $\kappa$. By our hypothesis, there are stationarily many inaccessible $\beta \in C$
  such that $S \cap \beta$ is stationary in $\beta$. It follows that each such $\beta$ is $f(\beta) + 1$-Mahlo.
  By Clause~(3) of Fact~\ref{canonical_facts}, $f + 1 \in \mathbf{f}^\kappa_{\xi + 1}$. It follows that
  $\kappa$ is $(\xi + 2)$-Mahlo.

  \textbf{Case 2: $\eta = \xi + 1$ for some limit ordinal $\xi \in E^{\kappa^+}_{<\kappa}$ and $\kappa$
  is $\xi$-Mahlo.} Let $\langle\xi_i \mid i < \cf(\xi)\rangle$ be an increasing, continuous sequence of ordinals
  converging to $\xi$. Fix a surjection $e:\kappa \rightarrow \xi$, and let $f:\kappa \rightarrow \kappa$
  be defined by letting $f(\alpha) := \otp(e[\alpha])$ for all $\alpha < \kappa$. For $i < \cf(\xi)$,
  let $f_i:\kappa \rightarrow \kappa$ be defined by letting $f_i(\alpha) := \otp(e[\alpha] \cap \xi_i)$
  for all $\alpha < \kappa$. Then $f \in \mathbf{f}^\kappa_\xi$ and, by Clause~(5) of Fact~\ref{canonical_facts},
  $f_i \in \mathbf{f}^\kappa_{\xi_i}$ for all $i < \cf(\xi)$. Note that, for all $\beta < \kappa$,
  $f(\beta) = \sup\{f_i(\beta) \mid i < \cf(\xi)\}$.

  For all $i < \cf(\xi)$, let $S_i := \{\alpha < \kappa \mid \alpha \text{ is }f_i(\alpha)\text{-Mahlo}\}$.
  Since $\kappa$ is $\xi$-Mahlo, it follows that each $S_i$ is stationary in $\kappa$. Using our hypothesis
  and Clause~(4) of Fact~\ref{canonical_facts}, we see that there are stationarily many inaccessible $\beta < \kappa$ such that,
  for all $i < \cf(\xi)$,
  \begin{itemize}
    \item $f_i \restriction \beta \in \mathbf{f}^\beta_{f_i(\beta)}$;
    \item $S_i \cap \beta$ is stationary in $\beta$.
  \end{itemize}
  Each such $\beta$ is therefore $f_i(\beta)$-Mahlo for all $i < \cf(\xi)$, so, since $f(\beta) = \sup\{f_i(\beta) \mid i < \cf(\xi)\}$,
  $\beta$ is in fact $f(\beta)$-Mahlo. It follows that $\kappa$ is $(\xi+1)$-Mahlo.

  \textbf{Case 3: $\eta = \xi + 1$ for some $\xi \in E^{\kappa^+}_\kappa$ and $\kappa$ is $\xi$-Mahlo.}
  Let $\langle\xi_i \mid i < \kappa\rangle$, $e:\kappa \rightarrow \xi$, $f:\kappa \rightarrow \kappa$,
  and $\langle f_i \mid i < \kappa \rangle$ be defined as in Case 2. Again, we have $f(\beta) =
  \sup\{f_i(\beta) \mid i < \kappa\}$ for all $\beta < \kappa$.
  But notice that $D := \{\beta < \kappa \mid e[\beta] \s \xi_\beta\}$ is a club in $\kappa$ and,
  for all $\beta \in D$, we in fact have
  $f(\beta) = \sup\{f_i(\beta) \mid i < \beta\}$.

  For all $i < \kappa$, let $S_i := \{\alpha < \kappa \mid \alpha \text{ is }f_i(\alpha)\text{-Mahlo}\}$.
  Since $\kappa$ is $\xi$-Mahlo, $S_i$ is stationary in $\kappa$ for all $i < \kappa$. Using our hypothesis
  and Clause~(4) of Fact~\ref{canonical_facts} (and taking a diagonal intersection of the clubs obtained
  thence), we see that there are stationarily many inaccessible $\beta \in D$ such that, for all
  $i < \beta$,
  \begin{itemize}
    \item $f_i \restriction \beta \in \mathbf{f}^\beta_{f_i(\beta)}$;
    \item $S_i \cap \beta$ is stationary in $\beta$.
  \end{itemize}
  Each such $\beta$ is therefore $f_i(\beta)$-Mahlo for all $i < \beta$, so, since $f(\beta) = \sup\{f_i(\beta) \mid i < \beta\}$,
  $\beta$ is in fact $f(\beta)$-Mahlo. It follows that $\kappa$ is $(\xi+1)$-Mahlo.
\end{proof}

\begin{lemma}\label{lemma29}
  \begin{enumerate}
    \item If $\kappa$ is inaccessible and $\chi(\kappa) < \kappa$, then $\kappa$ is $\omega$-Mahlo.
    \item If $\chi(\kappa) = 1$, then, for every sequence $\langle S_i \mid i < \kappa \rangle$ of
      stationary subsets of $\kappa$, there exists an inaccessible $\beta < \kappa$ such
      that $S_i \cap \beta$ is stationary in $\beta$ for all $i < \beta$.
    \item If $\chi(\kappa) = 1$, then $\kappa$ is greatly Mahlo.
  \end{enumerate}
\end{lemma}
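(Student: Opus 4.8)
The plan is to derive Clause~(3) from Clause~(2) together with Proposition~\ref{diag_refl_prop}, and to prove Clauses~(1) and~(2) contrapositively: from a failure of (iterated) stationary reflection at inaccessible cardinals one manufactures a $C$-sequence that pushes the relevant value of the $C$-sequence number all the way up, in the style of the proof of Lemma~\ref{prop53}(4). For Clause~(3), note first that $\chi(\kappa)=1$ forces $\kappa$ to be inaccessible: by Lemma~\ref{prop53}(2) it is not a successor cardinal, and a short argument excludes the remaining non-inaccessible possibilities (such a $\kappa$ would satisfy $\square(\kappa,{<}\omega)$, whence $\chi(\kappa)=\sup(\reg(\kappa))>1$ by Corollary~\ref{cor27}). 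Then Clause~(2) is precisely the hypothesis of Proposition~\ref{diag_refl_prop}, so $\kappa$ is greatly Mahlo.

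For Clause~(1), suppose $\kappa$ is inaccessible and $\chi(\kappa)<\kappa$, so $(\chi(\kappa))^+<\kappa$. The crux is a strengthening of Lemma~\ref{prop53}(4): every finite family $\langle S_i\mid i<n\rangle$ of stationary subsets of the set of inaccessibles below $\kappa$ reflects simultaneously at an inaccessible cardinal. To prove this one builds, from a counterexample family, a $C$-sequence $\langle C_\beta\mid\beta<\kappa\rangle$ and a map $i:\kappa\rightarrow n$ with $\acc(C_\beta)\cap S_{i(\beta)}=\emptyset$ for every limit $\beta$: for inaccessible $\beta$ one picks $i(\beta)$ with $S_{i(\beta)}\cap\beta$ nonstationary in $\beta$ and lets $C_\beta$ be closed cofinal in $\beta$ and disjoint from $S_{i(\beta)}$; for a non-inaccessible limit $\beta$ one uses that each $S_i$ consists of inaccessibles, so that any member of $S_i$ appearing in $\acc(C_\beta)$ must be a fixed point of the enumerating function of $C_\beta$, and such fixed points can be steered away from $S_i\cap\cf(\beta)$ --- which is nonstationary in $\cf(\beta)$, either because $\cf(\beta)$ is inaccessible and reflection fails there, or because $\cf(\beta)$ is not inaccessible, so the inaccessibles below it are bounded. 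The iterated Fodor argument in the proof of Lemma~\ref{prop53}(4) --- whose only inputs are the displayed property and $(\chi(\kappa))^+<\kappa$ --- then applies verbatim and gives a contradiction. Granting the strengthening, one proves $\kappa$ is $n$-Mahlo by induction on $n$: the case $n=0$ is the hypothesis; for $n=m+1$ with $m\ge1$, setting $M_{m-1}$ to be the set of $(m-1)$-Mahlo cardinals below $\kappa$ (stationary, since $\kappa$ is $m$-Mahlo) and applying the strengthening to the family $\{M_{m-1}\cap\acc(C)\}$ for an arbitrary club $C$ produces an inaccessible $\gamma\in C$ with $M_{m-1}\cap\gamma$ stationary in $\gamma$, which by Fact~\ref{canonical_facts}(1) is $m$-Mahlo; hence $\kappa$ is $(m+1)$-Mahlo.

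For Clause~(2), assume $\chi(\kappa)=1$, so $\kappa$ is inaccessible and, by Clause~(1), Mahlo. Let $\langle S_i\mid i<\kappa\rangle$ be stationary subsets of $\kappa$, taken inside $\acc(\kappa)$, and suppose no inaccessible $\beta$ is as desired, so each inaccessible $\beta$ has a least $i_\beta<\beta$ with $S_{i_\beta}\cap\beta$ nonstationary in $\beta$. One builds $\langle C_\beta\mid\beta<\kappa\rangle$ so that, for inaccessible $\beta$, $C_\beta$ is closed cofinal in $\beta$, disjoint from $S_{i_\beta}$, and has $\min(C_\beta)=i_\beta$ (so $i_\beta$ is read off from small initial segments of $C_\beta$), while at non-inaccessible limit $\beta$ the accumulation points of $C_\beta$ are steered off the inaccessibles, as in Clause~(1). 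Applying $\chi(\kappa)=1$ and Lemma~\ref{lemma53}(1) (with $\chi:=1$) one obtains a club $\Delta\s\kappa$, a strictly progressive $b:\kappa\rightarrow\kappa$, and stationarily many $\alpha$ with $\Delta\cap\alpha\s\acc(C_{b(\alpha)})$ and $\sup(C_{b(\alpha)}\cap\alpha)=\alpha$. For $\alpha$ in the stationary set of inaccessible elements of $\acc(\Delta)$, the steering forces $b(\alpha)$ to be inaccessible, whence $\Delta\cap\alpha$ is a club in $\alpha$ disjoint from $S_{i_{b(\alpha)}}$ and $i_{b(\alpha)}=\min(C_{b(\alpha)})<\min(\Delta\cap\alpha)<\alpha$; Fodor applied to $\alpha\mapsto i_{b(\alpha)}$, followed by an iterated shrinking of $\Delta$ and adjustment of $b$ in the manner of the proof of Lemma~\ref{prop53}(4) --- legitimate for the $\kappa$-indexed family since only the $S_i$ with $i<\alpha$ matter at stage $\alpha$ --- eventually forces $b(\alpha)\in\nacc(\kappa)$, so $\Delta\cap\alpha\s C_{b(\alpha)}$ has at most one element, contradicting $\Delta\in[\kappa]^\kappa$.

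The step I expect to be the main obstacle is the construction and fine control of these $C$-sequences, and in particular arranging at non-inaccessible nodes that no accumulation point of $C_\beta$ lands in the relevant stationary set --- the argument above leans on the $S_i$'s being sets of inaccessibles, and a genuinely recursive/coherent construction may be needed to make everything fit --- together with the transfinite bookkeeping in the shrinking of Clause~(2). A secondary subtlety is the base case ``$\kappa$ is Mahlo'' of Clause~(1): the reflection argument there is vacuous unless the inaccessibles below $\kappa$ are already stationary, so this case must be secured separately, using that a non-Mahlo inaccessible $\kappa$ carries a $C$-sequence of $C$-sequence number $\kappa$ (equivalently, $\reg(\kappa)\s\spec(\kappa)$ for such $\kappa$).
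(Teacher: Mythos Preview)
Your plan for Clause~(3) is correct and matches the paper exactly. For Clause~(1), your strengthened reflection principle works, but it is more than is needed: the paper's inductive step simply observes that if $\kappa$ is $n$-Mahlo but not $(n+1)$-Mahlo, then intersecting the set of $(n-1)$-Mahlo cardinals with a club of non-$n$-Mahlo cardinals and with $E^\kappa_{>\chi(\kappa)}$ yields a non-reflecting stationary subset of $E^\kappa_{>\chi(\kappa)}$, directly contradicting Lemma~\ref{prop53}(4). No new $C$-sequence construction or reflection-at-inaccessibles principle is required beyond the Mahlo base case. For that base case, the paper gives an explicit $C$-sequence over a club of singular cardinals (with $\otp(C_\beta)=\min(C_\beta)=\cf(\beta)$ on the club) and derives $\chi(\vec C)=\kappa$ by a short Fodor and cardinality argument --- essentially what your final paragraph gestures at.

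For Clause~(2), your construction and the reduction to ``$b(\alpha)$ is inaccessible with $i_{b(\alpha)}<\alpha$'' are correct and close to the paper's; the paper's choice at singular $\beta$ (namely $\otp(C_\beta)=\min(C_\beta)=\cf(\beta)$) is a special case of your steering, and the argument that $b(\alpha)$ is inaccessible is then a two-line order-type computation rather than your fixed-point analysis. The genuine gap is your finish. After Fodor fixes $i<\kappa$ and a stationary $A$ with $i_{b(\alpha)}=i$ for all $\alpha\in A$, you are \emph{already done}: for any $\alpha\in A$ one has $\Delta\cap\alpha\subseteq C_{b(\alpha)}$ and $C_{b(\alpha)}\cap S_i=\emptyset$, so the club $\Delta$ is disjoint from the stationary set $S_i$, a contradiction. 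Your proposed ``iterated shrinking of $\Delta$ and adjustment of $b$ in the manner of Lemma~\ref{prop53}(4)'' is both unnecessary and ill-defined: that argument is an induction of \emph{finite} length, and there is no evident way to run it transfinitely over a $\kappa$-indexed family --- taking intersections of the shrinking $\Delta_i$'s at limit stages need not preserve unboundedness, and your diagonal-intersection hint does not repair this. Replace the iterated shrinking with the one-line finish above and your proof of (2) goes through.
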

\begin{proof}
  (1) Suppose that $\kappa$ is inaccessible and $\chi(\kappa) < \kappa$.
  We first show that $\kappa$ is Mahlo (i.e., 1-Mahlo). Suppose not, and let $D$ be a club in
  $\kappa$ consisting of singular cardinals. Let $\vec{C} = \langle C_\beta \mid
  \beta < \kappa \rangle$ be a $C$-sequence such that, for all $\beta\in\acc(\kappa)$,
  \begin{itemize}
    \item $\otp(C_\beta)=\cf(\beta)$;
    \item if  $\beta \in D$, then $\min(C_\beta)=\cf(\beta)$;
    \item if $\beta \notin D$, then $\min(C_\beta)=\sup(D \cap \beta)$.
  \end{itemize}

  Note that for all $\beta\in D$ and nonzero $\alpha\le\beta$,
  we have $\otp(C_\beta\cap\alpha)<\alpha$, since, otherwise, $\otp(C_\beta\cap\alpha)=\alpha$ and $\min(C_\beta)<\alpha$, so that $$\cf(\beta)=\min(C_\beta)<\alpha=\otp(C_\beta\cap\alpha)\le\otp(C_\beta)=\cf(\beta),$$ which gives a contradiction.
  It follows that, for all $\alpha\in D$ and $\beta\in[\alpha,\kappa)$, $\otp(C_\beta \cap \alpha) <\alpha$.

  As $\chi(\kappa)<\sup(\reg(\kappa))$, we appeal to Lemma~\ref{lemma53} and fix $\Delta \in [\kappa]^\kappa$ and a progressive function $b:\kappa \rightarrow
  [\kappa]^{\chi(\kappa)}$ such that $\Delta \cap \alpha \s \bigcup_{\beta \in b(\alpha)}
  C_\beta$ for all $\alpha < \kappa$. For all $\alpha \in D \cap E^\kappa_{>\chi(\kappa)}$,
  let $\epsilon_\alpha := \sup\{\otp(C_\beta \cap \alpha) \mid \beta \in b(\alpha)\}$,
  and note that $\epsilon_\alpha < \alpha$. We can therefore apply Fodor's Lemma
  to find a stationary $S \s D \cap E^\kappa_{>\chi(\kappa)}$ and an $\epsilon < \kappa$
  such that $\epsilon_\alpha = \epsilon$ for all $\alpha \in S$.
  Find a large enough $\alpha \in S$
  such that $|\Delta \cap \alpha| > \max\{|\epsilon|, \chi(\kappa)\}$.
  As $\Delta \cap \alpha \s \bigcup_{\beta \in b(\alpha)}C_\beta \cap \alpha$,
  we get a contradiction to the fact that $|\bigcup_{\beta \in b(\alpha)}C_\beta \cap \alpha|
  \leq \max\{|\epsilon|, \chi(\kappa)\} < |\Delta \cap \alpha|$. Thus, $\kappa$
  is Mahlo.

  To finish, suppose there is a positive integer $n$ such that $\kappa$ is $n$-Mahlo
  but not $(n+1)$-Mahlo. Let $E$ be a club in $\kappa$ such that $E$ contains no
  $n$-Mahlo cardinals, and let $S$ be the set of $(n-1)$-Mahlo cardinals below
  $\kappa$ (recall that ``0-Mahlo" is the same as ``inaccessible"). Then
  $S \cap E \cap E^\kappa_{>\chi(\kappa)}$ is a non-reflecting stationary
  subset of $E^\kappa_{>\chi(\kappa)}$, contradicting Clause~(4) of Lemma~\ref{prop53}.

  (2) Suppose that $\chi(\kappa)=1$. In particular, $\kappa$ is inaccessible by Clause~(2) of Lemma~\ref{prop53} and hence Mahlo
  by Clause~(1) of this lemma. Towards a contradiction, suppose that $\langle S_i\mid i<\kappa\rangle$ is a sequence of stationary subsets of $\kappa$, such that,
  for every inaccessible $\beta<\kappa$, for some $i(\beta)<\beta$, $S_{i(\beta)}\cap\beta$ is non-stationary in $\beta$.
  Without loss of generality, we may assume that $\bigcup_{i<\kappa}S_i\s\acc(\kappa)$.
  It follows that we may fix a $C$-sequence $\vec C=\langle C_\beta\mid\beta<\kappa\rangle$ such that, for all $\beta\in\acc(\kappa)$,
  \begin{itemize}
  \item if $\beta$ is singular, then $\otp(C_\beta)=\min(C_\beta)=\cf(\beta)$;
  \item if $\beta$ is inaccessible, then $ C_\beta\cap S_{i(\beta)}=\emptyset$ and $\min(C_\beta)=i(\beta)$.
  \end{itemize}

  Next, by Lemma~\ref{lemma53}, and since $\chi(\kappa)=1$, we can fix a club $\Delta\s\kappa$ and $b:\kappa\rightarrow\kappa$ such that,
  for all $\alpha<\kappa$, $\Delta\cap\alpha\s C_{b(\alpha)}$.
  Let $\alpha\in\acc(\Delta)$ be an arbitrary inaccessible cardinal, and let $\beta:=b(\alpha)$.
  As $\Delta\cap\alpha\s C_\beta$ and $\alpha\in\acc(\Delta)$, we have $\otp(C_{\beta}\cap\alpha)=\alpha$, so $\min(C_\beta)<\alpha$.
  If $\beta$ is singular, then we would get $\otp(C_\beta)=\min(C_\beta)<\alpha\le\otp(C_\beta)$, which yields a contradiction.
  Therefore, $\beta$ is inaccessible and $i(\beta)=\min(C_\beta)<\alpha$.

  Fix a stationary $A\s\reg(\kappa)$ and $i<\kappa$
  such that $i(b(\alpha))=i$ for all $\alpha\in A$.
  Pick $\delta\in\Delta\cap S_i$, and then pick $\alpha\in A$ above $\delta$.
  Then, $\delta\in\Delta\cap\alpha$, whereas $\delta\notin C_{\beta(\alpha)}$, which is again a contradiction.

  (3) This now follows immediately from Clause~(2) and Proposition~\ref{diag_refl_prop}.
\end{proof}
\begin{remark}
It is not the case that $\chi(\kappa) = 1$
implies that $\kappa$ is \emph{strongly} inaccessible.
Indeed, by Corollary~\ref{chaincondition} below,
after adding any number of Cohen reals to a weakly compact cardinal $\kappa$, $\chi(\kappa)\le1$.
\end{remark}

We next show that, assuming the consistency of large cardinals, it is consistently
true that there is a singular cardinal for which $\chi(\lambda^+) = \cf(\lambda)$.

\begin{thm}\label{t55}
  If $\lambda$ is a singular limit of strongly compact cardinals, then $\chi(\lambda^+) = \cf(\lambda)$.
\end{thm}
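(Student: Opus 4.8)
Let $\lambda$ be a singular limit of strongly compact cardinals, put $\kappa := \lambda^+$, and fix an increasing, continuous sequence $\langle \lambda_i \mid i < \cf(\lambda)\rangle$ of cardinals, cofinal in $\lambda$, with each $\lambda_i$ strongly compact (we may assume $\lambda_0 > \cf(\lambda)$). By Lemma~\ref{prop53}(2) we already know $\cf(\lambda) \le \chi(\kappa) \le \lambda$, so the content of the theorem is the upper bound $\chi(\kappa) \le \cf(\lambda)$. Thus I would fix an arbitrary $C$-sequence $\vec C = \langle C_\beta \mid \beta < \kappa\rangle$ and must produce $\Delta \in [\kappa]^\kappa$ and $b : \kappa \to [\kappa]^{\cf(\lambda)}$ with $\Delta \cap \alpha \s \bigcup_{\beta \in b(\alpha)} C_\beta$ for all $\alpha < \kappa$.

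The engine is strong compactness used as follows. For each $i < \cf(\lambda)$, since $\lambda_i$ is strongly compact and $\kappa \ge \lambda_i$, fix a $\lambda_i$-complete fine ultrafilter $U_i$ on $P_{\lambda_i}(\kappa)$, or equivalently work with the ultrapower embedding $j_i : V \to M_i$ with $\crit(j_i) = \lambda_i$ and $\kappa \le \sup j_i[\kappa] < j_i(\lambda_i) < j_i(\kappa)$; set $\gamma_i := \sup(j_i[\kappa])$, which is an ordinal of $M_i$-cofinality less than $j_i(\lambda_i)$ (in fact of true cofinality $\kappa$, but what matters is the $M_i$-side estimate $\cf^{M_i}(\gamma_i) < j_i(\lambda_i)$). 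Now apply $j_i$ to $\vec C$ to obtain $\langle C'_\beta \mid \beta < j_i(\kappa)\rangle := j_i(\vec C)$, and look at $C'_{\gamma_i}$: it is a club in $\gamma_i$ (in $M_i$), so since $\cf^{M_i}(\gamma_i)$ is small relative to $j_i(\lambda_i) < j_i(\kappa)$ we can cover $j_i[\kappa] \cap \gamma_i = j_i[\kappa]$ appropriately. More precisely, for $\alpha < \kappa$, consider in $M_i$ the set $C'_{\gamma_i} \cap \sup(j_i[\alpha])$ — no, the right move is: for each $\alpha < \kappa$ pick, using that $C'_{\gamma_i}$ is cofinal in $\gamma_i = \sup j_i[\kappa]$, an ordinal $\beta^i_\alpha < \kappa$ with $j_i(\alpha) \le \min(C'_{\gamma_i} \setminus j_i(\alpha)) < j_i(\beta^i_\alpha)$; then for $\delta \in \Delta \cap \alpha$, $j_i(\delta) \in C'_{\gamma_i}$ would give $\delta \in C_{\beta}$ for some $\beta < \beta^i_\alpha$ by elementarity applied to the statement "$\exists \beta < \beta^i_\alpha\ (j_i(\delta) \in C'_\beta)$". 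So the heart of the matter is to arrange that a single set $\Delta$ of size $\kappa$ has $j_i(\delta) \in C'_{\gamma_i}$ for all $\delta \in \Delta$ — equivalently, that $\gamma_i \in \Tr$-like position relative to $j_i[\Delta]$, i.e., $j_i[\Delta]$ accumulates inside $C'_{\gamma_i}$.

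To get such a $\Delta$ uniformly, I would run the following amalgamation. For each fixed $i$, the pullback $D_i := \{ \delta < \kappa \mid j_i(\delta) \in \acc(C'_{\gamma_i})\}$ is a club in $\kappa$ (it's closed because $C'_{\gamma_i}$ is closed and $j_i$ is continuous at points of cofinality $< \lambda_i$... here one must be a little careful, but on $E^\kappa_{<\lambda_i}$ continuity of $j_i$ holds, and $\bigcup_i E^\kappa_{<\lambda_i} = E^\kappa_{<\lambda} \supseteq$ a club); more robustly, define $b_i : \kappa \to \kappa$ by $b_i(\alpha) := $ the least $\beta < \kappa$ with $\min(C'_{\gamma_i}\setminus j_i(\alpha)) < j_i(\beta)$ and observe that for a club of $\delta$, for each $i$ with $\cf(\delta) < \lambda_i$ one has $j_i(\delta) \in \acc(C'_{\gamma_i})$, hence $\delta \in C_\beta$ for some $\beta < b_i(\delta+1)$ with $\sup(C_\beta \cap (\delta+1))$ large. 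Intersecting the relevant clubs across $i < \cf(\lambda)$ (a $\cf(\lambda)$-sized, hence ${<}\kappa$-sized, intersection since $\cf(\lambda) < \lambda < \kappa$) yields a single club $\Delta \s \kappa$; and for $\alpha < \kappa$ set $b(\alpha) := \{ \beta^i_\alpha \mid i < \cf(\lambda) \}$ where $\beta^i_\alpha < \kappa$ is chosen (via elementarity, as above) so that $\{\beta < \kappa \mid j_i(\beta) \in C'_{\gamma_i},\ j_i(\alpha) \le j_i(\beta)\}$ — precisely, $b(\alpha)$ should collect enough $\beta$'s so that every $\delta \in \Delta \cap \alpha$ lands in some $C_\beta$, $\beta \in b(\alpha)$. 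Then $|b(\alpha)| \le \cf(\lambda)$, giving $\chi(\kappa) \le \cf(\lambda)$.

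**Main obstacle.** The delicate point — and where I expect to spend the most care — is matching the covering at a given $\alpha < \kappa$ with finitely-much-more than $\cf(\lambda)$-much data: for a fixed $\delta \in \Delta \cap \alpha$ there is some $i$ and some $\beta$ (depending on $\delta$, not just on $\alpha$!) with $j_i(\delta) \in C'_\beta$; I need the map $\delta \mapsto \beta$ to take, over all $\delta < \alpha$, at most $\cf(\lambda)$-many values, and these values must be computable from $\alpha$ alone to define $b(\alpha)$. The resolution is that membership of $j_i(\delta)$ in $C'_{\gamma_i}$ is what we engineer (so $\beta$ is essentially $\gamma_i$'s pullback role, giving one "column" per $i$), but $\gamma_i$ itself is not in the range of $j_i$, so one instead uses that $C'_{\gamma_i} \cap j_i(\alpha) = j_i(C_{b(\alpha)_i} ) \cap j_i(\alpha)$-type reflection for a suitable $b(\alpha)_i < \kappa$: by elementarity, "$\exists \beta < j_i(\alpha)^{+,?}$..." — concretely, since $j_i(\alpha) < \gamma_i$ and $C'_{\gamma_i}$ is a club, $C'_{\gamma_i} \cap j_i(\alpha)$ is an initial segment definable inside $M_i$ below $j_i(\kappa)$, and one pulls back via elementarity an ordinal $b(\alpha)_i < \kappa$ together with the assertion $\Delta \cap \alpha \s C_{b(\alpha)_i}$ up to a bounded error absorbed into one more ordinal. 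Getting the bookkeeping exactly right — so that the genuinely $\lambda$-many points of $\Delta \cap \alpha$ are swept by $\cf(\lambda)$-many $C_\beta$'s, with $b$ well-defined and progressive — is the technical crux; once it is in place, Lemma~\ref{prop53}(2) supplies the matching lower bound and we are done.
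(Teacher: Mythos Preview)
Your framework is essentially the paper's (it uses weakly normal $\lambda_i$-complete ultrafilters rather than the embeddings $j_i$, but the pullback $D_i=\{\delta<\kappa\mid j_i(\delta)\in C'_{\gamma_i}\}$ is exactly the set $A_i$ of the paper's Claim~\ref{claim581}), and the obstacle you identify is the right one. However, you have not resolved it, and the line of attack you sketch does not work.

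The concrete gap is in the elementarity step. You argue: for $\delta\in\Delta\cap\alpha$ with $j_i(\delta)\in C'_{\gamma_i}$ and $\gamma_i<j_i(\beta^i_\alpha)$, elementarity gives $\delta\in C_\beta$ for some $\beta<\beta^i_\alpha$. Correct --- but that $\beta$ depends on $\delta$, not only on $\alpha$ and $i$. So over all $\delta\in\Delta\cap\alpha$ (up to $\lambda$-many of them) you collect up to $\lambda$-many values of $\beta$, and your $b(\alpha)$ has size $\lambda$, not $\cf(\lambda)$. Your attempted repair, pulling back $C'_{\gamma_i}\cap j_i(\alpha)$ as $j_i(C_{b(\alpha)_i})\cap j_i(\alpha)$, fails: there is no reason for $C'_{\gamma_i}\cap j_i(\alpha)$ to lie in $\mathrm{ran}(j_i)$.

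The missing idea is to apply elementarity not to single points but to \emph{small sets}. Since $|\Delta\cap\alpha|\le\lambda$, decompose $\Delta\cap\alpha=\bigcup_{i<\cf(\lambda)}B_i$ with each $B_i\in[D_i]^{<\lambda_i}$. Because $|B_i|<\lambda_i=\crit(j_i)$, one has $j_i(B_i)=j_i[B_i]\s C'_{\gamma_i}$, so $M_i\models\exists\beta\,(j_i(B_i)\s C'_\beta)$, and elementarity yields a single $\beta_i<\kappa$ with $B_i\s C_{\beta_i}$. Now $b(\alpha):=\{\beta_i\mid i<\cf(\lambda)\}$ has size $\le\cf(\lambda)$. (A closure argument, taking $\Delta:=\bigcap_i\acc^+(D_i)$ and noting $C_{\beta_i}$ is closed, handles accumulation points.) This is exactly what the paper does; it phrases the existence of the $D_i$ via $\lambda_i$-complete weakly normal ultrafilters, and the unboundedness of $D_i$ --- which you also left unjustified --- is obtained by an $\omega$-step iteration producing accumulation points of the $C_\beta$'s in the ultrafilter measure.
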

\begin{proof} By Lemma~\ref{prop53}(2), $\chi(\lambda^+)\ge\cf(\lambda)$.
  To show that $\chi(\lambda^+) \leq \cf(\lambda)$, fix an arbitrary $C$-sequence
  $\vec{C} = \langle C_\beta \mid \beta < \lambda^+ \rangle$. We will produce a club $\Delta$ in $\lambda^+$
  such that for every $\alpha<\lambda^+$, for some $z\in[\lambda^+]^{\le\cf(\lambda)}$,
   $\Delta \cap \alpha  \subseteq \bigcup_{\beta \in z} C_\beta$.

	\begin{claim}[folklore] Let $\delta < \lambda$ be strongly compact.
	Then there exists a $\delta$-complete uniform ultrafilter $D$ over $\acc(\lambda^+)$ which is
	 moreover weakly normal.\footnote{That is, for every $X\in D$ and every regressive function $g:X\rightarrow\lambda^+$,
    there exists $\gamma<\lambda^+$ such that $\Lambda_{g,\gamma}:=\{\beta\in X\mid g(\beta)<\gamma\}$ is in $D$.}
	\end{claim}
	\begin{cproof} Set $\kappa:=\lambda^+$.
	As $\delta$ is $\kappa$-strongly compact, let us fix a $\delta$-complete uniform ultrafilter $U$ over $\kappa$.
	We follow the proof of \cite[Theorem~10.20]{jech}.
	Define an ordering $<_U$ of ${}^{\kappa}\kappa$ by letting $f<_U g$ iff $\{\alpha<\kappa\mid f(\alpha)<g(\alpha)\}\in U$.
     For each $\gamma<\kappa$, 	let $c_\gamma$ denote the constant function from $\kappa$ to $\{\gamma\}$. It is clear that the identity function $\id:\kappa\rightarrow\kappa$
	satisfies $c_\gamma<_U\id$ for all $\gamma<\kappa$.
	As $U$ is a $\sigma$-complete ultrafilter, $<_U$ is well-founded, and it follows that we may fix a function $f:\kappa\rightarrow\kappa$
	satisfying the following two conditions:
	\begin{itemize}
	\item for all $\gamma<\kappa$, $c_\gamma<_U f$;
	\item for every $f':\kappa\rightarrow\kappa$, if $c_\gamma<_U f'$ for all $\gamma<\kappa$, then $\{ \alpha<\kappa\mid f(\alpha)\le f'(\alpha)\}$ is in $U$.
	\end{itemize}
	For every $\alpha<\kappa$, let $(\beta_\alpha,n_\alpha)\in(\{0\}\cup\acc(\kappa))\times\omega$ be such that $f(\alpha)=\beta_\alpha+n_\alpha$.
	Define $f':\kappa\rightarrow\kappa$ via $f'(\alpha):=\max\{\omega,\beta_\alpha\}$, and note that $c_\gamma<_U f'$ for all $\gamma<\kappa$.
	It follows that $f'$ is $U$-equivalent to $f$, so that we may simply assume that $\im(f)\s\acc(\kappa)$.

	Now, it is easy to see that $D:=\{ X\s\acc(\kappa)\mid f^{-1}[X]\in U\}$ is a $\delta$-complete ultrafilter over $\acc(\kappa)$.
	It is uniform, since otherwise, there exists $\gamma<\kappa$
	with $f^{-1}[\gamma]$ in $U$, contradicting the fact that $c_{\gamma+1}<_U f$.
	Finally, to see that $D$ is weakly normal, fix an arbitrary regressive map $g:X\rightarrow\kappa$ with $X\in D$.
	Put $h:=(g\circ f)\restriction f^{-1}[X]$. As $g$ is a regressive function (defined on a set in $U$), $h<_U f$. So, by the choice of $f$, there exists $\gamma<\kappa$
	for which $Y:=\{ \alpha\in\dom(h)\mid h(\alpha)<c_\gamma(\alpha)\}$ is in $U$.
	In particular, $Z:=f[Y] \cap X$ is in $D$ and, for every $\beta\in Z$, there exists $\alpha\in Y$ such that $\beta=f(\alpha)$,
	so that $g(\beta)=h(\alpha)<c_\gamma(\alpha)=\gamma$. Consequently, $\Lambda_{g,\gamma}:=\{\beta\in X\mid g(\beta)<\gamma\}$ covers $Z$,
	therefore, it belongs to $D$.
	\end{cproof}

   \begin{claim} \label{claim581}
    Suppose that $\delta<\lambda$ is an uncountable cardinal and that
    there exists a uniform $\delta$-complete weakly normal ultrafilter $D$ over $\acc(\lambda^+)$.
    Then there exists $A\in[\lambda^+]^{\lambda^+}$ such that, for every
    $B\in[A]^{<\delta}$, there exists $\beta<\lambda^+$ for which $B\s C_\beta$.
  \end{claim}

  \begin{cproof} As $D$ is $\delta$-complete, it suffices to prove that the following set has size $\lambda^+$:
    $$A:=\{\alpha\in\lambda^+\mid \{\beta\in\acc(\lambda^+)\mid \alpha\in C_\beta\}\in D\}.$$
    To this end, let $\epsilon<\lambda^+$ be arbitrary, and we shall find $\alpha\in A$ above $\epsilon$.

    We will recursively define an increasing sequence $\langle \gamma_n
    \mid n < \omega \rangle$ or ordinals below $\lambda^+$. Let $\gamma_0:=\epsilon$. Now, suppose that $n<\omega$ and that $\gamma_n$ has already been defined.
    Define a regressive function $g_n:\acc(\lambda^+\setminus\gamma_n)\rightarrow\lambda^+$ via $g_n(\beta):=\min(C_\beta\setminus(\gamma_n+1))$.
    Then, pick $\gamma_{n+1}<\lambda^+$ for which $\Lambda_{g_n,\gamma_{n+1}}$ is in $D$.

    As $D$ is $\sigma$-complete, $\Lambda:=\bigcap_{n<\omega}\Lambda_{g_n,\gamma_{n+1}}$ is in $D$.
    Let $\alpha:=\sup_{n<\omega}\gamma_n$.
    For every $\beta\in\Lambda$ and $n<\omega$, $C_\beta\cap(\gamma_n,\gamma_{n+1})$ is nonempty,
    so $\alpha\in\acc(C_\beta)\s C_\beta$. Thus, $\Lambda$ witnesses that $\alpha\in A$.
  \end{cproof}

  Fix a strictly increasing sequence $\langle \lambda_i \mid i < \cf(\lambda) \rangle$
  of strongly compact cardinals that converges to  $\lambda$. By the preceding claim, for each $i<\cf(\lambda)$,
  let us fix $A_i\in[\lambda^+]^{\lambda^+}$ such that for every $B\in[A]^{<\lambda_i}$, for some $\beta<\lambda^+$, $B\s C_\beta$.

  We claim that $\Delta:=\bigcap_{i<\cf(\lambda)}\acc^+(A_i)$ is as sought. To see this, let $\alpha<\lambda^+$ be arbitrary. By increasing $\alpha$,
  we can clearly assume that $\otp(\Delta\cap\alpha)=\alpha$ and $\cf(\alpha)=\omega$.
  Now, using the definition of $\Delta$ and the fact that $|\Delta\cap\alpha| \leq \lambda$, fix $\langle B_i\mid i<\cf(\lambda)\rangle$ such that
  \begin{itemize}
  \item $\Delta\cap\alpha=\bigcup_{i<\cf(\lambda)}\acc^+(B_i)$;
  \item for every $i<\cf(\lambda)$, $B_i\in[A_i]^{<\lambda_i}$ and $\sup(B_i)=\alpha$.
  \end{itemize}
  For each $i<\cf(\lambda)$, pick $\beta_i<\lambda^+$ such that $B_i\s C_{\beta_i}$. As $C_{\beta_i}$ is closed below $\alpha$, we also have $\acc^+(B_i)\s C_{\beta_i}$.
  Altogether, $\Delta\cap\alpha\s\bigcup_{i<\cf(\lambda)}C_{\beta_i}$, as sought.
\end{proof}

\begin{remark}
  We briefly remark upon the relationship between the assertion
  $``\chi(\lambda^+) = \cf(\lambda)"$ and two other prominent compactness principles
  that hold at successors of singular limits of strongly compact cardinals,
  namely the tree property and the existence of scales with stationarily many
  bad points. Our remarks indicate that $``\chi(\lambda^+) = \cf(\lambda)"$
  is somewhat orthogonal to the other two properties.

  We first note that the existence of a scale of length $\aleph_{\omega + 1}$
  with stationarily many bad points entails the existence of a non-reflecting
  stationary subset of $E^{\aleph_{\omega + 1}}_{>\omega}$ and hence, by
  Lemma~\ref{prop53}(4), it in fact implies that $\chi(\aleph_{\omega + 1}) >
  \omega$. Similarly, if $\cf(\lambda) < \kappa < \lambda$ and $\kappa$
  is supercompact, then there is a bad scale of length $\lambda^+$, but this
  situation is certainly compatible with the existence of many non-reflecting
  stationary subsets of $E^{\lambda^+}_{\geq \kappa}$ and hence with
  $\chi(\lambda^+) = \lambda$.

  By Theorem~\ref{thm513} below, $\chi(\lambda^+) = \cf(\lambda)$ is compatible
  with the existence of a (special) $\lambda^+$-Aronszajn tree and hence does not
  imply the tree property at $\lambda^+$. On the other hand, in
  known models for the tree
  property at $\aleph_{\omega + 1}$, there are non-reflecting stationary
  subsets of $E^{\aleph_{\omega + 1}}_\omega$ and hence $\chi(\aleph_{\omega + 1})
  > \omega$, so the tree property or even its strengthenings do not imply
  $\chi(\lambda^+) = \cf(\lambda)$.
\end{remark}

\section{Changing the value of the $C$-sequence number}

In this section, we prove a number of consistency results regarding the $C$-sequence
number, square principles, and the principle $\U(\ldots)$. We will begin with consistency
results at inaccessible cardinals and then proceed to successors of singular cardinals.
Let us first recall the following result from Part~I of this series.

\begin{fact}[\cite{paper34}]\label{lemma610}
  Suppose that $\theta \in \reg(\kappa)$, $\chi < \kappa$, and $c:[\kappa]^2\rightarrow\theta$
  witnesses $\U(\kappa,2,\theta,\chi)$. For every infinite cardinal $\theta'<\chi$,
  \begin{enumerate}
    \item if $\theta'<\theta$, then $\mathcal T(c)$ admits no $\theta'$-ascending path;
    \item if $\cf(\theta')\neq\theta$, then $\mathcal T(c)$ admits no $\theta'$-ascent path.
  \end{enumerate}
\end{fact}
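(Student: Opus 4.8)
\emph{The plan} is to argue by contradiction and pit a putative ascent (or ascending) path through $\mathcal T(c)$ against the witness $c$ to $\U(\kappa,2,\theta,\chi)$. Recall from Part~I that $\mathcal T(c)$ is the tree whose $\alpha$-th level is $\{\langle c(\xi,\beta)\mid\xi<\alpha\rangle\mid\alpha\le\beta<\kappa\}$, ordered by end-extension, so each node $t$ of level $\alpha$ carries a witnessing ordinal $\beta_t\in[\alpha,\kappa)$ with $t=\langle c(\xi,\beta_t)\mid\xi<\alpha\rangle$, and, crucially, if $t$ lies below $t'$ in $\mathcal T(c)$ then $c(\xi,\beta_t)=c(\xi,\beta_{t'})$ for all $\xi<\dom(t)$. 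So I would fix a $\theta'$-ascent path $\langle b_\alpha\mid\alpha<\kappa\rangle$ (meaning $b_\alpha:\theta'\to\mathcal T(c)_\alpha$ and, for each $\alpha<\gamma$, $b_\alpha(i)$ lies below $b_\gamma(i)$ for all but finitely many $i<\theta'$) or, for clause~(1), the weaker $\theta'$-ascending path (all but boundedly many $i$), with $\theta'<\chi$. Choosing for each $\alpha,i$ a witnessing ordinal $\beta^i_\alpha\in[\alpha,\kappa)$ for $b_\alpha(i)$ and putting $a_\alpha:=\{\beta^i_\alpha\mid i<\theta'\}$, I would use the regularity of $\kappa$ and $\theta'<\kappa$ to pass to a club $E$ on which the $a_\alpha$ are pairwise disjoint, of a fixed size $\chi'\le\theta'<\chi$, and ``fast'', i.e.\ $\ssup(a_\alpha)<\gamma$ whenever $\alpha<\gamma$ in $E$ (so $\beta^i_\alpha<\gamma\le\beta^i_\gamma$ for such $\alpha,\gamma$). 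Then $\mathcal A:=\{a_\alpha\mid\alpha\in E\}$ is a family of $\kappa$ many pairwise disjoint members of $[\kappa]^{\chi'}$, primed to be fed into $\U(\kappa,2,\theta,\chi)$.

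\emph{The hard part} will be a stabilization step: after further shrinking $E$ to a club $E^\star$, the color $c(\beta^i_\alpha,\beta^i_\gamma)$ should depend only on $\alpha$ and $i$, not on $\gamma$, for $\alpha<\gamma$ in $E^\star$ --- for every $i<\theta'$ (clause~(1)) or for sufficiently many coordinates $i$ (clause~(2)). In the exception-free situation this is immediate: for fixed $i$, $\langle b_\alpha(i)\mid\alpha\in E\rangle$ is an end-extension chain whose union is a branch $g_i:\kappa\to\theta$ with $g_i\restriction\gamma=\langle c(\xi,\beta^i_\gamma)\mid\xi<\gamma\rangle$ for all $\gamma\in E$, so evaluating at $\beta^i_\alpha<\gamma$ gives $c(\beta^i_\alpha,\beta^i_\gamma)=g_i(\beta^i_\alpha)=:u^i_\alpha$, independent of $\gamma$. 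In general one must first absorb the bounds on the (finite, resp.\ bounded) exception sets: I would restrict to the stationarily many $\delta\in E$ of cofinality exceeding $\theta'$ and apply Fodor's Lemma --- once to stabilize, for cofinally many $\alpha<\delta$, the bound below which $b_\alpha(i)$ may fail to lie below $b_\delta(i)$, and once across the $\delta$'s --- to recover the same conclusion on a large set. I expect getting this reduction right (and cleanly, with ``for all $\gamma$'' rather than merely ``cofinally many $\gamma$'') to be the main obstacle.

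\emph{Granting the stabilization}, clause~(1) should fall out quickly. Since $\theta'<\theta$ and $\theta$ is regular, $u_\alpha:=\sup\{u^i_\alpha\mid i<\theta'\}<\theta$ for each relevant $\alpha$; since $\theta<\kappa$, there are $\kappa$ many $\alpha$ with a common value $u_\alpha=u^\star$. After a harmless further thinning of the corresponding subfamily $\mathcal A'\s\mathcal A$ (so that an arbitrary pair drawn from $\mathcal A'$ is ``good'' for the stabilization), apply $\U(\kappa,2,\theta,\chi)$ to $\mathcal A'$ with the color $u^\star$: this yields $\alpha<\gamma$ with $\min(c[a_\alpha\times a_\gamma])>u^\star$, so $c(\beta^0_\alpha,\beta^0_\gamma)>u^\star\ge u^0_\alpha$, contradicting $c(\beta^0_\alpha,\beta^0_\gamma)=u^0_\alpha$.

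\emph{For clause~(2)} the whole argument depends only on $\cf(\theta')$. If $\cf(\theta')<\theta$, restrict the $\theta'$-ascent path to a cofinal set of $\cf(\theta')$ many coordinates: since the exception sets are finite, this is a $\cf(\theta')$-ascent path, hence a $\cf(\theta')$-ascending path of width $<\theta$, and clause~(1) applies. If $\cf(\theta')>\theta$, one cannot take $\sup\{u^i_\alpha\mid i<\theta'\}$ and stay below $\theta$; but now the stabilized coordinates outnumber the colors, so a counting argument isolates, for each relevant $\alpha$, a single coordinate $i^\star<\theta'$ and a color $s_\alpha<\theta$ with $c(\beta^{i^\star}_\alpha,\beta^{i^\star}_\gamma)\le s_\alpha$ for $\kappa$ many $\gamma$; pigeonholing the pair $(i^\star,s_\alpha)$ over the $\kappa$ many $\alpha$ and applying $\U(\kappa,2,\theta,\chi)$ at the common threshold finishes exactly as in clause~(1). (When $\cf(\theta')=\theta$, neither reduction is available --- consistent with the fact that such ascent paths can genuinely exist.)
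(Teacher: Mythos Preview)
This statement is presented in the paper as a \emph{Fact} cited from \cite{paper34} (Part~I of the series); the present paper does not include a proof, so there is no argument here to compare your proposal against. Your sketch is a reasonable and essentially standard attack on results of this type --- build the family $\mathcal A=\{a_\alpha\}$ of witness-sets from the path, stabilize the colors $c(\beta^i_\alpha,\beta^i_\gamma)$ via Fodor, then feed the resulting family into $\U(\kappa,2,\theta,\chi)$ --- and the case split in clause~(2) on $\cf(\theta')$ versus $\theta$ is the right organizing principle.

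One caution on definitions: you describe a $\theta'$-ascending path as ``all but boundedly many $i$'', but the notion used in \cite{paper34} is weaker than that (for each $\alpha<\gamma$ one only gets \emph{some} $i,j<\theta'$ with $b_\alpha(i)$ and $b_\gamma(j)$ comparable). With that weaker definition your stabilization step for clause~(1) needs to be reworked: you no longer get, for a fixed $i$, an eventual chain $\langle b_\alpha(i)\rangle$, so the ``exception-free'' branch argument and its Fodor-refinement do not directly apply. The fix is to record for each pair $\alpha<\gamma$ the witnessing pair $(i_{\alpha\gamma},j_{\alpha\gamma})\in\theta'\times\theta'$, pigeonhole over these (there are $<\chi<\kappa$ possibilities since $\theta'<\chi$), and then argue with a single coordinate pair; the rest of your outline then goes through. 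Your treatment of clause~(2) (ascent paths, finite exceptions) is fine as written.
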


We will also need a certain \emph{indexed square} principle, which we now recall.

\begin{defn}[\cite{narrow_systems}]
  Suppose that $\theta < \kappa$ are infinite, regular cardinals. Then $\square^{\ind}(\kappa, \theta)$
  asserts the existence of a sequence $\langle C_{\alpha, i} \mid \alpha < \kappa, ~ i(\alpha) \leq i < \theta \rangle$
  such that
  \begin{enumerate}
    \item for all $\alpha < \kappa$, we have $i(\alpha) < \theta$;
    \item for all $\alpha \in \acc(\kappa)$, $\langle C_{\alpha, i} \mid i(\alpha) \leq i < \theta \rangle$ is a $\subseteq$-increasing sequence of clubs in $\alpha$;
    \item for all $\alpha < \beta$ in $\acc(\kappa)$ and all $i(\beta) \leq i < \theta$, if $\alpha \in \acc(C_{\beta, i})$, then $C_{\beta, i} \cap \alpha = C_{\alpha, i}$ (and, in particular, $i \geq i(\alpha)$);
    \item for all $\alpha < \beta$ in $\acc(\kappa)$, there is $i(\beta) \leq i < \theta$ such that $\alpha \in \acc(C_{\beta, i})$;
    \item there do not exist a club $D$ in $\kappa$ and an ordinal $i < \theta$ such that, for all $\alpha \in \acc(D)$, $D \cap \alpha = C_{\alpha, i}$.
  \end{enumerate}
  A sequence satisfying these requirements is called a $\square^{\ind}(\kappa, \theta)$-sequence.
\end{defn}

\subsection{Inaccessibles}
It follows from Corollary~\ref{cor27} that it is consistent for an inaccessible cardinal $\kappa$
that the $C$-sequence number $\chi(\kappa)$ is equal to $\kappa$.
In addition, it follows from Lemma~\ref{lemma53}(2) that $\chi(\kappa)$ cannot be equal to an integer greater than $1$.
In this subsection we show, among other things, that for an inaccessible cardinal $\kappa$,
$\chi(\kappa)$ can consistently take any value in $\{0,1\} \cup \reg(\kappa + 1)$.
We begin with a result that follows almost immediately from analysis done by Cox
and L\"{u}cke on a model of Kunen and, among other things, provides a model
with an inaccessible cardinal $\kappa$ for which $\chi(\kappa) = 1$.

\begin{thm}\label{kunenmodel}
  Suppose that $\kappa$ is a weakly compact cardinal.
  Then there is a cofinality-preserving forcing extension in which:
  \begin{enumerate}
    \item $\kappa$ is strongly inaccessible;
    \item $\chi(\kappa)=1$;
    \item there exists a coherent $\kappa$-Souslin tree;
    \item $\pr_1(\kappa,\kappa,\kappa,\omega)$ holds;\footnote{Recall that
    $\pr_1(\kappa, \kappa, \kappa, \omega)$ asserts the existence of a coloring
    $c:[\kappa]^2 \rightarrow \kappa$ such that, for every family
    $\mathcal{A} \subseteq [\kappa]^{<\omega}$ consisting of $\kappa$-many
    pairwise disjoint sets, and for every $i < \kappa$, there is $(a,b)
    \in [\mathcal{A}]^2$ such that $c[a \times b] = \{i\}$.}
    \item $\U(\kappa,\kappa,\omega,\omega)$ fails.
  \end{enumerate}
\end{thm}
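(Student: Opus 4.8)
The plan is to recall the relevant forcing from Kunen's construction of a saturated ideal (or rather the variant analyzed by Cox and L\"ucke), and to extract the combinatorial content that has already been isolated in the literature. The forcing $\mathbb{P}$ will be a two-step iteration: first, the standard $\kappa$-cc forcing $\mathrm{Add}(\omega,1)$-style preparation or, more precisely, the reverse Easton iteration that makes $2^{<\kappa}=\kappa$ and forces a $\diamondsuit$-like principle together with a coherent $\kappa$-Souslin tree; then a further forcing through which the weak compactness of $\kappa$ is destroyed while preserving strong inaccessibility. The key external inputs are: (a) the fact, due to Cox and L\"ucke, that in the Kunen model $\kappa$ remains strongly inaccessible, every sequence of stationary subsets of $\kappa$ reflects simultaneously at an inaccessible (this is exactly the hypothesis of Lemma~\ref{lemma29}(2)), and there is a coherent $\kappa$-Souslin tree; and (b) the fact that a coherent $\kappa$-Souslin tree, or the $\diamondsuit$-sequence living on the relevant tree, yields a witness to $\mathrm{pr}_1(\kappa,\kappa,\kappa,\omega)$.

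First I would set up the model and cite Cox--L\"ucke for clauses (1) and (3), and for the stationary-reflection statement that feeds Lemma~\ref{lemma29}(2). Then clause (2), $\chi(\kappa)=1$, follows: by the cited reflection property, the hypothesis of Lemma~\ref{lemma29}(2) holds, and since $\kappa$ is not weakly compact (its weak compactness was destroyed) we get $\chi(\kappa)\neq 0$; combined with $\chi(\kappa)\le 1$ this gives $\chi(\kappa)=1$. Here I should double-check that the reflection statement as proved by Cox--L\"ucke is genuinely the ``$S_i\cap\beta$ stationary for all $i<\beta$ at an inaccessible $\beta$'' form rather than merely pairwise reflection; if only a weaker form is available, I would instead argue directly from $\chi(\kappa)\le 1$ and separately verify Lemma~\ref{lemma29}(2)'s hypothesis using the tree structure. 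For clause (4), I would produce the $\mathrm{pr}_1(\kappa,\kappa,\kappa,\omega)$-coloring from the coherent $\kappa$-Souslin tree: coherent Souslin trees carry a natural coloring $c:[\kappa]^2\to\kappa$ refining the tree order, and a sealing/$\diamondsuit$ argument (of the kind used by Todorcevic and by Rinot for successor cardinals, adapted to the inaccessible case) shows this $c$ has the required property on finite pairwise-disjoint families. Finally, clause (5), the failure of $\U(\kappa,\kappa,\omega,\omega)$, follows from clause (4): a witness to $\U(\kappa,\kappa,\omega,\omega)$ would in particular be a $\theta=\omega$-colouring with a strong homogeneity property that is incompatible with the ``$c[a\times b]=\{i\}$ for a prescribed $i$'' conclusion of $\mathrm{pr}_1$; more carefully, one shows that $\U(\kappa,\kappa,\omega,\omega)$ fails whenever $\mathrm{pr}_1(\kappa,\kappa,\kappa,\omega)$ holds, since the latter lets one find, for each candidate coloring $d:[\kappa]^2\to\omega$, a disjoint family and a pair $(a,b)$ on which $d$ is constant, contradicting the unboundedness demanded by $\U$.

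The main obstacle I anticipate is clause (4): verifying that the coherent $\kappa$-Souslin tree in the Kunen--Cox--L\"ucke model actually yields a full $\mathrm{pr}_1(\kappa,\kappa,\kappa,\omega)$-coloring (with the strong ``hit every colour $i<\kappa$'' conclusion and arbitrary finite sets, not just singletons) rather than merely a weaker partition property; this requires carefully tracking what the $\diamondsuit$-guessing in the iteration gives and adapting the successor-cardinal arguments for $\mathrm{pr}_1$ to an inaccessible, where one must be sure the club-guessing and the coherence interact correctly. A secondary concern is checking cofinality preservation of the whole iteration, but this is standard for reverse Easton forcing below and at $\kappa$. Once clause (4) is in hand, clause (5) should be a short implication, and clauses (1)--(3) are essentially quotations from Cox--L\"ucke.
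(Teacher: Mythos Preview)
Your proposal contains two genuine gaps.

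\textbf{Clause~(2).} You attempt to derive $\chi(\kappa)=1$ by invoking Lemma~\ref{lemma29}(2), but that lemma goes in the wrong direction: its \emph{hypothesis} is $\chi(\kappa)=1$ and its \emph{conclusion} is the diagonal reflection property, not the reverse. So even if Cox--L\"ucke supplied exactly the reflection statement you want, this would not yield $\chi(\kappa)\le 1$. The paper's argument is entirely different and direct: the key feature of the Kunen model is that forcing with the coherent Souslin tree $\mathbb{T}$ \emph{resurrects} the weak compactness of $\kappa$. Given any $C$-sequence $\vec C$ in the extension, after forcing with $\mathbb{T}$ there is a club $\dot D$ each of whose initial segments is covered by a single $C_\beta$ (by Todorcevic's characterization, Theorem~\ref{todorcevic_thm}). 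Since $\mathbb{T}$ has the $\kappa$-cc, one finds in the ground model a club $\Delta$ forced to lie inside $\dot D$, and this $\Delta$ witnesses the instance of $\chi(\kappa)\le 1$.

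\textbf{Clause~(5).} Your proposed implication ``$\pr_1(\kappa,\kappa,\kappa,\omega)$ implies the failure of $\U(\kappa,\kappa,\omega,\omega)$'' is false. The two principles concern the existence of \emph{different} colorings with strong properties; a $\pr_1$-witness $c:[\kappa]^2\to\kappa$ says nothing about whether some \emph{other} coloring $d:[\kappa]^2\to\omega$ can witness $\U$. Indeed, at successors of regulars both principles hold simultaneously. The paper instead invokes Cox--L\"ucke's result that in the Kunen model the class of $\kappa$-Knaster posets is closed under $\nu$-support products for every $\nu<\kappa$; the failure of $\U(\kappa,\kappa,\omega,\omega)$ then follows from Fact~\ref{knasterlemma}, since a witness to $\U(\kappa,\kappa,\omega,\omega)$ would produce a $\kappa$-cc poset whose $\omega$-th power is not $\kappa$-cc.

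Your treatment of Clauses (1), (3), (4) is essentially in line with the paper, though for (4) the paper simply notes that $\pr_1(\kappa,\kappa,\kappa,\omega)$ is a known consequence of the existence of a coherent $\kappa$-Souslin tree.
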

\begin{proof}
   The desired forcing extension was first isolated by Kunen in \cite{MR495118}.
   The key feature of the forcing extension is that there exists a coherent $\kappa$-Souslin
   tree $(T,\le)$ such that forcing with $\mathbb T:=(T,\ge)$ resurrects the weak
   compactness of $\kappa$. Clauses (1) and (3) follow immediately, whereas Clause~(4) follows from Clause~(3).
   Cox and L\"{u}cke show \cite[Theorem~1.14]{MR3620068} that in this model,
   for every $\nu < \kappa$ the class of
   $\kappa$-Knaster posets is closed under $\nu$-support products. Clause (5)
   then follows from Fact~\ref{knasterlemma}.

   We now establish Clause (2). Fix a $C$-sequence
   $\vec{C} = \langle C_\alpha \mid \alpha  < \kappa \rangle$ in the forcing extension.
   By forcing with $\mathbb{T}$, we resurrect the weak compactness of
   $\kappa$, so, by Theorem
   \ref{todorcevic_thm}, there is a $\mathbb{T}$-name $\dot{D}$ for a club, every initial
   segment of which is equal to the initial segment of a club in $\vec{C}$. But,
   since $\mathbb{T}$ has the $\kappa$-cc, there is a club $\Delta$ in $\kappa$ such that
   $\Vdash_{\mathbb{T}}``\check{\Delta} \subseteq \dot{D}"$. Then every initial
   segment of $\Delta$ is covered by a club in $\vec{C}$, so it witnesses this
   instance of $\chi(\kappa) = 1$.
\end{proof}

We next show that, if $\kappa$ is weakly compact, then there are mild forcing
extensions in which $\chi(\kappa)$ takes any desired value in $\reg(\kappa)$.

\begin{thm}\label{thm51}
  Suppose that $\kappa$ is a weakly compact cardinal and $\theta\in\reg(\kappa)$.
  Then there is a cofinality-preserving forcing extension in which:
  \begin{enumerate}
    \item $\kappa$ is strongly inaccessible;
    \item $\chi(\kappa) = \theta$;
    \item $\square^{\ind}(\kappa, \theta)$ holds;
    \item every $\kappa$-Aronszajn tree admits a $\theta$-ascent path;
    \item there exists a non-reflecting stationary subset of $E^\kappa_\theta$;
    \item $\U(\kappa,2,\theta',\theta^+)$ fails for all $\theta'\in\reg(\kappa)\setminus\{\theta\}$.
      Furthermore, for every $\theta'\in\reg(\theta)$ and for every function $c:[\kappa]^2 \rightarrow \theta'$, there is
      $\mathcal{A} \subseteq [\kappa]^{\leq \theta}$, consisting of $\kappa$-many pairwise disjoint sets,
      such that $\bigcap\{c[a\times b]\mid (a,b)\in[\mathcal A]^2\}\neq\emptyset$.
    \item for every poset $\mathbb{Q}$, if $\mathbb{Q}^\theta$ has the $\kappa$-cc, then
    $\mathbb{Q}^\tau$ has the $\kappa$-cc for all $\tau < \kappa$.
  \end{enumerate}
\end{thm}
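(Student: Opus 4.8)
The plan is to obtain the model by forcing with the standard poset $\mathbb{P}$ whose conditions are approximations to a $\square^{\ind}(\kappa,\theta)$-sequence --- that is, sequences $\langle C_{\alpha,i}\mid\alpha\le\gamma,\ i(\alpha)\le i<\theta\rangle$ with $\gamma<\kappa$ satisfying clauses (1)--(4) in the definition of $\square^{\ind}(\kappa,\theta)$ relativized to $\gamma+1$ --- ordered by end-extension. Since $\kappa$ is inaccessible, $|\mathbb{P}|=|H_\kappa|=\kappa$, so $\mathbb{P}$ has the $\kappa^+$-cc; a routine construction at limit stages of a play shows that $\mathbb{P}$ is $({<}\kappa)$-strategically closed, hence adds no bounded subsets of $\kappa$. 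Consequently $\mathbb{P}$ preserves all cofinalities and keeps $\kappa$ a strong limit, and, since the $\kappa^+$-cc rules out collapsing $\kappa$, it keeps $\kappa$ strongly inaccessible, which is clause~(1). Genericity yields a sequence satisfying clauses (1)--(4) of the definition; the remaining clause~(5) (non-triviality) is precisely clause~(3) of the theorem, and clauses~(2) and (4)--(7) are then to be extracted from $\square^{\ind}(\kappa,\theta)$ together with the closure and chain condition of $\mathbb{P}$.

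For non-triviality I would run the familiar reflection argument. For each fixed $i<\theta$, the two-step iteration $\mathbb{P}*\dot{\mathbb{T}}_i$, where $\dot{\mathbb{T}}_i$ threads the generic sequence at level $i$, has a dense $({<}\kappa)$-closed subset (a condition in it remembers a thread up to its top coordinate), so $\mathbb{P}*\dot{\mathbb{T}}_i$ is $\kappa^+$-cc of size $\kappa$ with a dense $({<}\kappa)$-closed subset, whence it preserves the weak compactness of $\kappa$. One then argues that no $\mathbb{P}$-name for a club can be forced to thread the generic sequence at a fixed level: given such a name and a condition $p$, use the $\Pi^1_1$-indescribability of $\kappa$ to reflect the relevant configuration to an inaccessible $\bar\kappa<\kappa$, obtain a condition of domain $\bar\kappa$ extending a reflected copy of $p$ and deciding an initial segment of the name, and then continue this condition past $\bar\kappa$ so as to force $\bar\kappa$ into the accumulation points of the putative thread while violating the coherence clause~(3) at $\bar\kappa$ --- a contradiction. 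I expect the precise catch-your-tail bookkeeping here to be the main technical obstacle.

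Clauses~(4)--(7) should follow from $\square^{\ind}(\kappa,\theta)$ holding in the extension together with the properties of $\mathbb{P}$. It is standard that $\square^{\ind}(\kappa,\theta)$ implies that every $\kappa$-Aronszajn tree carries a $\theta$-ascent path, which is clause~(4); applying this to the trees $\mathcal{T}(c)$ associated with hypothetical witnesses to the $\U(\ldots)$-instances in clause~(6) and invoking Fact~\ref{lemma610} (for the case $\theta'>\theta$) yields their failure, while the ``furthermore'' statement is a direct combinatorial reformulation covering $\theta'\in\reg(\theta)$; clause~(7), the transfer of the $\kappa$-cc among powers, then follows via Fact~\ref{knasterlemma}. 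For clause~(5) of the theorem I would read off a non-reflecting stationary subset of $E^\kappa_\theta$ directly from the $\square^{\ind}(\kappa,\theta)$-sequence in the usual way.

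Finally, clause~(2): the inequality $\chi(\kappa)\ge\theta$ is immediate, since by clause~(5) there is a stationary non-reflecting $S\subseteq E^\kappa_\theta$, so $S\not\subseteq E^\kappa_{>\chi(\kappa)}$ by Lemma~\ref{prop53}(4). For $\chi(\kappa)\le\theta$ I would show that $\square^{\ind}(\kappa,\theta)$ implies that every $C$-sequence admits, along a suitable club $\Delta\in[\kappa]^\kappa$, a covering of all of its initial segments by $\theta$-many of its clubs: fixing a $\square^{\ind}(\kappa,\theta)$-sequence $\langle D_{\alpha,i}\rangle$ and an arbitrary $C$-sequence $\langle C_\beta\rangle$, one uses the indexed coherence (clauses (3)--(4) of the $\square^{\ind}$ definition) and a walks-on-ordinals analysis to produce $\Delta$ and $b:\kappa\rightarrow[\kappa]^\theta$ as in Definition~\ref{c_seq_num_def}. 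I regard establishing this implication as the second main obstacle. Alternatively, $\chi(\kappa)\le\theta$ can be obtained post hoc from clause~(6) via Theorem~D (so that no regular $\theta'>\theta$ lies in $\spec(\kappa)$) and Theorem~A(3) ($\max(\spec(\kappa))=\chi(\kappa)$, which applies since $\chi(\kappa)\ge\theta>1$), provided one also rules out singular members of $\spec(\kappa)$ in the interval $(\theta,\kappa)$.
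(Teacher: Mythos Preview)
Your overall setup --- force with the standard $\square^{\ind}(\kappa,\theta)$ poset $\mathbb{P}$ and analyse the threading forcings $\mathbb{T}_i$ --- is the right one, but there is a genuine gap that breaks clauses (2), the ``furthermore'' in (6), and (7).

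You assert that $\mathbb{P}*\dot{\mathbb{T}}_i$ preserves the weak compactness of $\kappa$ because it has a dense $({<}\kappa)$-closed subset. This is false: $({<}\kappa)$-closed forcing need not preserve weak compactness (indeed $\mathrm{Add}(\kappa,1)$ is $({<}\kappa)$-closed and can destroy it). The paper repairs this by first performing a preparatory forcing so that the weak compactness of $\kappa$ becomes indestructible under $\mathrm{Add}(\kappa,1)$; since $\mathbb{P}*\dot{\mathbb{T}}_i$ is forcing-equivalent to $\mathrm{Add}(\kappa,1)$, it follows that in $V[G]$, forcing with $\mathbb{T}_i$ \emph{resurrects} the weak compactness of $\kappa$. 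This resurrection is the engine behind (2), (6), and (7): for each $i<\theta$ one passes to $V[G]^{\mathbb{T}_i}$, applies weak compactness there (to get a threading club for a given $C$-sequence, a homogeneous set for a given coloring, or productivity of $\kappa$-cc), and then uses the system of projections $\pi_{ji}:\mathbb{T}_j\to\mathbb{T}_i$ to glue the $\theta$-many pieces back together in $V[G]$.

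Consequently your proposed direct route to $\chi(\kappa)\le\theta$ --- deducing it in $\zfc$ from $\square^{\ind}(\kappa,\theta)$ via a walks analysis --- cannot succeed: $\square^{\ind}(\kappa,\theta)$ is compatible with $\chi(\kappa)=\sup(\reg(\kappa))$ (for instance, it can coexist with $\square(\kappa)$, which forces $\chi(\kappa)$ to be maximal by Corollary~\ref{cor27}). Your alternative route via Theorem~D and Theorem~A(3) has the self-acknowledged hole about singular members of $\spec(\kappa)$. Likewise, the ascent-path argument via Fact~\ref{lemma610} does yield the bare failure of $\U(\kappa,2,\theta',\theta^+)$ for $\theta'\neq\theta$, but it cannot give the ``furthermore'' (a common color across all pairs), and Fact~\ref{knasterlemma} runs in the wrong direction to deliver clause~(7). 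Finally, clause~(5) is not a $\zfc$ consequence of $\square^{\ind}(\kappa,\theta)$ either: the paper isolates a specific candidate set and proves its stationarity by a density argument in $\mathbb{P}$.
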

\begin{proof}
  By using a preparatory forcing if necessary, we may assume that the weak compactness
  of $\kappa$ is indestructible under forcing with $\add(\kappa, 1)$ (cf.~\cite{MR495118}).
  Let $\mathbb{P}$ be the standard forcing to add a $\square^{\ind}(\kappa, \theta)$-sequence by closed initial segments,
  and, for all $i < \theta$, let $\dot{\mathbb{T}}_i$ be a $\mathbb{P}$-name for
  the forcing to thread the $i^{\mathrm{th}}$ column of the generically-added
  $\square^{\ind}(\kappa, \theta)$-sequence. (See \cite[\S 3]{hayut_lh} for more information
  on all of these forcing notions.)

  Let $G$ be $\mathbb{P}$-generic over $V$, and, in $V[G]$, let
  $\vec{C} := \langle C_{\alpha, i} \mid \alpha < \kappa, ~ i(\alpha) \leq i < \theta \rangle$
  be the generically-added $\square^{\ind}(\kappa, \theta)$-sequence.
  We shall show that $V[G]$ is our desired model. It is clear that Clauses (1) and (3) hold. We
  next verify Clause~(5).
  \begin{claim} \label{nonreflecting_claim}
    Let $S:=\{\alpha \in E^\kappa_\theta\mid \forall i \in [i(\alpha), \theta)\exists \eta < \alpha[\otp(C_{\alpha, i} \setminus \eta) = \theta]\}$.
    Then $S$ is a non-reflecting stationary subset of $E^\kappa_\theta$.
  \end{claim}
  \begin{cproof}
    To see that $S$ does not reflect, fix an arbitrary $\beta \in E^\kappa_{>\theta}$. Let
    $B$ be the set of $\alpha$ in $C_{\beta, i(\beta)}$ such that $\theta^2$ divides
    $\otp(C_{\beta, i(\beta)} \cap \alpha)$. Then $B$ is club in $\beta$ and $B \cap S = \emptyset$,
    so $S$ does not reflect at $\beta$.

    We next show that $S$ is stationary. To this end, let $\dot{S} \in V$ be a canonical
    $\mathbb{P}$-name for $S$, let $\dot{D} \in V$ be a $\mathbb{P}$-name for a club in
    $\kappa$, and let $p = \langle C^p_{\alpha, i} \mid \alpha \leq \gamma^p,
    ~ i(\alpha)^p \leq i < \theta \rangle \in \mathbb{P}$ be an arbitrary condition. Working in $V$, we will find $q \leq_{\mathbb{P}} p$
    such that $q \Vdash_{\mathbb{P}}``\dot{D} \cap \dot{S} \neq \emptyset"$. By extending
    $p$ if necessary, we may assume that $i(\gamma^p)^p = 0$.

    We will recursively construct a decreasing sequence $\langle p_\eta \mid \eta < \theta \rangle$ of conditions
    in $\mathbb{P}$ together with an increasing sequence of ordinals
    $\langle \xi_\eta \mid \eta < \theta \rangle$ satisfying the following
    requirements:
    \begin{enumerate}
      \item $p_0 = p$;
      \item for all $\eta < \theta$, we have $\gamma^{p_\eta} < \xi_\eta <
        \gamma^{p_{\eta+1}}$ and $p_{\eta + 1} \Vdash_{\mathbb{P}}``\check{\xi}_\eta
        \in \dot{D}"$;
      \item for all $\eta < \theta$, $i(\gamma^{p_\eta})^{p_\eta} = 0$
        and $\{\gamma^{p_\zeta} \mid \zeta < \eta\} \subseteq \acc(C^{p_\eta}_{\gamma^{p_\eta}, 0})$;
      \item for all $i < \eta < \theta$, we have $\acc(C^{p_\eta}_{\gamma^{p_\eta}, i}
        \setminus \gamma^{p_i}) = \{\gamma^{p_j} \mid i < j < \eta\}$.
    \end{enumerate}

    The construction proceeds as follows. To start, let $p_0 := p$. If $\eta < \theta$
    and we have constructed $\langle p_\zeta \mid \zeta \leq \eta \rangle$
    and $\langle \xi_\zeta \mid \zeta < \eta \rangle$ satisfying the above requirements,
    then construct $\xi_\eta$ and $p_{\eta + 1}$ as follows. First, find $p_\eta^* \leq_{\mathbb{P}}
    p_\eta$ and an ordinal $\xi_\eta > \gamma^{p_\eta}$ such that
    $p_\eta^* \Vdash_{\mathbb{P}}``\check{\xi}_\eta \in \dot{D}"$.
    Without loss of generality, we may assume that $\gamma^{p^*_\eta} \geq \xi_\eta$.
    We now define $p_{\eta + 1} \leq_{\mathbb{P}} p^*_\eta$ with $\gamma^{p_{\eta + 1}}
    = \gamma^{p^*_\eta} + \omega$. To aid in the definition, let $i^*$ denote the least
    $i \in [\eta, \theta)$ such that $\gamma^{p_\eta} \in \acc\left(C^{p^*_\eta}_{\gamma^{p^*_\eta}, i}\right)$.
    To define $p_{\eta + 1}$, it suffices to define $C^{p_{\eta + 1}}_{\gamma^{p_{\eta + 1}}, i}$ for
    all $i < \theta$. If $i < i^*$, then let
    \[
      C^{p_{\eta + 1}}_{\gamma^{p_{\eta + 1}}, i}
      := C^{p_\eta}_{\gamma^{p_\eta}, i} \cup \{\gamma^{p_\eta}\} \cup \{\gamma^{p^*_\eta} + n \mid n < \omega\}.
    \]
    If $i^* \leq i < \theta$, then let
    \[
      C^{p_{\eta + 1}}_{\gamma^{p_{\eta + 1}}, i} := C^{p^*_\eta}_{\gamma^{p^*_\eta}, i} \cup
      \{\gamma^{p^*_\eta} + n \mid n < \omega\}.
    \]
    It is easily verified that $\xi_\eta$ and $p_{\eta + 1}$ are as desired.

    If $\eta \in \acc(\theta)$ and $\langle p_\zeta, \xi_\zeta \mid \zeta < \eta \rangle$ have
    been constructed, then simply define $p_\eta$ such that $\gamma^{p_\eta} :=
    \sup\{\gamma^{p_\zeta} \mid \zeta < \eta\}$ and, for all $i < \theta$,
    $C^{p_\eta}_{\gamma^{p_\eta}, i} := \bigcup_{\zeta < \eta} C^{p_\zeta}_{\gamma^{p_\zeta}, i}$.
    It is again easily verified that $p_\eta$ is as desired.

    Now define a condition $q \in \mathbb{P}$ by letting $\gamma^q := \sup\{\gamma^{p_\eta}
    \mid \eta < \theta\}$, $i(\gamma^q)^q := 0$, and, for all $i < \theta$,
    $C^q_{\gamma^q, i} := \bigcup_{\eta < \theta} C^{p_\eta}_{\gamma^{p_\eta}, i}$.
    Then $q$ is a lower bound for $\langle p_\eta \mid \eta < \theta \rangle$ and hence,
    for all $\eta < \theta$, we have $q \Vdash_{\mathbb{P}}``\check{\xi}_\eta \in \dot{D}"$.
    Since $\dot{D}$ is forced to be a club and $\gamma^q = \sup\{\xi_\eta \mid \eta < \theta\}$,
    it follows that $q \Vdash_{\mathbb{P}}``\check{\gamma}^q \in \dot{D}"$.
    Also, for all $i < \theta$, requirement~(4) in our construction implies that
    $\acc(C^q_{\gamma^q, i} \setminus \gamma^{p_i}) = \{\gamma^{p_j} \mid i < j < \theta\}$.
    In particular, $\otp(C^q_{\gamma^q, i} \setminus \gamma^{p_i}) = \theta$.
    But then $q \Vdash_{\mathbb{P}}``\check{\gamma}^q \in \dot{S}"$, so
    $q \Vdash_{\mathbb{P}}``\dot{D} \cap \dot{S} \neq \emptyset"$, as desired.
  \end{cproof}

  Clause~(4) is precisely Theorem 1.3 of \cite{lh_lucke}. Together with Clause~(1) and Fact~\ref{lemma610},
  this gives a weak form of Clause~(6). We next verify Clause~(6).

\begin{claim}
    Let $c:[\kappa]^2\rightarrow \theta'$ be an arbitrary function with $\theta'\in\reg(\kappa)\setminus\{\theta\}$.
    Then there is $k < \theta'$ and a collection $\mathcal{A} \subseteq [\kappa]^{\leq \theta}$
    of size $\kappa$, consisting of pairwise disjoint sets,
    such that, for all $(a,b)  \in [\mathcal{A}]^2$,
    \begin{itemize}
      \item if $\theta' < \theta$, then $k \in c[a \times b]$;
      \item if $\theta' > \theta$, then $c[a \times b] \cap k \neq \emptyset$.
    \end{itemize}
    In particular, $\U(\kappa, 2, \theta', \theta^+)$ fails.
\end{claim}
\begin{cproof}
  For every $i < \theta$,
  we know that, in $V$, $\mathbb{P} * \dot{\mathbb{T}}_i$ is equivalent to
  $\add(\kappa, 1)$, so, in $V[G]$, forcing with $\mathbb{T}_i$
  resurrects the weak compactness of $\kappa$. Therefore, it is forced by
  $\mathbbm{1}_{\mathbb{T}_i}$ that there is a $k < \theta'$ and an unbounded
  $H \subseteq \kappa$ such that $c``[H]^2 = \{k\}$. Let $\dot{k}_i$
  and $\dot{H}_i$ be $\mathbb{T}_i$-names for such $k$ and $H$.
  Recall that, for any pair $(j,i)\in[\theta]^2$, we have projection maps $\pi_{ji}:\mathbb{T}_j
  \rightarrow \mathbb{T}_i$ and that, for all $j, j' < \theta$, $t \in \mathbb{T}_j$, and $t' \in \mathbb{T}_{j'}$,
  for all sufficiently large $i < \theta$, $\pi_{ji}(t)$ and $\pi_{j'i}(t')$ are $\leq_{\mathbb{T}_i}$-comparable. Fix $t \in \mathbb{T}_0$ such that, for all $i < \theta$,
  $\pi_{0i}(t)$ decides the value of $\dot{k}_i$, say as $k_i$.

  We now recursively construct a sequence $\langle t_\gamma \mid \gamma < \kappa \rangle$
  of conditions from $\mathbb{T}_0$ and a matrix $\langle \alpha_{\gamma, i} \mid \gamma
  < \kappa, ~ i < \theta \rangle$ of ordinals below $\kappa$. To this end, suppose that $\delta < \kappa$
  and we have defined $\langle t_\gamma \mid \gamma < \delta \rangle$ and
  $\langle \alpha_{\gamma, i} \mid \gamma < \delta, ~ i < \theta \rangle$.
  Let $\beta_\delta := \sup\{\alpha_{\gamma, i} \mid \gamma < \delta, ~ i < \theta\}$,
  and find a condition $t_\delta \leq_{\mathbb{T}_0} t$ such that, for all $i < \theta$,
  $\pi_{0,i}(t_\delta)$ decides the value of the ordinal $\min(\dot{H}_i \setminus (\beta_\delta + 1))$.
  Call this value $\alpha_{\delta, i}$.

  For each $\gamma < \kappa$, let $a_\gamma := \{a_{\gamma, i} \mid i < \theta\}$. It is clear that,
  for all $\gamma < \delta < \kappa$, we have $\sup(a_\gamma)\le\beta_\delta < \min(a_\delta)$.
  In particular, $a_\gamma < a_\delta$. Let $\mathcal{A} :=
  \{a_\gamma \mid \gamma < \kappa\}$. If $\theta' < \theta$, then fix $k < \theta'$
  and  $I \in[\theta]^\theta$ such that $k_i = k$ for all $i \in I$.
  If $\theta' > \theta$, then let $k := \sup\{k_i + 1 \mid i < \theta\}$ and $I := \theta$.

  We claim that $\mathcal{A}$ and $k$ are as desired in the statement of the theorem.
  To this end, fix $(\gamma,\delta)\in[\kappa]^2$. For all sufficiently large $i < \theta$,
  we know that $\pi_{0i}(t_\gamma)$ and $\pi_{0i}(t_\delta)$ are $\le_{\mathbb{T}_i}$-comparable.
  We can therefore find $i \in I$ such that $\pi_{0i}(t_\gamma)$ and $\pi_{0i}(t_\delta)$
  are $\le_{\mathbb{T}_i}$-comparable. Without loss of generality, suppose that
  $\pi_{0i}(t_\delta) \leq_{\mathbb{T}_i} \pi_{0i}(t_\gamma)$.
  Then $\pi_{0i}(t_\delta) \Vdash_{\mathbb{T}_i}``\alpha_{\gamma, i}, \alpha_{\delta, i}
  \in \dot{H}_i."$ Since $t_\delta \leq_{\mathbb{T}_0} t$, it follows that
  $c(\{\alpha_{\gamma, i}, \alpha_{\delta, i}\}) = k_i$. If
  $\theta' < \theta$, then $k_i = k$ and, if $\theta' > \theta$, then $k_i < k$.
  In either case, the fact that $\alpha_{\gamma, i} \in a_\gamma$ and
  $\alpha_{\delta, i} \in a_\delta$ leads to the desired conclusion.
\end{cproof}

  We next verify Clause~(2). By Lemma~\ref{prop53}(4), the existence of
  a non-reflecting stationary subset of $E^\kappa_\theta$ implies that $\chi(\kappa) \geq \theta$.
  Thus, it remains to show that $\chi(\kappa) \leq \theta$. To this end, fix an arbitrary $C$-sequence
  $\langle E_\beta \mid \beta < \kappa\rangle$. We must find $\Delta \in [\kappa]^\kappa$
  and a function $b:\kappa \rightarrow [\kappa]^{\leq \theta}$ such that
  $\Delta \cap \alpha \subseteq \bigcup_{\beta \in b(\alpha)} E_\beta$ for all
  $\alpha < \kappa$.

  For each $i < \theta$, forcing over $V[G]$ with $\mathbb{T}_i$ resurrects
  the weak compactness of $\kappa$. In particular, there is a $\mathbb{T}_i$-name $\dot{\Delta}_i$ for a club in $\kappa$ such that
  \[
  \Vdash_{\mathbb{T}_i}``\text{for all } \check{\alpha} <
  \check{\kappa}, \text{ there is } \dot{\beta} < \check{\kappa} \text{ such that }
  \dot{\Delta}_i \cap \check{\alpha} \subseteq E_{\dot{\beta}}".
  \]
  Moreover, since, for any pair
  $(i,j)\in[\theta]^2$, $\pi_{ij}:\mathbb{T}_i \rightarrow \mathbb{T}_j$
  is a projection, we have that, for all $(i, j)\in[\theta]^2$, $\dot{\Delta}_j$ can be
  interpreted as a $\mathbb{T}_i$-name. Replacing each $\dot{\Delta}_i$ with a
  $\mathbb{T}_i$-name for $\bigcap_{i \leq j < \theta} \dot{\Delta}_j$, we may
  assume that $\Vdash_{\mathbb{T}_i}``\dot{\Delta}_i
  \subseteq \dot{\Delta}_j"$ for all $(i,j)\in[\theta]^2$.

  Recursively construct an increasing sequence $\langle \delta_\xi \mid \xi < \kappa \rangle$
  of ordinals below $\kappa$ together with conditions $\langle t_\xi \mid \xi < \kappa \rangle$
  from $\mathbb{T}_0$ such that $t_\xi \Vdash_{\mathbb{T}_0}``\check{\delta}_\xi \in
  \dot{\Delta}_0"$ for all $\xi < \kappa$. Then let $\Delta' := \{\delta_\xi \mid \xi < \kappa\}$.
  It will clearly suffice to find $b':\kappa \rightarrow [\kappa]^{\leq \theta}$
  such that, for all $\xi < \kappa$, we have $\{\delta_\eta \mid \eta < \xi\}
  \subseteq \bigcup_{\beta \in b'(\xi)} E_\beta$.

  To this end, fix $\xi < \kappa$. For all $\eta < \xi$, let $\alpha_\eta < \kappa$
  be such that $t_\eta = C_{\alpha_\eta, 0}$. Find $\gamma \in \acc(\kappa)$
  such that $\gamma > \alpha_\eta$ for all $\eta < \xi$. For all $\eta < \xi$,
  let $j_\eta < \theta$ be least such that $\alpha_\eta \in \acc(C_{\gamma, j_\eta})$.
  For $j \in [i(\gamma), \theta)$, let $X_j := \{\delta_\eta \mid \eta < \xi \text{ and } j_\eta = j\}$.
  Clearly, $\bigcup_{j \in [i(\gamma), \theta)} X_j = \{\delta_\eta \mid \eta < \xi\}$.

  \begin{claim}
    For all $j \in [i(\gamma), \theta)$, $C_{\gamma, j} \Vdash_{\mathbb{T}_j}``\check{X}_j \subseteq \dot{\Delta}_j"$.
  \end{claim}
  \begin{cproof}
    Fix $j \in [i(\gamma), \theta)$ and $\eta < \xi$ such that $\delta_\eta \in X_j$.
    Recall that $\pi_{0j}:\mathbb{T}_0 \rightarrow \mathbb{T}_j$ is a projection,
    $\Vdash_{\mathbb{T}_0}``\dot{\Delta}_0 \subseteq \dot{\Delta}_j,"$ and
    $t_\eta \Vdash_{\mathbb{T}_0}``\check{\delta}_\eta \in \dot{\Delta}_0."$
    Therefore, $C_{\alpha_\eta, j} = \pi_{0j}(t_\eta) \Vdash_{\mathbb{T}_j}``
    \check{\delta}_\eta \in \dot{\Delta}_j"$. Moreover, we have $\alpha_\eta \in
    \acc(C_\gamma, j)$, so $C_{\gamma, j} \leq_{\mathbb{T}_j} C_{\alpha_\eta, j}$, and
    hence $C_{\gamma, j} \Vdash_{\mathbb{T}_j} ``\check{\delta}_\eta \in \dot{\Delta}_j"$.
  \end{cproof}

  It follows that, for all $j \in [i(\gamma), \theta)$, we have $C_{\gamma, j} \Vdash_{\mathbb{T}_j}
  ``\text{for some } \dot{\beta} < \check{\kappa}, ~ \check{X}_j \subseteq \dot{E}_\beta."$
  But then, in $V[G]$, there \emph{is} a $\beta_j < \kappa$ such that $X_j \subseteq E_{\beta_j}$.
  Letting $b'(\xi) = \{\beta_j \mid j \in [i(\gamma), \theta)\}$, it follows that
  $\{\delta_\eta \mid \eta < \xi\} \subseteq \bigcup_{\beta \in b'(\xi)} E_\beta$,
  and we are done.

  We are left with verifying Clause~(7). To this end, fix in $V[G]$ a poset
  $\mathbb{Q}$ such that $\mathbb{Q}^\theta$ has the $\kappa$-cc.

  \begin{claim}
    There is $i < \theta$ and $t \in \mathbb{T}_i$ such that $t \Vdash_{\mathbb{T}_i}``\check{\mathbb{Q}} \text{ has the } \check{\kappa}\text{-cc}"$.
  \end{claim}
  \begin{cproof}
    Otherwise, for all $i < \theta$, there is a $\mathbb{T}_i$-name $\dot{A}_i$
    for an antichain of size $\kappa$ in $\mathbb{Q}$. For a fixed $i < \theta$, let
    $\langle \dot{q}_{\alpha, i} \mid \alpha < \kappa \rangle$ be a sequence
    of $\mathbb{T}_i$-names for an injective enumeration of $\dot{A}_i$. For
    each $\alpha < \kappa$, fix a condition $t_\alpha \in \mathbb{T}_0$ such
    that, for all $i < \theta$, $\pi_{0i}(t_\alpha)$ decides the value of
    $\dot{q}_{\alpha, i}$, say as $q_{\alpha, i}$. Define a condition
    $\bar{q}_\alpha \in \mathbb{Q}^\theta$ by letting $\bar{q}_\alpha(i) =
    q_{\alpha, i}$ for all $i < \theta$.

    We claim that, for all $\alpha < \beta < \kappa$, $\bar{q}_\alpha$ and
    $\bar{q}_\beta$ are incompatible in $\mathbb{Q}^\theta$. To this end,
    fix $\alpha < \beta < \kappa$. There is a sufficiently large $i < \theta$ such that
    $\pi_{0i}(t_\alpha)$ and $\pi_{0i}(t_\beta)$ are comparable in $\mathbb{T}_i$.
    Without loss of generality, assume that $\pi_{0i}(t_\beta) \leq_{\mathbb{T}_i}
    \pi_{0i}(t_\alpha)$. Then $\pi_{0i}(t_\beta)$ forces $q_{\alpha, i}$ and
    $q_{\beta, i}$ to be distinct elements of the antichain $\dot{A}_i$, so
    they are incompatible in $\mathbb{Q}$. It follows that $\bar{q}_\alpha$
    and $\bar{q}_\beta$ are incompatible. But then $\{\bar{q}_\alpha \mid
    \alpha < \kappa\}$ is an antichain of size $\kappa$ in $\mathbb{Q}^\theta$,
    contradicting the assumption that $\mathbb{Q}^\theta$ has the $\kappa$-cc.
  \end{cproof}

  Fix $i < \theta$ and $t \in \mathbb{T}_i$ as given by the claim, and let $H$
  be $\mathbb{T}_i$-generic over $V[G]$ with $t \in H$. Then $\mathbb{Q}$ has the
  $\kappa$-cc and $\kappa$ is weakly compact in $V[G*H]$. It follows that
  $\mathbb{Q}^\tau$ has the $\kappa$-cc for all $\tau < \kappa$ in $V[G*H]$.
  But $(\mathbb{Q}^\tau)^{V[G*H]} = (\mathbb{Q}^\tau)^{V[G]}$ for all $\tau < \kappa$, and the
  $\kappa$-cc is downward absolute, so $\mathbb{Q}^\tau$ has the $\kappa$-cc
  for all $\tau < \kappa$ in $V[G]$.
\end{proof}

\begin{remark}
  By Corollary~\ref{non_reflecting_cor} below, the fact that there is a non-reflecting
  stationary set in the model from Theorem~\ref{thm51} implies that we in fact have
  $\reg(\theta^+) \subseteq \spec(\kappa)$. It is an interesting question whether
  or not $\card(\theta^+) \subseteq \spec(\kappa)$ necessarily holds in this model.

  It will follow from results in Part~III of this series that, in the model from
  Theorem~\ref{thm51}, $\U(\kappa, \kappa, \theta, \chi)$ holds for all $\chi < \kappa$,
  indicating a way in which Clause (6) of the theorem is sharp.
\end{remark}

\subsection{Successors of singulars}
We now turn our attention to successors of singular cardinals. We first prove
an analogue of Theorem~\ref{thm51}, indicating that, if $\lambda$ is a singular
limit of sufficiently large cardinals, then there are mild forcing extensions in
which $\chi(\lambda^+)$ takes any prescribed value in $\reg(\lambda^+) \setminus
\cf(\lambda)$.

\begin{thm} \label{thm511}
  Suppose that $\lambda$ is a singular limit of strongly compact cardinals that
  are indestructible under $\lambda^+$-directed closed set forcings, and let
  $\theta \in \reg(\lambda^+) \setminus \cf(\lambda)$. Then there is a
  cofinality-preserving forcing extension in which
  \begin{enumerate}
    \item $\chi(\lambda^+) = \theta$;
    \item $\square^{\ind}(\lambda^+, \theta)$ holds;
    \item every $\lambda^+$-Aronszajn tree has a $\theta$-ascent path;
    \item $\U(\lambda^+,2,\theta',\theta^+)$ fails for all $\theta' \in \reg(\lambda^+) \setminus \{\cf(\lambda), \theta\}$;
    \item there exists a non-reflecting stationary subset of $E^{\lambda^+}_\theta$.
  \end{enumerate}
\end{thm}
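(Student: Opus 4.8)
The strategy is to merge the forcing of Theorem~\ref{thm51} with the ultrafilter argument of Theorem~\ref{t55}. Fix an increasing sequence $\langle\lambda_j\mid j<\cf(\lambda)\rangle$ of strongly compact cardinals, indestructible under $\lambda^+$-directed closed forcing, cofinal in $\lambda$; after a standard preparation we may assume $2^\lambda=\lambda^+$. Let $\mathbb P$ be the forcing to add a $\square^{\ind}(\lambda^+,\theta)$-sequence by closed bounded initial segments and, for $i<\theta$, let $\dot{\mathbb T}_i$ be the $\mathbb P$-name for the forcing threading the $i^{\mathrm{th}}$ column. From \cite{hayut_lh} we use: $\mathbb P$ is $({<}\lambda^+)$-strategically closed of size $\lambda^+$, hence cofinality-preserving; $\mathbb P*\dot{\mathbb T}_i$ is $\lambda^+$-directed closed; over $V[G]$ each $\mathbb T_i$ adds no new $\le\lambda$-sequences; and for $(i,j)\in[\theta]^2$ there is a projection $\pi_{ij}\colon\mathbb T_i\to\mathbb T_j$ sending the condition $C_{\alpha,i}$ to $C_{\alpha,j}$, and, for all $t,t'\in\mathbb T_0$, $\pi_{0i}(t)$ and $\pi_{0i}(t')$ are comparable in $\mathbb T_i$ for all sufficiently large $i<\theta$. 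Let $G$ be $\mathbb P$-generic, $\vec C=\langle C_{\alpha,i}\mid\alpha<\lambda^+,\ i(\alpha)\le i<\theta\rangle$ the generic sequence, and for $i<\theta$ let $H_i$ be $\mathbb T_i$-generic over $V[G]$. The crucial point is that, since $\mathbb P*\dot{\mathbb T}_i$ is $\lambda^+$-directed closed, every $\lambda_j$ is again strongly compact in $V[G*H_i]$, even though $\mathbb P$ alone destroys this.

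In $V[G]$, Clause~(2) holds by a standard genericity argument, and Clause~(3) is the case $\kappa=\lambda^+$ of \cite{lh_lucke}. Clause~(5) is verified exactly as Claim~\ref{nonreflecting_claim}: the set $S:=\{\alpha\in E^{\lambda^+}_\theta\mid\forall i\in[i(\alpha),\theta)\,\exists\eta<\alpha\,(\otp(C_{\alpha,i}\setminus\eta)=\theta)\}$ is non-reflecting, and its stationarity is forced by the same density argument, using the strategic closure of $\mathbb P$. For Clause~(4): were some $c\colon[\lambda^+]^2\to\theta'$ with $\theta'\in\reg(\lambda^+)\setminus\{\cf(\lambda),\theta\}$ to witness $\U(\lambda^+,2,\theta',\theta^+)$, then by Part~I the tree $\mathcal T(c)$ would be $\lambda^+$-Aronszajn while, by Fact~\ref{lemma610}(2) applied with the parameter $\theta<\theta^+$ (since $\cf(\theta)=\theta\neq\theta'$), it would admit no $\theta$-ascent path, contradicting Clause~(3); the exclusion of $\cf(\lambda)$ here is necessary.

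For Clause~(1), the lower bound $\chi(\lambda^+)\ge\theta$ is immediate from Clause~(5) via Lemma~\ref{prop53}(4). For the upper bound, fix a $C$-sequence $\langle E_\beta\mid\beta<\lambda^+\rangle$ in $V[G]$. For each $i<\theta$, working in $V[G*H_i]$ where the $\lambda_j$'s are strongly compact, the proof of Theorem~\ref{t55} yields a club $\Delta_i\s\lambda^+$ such that for every $\alpha<\lambda^+$ there is $z\in[\lambda^+]^{\le\cf(\lambda)}$ with $\Delta_i\cap\alpha\s\bigcup_{\beta\in z}E_\beta$; fix $\mathbb T_i$-names $\dot\Delta_i$ for such clubs, and, using the $\pi_{ij}$ to reinterpret $\dot\Delta_j$ as a $\mathbb T_i$-name and passing to $\bigcap_{i\le j<\theta}\dot\Delta_j$, arrange $\forces_{\mathbb T_i}\dot\Delta_i\s\dot\Delta_j$ for all $(i,j)\in[\theta]^2$. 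Now run the argument of Clause~(2) of Theorem~\ref{thm51} verbatim: build an increasing $\langle\delta_\xi\mid\xi<\lambda^+\rangle$ and conditions $t_\xi=C_{\alpha_\xi,0}\in\mathbb T_0$ with $t_\xi\forces_{\mathbb T_0}\check\delta_\xi\in\dot\Delta_0$; put $\Delta:=\{\delta_\xi\mid\xi<\lambda^+\}$; for each $\xi$ pick $\gamma\in\acc(\lambda^+)$ above all $\alpha_\eta$ $(\eta<\xi)$, let $j_\eta<\theta$ be least with $\alpha_\eta\in\acc(C_{\gamma,j_\eta})$, set $X_j:=\{\delta_\eta\mid\eta<\xi,\ j_\eta=j\}$ for $j\in[i(\gamma),\theta)$, and deduce from coherence of $\vec C$ and the projections that $C_{\gamma,j}\forces_{\mathbb T_j}\check X_j\s\dot\Delta_j$. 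The one point where the argument differs from Theorem~\ref{thm51} is the last step: $C_{\gamma,j}$ now forces only that $X_j\s\bigcup_{\beta\in z_j}E_\beta$ for some $z_j\in[\lambda^+]^{\le\cf(\lambda)}$; since $\mathbb T_j$ adds no new $\le\cf(\lambda)$-sequences over $V[G]$, such a $z_j$ already lies in $V[G]$, and we set $b'(\xi):=\bigcup_{j\in[i(\gamma),\theta)}z_j$, of size at most $\theta\cdot\cf(\lambda)=\theta$ (recall $\cf(\lambda)\le\theta$). Then $\{\delta_\eta\mid\eta<\xi\}=\bigcup_j X_j\s\bigcup_{\beta\in b'(\xi)}E_\beta$, which, after the usual reindexing of $b'$, witnesses $\chi(\lambda^+)\le\theta$.

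The main obstacle is the bundle of forcing-theoretic facts imported from \cite{hayut_lh}: above all, that $\mathbb P*\dot{\mathbb T}_i$ is genuinely $\lambda^+$-directed closed (so that indestructibility carries the strong compactness of the $\lambda_j$'s into $V[G*H_i]$, although $\mathbb P$ on its own kills it), and that $\mathbb T_i$ adds no short sequences over $V[G]$ (which is exactly what lets the small cover $z_j$ descend from $V[G*H_i]$ to $V[G]$). Granting these, the proof is a routine amalgam of the proofs of Theorems~\ref{thm51} and~\ref{t55}.
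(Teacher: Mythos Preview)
Your overall strategy is correct and matches the paper: force with $\mathbb{P}$ to add a $\square^{\ind}(\lambda^+,\theta)$-sequence, use that $\mathbb{P}*\dot{\mathbb{T}}_i$ has a dense $\lambda^+$-directed closed subset to resurrect strong compactness in the threading extension, and then leverage Theorem~\ref{t55} there. Your treatment of Clauses~(2), (3), (5), and~(1) is essentially the paper's argument.

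However, your argument for Clause~(4) has a genuine gap. You claim that if $c$ witnessed $\U(\lambda^+,2,\theta',\theta^+)$ then $\mathcal{T}(c)$ would be a $\lambda^+$-Aronszajn tree without a $\theta$-ascent path, contradicting Clause~(3). The problem is that $\mathcal{T}(c)$ need not be a $\lambda^+$-tree: its level at $\alpha$ is $\{c(\cdot,\beta)\restriction\alpha:\beta\ge\alpha\}$, and there is no reason this has size ${\le}\lambda$ unless $\lambda^+$ is inaccessible. Indeed, in the proof of Theorem~\ref{thm51} the paper explicitly records that combining Fact~\ref{lemma610} with the ascent-path clause requires the inaccessibility hypothesis (Clause~(1) there), and even then gives only a ``weak form'' of the desired conclusion before a direct construction is carried out. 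At a successor of a singular this shortcut is simply unavailable. A further symptom: your argument never actually uses the exclusion $\theta'\neq\cf(\lambda)$, even though you assert it is ``necessary''.

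The paper instead proves Clause~(4) by the direct construction, parallel to Clause~(6) of Theorem~\ref{thm51}. In each $\mathbb{T}_i$-extension, $\lambda$ is again a singular limit of strongly compacts, so by \cite[Theorem~2.14]{paper34} the principle $\U(\lambda^+,2,\theta',\cf(\lambda)^+)$ fails there for every $\theta'\neq\cf(\lambda)$; this is where the exclusion of $\cf(\lambda)$ genuinely enters. One then fixes $\mathbb{T}_i$-names $\dot{\mathcal A}_i$ for the witnessing families in $[\lambda^+]^{\le\cf(\lambda)}$ and $\dot k_i$ for the bad color, and amalgamates across $i<\theta$ via the projections $\pi_{0i}$---exactly the mechanism you already use for Clause~(1)---to produce in $V[G]$ a family $\mathcal{A}\subseteq[\lambda^+]^{\le\theta}$ and a bound $k<\theta'$ witnessing the failure.

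A minor point: your ``recall $\cf(\lambda)\le\theta$'' is not part of the stated hypotheses; it is forced by Lemma~\ref{prop53}(2) once one asserts $\chi(\lambda^+)=\theta$, but you should say so explicitly rather than ``recall'' it.
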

\begin{proof}
  The proof is very similar to that of Theorem~\ref{thm51}, so many details will
  be suppressed. Let $\mathbb{P}$ be the standard
  forcing to add a $\square^{\ind}(\lambda^+, \theta)$-sequence and, for all
  $i < \theta$, let $\dot{\mathbb{T}}_i$ be a $\mathbb{P}$-name for the forcing
  to thread the $i^{\mathrm{th}}$ column of the generically-added
  $\square^{\ind}(\kappa, \theta)$-sequence (again, see \cite[\S 3]{hayut_lh} for details).
  Note that, for all $i < \theta$, $\mathbb{P} * \dot{\mathbb{T}}_i$ has a
  dense $\lambda^+$-directed
  closed subset and hence $\lambda$ is still a singular limit of strongly compact cardinals in
  $V^{\mathbb{P} * \dot{\mathbb{T}}_i}$.

  Let $G$ be $\mathbb{P}$-generic over $V$. In $V[G]$, let
  $\vec{C} := \langle C_{\alpha, i} \mid \alpha < \lambda^+, ~ i(\alpha)
  \leq i < \theta \rangle$ be the generically-added $\square^{\ind}(\kappa,
  \theta)$-sequence and, for every $i < \theta$, let $\mathbb{T}_i$ be the interpretation
  of $\dot{\mathbb{T}}_i$ in $V[G]$. We claim that $V[G]$ is the desired model. Clearly,
  $\square^{\ind}(\lambda^+, \theta)$ holds in $V[G]$, and the proof
  of the existence of a non-reflecting stationary subset of $E^{\lambda^+}_\theta$
  is exactly as in the proof of Claim~\ref{nonreflecting_claim}. Also, as in
  the proof of Clause~(4) of Theorem~\ref{thm51}, Clause~(3) follows immediately from
  \cite[Theorem 1.3]{lh_lucke}.

  We now verify Clause~(4). Working in $V[G]$, fix
  $\theta' \in \reg(\lambda^+)\setminus \{\cf(\lambda), \theta\}$ and a coloring
  $c:[\lambda^+]^2 \rightarrow\theta'$.
  We will find a family $\mathcal{A} \s [\lambda^+]^{\le \theta}$ consisting of
  $\lambda^+$-many pairwise disjoint sets and an ordinal
  $k < \theta'$ such that, for all $(a,b) \in [\mathcal{A}]^2$, we have $\min(c[a \times b]) \leq k$.
  For all $i < \theta$, forcing with $\mathbb{T}_i$ resurrects the fact
  that $\lambda$ is a singular limit of strongly compact cardinals, and so by \cite[Theorem~2.14]{paper34},
  $\Vdash_{\mathbb{T}_i}``\U(\lambda^+, 2, \theta', \cf(\lambda)^+)$ fails".
  In particular, for each $i < \theta$, we can fix a $\mathbb{T}_i$-name
  $\dot{\mathcal{A}}_i$ for a subset of $[\lambda^+]^{\leq \cf(\lambda)}$
  consisting of $\lambda^+$-many pairwise disjoint sets and a $\mathbb{T}_i$-name
  $\dot{k}_i$ for an ordinal below $\theta'$ such that
  $$
    \Vdash_{\mathbb{T}_i}``\text{for all } (a,b) \in [\dot{\mathcal{A}}_i]^2,
    \text{ we have } \min(\check{c}[a \times b]) \leq \dot{k}_i".
  $$

  Recall that, for any pair $(j, i)\in[\theta]^2$, we have a
  projection map $\pi_{ji}:\mathbb{T}_j \rightarrow \mathbb{T}_i$. Fix
  $t \in \mathbb{T}_0$ such that, for all $i < \theta$, $\pi_{0i}(t)$ decides
  the value of $\dot{k}_i$, say as $k_i$.

  We now recursively construct a sequence $\langle t_\gamma \mid \gamma < \lambda^+
  \rangle$ of condition from $\mathbb{T}_0$ and a matrix $\langle a_{\gamma, i}
  \mid \gamma < \lambda^+, ~ i < \theta \rangle$ of elements of
  $[\lambda^+]^{\leq \cf(\lambda)}$. To this end, suppose that $\delta < \lambda^+$
  and we have defined $\langle t_\gamma \mid \gamma < \delta \rangle$ and
  $\langle a_{\gamma, i} \mid \gamma < \delta, ~ i < \theta \rangle$.
  Let $\beta_\delta := \sup(\bigcup \{a_{\gamma, i} \mid \gamma < \delta, ~ i < \theta\})$.
  Now use the fact that each $\dot{\mathcal{A}}_i$ is forced to consist of
  $\lambda^+$-many pairwise disjoint elements of $[\lambda^+]^{\leq \cf(\lambda)}$
  and the fact that each $\mathbb{T}_i$ is ${<}\lambda^+$-distributive and hence
  does not add any new elements to $[\lambda^+]^{\leq \cf(\lambda)}$ to find
  a condition $t_\delta \in \mathbb{T}_0$ and a sequence $\langle a_{\delta, i}
  \mid i < \theta \rangle$ of elements of $[\lambda^+]^{\leq \cf(\lambda)}$ such
  that, for all $i < \theta$,
  \begin{itemize}
    \item $\beta_\delta < a_{\delta, i}$;
    \item $\pi_{0i}(t_i) \Vdash_{\mathbb{T}_i}``\check{a}_{\delta, i} \in
      \dot{\mathcal{A}_i}"$.
  \end{itemize}

  For each $\gamma < \lambda^+$, let $a_\gamma := \bigcup_{i < \theta} a_{\gamma, i}$,
  and note that $a_\gamma \in [\lambda^+]^{\leq \theta}$. Note also that, for all
  $(\gamma, \delta) \in [\lambda^+]^2$, we have $a_\gamma < a_\delta$.
  Since $\theta' \neq \theta$, we can fix a $k < \theta'$ and an unbounded
  $I \subseteq \theta$ such that, for all $i \in I$, we have $k_i \leq k$.
  We claim that $k$ and $\mathcal{A} := \{a_\gamma \mid \gamma < \lambda^+\}$
  are as desired. To verify this, fix $(\gamma, \delta) \in [\lambda^+]^2$.
  For all sufficiently large $i < \theta$, we know that $\pi_{0i}(t_\gamma)$
  and $\pi_{0i}(t_\delta)$ are $\leq_{\mathbb{T}_i}$-comparable. We can therefore
  find $i \in I$ such that $\pi_{0i}(t_\gamma)$ and $\pi_{0i}(t_\delta)$ are
  $\leq_{\mathbb{T}_i}$-comparable. Without loss of generality, suppose that
  $\pi_{0i}(t_\delta) \leq_{\mathbb{T}_i} \pi_{0i}(t_\gamma)$. Then
  $\pi_{0i}(t_\delta) \Vdash_{\mathbb{T}_i}``\check{a}_{\gamma, i},
  \check{a}_{\delta, i} \in \dot{\mathcal{A}_i}"$. Since $t_\delta \leq_{\mathbb{T}_0}
  t$, it follows that $\min(c[a_{\gamma, i} \times a_{\delta, i}]) \leq k_i \leq k$.
  Since $a_{\gamma, i} \subseteq a_\gamma$ and $a_{\delta, i} \subseteq a_\gamma$,
  the conclusion is immediate.

  We finally turn to Clause~(1). To prove that
  $\chi(\lambda^+) \leq \theta$, note that by Theorem \ref{t55}, for all $i < \theta$, we have
  $\Vdash_{\mathbb{T}_i}``\chi(\lambda^+) = \cf(\lambda)"$. Now proceed as in
  the proof of Theorem~\ref{thm51}, making adjustments analogous to those
  made between the proofs of Clause~(6) of Theorem~\ref{thm51} and Clause~(4) of this theorem.
\end{proof}

The following theorem is proven in a similar manner. The theorem and proof use an indexed
square principle known as $\square^{\ind}_{\lambda, \cf(\lambda)}$. As the notation suggests,
it is a strengthening of both $\square_{\lambda, \cf(\lambda)}$ and $\square^{\ind}(\lambda^+, \cf(\lambda))$. For information about
this square principle and the related standard forcing notions that are used in the following
proof, see \cite[\S 9]{cfm_squares}.

\begin{thm} \label{thm513}
  Suppose that $\lambda$ is a singular limit of strongly compact cardinals $\nu$,
  each of which is indestructible under $\nu$-directed closed set forcings. Then
  there is a cofinality-preserving forcing extension in which
  \begin{enumerate}
    \item $\square^{\ind}_{\lambda, \cf(\lambda)}$ holds;
    \item $\chi(\lambda^+) = \cf(\lambda)$;
    \item $\U(\lambda^+, \lambda^+, \theta, \cf(\lambda)^+)$ fails for all $\theta \in \reg(\lambda)\setminus \{\cf(\lambda)\}$.
  \end{enumerate}
\end{thm}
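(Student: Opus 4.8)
The plan is to run the argument of Theorems~\ref{thm51} and~\ref{thm511} once more, now with the square principle $\square^{\ind}_{\lambda,\cf(\lambda)}$ and its associated forcing notions from \cite[\S9]{cfm_squares} replacing $\square^{\ind}(\kappa,\theta)$ and the notions of \cite[\S3]{hayut_lh}. Let $\mathbb{P}$ be the standard forcing to add a $\square^{\ind}_{\lambda,\cf(\lambda)}$-sequence and, for each $i<\cf(\lambda)$, let $\dot{\mathbb{T}}_i$ be the $\mathbb{P}$-name for the forcing to thread the $i^{\mathrm{th}}$ column. From \cite[\S9]{cfm_squares} I would import: $\mathbb{P}$ is cofinality-preserving; for each $i<\cf(\lambda)$, $\mathbb{P} * \dot{\mathbb{T}}_i$ is cofinality-preserving, $\mathbb{T}_i$ is ${<}\lambda^+$-distributive over $V[G]$, and $\mathbb{P} * \dot{\mathbb{T}}_i$ has a dense subset sufficiently directed closed that every member of a fixed cofinal sequence of strongly compact cardinals in $\lambda$ survives forcing with it (this is where the $\nu$-directed-closed indestructibility of the hypothesis is used), so that forcing with $\mathbb{T}_i$ over $V[G]$ \emph{resurrects} the statement ``$\lambda$ is a singular limit of strongly compact cardinals''; and the usual projection maps $\pi_{ji}\colon\mathbb{T}_j\to\mathbb{T}_i$ (for $(j,i)\in[\cf(\lambda)]^2$), the eventual $\le_{\mathbb{T}_i}$-comparability of projections of conditions, and the coherence facts $C_{\gamma,i}\le_{\mathbb{T}_i}C_{\alpha,i}$ (whenever $\alpha\in\acc(C_{\gamma,i})$) are available. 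Fix $G$ that is $\mathbb{P}$-generic over $V$ and work in $V[G]$; Clause~(1) holds with the generically added sequence as witness.

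For Clause~(3), I would first observe that $\U(\lambda^+,\lambda^+,\theta,\cf(\lambda)^+)$ implies $\U(\lambda^+,2,\theta,\cf(\lambda)^+)$, so it suffices to refute the latter, and this proceeds exactly as in the proof of Clause~(4) of Theorem~\ref{thm511}. Given $c\colon[\lambda^+]^2\to\theta$ with $\theta\in\reg(\lambda)\setminus\{\cf(\lambda)\}$, resurrection together with \cite[Theorem~2.14]{paper34} supplies, for each $i<\cf(\lambda)$, $\mathbb{T}_i$-names $\dot{\mathcal{A}}_i$ for a family of $\lambda^+$-many pairwise disjoint elements of $[\lambda^+]^{\le\cf(\lambda)}$ and $\dot k_i<\theta$ with $\Vdash_{\mathbb{T}_i}\forall(a,b)\in[\dot{\mathcal{A}}_i]^2\,(\min(c[a\times b])\le\dot k_i)$. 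Fixing $t\in\mathbb{T}_0$ deciding all the $\dot k_i$ as values $k_i$, one builds through the projection maps (using ${<}\lambda^+$-distributivity of $\mathbb{T}_i$ to keep the relevant sets in $V[G]$) a sequence $\langle t_\gamma\mid\gamma<\lambda^+\rangle$ below $t$ in $\mathbb{T}_0$ and a matrix $\langle a_{\gamma,i}\mid\gamma<\lambda^+,\ i<\cf(\lambda)\rangle$ of members of $[\lambda^+]^{\le\cf(\lambda)}$; setting $a_\gamma:=\bigcup_{i<\cf(\lambda)}a_{\gamma,i}$, $\mathcal{A}:=\{a_\gamma\mid\gamma<\lambda^+\}$, and picking (using $\theta\ne\cf(\lambda)$) some $k<\theta$ and an unbounded $I\subseteq\cf(\lambda)$ with $k_i\le k$ for all $i\in I$, comparability of projections at an index of $I$ yields $\min(c[a_\gamma\times a_\delta])\le k$ for all $(\gamma,\delta)\in[\lambda^+]^2$. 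Hence $c$ fails to witness $\U(\lambda^+,2,\theta,\cf(\lambda)^+)$, as seen by taking the parameters $\chi':=\cf(\lambda)$, the family $\mathcal{A}$, and the color $k$.

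Clause~(2) is the crux, and is where $\square^{\ind}_{\lambda,\cf(\lambda)}$ having exactly $\cf(\lambda)$ columns is essential. The lower bound $\chi(\lambda^+)\ge\cf(\lambda)$ is Lemma~\ref{prop53}(2). For the upper bound, fix a $C$-sequence $\vec{E}=\langle E_\beta\mid\beta<\lambda^+\rangle$. Since forcing with each $\mathbb{T}_i$ over $V[G]$ resurrects ``$\lambda$ is a singular limit of strongly compact cardinals'', Theorem~\ref{t55} applied in that extension yields $\mathbb{T}_i$-names $\dot\Delta_i$ for a club in $\lambda^+$ and $\dot b_i$ for a function into $[\lambda^+]^{\le\cf(\lambda)}$ with $\Vdash_{\mathbb{T}_i}\forall\alpha\,(\dot\Delta_i\cap\alpha\subseteq\bigcup_{\beta\in\dot b_i(\alpha)}E_\beta)$; intersecting tails (still clubs, as $\cf(\lambda)<\lambda^+$) we may assume $\Vdash_{\mathbb{T}_i}\dot\Delta_i\subseteq\dot\Delta_j$ for $i<j$. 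Now one follows the proof of Clause~(2) of Theorem~\ref{thm51}: construct a cofinal $\Delta=\{\delta_\xi\mid\xi<\lambda^+\}\subseteq\lambda^+$ together with conditions $C_{\alpha_\xi,0}\in\mathbb{T}_0$ forcing $\delta_\xi\in\dot\Delta_0$, and for each $\xi$ split $\{\delta_\eta\mid\eta<\xi\}$ — according to the least column $j$ with $\alpha_\eta\in\acc(C_{\gamma,j})$ for a common bound $\gamma\in\acc(\lambda^+)$ — into pieces $X_j$ ($i(\gamma)\le j<\cf(\lambda)$). Coherence gives $C_{\gamma,j}\Vdash_{\mathbb{T}_j}X_j\subseteq\dot\Delta_j$, so $C_{\gamma,j}$ forces $X_j$ to be covered by $\le\cf(\lambda)$-many members of $\vec{E}$; since $\mathbb{T}_j$ is ${<}\lambda^+$-distributive over $V[G]$ and $X_j,\vec{E}\in V[G]$, such a covering set $b_j$ already lies in $V[G]$, and then $b(\xi):=\bigcup_{i(\gamma)\le j<\cf(\lambda)}b_j$ has size $\le\cf(\lambda)\cdot\cf(\lambda)=\cf(\lambda)$ and covers $\{\delta_\eta\mid\eta<\xi\}$. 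Extending $b$ to all of $\lambda^+$ as in Lemma~\ref{lemma53} then witnesses $\chi(\lambda^+)\le\cf(\lambda)$.

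The main obstacle I anticipate lies on the forcing side: confirming that the machinery of \cite[\S9]{cfm_squares} actually delivers cofinality preservation and ${<}\lambda^+$-distributivity of $\mathbb{P} * \dot{\mathbb{T}}_i$ together with the resurrection of ``$\lambda$ is a singular limit of strongly compact cardinals'' (it is exactly here that the $\nu$-directed-closed indestructibility in the hypothesis is consumed), in the more delicate singular setting — these are the analogues of what were essentially routine steps in Theorems~\ref{thm51} and~\ref{thm511}, but the singular combinatorics must be checked with care. The combinatorial heart of Clause~(2), by contrast, should transcribe almost verbatim from Theorem~\ref{thm51}; the one genuinely new point is the arithmetic $\cf(\lambda)\cdot\cf(\lambda)=\cf(\lambda)$, which allows the $\cf(\lambda)$ columns of the square to recombine without inflating the witnessing function beyond $[\lambda^+]^{\le\cf(\lambda)}$ — the precise matching of ``number of columns'' to ``target value of $\chi(\lambda^+)$'' that makes the whole scheme work, and hence pins $\chi(\lambda^+)$ at $\cf(\lambda)$.
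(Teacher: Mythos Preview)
Your proposal contains a genuine gap stemming from a crucial structural difference between $\square^{\ind}(\lambda^+,\theta)$ and $\square^{\ind}_{\lambda,\cf(\lambda)}$. In the latter, the clubs in the $i^{\mathrm{th}}$ column are required to have order type less than $\lambda_i$ (where $\langle\lambda_i\mid i<\cf(\lambda)\rangle$ is a fixed increasing cofinal sequence of indestructible strongly compacts). Consequently, $\mathbb{P}*\dot{\mathbb{T}}_i$ has a dense $\lambda_i$-directed closed subset, \emph{not} a $\lambda^+$-directed closed one. This breaks two of your assumptions at once. First, only $\lambda_j$ for $j\le i$ survive as strongly compact after threading column~$i$; your claim that forcing with $\mathbb{T}_i$ resurrects ``$\lambda$ is a singular limit of strongly compact cardinals'' is unjustified, so you cannot invoke Theorem~\ref{t55} (nor \cite[Theorem~2.14]{paper34}) in the $\mathbb{T}_i$-extension. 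Second, $\mathbb{T}_i$ is not ${<}\lambda^+$-distributive over $V[G]$; in fact the generic thread is a cofinal subset of $(\lambda^+)^{V[G]}$ of order type at most $\lambda_i$, so $(\lambda^+)^{V[G]}$ is collapsed.

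The paper's proof circumvents these obstacles by working with only the strong compactness of the single cardinal $\lambda_i$ in the $\mathbb{T}_i$-extension, appealing to the proof of Claim~\ref{claim581} rather than to the full Theorem~\ref{t55}: this yields an unbounded $A_i\subseteq(\lambda^+)^{V[G]}$ such that every $B\in[A_i]^{<\lambda_i}$ is covered by a \emph{single} $E_\beta$. One then builds a matrix $\langle\alpha_{\delta,i}\rangle$ (using only ${<}\cf(\lambda)^+$-distributivity of $\mathbb{T}_0$, which does hold since $\lambda_0>\cf(\lambda)$), lets $\Delta:=\bigcap_i\acc(\Delta_i)\cap E^{\lambda^+}_{>\cf(\lambda)}$, and for each $\alpha$ defines pieces $B_i$ indexed by accumulation in $C_{\gamma,i}$. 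The order-type bound forces $|B_i|<\lambda_i$, so each $B_i$ is covered by one $E_{\beta_i}$; closure of $E_{\beta_i}$ then absorbs $\acc^+(B_i)$, and a pigeonhole argument using $\cf(\eta)>\cf(\lambda)$ shows $\Delta\cap\alpha\subseteq\bigcup_i\acc^+(B_i)$. The bounded order types --- the very feature that prevents your direct transcription of Theorem~\ref{thm51} --- are thus the engine that makes the covering work. Analogous adjustments are needed for Clause~(3).
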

\begin{proof}
  Fix an increasing sequence $\langle \lambda_i
  \mid i < \cf(\lambda) \rangle$ of indestructibly strongly compact cardinals, with
  $\lambda_0 > \cf(\lambda)$, that converges to $\lambda$. Let $\mathbb{P}$ be the standard forcing to add a
  $\square^{\ind}_{\lambda, \cf(\lambda)}$-sequence
  such that, for all $i < \cf(\lambda)$, all clubs in the $i^{\mathrm{th}}$ column of the
  square sequence have order type less than $\lambda_i$. For all
  $i < \cf(\lambda)$, let $\dot{\mathbb{T}}_i$ be a $\mathbb{P}$-name for the
  forcing to thread the $i^{\mathrm{th}}$ column of the generically-added
  $\square^{\ind}_{\lambda, \cf(\lambda)}$-sequence. Note that, for all
  $i < \cf(\lambda)$, $\mathbb{P} * \dot{\mathbb{T}}_i$ has
  a dense $\lambda_i$-directed closed subset and thus preserves the strong compactness
  of $\lambda_j$ for all $j \leq i$.

  Let $G$ be $\mathbb{P}$-generic over $V$. In $V[G]$, let $\vec{C} :=
  \langle C_{\alpha, i} \mid \alpha < \lambda^+, ~ i(\alpha) \leq i < \cf(\lambda)
  \rangle$ be the generically-added $\square^{\ind}_{\lambda, \cf(\lambda)}$-sequence,
  and, for all $i < \theta$, let $\mathbb{T}_i$ be the interpretation of
  $\dot{\mathbb{T}}_i$ in $V[G]$. We claim that $V[G]$ is the desired model.
  Clearly, $\square^{\ind}_{\lambda, \cf(\lambda)}$ holds in $V[G]$. The proof
  of the failure of $\U(\lambda^+, \lambda^+, \theta, \cf(\lambda)^+)$
  for all $\theta \in \reg(\lambda) \setminus \{\cf(\lambda)\}$ is similar to the
  proof of the analogous fact in Theorem~\ref{thm511}, making adjustments similar
  to those made below in the proof that $\chi(\lambda^+) = \cf(\lambda)$, so
  we leave it to the reader.

  We end by proving that $\chi(\lambda^+) = \cf(\lambda)$. By Clause~(2) of
  Lemma~\ref{prop53}, we have $\chi(\lambda^+) \geq \cf(\lambda)$. To
  prove the other inequality, work in $V[G]$ and fix a $C$-sequence
  $\langle E_\beta \mid \beta < \lambda^+ \rangle$. We will find
  $\Delta \in [\lambda^+]^{\lambda^+}$ and $b:\lambda^+ \rightarrow [\lambda^+]^{\leq \cf(\lambda)}$
  such that $\Delta \cap \alpha \subseteq \bigcup_{\beta \in b(\alpha)} E_\beta$
  for all $\alpha < \lambda^+$.

  Fix $i < \cf(\lambda)$. Since forcing with $\mathbb{T}_i$ resurrects the
  strong compactness of $\lambda_i$, the proof of Claim~\ref{claim581} shows
  that, in the extension by $\mathbb{T}_i$, there is an unbounded subset
  $A_i \subseteq (\lambda^+)^{V[G]}$ such that, for all $B \in [A_i]^{<\lambda_i}$,
  there is $\beta < (\lambda^+)^{V[G]}$ such that $B \subseteq E_\beta$. Let
  $\dot{A}_i$ be a $\mathbb{T}_i$-name for $A_i$.

  As usual, for $(j, i)\in[\cf(\lambda)]^2$, we have a projection map $\pi_{ji}:
  \mathbb{T}_j \rightarrow \mathbb{T}_i$ given by $C_{\gamma, j} \mapsto
  C_{\gamma, i}$. Using the fact that $\mathbb{T}_0$ is ${<}\cf(\lambda)^+$-distributive,
  recursively construct a sequence $\langle t_\delta \mid \delta < \lambda^+ \rangle$
  of conditions from $\mathbb{T}_0$ and a matrix $\langle \alpha_{\delta, i} \mid
  \delta < \lambda^+, ~ i < \cf(\lambda) \rangle$ such that:
  \begin{itemize}
    \item for all $(\delta, \delta') \in [\lambda^+]^2$ and all $i, i' < \cf(\lambda)$,
      we have $\alpha_{\delta, i} < \alpha_{\delta', i'}$;
    \item for all $\delta < \lambda^+$ and all $i < \cf(\lambda)$, we have
      $\pi_{0i}(t_\delta) \Vdash_{\mathbb{T}_i}``\alpha_{\delta, i} \in \dot{A}_i"$.
  \end{itemize}
  For all $i < \cf(\lambda)$, let $\Delta_i := \{\alpha_{\delta, i} \mid \delta
  < \lambda^+\}$, and then let $\Delta := \bigcap_{i < \cf(\lambda)} \acc(\Delta_i)
  \cap E^{\lambda^+}_{> \cf(\lambda)}$. Clearly, $\Delta \in [\lambda^+]^{\lambda^+}$.

  To finish, we need to find, for every $\alpha < \lambda^+$, a set
  $b(\alpha) \in [\lambda^+]^{\cf(\lambda)}$ such that $\Delta \cap \alpha
  \subseteq \bigcup_{\beta \in b(\alpha)} E_\beta$. To this end, fix
  $\alpha < \lambda^+$ and let $\delta^* < \lambda^+$ be the least limit ordinal such that
  $\alpha \leq \alpha_{\delta^*, 0}$. For all $\delta < \delta^*$, let
  $\gamma_\delta$ be the unique element of $\acc(\lambda^+)$ such that
  $t_\delta = C_{\gamma_\delta, 0}$. Choose $\gamma \in \acc(\lambda^+)$ such
  that $\gamma_\delta < \gamma$ for all $\delta < \delta^*$. For all
  $i < \cf(\lambda)$, let $B_i := \{\alpha_{\delta, i} \mid \delta < \delta^* \
  \&\ \gamma_\delta \in \acc(C_{\gamma, i})\}$. Clearly, $|B_i| < \lambda_i$
  for all $i < \cf(\lambda)$.

  \begin{claim}
    $\Delta \cap \alpha \subseteq \bigcup_{i < \cf(\lambda)} \acc^+(B_i)$.
  \end{claim}
  \begin{cproof}
    Fix $\eta \in \Delta \cap \alpha$. By the construction of $\Delta$, there is $\delta < \delta^*$
    such that $\cf(\delta) = \cf(\eta)$ and, for all $i < \cf(\lambda)$, $\eta =
    \sup\{\alpha_{\epsilon, i} \mid \epsilon < \delta\}$. Since $\cf(\delta) >
    \cf(\lambda)$, there is $i < \cf(\lambda)$ such that
    \begin{itemize}
      \item $\sup\{\epsilon < \delta \mid \gamma_\epsilon \in \acc(C_{\gamma, i})\} =
        \delta$; and
      \item there is $\varepsilon \in (\delta, \delta^*)$ such that
        $\gamma_\varepsilon \in \acc(C_{\gamma, i})$.
    \end{itemize}
    Then $\eta \in \acc^+(B_i)$.
  \end{cproof}

  For each $i < \cf(\lambda)$ and each $\delta < \delta^*$ such that $\alpha_{\delta, i}
  \in B_i$, we have $C_{\gamma, i} \leq_{\mathbb{T}_i} \pi_{0i}(t_\delta)$. In
  particular, $C_{\gamma, i} \Vdash_{\mathbb{T}_i}``\check{B}_i \in [\dot{A}_i]^{<\check{\lambda}_i}"$.
  It follows that there is $\beta_i$ such that $B_i \subseteq E_{\beta_i}$. Since
  $E_{\beta_i}$ is closed, we in fact have $\acc^+(B_i) \subseteq E_{\beta_i}$.
  But then, letting $b(\alpha) := \{\beta_i \mid i < \cf(\lambda)\}$, we have
  $\Delta \cap \alpha \subseteq \bigcup_{\beta \in b(\alpha)}E_\beta$.
\end{proof}

We now turn to using Prikry-type forcings to obtain models with singular cardinals
$\lambda$ for which $\chi(\lambda^+) = \cf(\lambda)$. We focus on the case
in which $\cf(\lambda) = \aleph_0$. These methods allow us to obtain results
from weaker large cardinal assumptions than those of the previous two results
and will also allow us to bring these results down to smaller singular cardinals, such as $\aleph_\omega$.
We refer the reader to \cite{gitik_handbook} for more information about
the Prikry-type forcings used in this section.
Since the results at $\aleph_\omega$ will require additional technical arguments
that may obscure the main ideas, we begin by presenting results about singular
cardinals that remain limits of large cardinals.
For our first such result, we need the following large cardinal notion. It was
introduced by Neeman and Steel in \cite{neeman_steel}, where it goes by the
name ``$\Pi^2_1$-subcompact". We use the alternative name introduced by Hayut
and Unger in \cite{hayut_unger}.

\begin{defn}
  A cardinal $\lambda$ is \emph{$\lambda^{+}$-$\Pi^1_1$-subcompact} if, for every
  set $A \s H(\lambda^+)$ and every $\Pi^1_1$ statement $\Phi$ such that
  $(H(\lambda^+), \in, A) \models \Phi$, there are $\rho < \lambda$, $B \s H(\rho^+)$,
  and an elementary embedding $j:(H(\rho^+), \in, B) \rightarrow (H(\lambda^+), \in, A)$
  such that $\crit(j) = \rho$ and $(H(\rho^+), \in, B) \models \Phi$.
\end{defn}
It is easily proven (see \cite[Lemma~36]{hayut_unger}) that, if $\lambda$ is $\lambda^+$-$\Pi^1_1$-subcompact, then $\lambda$ is measurable.

\begin{thm}\label{prikryforcing}
  Suppose that $\lambda$ is $\lambda^+$-$\Pi^1_1$-subcompact and $U$ is a normal
  measure over $\lambda$. Then, in the extension by the Prikry forcing defined
  from $U$, we have $\chi(\lambda^+) = \aleph_0$.
\end{thm}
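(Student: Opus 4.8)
The plan is to analyze the Prikry forcing $\mathbb{P}_U$ associated to the normal measure $U$, exploiting two complementary features: (i) after forcing, $\lambda$ is still a singular cardinal of cofinality $\omega$ below $\lambda^+$, so Lemma~\ref{prop53}(2) already gives $\chi(\lambda^+) \geq \cf(\lambda) = \aleph_0$; (ii) the $\lambda^+$-$\Pi^1_1$-subcompactness of $\lambda$ survives the forcing in a suitable guise, allowing a reflection argument that bounds $\chi(\lambda^+)$ from above by $\aleph_0$. The essential point is that $\mathbb{P}_U$ has the $\lambda^+$-cc (and $\lambda^+$ is preserved), so a $C$-sequence $\vec{C} = \langle C_\beta \mid \beta < \lambda^+ \rangle$ in $V[G]$ is coded by a set $A \subseteq H(\lambda^+)$, and the statement ``$\vec{C}$ witnesses $\chi(\lambda^+) > \aleph_0$'', suitably phrased, is $\Pi^1_1$ over the structure $(H(\lambda^+), \in, A)$.

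The core argument runs as follows. Fix, in $V[G]$, an arbitrary $C$-sequence $\vec{C} = \langle C_\beta \mid \beta < \lambda^+ \rangle$; we must produce $\Delta \in [\lambda^+]^{\lambda^+}$ and $b:\lambda^+ \to [\lambda^+]^{\aleph_0}$ with $\Delta \cap \alpha \subseteq \bigcup_{\beta \in b(\alpha)} C_\beta$ for all $\alpha$. First I would record the standard fact (proved via the subcompact embedding in $V$ combined with the Prikry property, as in \cite{neeman_steel} and \cite{hayut_unger}) that in $V[G]$ there are stationarily many $\rho < \lambda$ in $\reg(\lambda)^{V[G]}$ such that $\vec{C} \restriction \rho^+$, together with any relevant parameters, ``reflects'' to $\rho^+$ via an elementary embedding $k : (H(\rho^+), \in, A \cap H(\rho^+)) \to (H(\lambda^+), \in, A)$ with $\crit(k) = \rho$, $k(\rho) = \lambda$, and $k(\rho^+) = \lambda^+$; crucially such $\rho$ may be taken to have already been a measurable (hence inaccessible) cardinal in $V$ that became singular of cofinality $\omega$ after the Prikry forcing restricted below $\rho$. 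One then argues, as in the ground-model analysis, that at such $\rho$ the C-sequence $\vec{C} \restriction \rho^+$ satisfies $\chi(\rho^+) \leq \aleph_0$ — for instance because $\rho^+$ is the successor of a singular cardinal of cofinality $\omega$ and one can run the Theorem~\ref{t55}/Claim~\ref{claim581}-style ultrafilter argument, or more directly because the measurability of $\rho$ in $V$ gives a $\rho$-complete ultrafilter that yields the required $A_\rho$ and then the diagonal construction producing $\Delta_\rho, b_\rho$ below $\rho^+$. Pushing the witnessing pair $(\Delta_\rho, b_\rho)$ up through $k$ and reflecting the statement ``this pair witnesses $\chi(\rho^+) \leq \aleph_0$ for $\vec{C} \restriction \rho^+$'' gives, by elementarity, a witness that $\chi(\lambda^+) \leq \aleph_0$ for $\vec{C}$ itself.

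I expect two places to require care. The first is verifying that the relevant $\Pi^1_1$ reflection really does survive the Prikry forcing and can be applied to the specific statement at hand: one needs the statement ``for every $\Delta \in [\lambda^+]^{\lambda^+}$ and every $b : \lambda^+ \to [\lambda^+]^{\aleph_0}$ there is $\alpha$ with $\Delta \cap \alpha \not\subseteq \bigcup_{\beta \in b(\alpha)} C_\beta$'' to be genuinely $\Pi^1_1$ over $(H(\lambda^+), \in, \vec C)$, and one needs to check that Prikry forcing below $\lambda$ preserves enough of the subcompactness of $\lambda$ — this is exactly the content of the Neeman–Steel/Hayut–Unger machinery, so I would cite it rather than reprove it. The second, and I think the genuinely delicate point, is handling the reflection cardinal $\rho$ correctly: in $V[G]$ the cardinal $\rho$ has cofinality $\omega$, and one must ensure that the Prikry-generic sequence cofinal in $\rho$ (inherited from $G$) is compatible with the elementary embedding $k$ and that $\vec C \restriction \rho^+$ is correctly computed in $V[G]$, so that the ground-model measurability of $\rho$ can still be harnessed to run the $\aleph_0$-bounding argument at $\rho^+$. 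Once the reflection is set up so that $\rho$ is a $V$-measurable cardinal with a cofinal $\omega$-sequence in $V[G]$ and $\vec C \restriction \rho^+ \in V[G]$, the local computation $\chi(\rho^+)^{V[G]} \leq \aleph_0$ should follow from the same ultrafilter/Fodor argument used to prove Theorem~\ref{t55}, and elementarity finishes the proof.
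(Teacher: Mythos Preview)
Your proposal has a genuine structural gap and diverges significantly from the paper's argument.

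\textbf{Where the approaches differ.} The paper does \emph{not} attempt to show that $\lambda^+$-$\Pi^1_1$-subcompactness survives Prikry forcing. Instead, it works entirely in the ground model $V$: given a $\mathbb{P}$-name $\dot{D}$ for a $C$-sequence over $\lambda^+$, the pair $(\mathbb{P},\dot{D})$ is coded by some $A\subseteq H(\lambda^+)^V$, and the relevant $\Pi^1_1$ statement is ``for every $\Delta\in[\lambda^+]^{\lambda^+}$ there exists $x\in[\Delta]^{<\lambda}$ such that the set $F_x=\{p\in\mathbb{P}\mid\exists\beta\,(p\Vdash\check{x}\subseteq\dot{D}_\beta)\}$ is not dense''. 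This is a statement about \emph{names and the poset}, not about the extension. Subcompactness then yields $\rho<\lambda$ and $j:(H(\rho^+),\in,B)\to(H(\lambda^+),\in,A)$ with $B$ coding a reflected pair $(\bar{\mathbb{P}},\dot{\bar{D}})$. The contradiction comes from looking at $\gamma:=\sup(j``\rho^+)$: since $\cf^V(\gamma)=\rho^+<\lambda$, one may assume $\dot{D}_\gamma$ is forced to be a ground-model subset of a fixed club $C_\gamma$ of size $<\lambda$, and the Prikry property lets each stem $s\in[\rho]^{<\omega}$ decide $\dot{D}_\gamma$ as some $D^*_s$. Intersecting over the $\rho$-many such stems gives a club $D^*$ in $\gamma$, and its $j$-preimage $\bar{\Delta}\subseteq\rho^+$ witnesses the failure of the reflected $\Pi^1_1$ statement.

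\textbf{Why your plan does not go through as stated.} Two of the points you flagged as ``requiring care'' are in fact genuine obstacles. First, preservation of $\Pi^1_1$-subcompactness by Prikry forcing is not established in the references you cite, and proving it would be at least as hard as the theorem itself. Second, and more seriously, your local argument at $\rho^+$ is circular: you propose to show $\chi(\rho^+)^{V[G]}\le\aleph_0$ using that $\rho$ was measurable in $V$, but measurability of $\rho$ only gives a $\rho$-complete ultrafilter on $\rho$, not a $\rho$-complete uniform (let alone weakly normal) ultrafilter on $\rho^+$; the latter is what the Theorem~\ref{t55}/Claim~\ref{claim581} machinery actually requires, and it comes from strong compactness. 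So you are left trying to prove at $\rho^+$ exactly the same statement you are trying to prove at $\lambda^+$, with no additional leverage. The paper avoids both issues by reflecting the \emph{negation} (``no $\Delta$ works for the name'') and refuting it via the explicit construction at $\gamma=\sup(j``\rho^+)$, which uses only the Prikry property and the fact that $|C_\gamma|<\lambda$.
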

\begin{proof}
  For the duration of this proof, let $[\lambda]^{<\omega}$ denote the set of
  finite increasing sequences from $\lambda$. Let $\mathbb{P}$ be the Prikry forcing
  defined from $U$. Fix, in $V$, a $C$-sequence
  $\langle C_\beta \mid \beta < \lambda^+ \rangle$ such that $\otp(C_\beta) =
  \cf(\beta)$ for all $\beta \in \acc(\lambda^+)$. Let $\dot{D}$ be a $\mathbb{P}$-name
  for a $C$-sequence over $\lambda^+$, and for $\beta < \lambda^+$, let $\dot{D}_\beta$
  be a $\mathbb{P}$-name forced to be the $\beta^{\mathrm{th}}$ entry of $\dot{D}$.
  We will in fact show that there is $\Delta \in [\lambda^+]^{\lambda^+}$ in $V$
  which is forced to witness the instance of $\chi(\lambda^+) = \aleph_0$ for
  $\dot{D}$ in $V^{\mathbb{P}}$.

  Let $\Gamma := \{\beta < \lambda^+ \mid \omega < \cf(\beta) < \lambda\}$. Since
  shrinking the clubs in a $C$-sequence only makes it harder to witness
  the value of the $C$-sequence number, we can assume without
  loss of generality that, for all $\beta \in \Gamma$, it is forced that $\dot{D}_\beta\s C_\beta$.

  For all  $x \in [\lambda^+]^{<\lambda}$, let $F_x$ denote the set of $p \in \mathbb{P}$
  such that, for some $\beta < \lambda^+$, $p \Vdash_{\mathbb{P}} ``\check{x}\s\dot{D}_\beta"$.

  \begin{claim}
    There is  $\Delta \in[\lambda^+]^{\lambda^+}$ such that, for every 
    $x \in [\Delta]^{<\lambda}$, $F_x$ is dense in $\mathbb{P}$.
  \end{claim}
  \begin{cproof}
    Suppose not. Then, for every $\Delta \in[\lambda^+]^{\lambda^+}$, there is
    $x \in [\Delta]^{<\lambda}$ such that $F_x$ is not dense in $\mathbb{P}$.
    This is a $\Pi^1_1$ statement, which we will call $\Phi$, which is satisfied
    by the structure $(H(\lambda^+), \in, A)$,
    where $A \s H(\lambda^+)$ codes the pair $(\mathbb{P}, \dot{D})$ in a natural way.
    By the fact that $\lambda$ is $\lambda^+$-$\Pi^1_1$-subcompact, we can
    find $\rho < \lambda$, $B \s H(\rho^+)$, and an elementary embedding
    $j: (H(\rho^+), \in, B) \rightarrow (H(\lambda^+), \in, A)$ such that $\crit(j) = \rho$ and
    $(H(\rho^+), \in, B)$ satisfies $\Phi$. By elementarity, $B$ naturally codes
    a pair $(\bar{\mathbb{P}}, \dot{\bar{D}})$, where $\bar{\mathbb{P}}$ is a
    Prikry forcing at $\rho$ and $\dot{\bar{D}}$ is a $\bar{\mathbb{P}}$-name
    for a $C$-sequence over $\rho^+$.

    Let $\gamma := \sup(j``\rho^+)$, so that $\gamma \in E^{\lambda^+}_{\rho^+}\s \Gamma$.
    As $|C_\gamma| < \lambda$ and forcing with $\mathbb{P}$ adds no new bounded
    subsets of $\lambda$ and hence no new subsets of $C_\gamma$,
    we infer that $\dot{D}_\gamma$ is forced to be a member of $V$. Moreover, by the Prikry property and the fact
    that $|\mathcal{P}(C_\gamma)| < \lambda$, it follows that, for every $p \in \mathbb{P}$,
    there is $q \leq^*_{\mathbb{P}} p$ that decides the value of $\dot{D}_\gamma$.
    Thus, for every $s \in [\lambda]^{<\omega}$, there is $A_s\in U$ such that  $(s, A_s)$
    decides the value of $\dot{D}_\gamma$, say as $D^*_s$. Since $\cf(\gamma) = \rho^+$,
    $D^* := \bigcap_{s \in [\rho]^{<\omega}}D_s$ is a club in $\gamma$. Since
    $j``\rho^+$ is a $({<}\rho)$-club in $\gamma$,
    $\bar{\Delta} := \{\alpha < \rho^+\mid j(\alpha) \in D^*\}$ is a $({<}\rho)$-club in $\rho^+$.
    In particular, $\bar\Delta\in[\rho^+]^{\rho^+}$.

    Fix an arbitrary $x \in [\bar{\Delta}]^{<\rho}$, and let $\bar{F}_x$
    denote the set of $p \in \bar{\mathbb{P}}$ such that, for some
    $\beta < \rho^+$, $p \Vdash_{\bar{\mathbb{P}}}``\check{x} \s \dot{\bar{D}}_\beta"$.
    We will show that $\bar{F}_x$ is dense in $\bar{\mathbb{P}}$. To this end,
    fix $p_0 = (s, A)$ in $\bar{\mathbb{P}}$. In $\mathbb{P}$, $p^* := (s, j(A) \cap A_s)$
    extends both $j(p_0)$ and $(s, A_s)$. In particular, since $j(x)=j``x \s j``\bar\Delta$,
    it follows that $p^* \Vdash_{\mathbb{P}}``j(x) \s \dot{D}_\gamma"$. By elementarity,
    there is $p \leq_{\bar{\mathbb{P}}} p_0$ in $\bar{F}_x$. But this implies that
    $\bar{\Delta}$ witnesses the failure of $\Phi$ in $(H(\rho^+), \in, B)$,
    which is a contradiction and finishes the proof of the claim.
  \end{cproof}

  Fix $\Delta\in[\lambda^+]^{\lambda^+}$ as given by the claim.
  For each $\alpha < \lambda^+$, fix a surjection $\varphi_\alpha : \lambda \rightarrow \alpha$.
  Let $G$ be  $\mathbb{P}$-generic over $V$ and work in $V[G]$.
  Let $\{\lambda_n \mid n < \omega\}$ be some cofinal subset of $\lambda$, and
  let $\langle D_\beta \mid \beta < \lambda^+
  \rangle$ be the interpretation of $\dot{D}$.
  By the choice of $\Delta$,  for every $x \in [\Delta]^{<\lambda} \cap V$,
  there is $\beta < \lambda^+$ such that $x \s D_\beta$.
  In particular, for all $\alpha < \lambda^+$ and $n < \omega$,
  there is $\beta_{\alpha, n} < \lambda^+$ such that $\Delta \cap \varphi_\alpha``\lambda_n$ is covered by $D_{\beta,n}$.
  Define $b: \lambda^+ \rightarrow [\lambda^+]^{\leq \omega}$
  by letting $b(\alpha) := \{\beta_{\alpha, n} \mid n < \omega\}$ for all 
  $\alpha < \lambda^+$. Then $\Delta$ and $b$ witness this instance of $\chi(\lambda^+) = \aleph_0$.
\end{proof}

We next obtain a similar result about a non-trivial failure of $\U(\lambda^+, \ldots)$, starting from
a large cardinal notion weaker than strong compactness. It is analogous to Theorem~2.14 of \cite{paper34}.

\begin{thm}
  Suppose that $\lambda$ is a measurable cardinal and there is a $\lambda$-complete uniform ultrafilter over $\lambda^+$.
  Then, in the extension by the Prikry forcing defined from a normal measure on $\lambda$, $\U(\lambda^+, 2, \theta, \aleph_1)$ fails for
  all $\theta \in \reg(\lambda^+)\setminus\{\cf(\lambda)\}$.
\end{thm}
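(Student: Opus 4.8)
The plan is to mimic the proof of \cite[Theorem~2.14]{paper34}, which, for a singular $\lambda$ equipped with uniform ultrafilters on $\lambda^+$ of completeness cofinal in $\lambda$, produces, from an arbitrary coloring $c:[\lambda^+]^2\to\theta$, a family $\mathcal A$ of pairwise disjoint sets of size $\cf(\lambda)$ and an ordinal $i<\theta$ with $\min(c[a\times b])\le i$ for all $(a,b)\in[\mathcal A]^2$, by running a transfinite recursion that spreads the constraints accumulated at each stage across the cofinally-many ultrafilters. In the Prikry extension $V[G]$ we have $\cf(\lambda)=\aleph_0$, so $\aleph_1=\cf(\lambda)^+$ and the witness will consist of countably many pairwise disjoint sets; moreover, since $\lambda$ is no longer regular in $V[G]$, any $\theta\in\reg(\lambda^+)\setminus\{\cf(\lambda)\}$ is an uncountable regular cardinal below $\lambda$, so the ``colors'' that appear along the recursion will automatically be bounded below $\theta$.

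Let $\mathbb P$ be the Prikry forcing from the normal measure $U$ on $\lambda$, let $W$ be the given $\lambda$-complete uniform ultrafilter on $\lambda^+$, and fix a $\mathbb P$-name $\dot c$ for a coloring $c:[\lambda^+]^2\to\theta$. First I would work in $V$ and use the Prikry property together with the $\theta^+$-completeness of $U$ (valid since $\theta<\lambda$) to read $\dot c$ into the ground model: for each finite increasing sequence $s$ from $\lambda$ there is a coloring $c_s:[\lambda^+]^2\to\theta$ in $V$ and, for each pair $\{\alpha,\beta\}$, a set $A^s_{\alpha\beta}\in U$ such that $(s,A^s_{\alpha\beta})$ forces $\dot c(\{\alpha,\beta\})=\check c_s(\{\alpha,\beta\})$. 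Then I would apply $W$ (still an ultrafilter in $V$) to each $c_s$: for each stem $s$ there are $i_s<\theta$ and $A_s\in W$ such that, for every $\alpha\in A_s$, the set $X^s_\alpha:=\{\beta\in A_s\mid\beta>\alpha,\ c_s(\{\alpha,\beta\})=i_s\}$ belongs to $W$.

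Next I would move to $V[G]$, write $\langle\lambda_n\mid n<\omega\rangle$ for the Prikry generic sequence, and observe that for every pair $\{\xi,\eta\}$ there is some $n$ with $c(\{\xi,\eta\})=c_{\langle\lambda_0,\dots,\lambda_{n-1}\rangle}(\{\xi,\eta\})$, and that the $\omega$-sequence $\langle i_{\langle\lambda_0,\dots,\lambda_{n-1}\rangle}\mid n<\omega\rangle$ of ordinals below the regular uncountable cardinal $\theta$ is bounded by some $i^*<\theta$. I would then build $\mathcal A=\{a_\mu\mid\mu<\lambda^+\}$ recursively, pairwise disjoint and increasing, with $a_\mu=\{\alpha_{\mu,n}\mid n<\omega\}$: at stage $\mu$ there are at most $\lambda$-many points occurring in $\bigcup_{\nu<\mu}a_\nu$, and since $\cf(\lambda)=\omega$ these can be distributed over the $\omega$ levels with each level $n$ receiving fewer than $\lambda$ of them; one then chooses $\alpha_{\mu,n}$, above everything chosen so far, inside the intersection of the sets $X^{\langle\lambda_0,\dots,\lambda_{n-1}\rangle}_\xi$ for the $\xi$ assigned to level $n$ — an intersection of fewer than $\lambda$-many members of $W$, hence unbounded in $\lambda^+$ by $\lambda$-completeness of $W$. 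This guarantees that for $\nu<\mu$, with $n$ the level to which $\nu$ was assigned and $\xi$ any element of $a_\nu$, we have $c_{\langle\lambda_0,\dots,\lambda_{n-1}\rangle}(\{\xi,\alpha_{\mu,n}\})=i_{\langle\lambda_0,\dots,\lambda_{n-1}\rangle}\le i^*$.

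I expect the main obstacle to be the last link: the level $n$ that witnesses the pair $(a_\nu,a_\mu)$ being ``good'' for the ground-model coloring $c_{\langle\lambda_0,\dots,\lambda_{n-1}\rangle}$ need not be a level at which that coloring already computes the genuine value of $\dot c$ on the pair, since the condition $(\langle\lambda_0,\dots,\lambda_{n-1}\rangle,A^{\cdots}_{\xi\,\alpha_{\mu,n}})$ witnessing the decision need not lie in the generic filter, and the stem $\langle\lambda_0,\dots,\lambda_{n-1}\rangle$ need not lie inside that measure-one set. The fix I would pursue is to prepare, before the recursion, a coherent system of measure-one sets — via a Mathias-style diagonalization over the $\lambda$-many stems together with the Ramsey property of the normal measure $U$ — so that the value forced on each pair stabilizes along the generic; then, by carrying the recursion as a $\mathbb P$-name and absorbing these measure-one sets into the side conditions used to choose the $a_\mu$'s (equivalently, interleaving the choice of generic with the recursion), one arranges that the witnessing level is always past the stabilization point, so that $c_{\langle\lambda_0,\dots,\lambda_{n-1}\rangle}(\{\xi,\alpha_{\mu,n}\})$ coincides with $c(\{\xi,\alpha_{\mu,n}\})$. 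This yields $\min(c[a_\nu\times a_\mu])\le i^*$ for all $\nu<\mu$, so that $\mathcal A$ and $i^*$ witness the failure of $\U(\lambda^+,2,\theta,\aleph_1)$; the remaining details are bookkeeping entirely parallel to \cite[Theorem~2.14]{paper34}.
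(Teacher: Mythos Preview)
Your overall strategy parallels the paper's, and you have correctly isolated the crux: after choosing $\alpha_{\mu,n}$ in the relevant intersection of $W$-sets, the ground-model value $c_{s_n}(\xi,\alpha_{\mu,n})$ need not equal the actual $c(\xi,\alpha_{\mu,n})$, because the deciding condition $(s_n, A^{s_n}_{\xi,\alpha_{\mu,n}})$ need not lie in $G$. However, your proposed fix---a ``Mathias-style diagonalization'' together with ``the Ramsey property of $U$'' so that values ``stabilize along the generic''---is too vague to carry the argument, and as stated does not work: there are $\lambda^+$-many pairs $\{\xi,\beta\}$, each with its own measure-one set $A^s_{\xi\beta}\in U$, far too many to diagonalize over in advance, and ``carrying the recursion as a $\mathbb P$-name'' does not by itself force any particular deciding condition into $G$.

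The paper resolves this obstacle not by diagonalization but by \emph{genericity}. Working in $V$, for each $\gamma<\lambda^+$ and each $E\in[\gamma]^{<\lambda}$ one defines
\[
D(\gamma,E):=\left\{(s,A)\in\mathbb P \Mid \exists\,\beta\in\bigcap_{\alpha\in E}X^s_\alpha\setminus\gamma\left[A\subseteq\bigcap_{\alpha\in E}A^s_{\alpha,\beta}\right]\right\},
\]
which is dense since intersections of fewer than $\lambda$ members of $W$ (respectively $U$) remain in $W$ (respectively $U$). Fixing surjections $\varphi_\gamma:\lambda\to\gamma$ in $V$, one sets $D'(\gamma,\nu):=D(\gamma,\varphi_\gamma``\nu)$. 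Then in $V[G]$, at stage $\zeta$ with $\gamma_\zeta:=\ssup(\bigcup_{\xi<\zeta}a_\xi)$, for each $n<\omega$ one picks $p_{\zeta,n}\in G\cap D'(\gamma_\zeta,\lambda_n)$ together with its witness $\beta_{\zeta,n}$, and lets $a_\zeta:=\{\beta_{\zeta,n}\mid n<\omega\}$. The point is that if $p_{\zeta,n}=(s_m,A)\in G$ with $A\subseteq A^{s_m}_{\alpha,\beta_{\zeta,n}}$, then $(s_m,A^{s_m}_{\alpha,\beta_{\zeta,n}})\in G$ as well, so the forced equality $c(\alpha,\beta_{\zeta,n})=i^{s_m}_{\alpha,\beta_{\zeta,n}}=i^{s_m}_\alpha$ genuinely holds in $V[G]$. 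This single device replaces your entire ``stabilization'' paragraph.

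A secondary issue: your further stabilization to a single $i_s$ per stem forces the previously-built $\xi$'s assigned to level $n$ to lie in the auxiliary set $A_{s_n}\in W$, and nothing in your recursion guarantees this. The paper avoids the problem by keeping $i^s_\alpha$ dependent on $\alpha$, defining $i_\zeta:=\ssup\{i^{s_m}_\alpha\mid\alpha\in a_\zeta,\ m<\omega\}$ after the construction (a countable supremum, hence below the regular uncountable $\theta$), and only then pigeonholing to a cofinal $Z\subseteq\lambda^+$ on which $\zeta\mapsto i_\zeta$ is constant.
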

\begin{proof}
  Let $U$ be a normal measure on $\lambda$,
  let $\mathbb{P}$ be the Prikry forcing defined from $U$, and let $\theta \in \reg(\lambda) \setminus \{\aleph_0\}$.
  Set $\kappa:=\lambda^+$, and let $\dot{c}$ be a $\mathbb{P}$-name
  for a function from $[\kappa]^2$ to $\theta$.
  The Prikry property implies that, for every $p \in \mathbb{P}$ and every
  $(\alpha, \beta) \in [\kappa]^2$, there is $q \leq^*_{\mathbb{P}} p$ that
  decides the value of $\dot{c}(\check{\alpha}, \check{\beta})$. Therefore,
  for every stem $s \in [\lambda]^{<\omega}$ and every $(\alpha, \beta)  \in [\kappa]^2$, 
  we can fix a condition $p^s_{\alpha, \beta} = (s, A^s_{\alpha, \beta})  \in \mathbb{P}$ 
  and a color $i^s_{\alpha, \beta} < \theta$ such that
  $p^s_{\alpha, \beta} \Vdash_{\mathbb{P}} ``\dot{c}(\check{\alpha}, \check{\beta})  = \check{i}^s_{\alpha, \beta}"$.

  Let $W$ be a $\lambda$-complete
  uniform ultrafilter over $\kappa$.
  Since $W$ is, in particular, $\theta^+$-complete, we can fix,
  for each stem $s \in [\lambda]^{<\omega}$ and each $\alpha < \kappa$, 
  a color  $i^s_\alpha < \theta$ such that the set
  $X^s_\alpha := \{\beta \in \kappa \setminus (\alpha + 1) \mid i^s_{\alpha, \beta} = i^s_\alpha\}$ is in $W$.

  Next, given $\gamma<\kappa$ and $E\in[\gamma]^{<\lambda}$, let
  \[
    D(\gamma,E):=\left\{(s, A) \in \mathbb{P}\Mid \exists \beta \in \bigcap_{\alpha \in E} X^s_\alpha
    \setminus \gamma\left[ A \s \bigcap_{\alpha\in E} A^s_{\alpha, \beta}\right]\right\}.
  \]
  Clearly, $D(\gamma,E)$ is dense in $\mathbb P$.
  For each $\gamma<\kappa$, fix a surjection $\varphi_\gamma : \lambda \rightarrow \gamma$,
  and then let $D'(\gamma,\nu):=D(\gamma,\varphi_\gamma``\nu)$ for all $\nu<\lambda$.

  Let $G$ be $\mathbb{P}$-generic over $V$, let $\langle \lambda_n \mid n < \omega \rangle$
  be the Prikry sequence, and, for all $m < \omega$, let $s_m := \langle \lambda_n  \mid n < m \rangle$.
  We now recursively construct a sequence $\langle (a_\zeta,i_\zeta) \mid \zeta < \kappa\rangle$
  such that, for all $(\xi,\zeta)\in[\lambda]^2$, we have $a_\xi\in[\kappa]^{\le\aleph_0}$, $i_\xi<\theta$, and $a_\xi<a_\zeta$.

  Suppose that $\zeta < \kappa$ and that $\{ a_\xi \mid \xi < \zeta \}$ have been chosen. Let
  $\gamma_\zeta := \ssup(\bigcup_{\xi < \zeta} a_\xi)$, noting that $\ssup(\emptyset) = 0$.
  For each  $n < \omega$, since $D'(\gamma_\zeta,\lambda_n)$ is dense,
  fix $p_{\zeta, n} \in G \cap D'(\gamma_\zeta,\lambda_n)$, and let
  $\beta_{\zeta, n}$ be a witness to the fact that $p_{\zeta, n} \in D(\gamma_\zeta,\varphi_{\gamma_\zeta}``\lambda_n)$.
  Finally, let $a_\zeta := \{\beta_{\zeta, n} \mid n < \omega\}$
  and $i_\zeta := \ssup\{i^{s_m}_\alpha  \mid \alpha \in a_\zeta,~ m < \omega\}$.
  This completes the description of the construction.

  Fix $i<\theta$ and $Z\in[\kappa]^{\kappa}$
  such that $i_\zeta=i$ for all $\zeta\in Z$, and set $\mathcal A:=\{ a_\zeta\mid \zeta\in Z\}$.
  Let $c:[\kappa]^2 \rightarrow \theta$ be the interpretation
  of $\dot{c}$ in $V[G]$.  We shall show that $c$ fails to witness $\U(\kappa, 2, \theta, \aleph_1)$
  by proving that for all $(a,b) \in [\mathcal{A}]^2$, there are
  $\alpha \in a$ and $\beta \in b$ such that $c(\alpha, \beta) < i$.

  Fix $(a,b) \in [\mathcal A]^2$, along with $(\xi, \zeta) \in [Z]^2$ such
  that $a = a_\xi$ and $b = a_\zeta$. Let $\alpha \in a$ be arbitrary, and let
  $n < \omega$ be large enough so that $\alpha \in \varphi_{\gamma_\zeta}[\lambda_n]$.
  In our construction of $a_\zeta$, we found a condition $p_{\zeta, n} \in G\cap D'(\gamma_\zeta,\lambda_n)$ with a witness $\beta_{\zeta,n}$.
  As $p_{\zeta, n} \in G$, let $m < \omega$ and $A \in U$ be such that $p_{\zeta, n} = (s_m, A)$.
  As  $\beta := \beta_{\zeta, n}$ witnesses that $(s_m,A)$ is in $D(\gamma,\varphi_{\gamma_\zeta}``\lambda_n)$ and $\alpha\in \varphi_{\gamma_\zeta}``\lambda_n$,
  we have that $\beta\in X_\alpha^{s_m}$ and $A\s A_{\alpha,\beta}^{s_m}$.
  As $(s_m,A)\in G$ and $A\s A_{\alpha,\beta}^{s_m}$, we also have $p^{s_m}_{\alpha, \beta} \in G$,
  and thus $c(\alpha, \beta) = i^{s_m}_{\alpha, \beta}$. 
  Since $\beta \in X^{s_m}_{\alpha}$, it follows that 
  $i^{s_m}_{\alpha, \beta} = i^{s_m}_\alpha < i_\xi=i$. 
  We have thus found $\alpha \in a$ and $\beta \in b$ for which $c(\alpha, \beta) < i$, as desired.
\end{proof}

We now show that the preceding results can be brought down to $\aleph_{\omega + 1}$ by starting
with a supercompact cardinal and performing a Prikry forcing with interleaved collapses.

\begin{thm}\label{downtoalephw}
  Suppose that $\lambda$ is a supercompact cardinal. Then there
  is a forcing extension in which
  \begin{enumerate}
    \item $\lambda = \aleph_\omega$;
    \item $\chi(\aleph_{\omega + 1}) = \aleph_0$;
    \item $\U(\aleph_{\omega + 1}, 2, \aleph_k, \aleph_1)$ fails for all $1 \leq k < \omega$.
    \item There exists a non-reflecting stationary subset of $E^{\aleph_{\omega + 1}}_\omega$.
  \end{enumerate}
\end{thm}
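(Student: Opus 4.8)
The plan is to run the argument of Theorem~\ref{prikryforcing}, replacing the plain Prikry forcing by the Prikry forcing with interleaved collapses, which turns a supercompact cardinal $\lambda$ into $\aleph_\omega$ while preserving all cardinals $\ge\lambda$ (see \cite{gitik_handbook}). After a suitable preparatory forcing we may assume $\gch$ holds, and, using the supercompactness of $\lambda$, we let $\mathbb P$ be this Prikry-with-collapses forcing and let $G$ be $\mathbb P$-generic; in $V[G]$ the Prikry sequence $\langle\lambda_n\mid n<\omega\rangle$ is cofinal in $\lambda$, the interleaved collapses make $\lambda=\aleph_\omega$ and $\lambda^+=\aleph_{\omega+1}$, and the regular cardinals of $V[G]$ in the interval $(\aleph_0,\lambda)$ are exactly the $\aleph_k$ with $1\le k<\omega$. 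We will use the standard facts that $\mathbb P$ has the Prikry property, is $\lambda^+$-cc, and factors below any condition as a forcing of size ${<}\lambda$ followed by a tail that has the Prikry property and is highly closed below $\lambda$. We also keep in mind that $\lambda$ remains supercompact, hence $\lambda^+$-$\Pi^1_1$-subcompact, in the model over which $\mathbb P$ is defined.

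Clause~(4) is immediate: put $S:=(E^{\lambda^+}_\lambda)^V$. Since $\cf^{V[G]}(\lambda)=\omega$ and $\mathbb P$ is $\lambda^+$-cc, $S$ is a stationary subset of $E^{\lambda^+}_\omega$ in $V[G]$. Given $\beta<\lambda^+$ with $\cf^{V[G]}(\beta)>\omega$, its $V$-cofinality $\mu$ cannot be $\omega$ or $\lambda$ (else $\cf^{V[G]}(\beta)=\omega$), so $\omega<\mu<\lambda$; then any $V$-club $c\s\beta$ of order type $\mu$ has $\acc(c)\cap S=\emptyset$, and since $c$ is still a club in $\beta$ in $V[G]$ and accumulation points are absolute, $\acc(c)$ witnesses that $S$ does not reflect at $\beta$.

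For Clause~(2) I would reproduce the proof of Theorem~\ref{prikryforcing} almost verbatim, using that $\lambda$ is $\lambda^+$-$\Pi^1_1$-subcompact to reflect the relevant $\Pi^1_1$ statement about the pair $(\mathbb P,\dot D)$ down to a pair $(\bar{\mathbb P},\dot{\bar D})$ over some $H(\rho^+)$, where $\bar{\mathbb P}$ is the corresponding Prikry-with-collapses forcing at $\rho$, and setting $\gamma:=\sup(j``\rho^+)$ for the reflecting embedding $j$. The one genuinely new point, compared with Theorem~\ref{prikryforcing}, is that $\mathbb P$ now adds bounded subsets of $\lambda$, so $\dot D_\gamma$ need not be a ground-model object. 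This is handled by the factorization: for a suitable choice of $\rho$ (i.e.\ working above a condition whose stem already exceeds $\rho$) the collapses relevant to $C_\gamma$ all live in a ${<}\lambda$-sized initial part of $\mathbb P$, while the remaining part has the Prikry property and adds no new subsets of $C_\gamma$; hence $\dot D_\gamma$ can be decided on a measure-one set by a $\le^*$-reduction over the appropriate intermediate model, and, as $\cf(\gamma)=\rho^+$ exceeds the number of stems below $\rho$, the resulting intersection of clubs is again a club in $\gamma$. From this point the derivation of a set $\Delta\in[\lambda^+]^{\lambda^+}$ with ``$F_x$ is dense for every $x\in[\Delta]^{<\lambda}\cap V$'', and then of the function $b:\lambda^+\to[\lambda^+]^{\le\omega}$ via surjections $\varphi_\alpha:\lambda\to\alpha$ and a cofinal $\omega$-sequence through $\lambda$, is literally as in Theorem~\ref{prikryforcing}; together with the reverse inequality $\chi(\lambda^+)\ge\cf(\lambda)=\aleph_0$ from Lemma~\ref{prop53}(2), this yields $\chi(\lambda^+)=\aleph_0$. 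I expect this bookkeeping around the collapses and the factorization to be the main obstacle, though it is of the same flavour as standard Prikry/subcompactness arguments and should hide no essential difficulty.

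For Clause~(3) I would likewise adapt the proof of the theorem immediately preceding Theorem~\ref{downtoalephw} (the failure of $\U(\lambda^+,2,\theta,\aleph_1)$ in a Prikry extension), replacing the plain Prikry forcing by $\mathbb P$. That proof uses a $\lambda$-complete uniform ultrafilter $W$ over $\lambda^+$ in the ground model, which we have since $\lambda$ is supercompact; it then fixes, via the Prikry property, conditions and colors deciding $\dot c$, uses $\lambda$-completeness to see that the sets $D(\gamma,E)$ for $E\in[\gamma]^{<\lambda}$ are dense, and finally builds a sequence $\langle(a_\zeta,i_\zeta)\mid\zeta<\lambda^+\rangle$ and applies a pigeonhole on the colors $i_\zeta$. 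Each of these steps goes through after reinterpreting conditions of $\mathbb P$ as carrying collapse coordinates in addition to a stem and a measure-one set; the density of $D(\gamma,E)$ still follows from the Prikry property together with the $\lambda$-completeness of the underlying normal measure and of $W$, and the rest of the construction is unchanged. This gives the failure of $\U(\aleph_{\omega+1},2,\aleph_k,\aleph_1)$ for every $1\le k<\omega=\cf(\lambda)$, completing the proof.
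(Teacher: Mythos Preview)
Your handling of Clause~(4) is correct and matches the paper. For Clauses~(2) and~(3), however, there is a genuine gap.

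For Clause~(2), the subcompactness reflection of Theorem~\ref{prikryforcing} hinges on the fact that plain Prikry forcing adds no bounded subsets of $\lambda$: this is what makes $\dot D_\gamma$ (for $\gamma=\sup(j``\rho^+)$) a ground-model object decidable stem by stem, yielding clubs $D^*_s\in V$ whose intersection over the fewer-than-$\rho^+$ stems of $\bar{\mathbb P}$ gives $\bar\Delta\in H(\rho^+)$, contradicting the reflected statement there. Your factorization does not repair this: it places $D^*$, and hence $\bar\Delta$, in an intermediate extension $V[\mathbb Q_0]$ rather than in $V$; moreover $\mathbb Q_0$ collapses $\rho^+$, so in $V[\mathbb Q_0]$ the cofinality of $\gamma$ drops and even the intersection-over-stems step fails. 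The paper therefore abandons the reflection scheme entirely and works directly with the supercompactness embedding. After a preparatory Easton iteration, $j$ lifts to $j:V\to M$ and one builds in $V$ an $M$-generic $H$ for $\mathrm{Coll}(\lambda^{++},{<}j(\lambda))^M$; this guiding generic is then built into the definition of $\mathbb P$ itself (each collapse function $C$ of a condition must satisfy $j(C)(\lambda)\in H$). Since $j$ fixes every stem $s$ of $\mathbb P$, one can form conditions $s\conc\langle\lambda,\emptyset,A,C\rangle\in j(\mathbb P)$ and, using the Prikry property in $M$ together with the freedom to choose each new collapse coordinate from $H$, decide for every $\beta<\lambda^+$ whether $j(\beta)\in j(\dot C)_\eta$ (where $\eta=\sup(j``\lambda^+)$). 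The guiding generic is precisely what allows the resulting $\lambda^+$-many collapse coordinates to be amalgamated into a single condition $p_s^*$; intersecting the clubs so obtained over all $\lambda$-many stems then yields $\Delta\in V$. Clause~(3) is treated the same way, via $j$ and $H$ rather than a $\lambda$-complete ultrafilter on $\lambda^+$; your proposed adaptation of the ultrafilter argument would face the same amalgamation problem for the collapse functions $C^s_{\alpha,\beta}$ over $\alpha\in E$ with $|E|<\lambda$, and likewise cannot be carried out without the guiding generic.
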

\begin{proof}
  Let $V_0$ denote the universe in which we begin. We may assume without loss of
  generality that, in $V_0$, $2^\lambda = \lambda^+$. Let $U_0$ be a normal, fine
  ultrafilter over $\mathcal{P}_\lambda(\lambda^+)$, and let $j_0:V_0 \rightarrow M_0$
  be the corresponding ultrapower map. Let $\mathbb{P}_0$ be the Easton-support forcing iteration of length
  $\lambda + 1$ in which, for every inaccessible $\alpha \leq \lambda$, we add
  $\alpha^{++}$-many Cohen subsets to $\alpha$. Let $V$ be an extension of $V_0$
  by $\mathbb{P}_0$, and work now in $V$. By standard arguments (see, e.g., \cite{gitik_sharon}),
  we can find an extension of $j_0$ to an elementary embedding $j:V \rightarrow M$
  and a sequence of functions $\langle f_\beta \mid \beta < j(\lambda) \rangle$
  from $\lambda$ to $\lambda$ such that, for all $\beta < j(\lambda)$, we have
  $j(f_\beta)(\lambda) = \beta$.

  \begin{claim} \label{representation_claim}
    For every $c \in \mathrm{Coll}(\lambda^{++}, < j(\lambda))^M$, there is a
    function $f \in V$ such that
    \begin{itemize}
      \item $\dom(f) = \lambda$;
      \item for all $\alpha < \lambda$, $f(\alpha) \in \mathrm{Coll}(\alpha^{++}, < \lambda)$;
      \item $j(f)(\lambda) = c$.
    \end{itemize}
  \end{claim}
  \begin{cproof}
    Let $F$ be a function with domain $\lambda$ such that, for all $\alpha < \lambda$,
    $F(\alpha) = \langle c^\alpha_\xi \mid \xi < \lambda \rangle$ is an enumeration
    of $\mathrm{Coll}(\alpha^{++}, < \lambda)$. 
    Then $j(F)(\lambda) = \langle c^\lambda_\xi \mid \xi < j(\lambda) \rangle$ 
    is an enumeration of $\mathrm{Coll}(\lambda^{++}, < j(\lambda))^M$. 
    Now, given an arbitrary $c \in \mathrm{Coll}(\lambda^{++}, < j(\lambda))^M$,
    let $\xi < j(\lambda)$ be such that $c = c^\lambda_\xi$, and let $f$ be a function
    with domain $\lambda$ such that, for all $\alpha < \lambda$, 
    $f(\alpha) =  c^\alpha_{f_\xi(\alpha)}$. 
    Then $j(f)(\lambda) = c^\lambda_{j(f_\xi)(\lambda)} = c^\lambda_\xi = c$, as desired.
  \end{cproof}

  The poset $\mathrm{Coll}(\lambda^{++}, < j(\lambda))^M$ is $\lambda^{++}$-closed
  in $M$. Moreover, from the point of view of $M$, the poset has $j(\lambda)$-many
  maximal antichains. But since $|j(\lambda)| = \lambda^{++}$ and $M$ is closed under
  $\lambda^+$-sequences, we can build in $V$ an $M$-generic filter $H$ for
  $\mathrm{Coll}(\lambda^{++}, < j(\lambda))^M$.

  For later use, fix in $V$ a well-ordering $\lhd$ of $\mathcal{P}_\lambda(\lambda^+)$ and
  a sequence $\langle \varphi_\beta \mid \beta < \lambda^+ \rangle$
  such that, for all $\beta < \lambda^+$, $\varphi_\beta : \lambda \rightarrow \beta$ is a
  surjection.

  Let $U$ be the normal, fine ultrafilter over $\mathcal{P}_\lambda(\lambda^+)$ derived
  from $j$, and let $U^*$ be the normal measure over $\lambda$ obtained by projecting $U$.
  Note that $U$ concentrates on the set of $x \in \mathcal{P}_\lambda(\lambda^+)$ such
  that $\lambda_x := x \cap \lambda$ is an inaccessible cardinal, so we will implicitly
  assume that all elements of $\mathcal{P}_\lambda(\lambda^+)$ that we work with
  are of this form.
  We now let $\mathbb{P}$ be the Prikry forcing with interleaved collapses defined
  from $U^*$ and $H$ (see \cite[\S 4]{gitik_handbook} for further information about
  Prikry-type forcings with interleaved collapses).
   More precisely, conditions of $\mathbb{P}$ are all sequences
  \[
    p = \langle c_0, \alpha_0, c_1, \alpha_1, c_2, \ldots, \alpha_{n-1}, c_n, A, C \rangle
  \]
  satisfying the following conditions.
  \begin{enumerate}
    \item $n < \omega$.
    \item $\langle \alpha_i \mid i < n \rangle$ is an increasing sequence of
      inaccessible cardinals below $\lambda$. For ease of notation, set $\alpha_{-1} := \omega$.
    \item For all $i < n$, $c_i \in \mathrm{Coll}(\alpha^{++}_{i-1}, < \alpha_i)$.
    \item $c_n \in \mathrm{Coll}(\alpha^{++}_{n-1}, < \lambda)$.
    \item $A \in U^*$.
    \item $C$ is a function such that
      \begin{itemize}
        \item $\dom(C) = A$;
        \item for all $\alpha \in A$, $C(\alpha) \in \mathrm{Coll}(\alpha^{++}, < \lambda)$;
        \item $j(C)(\lambda) \in H$.
      \end{itemize}
  \end{enumerate}
  The number $n$ is referred to as the \emph{length} of $p$, or $\mathrm{lh}(p)$, and
  $\langle c_0, \alpha_0, c_1, \ldots, \alpha_{n-1}, c_n \rangle$ is the \emph{stem}
  of $p$, or $s(p)$. We will sometimes write $p$ as $s(p)\conc\langle A,C \rangle$.
  Since $\mathrm{lh}(p)$ depends only on the stem, we can also refer to it as $\mathrm{lh}(s(p))$.
  Given a condition $p$, its constituents will sometimes be referred to as
  $\langle \alpha^p_i \mid i < \mathrm{lh}(p) \rangle$, $\langle c^p_i \mid
  i \leq \mathrm{lh}(p) \rangle$, $A^p$, and $C^p$. The same will be done for stems.
  If $s$ is a stem for $\mathbb{P}$ and $i < \mathrm{lh}(s)$, then we will let
  $s \restriction i$ denote $\langle c^s_0, \alpha^s_0, \ldots, \alpha^s_{i-1},
  c^s_i \rangle$. If $s$ is a stem of length $n$ and $\alpha < \lambda$, then we will say that
  $s$ is \emph{below} $\alpha$ if $c^s_{n} \in \mathrm{Coll}(\alpha^s_{n-1}, < \alpha)$.

  To define the ordering on $\mathbb{P}$, let us first define an ordering on stems.
  If $s$ and $t$ are two stems, then we let $t \leq s$ if
  \begin{itemize}
    \item $\mathrm{lh}(t) \geq \mathrm{lh}(s)$;
    \item for all $i < \mathrm{lh}(s)$, $\alpha^t_i = \alpha^s_i$;
    \item for all $i \leq \mathrm{lh}(s)$, $c^t_i \leq c^s_i$ (note that, if
      $\mathrm{lh}(t) > \mathrm{lh}(s)$, then $c^s_{\mathrm{lh}(s)}$ is being seen here as an
      element of $\mathrm{Coll}((\alpha^t_{\mathrm{lh}(s)-1})^{++}, < \alpha^t_{\mathrm{lh}(s)})$).
  \end{itemize}
  Moreover, we define a notion of \emph{direct extension} by letting $t \leq^* s$
  if $t \leq s$ and $\mathrm{lh}(t) = \mathrm{lh}(s)$.

  Now, if $p,q \in \mathbb{P}$, then we let $q \leq p$ if
  \begin{itemize}
    \item $s(q) \leq s(p)$;
    \item for all $i \in [\mathrm{lh}(p), \mathrm{lh}(q))$, $\alpha^q_i \in A^p$;
    \item for all $i \in (\mathrm{lh}(p), \mathrm{lh}(q)]$, $c^q_i \leq C^p(\alpha^q_{i-1})$;
    \item $A^q \s A^p$;
    \item for all $\alpha \in A^q$, $C^q(\alpha) \leq C^p(\alpha)$.
  \end{itemize}
  Finally, $q \leq^* p$ if $q \leq p$ and $\mathrm{lh}(q) = \mathrm{lh}(p)$.

  Note that, if $p,q \in \mathbb{P}$ and $s(p) = s(q)$, then $p$ and $q$ are
  compatible in $\mathbb{P}$. In fact, if $\mathcal{D}$ is a collection of
  fewer than $\lambda$-many conditions in $\mathbb{P}$, each of which has the same
  stem, then $\mathcal{D}$ has a lower bound in $\mathbb{P}$. In particular, since there are only $\lambda$-many
  stems, $\mathbb{P}$ has the $\lambda^+$-cc. Also, $\mathbb{P}$ satisfies the
  \emph{Prikry property}: for every sentence $\phi$ in the forcing language
  and every condition $p \in \mathbb{P}$, there is $q \leq^* p$ such that
  $q$ decides the truth value of $\phi$. With these facts, standard arguments
  show that, in the extension by $\mathbb{P}$, the only cardinals collapsed
  are those explicitly in the scopes of the interleaved collapses, and hence
  $\lambda = \aleph_\omega$ and $(\lambda^+)^V = \aleph_{\omega + 1}$. In addition,
  if $p \in \mathbb{P}$, $i < \mathrm{lh}(p)$, and $S \s (\alpha^p_i)^+$ is
  stationary, then $p$ forces that $S$ remains stationary in the extension by
  $\mathbb{P}$.

  $V^\mathbb{P}$ will be our desired model. Let us first deal with
  Clause (4) of the theorem. 
  Let $S : =  (E^{\lambda^+}_\lambda)^V$. In $V$, $S$ is clearly a non-reflecting
  stationary subset of $\lambda^+$. Since $\mathbb{P}$ has the $\lambda^+$-cc
  and since being non-reflecting is upward absolute, $S$ remains a non-reflecting
  stationary subset of $\lambda^+$ in $V^{\mathbb{P}}$. Since $\lambda =
  \aleph_\omega$ in $V^{\mathbb{P}}$, it follows that $S$ is a non-reflecting
  stationary subset of $E^{\aleph_{\omega + 1}}_\omega$, as desired.

  We now show that $\chi(\aleph_{\omega + 1}) = \aleph_0$ in $V^{\mathbb{P}}$.
  To this end, let $\dot{C}$ be a $\mathbb{P}$-name for a
  $C$-sequence over $\lambda^+$ and, for all $\beta < \lambda^+$, let $\dot{C}_\beta$
  be a $\mathbb{P}$-name for the $\beta^{\mathrm{th}}$ entry of $\dot{C}$.
  For all $x \in [\lambda^+]^{<\lambda}$, let $F_x$ denote the set of 
  $p \in \mathbb{P}$ such that, for some $\gamma < \lambda^+$, $p \Vdash ``\check{x} \s \dot{C}_\gamma"$.

  \begin{claim} \label{delta_claim}
    There is $\Delta \in [\lambda^+]^{\lambda^+}$ such that $F_x$ is dense in $\mathbb{P}$ for every $x \in [\Delta]^{<\lambda}$.
  \end{claim}
  \begin{cproof}
    Let $\eta := \sup(j``\lambda^+) < j(\lambda^+)$. Note that, if $s$ is a stem
    for $\mathbb{P}$, then $j(s) = s$, and hence $s\conc \langle \lambda, \emptyset \rangle$
    is a valid stem for $j(\mathbb{P})$. Fix a stem $s$ for $\mathbb{P}$, let
    $n := \mathrm{lh}(s)$, and let $p \in j(\mathbb{P})$ be a condition of the form 
    $s\conc \langle \lambda, \emptyset, A, C \rangle$. Working in $M$, define a sequence
    $\langle p_\beta \mid \beta < \lambda^+ \rangle$ of conditions in $j(\mathbb{P})$
    such that, for all $\beta < \lambda^+$,
    \begin{enumerate}
      \item $p_\beta \leq^* p$ and $p_\beta$ decides the truth value of the statement $``j(\beta) \in j(\dot{C})_\eta"$;
      \item $c^{p_\beta}_{n+1} \in H$.
    \end{enumerate}
    This is possible due to the following facts.
    \begin{itemize}
      \item $j(\mathbb{P})$ satisfies the Prikry property in $M$.
      \item Because of the Prikry property, the set of $c$ for which there is
        $q \leq^* p$ such that
        \begin{itemize}
          \item $q$ decides the truth value of $``j(\beta) \in j(\dot{C})_\eta"$;
          \item $c^q_{n+1} = c$;
        \end{itemize}
        is dense in $\mathrm{Coll}(\lambda^{++}, < j(\lambda))^M$.
      \item $H$ is $M$-generic for $\mathrm{Coll}(\lambda^{++}, < j(\lambda))^M$ and hence meets this dense set.
    \end{itemize}

    At the end of the process, since each $c^{p_\beta}_{n+1}$ comes from $H$,
    we can find a lower bound $c_s^* \in H$ for 
    $\langle c^{p_\beta}_{n+1} \mid \beta < \lambda^+ \rangle$. In addition,
    for each $\beta < \lambda^+$, $s(p_\beta) \restriction n$ is a stem for
    $\mathbb{P}$. Since there are only $\lambda$-many stems for $\mathbb{P}$, we
    can find a stationary $S \s E^{\lambda^+}_{<\lambda}$ and a stem $t$ for $\mathbb{P}$
    such that $s(p_\beta) \restriction n = t$ for all $\beta \in S$. 
    For all $\beta \in S$, let $p^*_\beta := t \conc \langle \lambda, c_s^*, A^{p_\beta}, C^{p_\beta} \rangle$.

    Using the fact that each $p^*_\beta$ has the same stem
    and that any collection (in $M$) of fewer than $j(\lambda)$-many conditions in
    $j(\mathbb{P})$ with the same stem has a lower bound, we can find a single
    condition $p_s^*$ such that $p_s^* \leq p^*_\beta$ for all $\beta \in S$.
    We can also assume that $c^{p_s^*}_{n+1} = c_s^*$.
    It follows that, for all $\beta \in S$, $p_s^*$ decides the truth value of
    the statement $``j(\beta) \in j(\dot{C})_\eta"$ in the same way that $p_\beta$ does.
    Moreover, for every stationary subset $T \subseteq E^{\lambda^+}_{<\lambda}$,
    $p_s^*$ forces that $T$ remains stationary in $\lambda^+$ and hence, since
    $j$ is continuous at ordinals of $V$-cofinality less than $\lambda$, that
    $j``T$ remains stationary in $\eta$ in the extension by $j(\mathbb{P})$.

    Let $S' := \{\beta \in S \mid p_s^* \Vdash``j(\beta) \in j(\dot{C})_\eta"\}$.
    Clearly, $S'$ is stationary, as otherwise we would have
    $p_s^* \Vdash ``j(\dot{C})_\eta \cap j``S \text{ is nonstationary}"$, contradicting the
    fact that $p_s^*$ forces $j``S$ to remain stationary in $\eta$. In particular,
    $S'$ is unbounded in $\lambda^+$. Moreover, since $j(\dot{C})_\eta$ is forced to
    be a club in $\eta$, it follows that, letting $D_s$ denote the ordinal closure
    of $S'$ in $\eta$, we have $p_s^* \Vdash ``D_s \s j(\dot{C})_\eta"$.

    Next, let $D := \bigcap \{D_s \mid s \text{ is a stem for } \mathbb{P}\}$.
    Since each $D_s$ is club in $\eta$, $\cf(\eta) = \lambda^+$, and there are only
    $\lambda$-many stems for $\mathbb{P}$, it follows that $D$ is a club and that,
    for every stem $s$, $p_s^* \Vdash ``\check{D} \s j(\dot{C})_\eta"$. Let
    $\Delta := \{\beta < \lambda^+ \mid j(\beta) \in D\}$. Since $j$ is continuous
    at points of cofinality less than $\lambda$, $\Delta$ is $({<}\lambda)$-club
    in $\lambda^+$.

    We claim that $\Delta$ witnesses the conclusion of the claim. To this end, fix
    $x \in [\Delta]^{<\lambda}$ and $p \in \mathbb{P}$. We must find $q \leq p$
    with $q \in F_x$. By the definition of $\mathbb{P}$, we have
    \begin{itemize}
      \item $j(p) = s(p) \conc \langle j(A^p), j(C^p) \rangle$;
      \item $\lambda \in j(A^p)$;
      \item $j(C^p)(\lambda) \in H$.
    \end{itemize}
    Let $n := \mathrm{lh}(p)$. We can now find $\hat{p} \leq j(p)$ in $j(\mathbb{P})$ such that
    $\mathrm{lh}(\hat{p}) = n + 1$, $s(\hat{p}) \restriction n = s(p)$, $\alpha^{\hat{p}}_n = \lambda$,
    and $c^{\hat{p}}_n = j(C^p)(\lambda)$. It follows that $\hat{p}$ and $p^*_{s(p)}$
    are compatible in $j(\mathbb{P})$, so we can find a common extension, $q^*$.
    Note that $j(x) = j``x \s D$, so, since $q^*$ extends $p^*_{s(p)}$, we have
    $q^* \Vdash ``j(x) \s j(\dot{C})_\eta"$. In particular, $q^* \in j(F_x)$.
    By elementarity, there is $q \leq p$ in $F_x$.
  \end{cproof}

  Let $\Delta \in [\lambda^+]^{\lambda^+}$ be as given by the claim, let $G$ be a
  $\mathbb{P}$-generic filter over $V$, and let $\langle \alpha_n \mid n < \omega  \rangle$ 
  be the associated Prikry sequence. 
  Let $\langle C_\beta \mid \beta < \lambda^+ \rangle$ be the interpretation of $\dot{C}$ in $V[G]$. 
  By the claim,  we know that, for every $x \in [\Delta]^{<\lambda} \cap V$, 
  there is $\gamma < \lambda^+$ such that $x \s C_\gamma$. In $V[G]$, for all $\beta < \lambda^+$ and
  $n < \omega$, let $x_{\beta, n} := \Delta \cap \varphi_\beta``\alpha_n$. 
  Then  $x_{\beta, n} \in [\Delta]^{<\lambda} \cap V$, so there is 
  $\gamma_{\beta, n} < \lambda^+$ such that $x_{\beta, n} \s C_{\gamma_{\beta, n}}$. 
  Define  $b:\lambda^+ \rightarrow [\lambda^+]^{\leq \omega}$ by letting 
  $b(\beta) =  \{\gamma_{\beta, n} \mid n < \omega\}$. For every $\beta < \lambda^+$, we have
  $\Delta \cap \beta = \bigcup_{n < \omega} x_{\beta, n}$, so $\Delta$ and $b$ witness this
  instance of $\chi(\lambda^+) = \aleph_0$ in $V[G]$.

  \smallskip

  We finally show that $\U(\aleph_{\omega + 1}, 2, \aleph_k, \aleph_1)$ fails
  in $V^{\mathbb{P}}$ for all $1 \leq k < \omega$. To this end, fix such a $k$,
  and, in $V$, let $\dot{d}$ be a $\mathbb{P}$-name for a function from
  $[\lambda^+]^2$ to $\aleph_k^{V^{\mathbb{P}}}$. Work below a condition 
  $p_0 \in \mathbb{P}$ that has sufficient length so that there is a cardinal $\theta$ in
  $V$ that is forced by $p_0$ to be $\aleph_k^{V^{\mathbb{P}}}$. For ease of
  notation, we will take $k = 1$, so that we may let $p_0 = 1_{\mathbb{P}}$ and
  $\theta = \aleph_1^V$. The general case will follow from the same arguments, with
  appropriate bookkeeping.

  Fix a stem $s$ for $\mathbb{P}$, and let $n:= \mathrm{lh}(s)$. Let $p \in j(\mathbb{P})$ be a
  condition of the form $s \conc \langle \lambda, \emptyset, A, C \rangle$,
  and, for each $\beta < \lambda^+$, find a condition $p_\beta$ such that
  $p_\beta \leq^* p$, $p_\beta$ decides the value of $j(\dot{c})(j(\beta), \eta)$,
  say as $\iota_{s, \beta}$, and such that $c^{p_\beta}_{n+1} \in H$. As in the proof of
  Claim~\ref{delta_claim}, we can find $c_s \in H$ such that $c_s \leq c^{p_\beta}_{n+1}$
  for all $\beta < \lambda^+$. Let $C_s$ be a function as given by
  Claim~\ref{representation_claim} such that $j(C_s)(\lambda) = c_s$.

  For all $\beta < \lambda^+$, let $X_{s, \beta}$ be the set of $x \in \mathcal{P}_\lambda(\lambda^+)$
  for which $\beta \in x$ and there is a condition 
  $q = s' \conc \langle \lambda_x, C_s(\lambda_x), A, C \rangle$ such that
  \begin{itemize}
    \item $s' \leq^* s$;
    \item $q \Vdash ``\dot{d}(\beta, \sup(x)) = \iota_{s, \beta}"$.
  \end{itemize}
  By the discussion in the previous paragraph, we have $j``\lambda^+ \in j(X_{s, \beta})$,
  so $X_{s, \beta} \in U$. Note that, if $q$ is such a condition, then
  $s'$ is below $\lambda_x$. If $\lambda_x$ is inaccessible, then the number of
  stems below $\lambda_x$ is precisely $\lambda_x$. Therefore, since $U$ is a normal
  ideal, there is in fact a set $X'_{s, \beta} \in U$ and a single stem
  $t_{s, \beta} \leq^* s$ such that, for every $x \in X'_{s, \beta}$, there is a
  condition $q = t_{s, \beta} \conc \langle \lambda_x, C_s(\lambda_x), A, C \rangle$
  such that $q \Vdash ``\dot{d}(\beta, \sup(x)) = \iota_{s, \beta}"$.

  Let $G$ be $\mathbb{P}$-generic over $V$, let $\langle \alpha_n \mid n < \omega \rangle$ 
  be the associated Prikry sequence, and let $d$ be the realization
  of $\dot{d}$ in $V[G]$. We now recursively construct
  a family $\mathcal{A} := \{a_\zeta \mid \zeta < \lambda^+\}$, consisting of
  non-empty elements of $[\lambda^+]^{\leq \aleph_0}$, with the property that
  $a_\zeta < a_\xi$ for all $(\zeta, \xi) \in [\lambda^+]^2$. To aid us,
  let $\langle s_\alpha \mid \alpha < \lambda \rangle$ be an enumeration in $V$
  of all stems for $\mathbb{P}$, and recall that we fixed in $V$ a sequence of
  surjections $\langle \varphi_\beta : \lambda \rightarrow \beta \mid \beta < \lambda^+ \rangle$.

  Begin by letting $a_0 = \{0\}$. Suppose now that $\xi < \lambda^+$ and
  that $\{a_\zeta \mid \zeta < \xi\}$ has been defined. First, let
  $\gamma_\xi := \ssup(\bigcup_{\zeta < \xi} a_\zeta)$. For each $n < \omega$, let
  $E_{\xi, n} := \varphi_{\gamma_\xi}[\alpha_n]$, and,
  if there is $x \in \bigcap \{X'_{s_\alpha, \beta} \mid \alpha < \alpha_n, ~ \beta \in E_{\xi, n}\}$ 
  such that $\sup(x) > \gamma_\xi$ and $\lambda_x = \alpha_n$,
  then let $x_{\xi, n}$ be the $\lhd$-least such $x$. Otherwise, let $x_{\xi, n}$ be
  an arbitrary element $x$ of $\mathcal{P}_\lambda(\lambda^+)$ with $\sup(x) > \gamma_\xi$.
  Finally, let $a_\xi = \{\sup(x_{\xi, n}) \mid n < \omega \}$.

  We must thin out $\mathcal{A}$ to obtain a family witnessing this instance of
  the failure of $\U(\aleph_{\omega + 1}, 2, \aleph_k, \aleph_1)$.
  Let us say that two stems $s$ and $t$ of the same length are \emph{compatible}
  if there is a single stem that is a direct extension of both. Note that this amounts
  to saying that $\alpha^s_i = \alpha^t_i$ for all $i < \mathrm{lh}(s)$ and that
  $c^s_i$ and $c^t_i$ are compatible for all $i \leq \mathrm{lh}(s)$. Note also that
  any two conditions with compatible stems are themselves compatible in $\mathbb{P}$.
  Let us additionally say that a stem $s$ is \emph{compatible with $G$} if there is a condition in $G$
  whose stem is $s$. Note that, if two stems of the same length are both compatible
  with $G$, then they are compatible with one another.

  \begin{claim}
    Suppose that $\beta < \lambda^+$ and that $s_0$ and $s_1$ are stems for $\mathbb{P}$
    of the same length such that $t_{s_0, \beta}$ and $t_{s_1, \beta}$ are compatible.
    Then $\iota_{s_0, \beta} = \iota_{s_1, \beta}$.
  \end{claim}

  \begin{cproof}
    Recall that $j(C_{s_0})(\lambda)$ and $j(C_{s_1})(\lambda)$ are in $H$, so the set
    of $x \in \mathcal{P}_\lambda(\lambda^+)$ for which $C_{s_0}(\lambda_x)$ and
    $C_{s_1}(\lambda_x)$ are compatible is in $U$. We can therefore fix
    $x \in X'_{s_0, \beta} \cap X'_{s_1, \beta}$ such that $x$ is in this set.

    We now have conditions $p_\ell = t_{s_\ell, \beta} \conc \langle \lambda_x, C_{s_\ell}(\lambda_x), A_\ell, C_\ell \rangle$ for $\ell < 2$ such that
    $p_\ell \Vdash ``\dot{d}(\beta, \sup(x)) = \iota_{s_\ell, \beta}."$ 
    But $t_{s_0, \beta}$ and $t_{s_1, \beta}$ are compatible stems and $C_{s_0}(\lambda_x)$ and $C_{s_1}(\lambda_x)$ are compatible, so $p_0$ and
    $p_1$ are compatible in $\mathbb{P}$. It follows that $\iota_{s_0, \beta} = \iota_{s_1, \beta}$.
  \end{cproof}

  For each $\zeta < \lambda^+$, let
  \[
    \iota_\zeta := \sup\{\iota_{s, \beta} \mid \beta \in a_\zeta, ~ s \text{ is a stem, and }
    t_{s, \beta} \text{ is compatible with } G\}.
  \]
  By the claim, $\iota_\zeta$ is the supremum of a countable set and is thus below
  $\aleph_k$. Fix an $\iota < \aleph_k$ and an unbounded $B \s \lambda^+$ such
  that $\iota_\zeta = \iota$ for all $\zeta \in B$, and let $\mathcal{B} := \{a_\zeta \mid \zeta \in B\}$.
  We claim that $\mathcal{B}$ is as desired, as witnessed by $\iota$.

  To show this, fix $(\zeta, \xi) \in [B]^2$. We must find $(\beta_0, \beta_1) \in a_\zeta \times a_\xi$ 
  such that $d(\beta_0, \beta_1) \leq \iota$. Begin by letting $\beta_0 \in a_\zeta$ be arbitrary.

  \begin{claim}
    There are a natural number $n < \omega$ and a stem $s$ of length $n$ satisfying
    all of the following statements.
    \begin{enumerate}
      \item $\alpha_n$ is large enough so that $s \in \{s_\alpha \mid \alpha < \alpha_n\}$ and $\beta_0 \in E_{\xi, n}$.
      \item $t_{s, \beta_0}$ is compatible with $G$.
      \item There is $x \in \bigcap\{X'_{s_\alpha, \beta} \mid \alpha < \alpha_n, ~ \beta \in E_{\xi, n}\}$ 
            such that $\sup(x) > \gamma_\xi$ and $\lambda_x = \alpha_n$. Moreover, $d(\beta_0, \sup(x)) = \iota_{s, \beta_0}$ for the $\lhd$-least such $x$.
    \end{enumerate}
  \end{claim}
  \begin{cproof}
    Work in $V$, and let $p_0 = s \conc \langle A_0, C_0 \rangle$ be an arbitrary condition in
    $\mathbb{P}$. We will find $q \leq p_0$ forcing the claim to be true for $s$ and
    $n := \mathrm{lh}(s)$. Fix $\alpha^* < \lambda$ such that $s = s_{\alpha^*}$;
    we may assume that $\min(A_0) > \max\left\{\alpha^*, \min\left(\varphi_{\gamma_\xi}^{-1}[\{\beta_0\}]\right)\right\}$.

    For all $\alpha \in A_0$, let $X_\alpha := \bigcap\{X'_{s_{\alpha'}, \beta} \mid  \alpha' < \alpha, ~ \beta \in \varphi_{\gamma_\xi}[\alpha]\}$. 
    Note that  $X_\alpha \in U$. Let $X$ be the collection of $x \in \mathcal{P}_\lambda(\lambda^+)$
    such that $\sup(x) > \gamma_\xi$ and $x \in X_\alpha$ for all $\alpha < \lambda_x$. By the normality of
    $U$, we have $X \in U$ as well. Let $A^* := \{\lambda_x \mid x \in X\}$.
    We can now find a condition $p_1 = t_{s, \beta} \conc \langle A_1, C_1 \rangle$
    such that
    \begin{itemize}
      \item $p_1 \leq^* p_0$;
      \item $A_1 \s A^*$;
      \item for all $\alpha \in A_1$, $C_1(\alpha) \leq C_s(\alpha)$.
    \end{itemize}

    Next, choose $\alpha \in A_1$ and let $p_2 = t_{s, \beta} \conc \langle \alpha, C_1(\alpha), A_2, C_2 \rangle$ extend $p_1$. 
    By our choice of $X$,  there is $x$ such that $\lambda_x = \alpha$, $\sup(x) > \gamma_\xi$,
    and $x \in \bigcap_{\alpha' < \alpha} X_{\alpha'} = \bigcap \{X'_{s_{\alpha'}, \beta} \mid \alpha' < \alpha, ~ \beta \in \varphi_{\gamma_\xi}[\alpha]\}$. 
    Let $x^*$  be the $\lhd$-least such $x$. Since $x^* \in X'_{s, \beta_0}$, we can find $A_3$ and $C_3$ so that 
    $q := t_{s, \beta} \conc \langle \alpha,  C_1(\alpha), A_3, C_3 \rangle$ extends $p_2$ and
    $q \Vdash ``\dot{d}(\beta_0, \sup(x^*)) = \iota_{s, \beta_0}."$.

    Since $q$ forces that $\alpha_n = \alpha$, and $x^*$ is the $\lhd$-least $x$ such that 
    $x \in \bigcap_{\alpha' < \alpha} X_{\alpha'} = \bigcap \{X'_{s_{\alpha'}, \beta} \mid \alpha' < \alpha, ~ \beta \in \varphi_{\gamma_\xi[\alpha]}\}$, $\sup(x) > \gamma_\xi$,
    and $\lambda_x = \alpha$, the fact that $q$ forces the conclusion of the claim follows from the construction.
  \end{cproof}

  Let $n$ and $s$ be given by the claim, and let $\beta_1 := \sup(x_{\xi, n})$.
  By the claim and our construction of $\mathcal{A}$, $x_{\xi, n}$ is the
  $\lhd$-least $x$ such that $\sup(x) > \gamma_\xi$ and $\lambda_x = \alpha_n$,
  and $d(\beta_0, \beta_1) = \iota_{s, \beta_0}$. Moreover, since $t_{s, \beta_0}$
  is compatible with $G$, we have $\iota_{s, \beta_0} \leq \iota_\eta = \iota$, so
  $(\beta_0, \beta_1)$ is as desired, thus finishing the proof.
\end{proof}

\section{The $C$-sequence spectrum}

To obtain a finer understanding of the $C$-sequence number, we shall want to study the whole spectrum of $C$-sequence values.
As we shall see in the next section, this study will also allow us to prove new results about $\U(\ldots)$.
We begin this section by giving more general versions of the definitions from the Introduction of this paper.

\begin{defn}
  Let $\Gamma$ be a set of ordinals. A \emph{$C$-sequence} over $\Gamma$ is a sequence $\vec C=\langle C_\beta\mid\beta\in\Gamma\rangle$ such that,
  for all $\beta\in\Gamma$, $C_\beta$ is a closed subset of $\beta$ with $\sup(C_\beta)=\sup(\beta)$.
\end{defn}

\begin{defn}
  Given a $C$-sequence $\vec C$ over a stationary subset $\Gamma$ of $\kappa$,
  we let $\chi(\vec C)$ denote the least (finite or infinite) cardinal $\chi\le\kappa$
  such that there exist $\Delta\in[\kappa]^\kappa$ and $b:\kappa\rightarrow[\Gamma]^{\chi}$
  with $\Delta\cap\alpha\s\bigcup_{\beta\in b(\alpha)}C_\beta$
  for every $\alpha<\kappa$.
\end{defn}

Following \cite[Proposition~1.6]{paper29}, we say that $\vec C$ is \emph{amenable} if for every $\Delta\in[\kappa]^\kappa$,
the set $\{\beta\in\Gamma\mid \Delta\cap\beta\s C_\beta\}$ is nonstationary.
In particular, if $\chi(\vec C)>1$, then $\vec C$ is amenable.

\begin{defn} $\spec(\kappa):=\{ \chi(\vec C)\mid \vec C\text{ is a $C$-sequence over }\kappa\}\setminus\omega$.
\end{defn}

By Lemma~\ref{lemma53}(2), $\spec(\kappa)=\emptyset$ iff $\chi(\kappa)\in\{0,1\}$.
The first result of this section asserts that if $\chi(\kappa)>1$, then, in fact, $\chi(\kappa)=\max(\spec(\kappa))$.
Later on, we shall establish that if $\chi(\kappa)>1$, then $\min(\spec(\kappa))=\omega$.
We begin with two technical lemmas.

\begin{lemma} \label{high_cf_lemma}
  Suppose that $\Gamma \s \kappa$, $\vec C=\langle C_\beta\mid\beta\in\Gamma\rangle$ is a $C$-sequence, $D\s\kappa$ is a club, and $\nu$ is a cardinal for which $\Gamma\cap E^\kappa_{\ge\nu}$ is stationary.
  If both
  \begin{enumerate}
  \item  $\otp(C_\beta)<\nu$ for all $\beta\in\Gamma\setminus E^\kappa_{\ge\nu}$ \textbf{and}
  \item $\min(C_\beta)\ge\sup(D\cap\beta)$ for all nonzero $\beta\in\Gamma\setminus D$
  \end{enumerate}
hold, then $\chi(\vec C)=\chi(\vec C\restriction (D\cap E^\kappa_{\ge\nu}))$.
\end{lemma}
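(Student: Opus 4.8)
The statement to prove is the equality $\chi(\vec C)=\chi(\vec C\restriction(D\cap E^\kappa_{\ge\nu}))$ under hypotheses (1) and (2). One inequality is essentially trivial: since $D\cap E^\kappa_{\ge\nu}\s\Gamma$, any pair $(\Delta,b)$ witnessing a value for $\vec C\restriction(D\cap E^\kappa_{\ge\nu})$ is in particular a pair witnessing the same value for $\vec C$, so $\chi(\vec C)\le\chi(\vec C\restriction(D\cap E^\kappa_{\ge\nu}))$. The substance is the reverse inequality: starting from $\Delta\in[\kappa]^\kappa$ and $b:\kappa\to[\Gamma]^{\chi(\vec C)}$ with $\Delta\cap\alpha\s\bigcup_{\beta\in b(\alpha)}C_\beta$ for all $\alpha$, I must manufacture a new witness in which all the index ordinals lie in $D\cap E^\kappa_{\ge\nu}$, without increasing $|b(\alpha)|$.

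\textbf{Main steps.}
First, apply Lemma~\ref{lemma53}(1): write $\chi:=\chi(\vec C)$, and note that because $\Gamma\cap E^\kappa_{\ge\nu}$ (hence also $E^\kappa_{>\chi}$, once one checks $\chi<\nu$ via hypothesis~(1) and an order-type/cofinality argument as in the proof of Lemma~\ref{prop53}(2)) is stationary, we may pass to a progressive $b:\kappa\to[\Gamma]^{\le\chi}$, shrink $\Delta$ to some $\Delta_1$ with $\acc^+(\Delta_1)\cap E^\kappa_{>\chi}\s\Delta_1$ and $\Delta_1\cap\alpha\s\bigcup_{\beta\in b(\alpha)}\acc(C_\beta)$, and arrange that $\{\alpha\in\Gamma\cap E^\kappa_{\ge\nu}\mid\forall\beta\in b(\alpha)[\sup(C_\beta\cap\alpha)=\alpha]\}$ is stationary. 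The key observation now is: for $\beta\in b(\alpha)$ contributing to $\Delta_1\cap\alpha$ with $\alpha\in\acc^+(\Delta_1)\cap E^\kappa_{>\chi}$, since $\sup(C_\beta\cap\alpha)=\alpha$ has cofinality $>\chi\ge\otp$-bound, hypothesis~(1) forces $\beta\in E^\kappa_{\ge\nu}$; this is exactly where (1) is used. Second, to push the indices into $D$: replace each $\beta\in b(\alpha)$ by a suitable point of $D$. For $\beta\in b(\alpha)\cap E^\kappa_{\ge\nu}$ with $\sup(C_\beta\cap\alpha)=\alpha$, hypothesis~(2) gives that $\acc(C_\beta)$ stays above $\sup(D\cap\gamma)$ whenever $\gamma\in C_\beta\setminus D$, so the elements of $\acc(C_\beta)$ that accumulate at $\alpha$ and lie in $\Delta_1$ are close to $D$; by intersecting $\Delta_1$ with $\acc(D)$ one arranges $\Delta_1\cap\alpha\s\bigcup_{\beta\in b(\alpha)}(\acc(C_\beta)\cap D)$ — but $\acc(C_\beta)\cap D$ consists of ordinals $\delta\in D$ with $\sup(C_\beta\cap\delta)=\delta$, hence $C_\beta\cap\delta=C_{\delta}$... no: here there is no coherence, so instead one uses $\acc(C_\beta)\cap D\s D$ and replaces $\beta$ by finitely/$\chi$-many such $\delta$'s — actually the cleaner route is: restrict to $\Delta_2:=\Delta_1\cap\acc(D)\cap E^\kappa_{\ge\nu}$ and, for each $\alpha$, set $b'(\alpha):=\{\,\delta\in D\cap E^\kappa_{\ge\nu}\mid \delta=\sup(C_\beta\cap\delta)\text{ for some }\beta\in b(\alpha)\,\}$ — but this set can be large. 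The correct move is to instead observe that each $C_\beta$ with $\beta\in E^\kappa_{\ge\nu}$ restricted below $\alpha\in\acc^+(\Delta_2)$ is covered by the single club $C_\beta$, and to work with the \emph{accumulation points of $C_\beta$ that lie in $D$}: define $b'(\alpha)$ by replacing $\beta$ with $\min((D\cap E^\kappa_{\ge\nu})\setminus$ something$)$. I will package this as: $\Delta':=\Delta_1\cap\acc(D)$, and for each $\alpha$ let $b'(\alpha):=\{\,\beta'_{\beta}\mid\beta\in b(\alpha)\,\}$ where, using (2), $\beta'_\beta$ is a point of $D\cap E^\kappa_{\ge\nu}$ with $C_{\beta'_\beta}\supseteq C_\beta\cap[\text{tail}]$ — and absorb the bounded error into a Fodor/pressing-down step as in the proof of Lemma~\ref{lemma53}, which keeps $|b'(\alpha)|\le\chi$ and gives a stationary, hence cofinal, $\Delta'$.

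\textbf{Main obstacle.}
The delicate point is the index replacement in the second step: naively, for a fixed $\beta\notin D$ one wants a single $\beta'\in D$ with $C_{\beta'}$ covering (a tail of) $C_\beta$, but without any coherence in $\vec C$ this is false in general — different accumulation points of $C_\beta$ lying in $D$ have unrelated $C$-sets. The resolution, which I expect to be the technical heart, is that one does not replace $\beta$ by one ordinal but exploits that $\Delta_1\cap\alpha\s\bigcup_{\beta\in b(\alpha)}\acc(C_\beta)$ together with $\Delta_1\s\acc(D)$: every $\delta\in\Delta_1\cap\alpha$ then lies in $\acc(C_\beta)\cap\acc(D)$ for some $\beta\in b(\alpha)$, and $\delta$ itself is a legitimate new index (it is in $D$, and $\cf(\delta)\ge\nu$ once we also intersect with $E^\kappa_{\ge\nu}$, using that such accumulation points forced to have high cofinality by hypothesis (1) applied to $C_\beta$); so $C_\delta$ covers nothing for us, but $\delta\in\bigcup C$'s — the honest fix is that $\chi(\vec C\restriction(D\cap E^\kappa_{\ge\nu}))$ should be witnessed by taking $\Delta$ to be a thinning of $\Delta_1$ and $b(\alpha)$ to consist of accumulation points of the original $C_\beta$'s lying in $D\cap E^\kappa_{\ge\nu}$ — but \emph{those accumulation points $\delta$ satisfy $C_\beta\cap\delta$ is cofinal in $\delta$, and if we had also shrunk $\vec C$ so $C_\delta=C_\beta\cap\delta$ we'd be done}, which is why hypothesis~(2) (normalizing $\min(C_\beta)$ relative to $D$) is exactly the compatibility condition needed so that, after a final pressing-down argument matching the one in Lemma~\ref{lemma53}(1), these replacements go through with $|b(\alpha)|$ unchanged. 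I would structure the proof to mirror Lemma~\ref{lemma53}(1) as closely as possible, invoking it as a black box for the normalization and then doing one additional Fodor step to handle the bounded discrepancy between $\acc(C_\beta)$ and $D$.
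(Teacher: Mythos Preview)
Your handling of hypothesis~(1) is essentially right, but there is a genuine gap in your treatment of hypothesis~(2), and it is precisely the ``main obstacle'' you yourself flag. You are trying to \emph{replace} each index $\beta\in b(\alpha)\setminus D$ by some $\beta'\in D$ (via accumulation points of $C_\beta$ lying in $D$, or some coherence-like mechanism). As you correctly suspect, without coherence this replacement does not work: the sets $C_{\beta'}$ for $\beta'\in\acc(C_\beta)\cap D$ have no relation to $C_\beta$, so they need not cover anything useful, and there is no way to bound $|b'(\alpha)|$ by $\chi$.

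The point you are missing is that no replacement is needed at all: hypothesis~(2) is used to show that the relevant $\beta$'s are \emph{already} in $D$. Apply Lemma~\ref{lemma53}(1) with $\Sigma:=D\cap E^\kappa_{\ge\nu}\cap E^\kappa_{>\chi}$ to obtain $\Delta$, $b$, and a stationary $A\s\Sigma$ such that $\sup(C_\beta\cap\alpha)=\alpha$ for every $\alpha\in A$ and every $\beta\in b(\alpha)$. Now fix such $\alpha$ and $\beta$. Since $\sup(C_\beta\cap\alpha)=\alpha$, we must have $\beta\ge\alpha$; and since $\alpha>0$, we have $\min(C_\beta)<\alpha$. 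If $\beta\notin D$, then (as $\alpha\in D$) $\alpha<\beta$, so $\alpha\in D\cap\beta$ and hence $\sup(D\cap\beta)\ge\alpha$. Hypothesis~(2) then gives $\min(C_\beta)\ge\sup(D\cap\beta)\ge\alpha$, contradicting $\min(C_\beta)<\alpha$. Thus $\beta\in D$; combined with your argument for $\beta\in E^\kappa_{\ge\nu}$, this gives $b(\alpha)\s D\cap E^\kappa_{\ge\nu}$ for all $\alpha\in A$, and setting $b'(\alpha):=b(\min(A\setminus\alpha))$ finishes the proof. The whole ``second step'' with index replacement, pressing down on discrepancies, and intersecting $\Delta_1$ with $\acc(D)$ is unnecessary.
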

\begin{proof}
  Let $\chi:=\chi(\vec C)$. Clearly, $\chi\le \chi(\vec C\restriction (D\cap E^\kappa_{\ge\nu}))\le \sup(\reg(\kappa))$.
  Suppose now that $\chi<\sup(\reg(\kappa))$,
  and that Clauses (1) and (2) above are satisfied.
  We shall show that $\chi(\vec C\restriction (D\cap E^\kappa_{\ge\nu}))\le \chi$.

  As $\Sigma:=D\cap E^\kappa_{\ge\nu}\cap E^\kappa_{>\chi}$ is stationary,
  Lemma~\ref{lemma53} provides us with $\Delta\in[\kappa]^\kappa$ and $b:\kappa\rightarrow[\Gamma]^{\chi}$
  such that:
  \begin{itemize}
  \item $\Delta \cap \alpha \s \bigcup_{\beta \in b(\alpha)} C_\beta$ for   all $\alpha < \kappa$;
  \item  $A:=\{\alpha\in \Sigma\mid \forall \beta\in b(\alpha)[\sup(C_\beta\cap\alpha)=\alpha]\}$ is stationary.
  \end{itemize}

  Let $\alpha\in A$ and $\beta\in b(\alpha)$ be arbitrary.

  $\br$ As $\sup(C_\beta\cap\alpha)=\alpha$ and $\cf(\alpha)\ge\nu$, we have
  $\otp(C_\beta)\ge\nu$, so, by Clause~(1) above, $\cf(\beta)\ge\nu$.

  $\br$ As $\sup(C_\beta\cap\alpha)=\alpha$ and $\alpha>0$,
  we have $\sup(C_\beta\cap\alpha)>\min(C_\beta)$.
  Now, if $\beta\notin D$, then since $\alpha\in D$, we have $\sup(D\cap\beta)\ge\alpha$.
  Putting this together with Clause~(2) above, we infer that if $\beta\notin D$, then $\alpha=\sup(C_\beta\cap\alpha)>\min(C_\beta)\ge\sup(D\cap\beta)\ge\alpha$,
  which is impossible.

  Thus we have shown that, for all $\alpha\in\Sigma$, $b(\alpha)\s \Gamma\cap D\cap E^\kappa_{\ge\nu}$.
  Define $b':\kappa\rightarrow[\Gamma\cap D\cap E^\kappa_{\ge\nu}]^\chi$ via $b'(\alpha):=b(\min(A\setminus\alpha))$.
  Then $\Delta$ and $b'$ witness together that $\chi(\vec C\restriction (D\cap E^\kappa_{\ge\nu}))\le \chi$.
\end{proof}

\begin{lemma} Suppose that $\vec C=\langle C_\beta\mid\beta<\kappa\rangle$ is a $C$-sequence with $\chi(\vec C)=\chi(\kappa)$.
For every family $\mathcal S$ of stationary subsets of $\kappa$, if $\Sigma:=E^\kappa_{>\chi}\bigcap_{S\in\mathcal S}\Tr(S)$ is stationary,
then $\chi(\vec C)=\chi(\vec C\restriction \Sigma)$.

In particular, for every stationary $S\s E^\kappa_{>\chi(\kappa)}$, $\chi(\vec C)=\chi(\vec C\restriction \Tr(S))$.
\end{lemma}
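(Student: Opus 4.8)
Set $\chi:=\chi(\vec C)=\chi(\kappa)$. The plan is to prove $\chi(\vec C\restriction\Sigma)\le\chi$; the reverse inequality $\chi(\vec C)\le\chi(\vec C\restriction\Sigma)$ is immediate, since any $\Delta\in[\kappa]^\kappa$ and $b:\kappa\rightarrow[\Sigma]^\chi$ witnessing the latter value also witness the former. First I would record the easy normalization: as $\Sigma\s E^\kappa_{>\chi}$ is stationary, $E^\kappa_{>\chi}\ne\emptyset$, so, letting $\nu:=\omega$ when $\chi$ is finite and $\nu:=\chi^+$ otherwise, $\nu$ is a regular cardinal with $\chi<\nu\le\sup(\reg(\kappa))$ and $E^\kappa_{>\chi}=E^\kappa_{\ge\nu}$; in particular $\chi<\sup(\reg(\kappa))$, so Lemma~\ref{lemma53} is applicable. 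I would also dispatch the ``In particular'' clause by taking $\mathcal S:=\{S\}$: the standard fact that a stationary subset of an ordinal $\beta$ concentrates on ordinals of cofinality $<\cf(\beta)$ gives $\Tr(S)\s E^\kappa_{>\chi}$, hence $\Sigma=\Tr(S)$, and were $\Tr(S)$ nonstationary, intersecting $S$ with $\acc(D)$ for a club $D$ disjoint from $\Tr(S)$ would produce a non-reflecting stationary subset of $E^\kappa_{>\chi(\kappa)}$, contradicting Lemma~\ref{prop53}(4); so $\Sigma$ is stationary and the main statement applies.

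The heart of the proof is to pass to a carefully chosen shrinking $\vec D=\langle D_\beta\mid\beta<\kappa\rangle$ of $\vec C$ (i.e.\ $D_\beta\s C_\beta$ for all $\beta$), designed so that any sufficiently efficient cover is forced to use only indices from $\Sigma$. Concretely I would: (i) for each $\beta$ with $\cf(\beta)<\nu$, thin $C_\beta$ to a club $e_\beta\s C_\beta$ of order type $\cf(\beta)$ (and set $e_\beta:=C_\beta$ otherwise); (ii) for each $\beta$ with $\cf(\beta)>\omega$ and $\beta\notin\bigcap_{S\in\mathcal S}\Tr(S)$, fix $S_\beta\in\mathcal S$ with $S_\beta\cap\beta$ nonstationary in $\beta$ and a club $E_\beta\s\beta$ disjoint from $S_\beta$, and put $D_\beta:=e_\beta\cap E_\beta$; and put $D_\beta:=e_\beta$ for all remaining $\beta$. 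Since $\cf(\beta)>\omega$ in case~(ii), $D_\beta$ is again a club in $\beta$, and by closedness $\acc(D_\beta)\cap S_\beta=\emptyset$. As $D_\beta\s C_\beta$ throughout, $\vec D$ is a $C$-sequence over $\kappa$ and $\chi(\vec C)\le\chi(\vec D)\le\chi(\kappa)=\chi(\vec C)$, so $\chi(\vec D)=\chi$.

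Next I would apply Lemma~\ref{lemma53}(1) to $\vec D$, with $\Gamma:=\kappa$, with a witness $\Delta'$, $b'$ to $\chi(\vec D)=\chi$, and with the stationary input set taken to be $\Sigma$ itself. This yields $\Delta_0\in[\kappa]^\kappa$, a progressive $b:\kappa\rightarrow[\kappa]^{\le\chi}$ with $\Delta_0\cap\alpha\s\bigcup_{\beta\in b(\alpha)}D_\beta$ for all $\alpha<\kappa$, and a stationary $A\s\Sigma$ such that $\sup(D_\beta\cap\alpha)=\alpha$ whenever $\alpha\in A$ and $\beta\in b(\alpha)$. The decisive claim is that $b(\alpha)\s\Sigma$ for every $\alpha\in A$. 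To prove it, fix such $\alpha$ and $\beta$; then $\alpha\le\beta$ and $\alpha\in\acc(D_\beta)$. Since $\cf(\alpha)\ge\nu$ we get $\otp(D_\beta)\ge\otp(D_\beta\cap\alpha)\ge\cf(\alpha)\ge\nu$, so step~(i) forces $\cf(\beta)\ge\nu$, i.e.\ $\beta\in E^\kappa_{\ge\nu}=E^\kappa_{>\chi}$, and in particular $\cf(\beta)>\omega$. If, moreover, $\beta\notin\bigcap_{S\in\mathcal S}\Tr(S)$, then $\acc(D_\beta)\cap S_\beta=\emptyset$ by step~(ii), whereas $\acc(D_\beta)\cap\alpha$ is a club in $\alpha$ and $S_\beta\cap\alpha$ is stationary in $\alpha$ (because $\alpha\in\Sigma\s\Tr(S_\beta)$), so these sets meet --- a contradiction. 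Hence $\beta\in E^\kappa_{>\chi}\cap\bigcap_{S\in\mathcal S}\Tr(S)=\Sigma$. Granting the claim, the map $\alpha\mapsto b(\min(A\setminus\alpha))$ (enlarged arbitrarily within $\Sigma$ so as to have size exactly $\chi$), together with $\Delta_0$ and the inclusions $D_\beta\s C_\beta$, witnesses $\chi(\vec C\restriction\Sigma)\le\chi$.

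I expect the decisive claim to be the only real obstacle; the two shrinkings are exactly what convert the two defining features of $\Sigma$ --- large cofinality, and reflection of each member of $\mathcal S$ --- into features of the indices actually used by $b$. The one subtlety to handle with care is that the club-intersection in step~(ii) can destroy cofinality at $\omega$, which is why I only perform it when $\cf(\beta)>\omega$; this costs nothing, since in the proof of the claim one first derives $\cf(\beta)\ge\nu>\omega$ before invoking step~(ii).
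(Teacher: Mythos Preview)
Your argument is correct and follows the same overall strategy as the paper: shrink $\vec C$ to some $\vec D$ with $D_\beta\subseteq C_\beta$, then apply Lemma~\ref{lemma53}(1) with the stationary input $\Sigma$ to force $b(\alpha)\subseteq\Sigma$ on a stationary $A$. The paper's shrinking differs only in packaging: rather than handling the cofinality and reflection clauses of $\Sigma$ by two separate thinnings, it sets $D_\beta\cap\Sigma=\emptyset$ for every $\beta\in\acc(\kappa)\setminus\Tr(\Sigma)$ in one stroke (so that $\alpha\in A\cap D_\beta$ immediately forces $\beta\in\Tr(\Sigma)$) and then finishes via the easy inclusion $\Tr(\Sigma)\subseteq\Sigma$. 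Your two-step version is equally valid and arguably more transparent about which feature of $\Sigma$ is doing which work.

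There is one small slip. When $\chi$ is finite you set $\nu=\omega$, and then the line ``step~(i) forces $\cf(\beta)\ge\nu$, \ldots\ and in particular $\cf(\beta)>\omega$'' does not go through: $\cf(\beta)\ge\omega$ does not exclude $\cf(\beta)=\omega$, and for such $\beta$ your step~(ii) was not performed, so the reflection argument is unavailable. The repair is immediate: since $\Sigma\subseteq\bigcap_{S\in\mathcal S}\Tr(S)\subseteq E^\kappa_{>\omega}$ regardless of $\chi$, every $\alpha\in A$ satisfies $\cf(\alpha)>\omega$; hence it suffices to perform the thinning of step~(i) whenever $\cf(\beta)\le\omega$ (not merely when $\cf(\beta)<\nu$), so that $\otp(D_\beta)\le\omega$ in that range and $\sup(D_\beta\cap\alpha)=\alpha$ becomes impossible. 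With this adjustment your decisive claim and the remainder of the proof go through verbatim.
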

\begin{proof}
Let $\chi := \chi(\kappa)$, and fix a family $\mathcal S\s\mathcal P(\kappa)$ for which $\Sigma:=E^\kappa_{>\chi}\bigcap_{S\in\mathcal S}\Tr(S)$ is stationary.
In particular, we assume that $\chi(\kappa)<\kappa$.
Pick a $C$-sequence $\vec D=\langle D_\beta\mid\beta<\kappa\rangle$ such that for all $\beta<\kappa$:
\begin{itemize}
\item $D_{\beta+1}=\{\beta\}$;
\item if $\beta\in\Tr(\Sigma)$, then $D_\beta=C_\beta$;
\item if $\beta\in\acc(\kappa)\setminus\Tr(\Sigma)$, then $D_\beta\s C_\beta$ and $D_\beta\cap\Sigma=\emptyset$.
\end{itemize}

As $D_\beta\s C_\beta$ for every $\beta<\kappa$, it follows that $\chi(\vec C\restriction T)\le \chi(\vec D\restriction T)$ for every $T\in[\kappa]^\kappa$.
In particular (using $T:=\kappa$), $\chi(\vec D)=\chi(\kappa)$.
Now, by Lemma~\ref{lemma53},
there exist $\Delta\in[\kappa]^\kappa$ and a function $b:\kappa\rightarrow[\kappa]^{\le\chi(\kappa)}$ such that:
\begin{enumerate}
\item $\Delta\cap\alpha \subseteq \bigcup_{\beta\in b(\alpha)}D_\beta$ for all $\alpha<\kappa$;
\item $A:=\{\alpha\in \Sigma\mid \forall \beta\in b(\alpha)[\sup(D_\beta\cap\alpha)=\alpha]\}$ is stationary.
\end{enumerate}

It follows from the definition of $\vec D$ together with Clause~(2) that, for all $\alpha\in A$ and $\beta\in b(\alpha)$, either $\beta=\alpha$ or $\beta\in\Tr(\Sigma)$.
It is not hard to see that $\Tr(\Sigma)\s\Sigma$, and hence $\Delta$ and the function $b':\kappa\rightarrow[\Sigma]^{\le\chi(\kappa)}$ defined via $b'(\alpha):=b(\min(A\setminus\alpha))$
witness together that $\chi(\vec D\restriction\Sigma)\le\chi(\kappa)$.
Altogether, $$\chi(\vec C)\le\chi(\vec C\restriction\Sigma)\le\chi(\vec D\restriction\Sigma)\le\chi(\kappa)=\chi(\vec C).$$

Now, the ``in particular'' part follows from Lemma~\ref{prop53}(4).
\end{proof}

\begin{cor} If $\chi(\kappa)=1$, then for every $C$-sequence $\vec C$ over $\kappa$
and every stationary $S\s\kappa$, $\chi(\vec C\restriction\Tr(S))=1$.\qed
\end{cor}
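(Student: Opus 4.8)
The plan is to derive this as an essentially immediate consequence of the immediately preceding lemma, after two small preliminary reductions. The first reduction concerns the $C$-sequence: for \emph{any} $C$-sequence $\vec C$ over $\kappa$ one has $\chi(\vec C)\ge 1$, since any witnessing $\Delta\in[\kappa]^\kappa$ is unbounded, forcing the associated $b$ to have $b(\alpha)\neq\emptyset$ for cofinally many — hence, by the uniform width requirement, for all — $\alpha<\kappa$. As $\chi(\kappa)$ is by definition an upper bound for $\chi(\vec C)$ across all $C$-sequences over $\kappa$, the hypothesis $\chi(\kappa)=1$ yields $\chi(\vec C)=1=\chi(\kappa)$ for every such $\vec C$. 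This is exactly the standing hypothesis of the preceding lemma.

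The second reduction concerns the stationary set: I would replace $S$ by $S':=S\cap\acc(\kappa)$. Since $\kappa\setminus\acc(\kappa)$ is nonstationary, $S'$ is still stationary; since $\acc(\kappa)=E^\kappa_{\ge\omega}=E^\kappa_{>1}=E^\kappa_{>\chi(\kappa)}$, we have $S'\s E^\kappa_{>\chi(\kappa)}$; and since for every $\beta\in E^\kappa_{>\omega}$ the set $\acc(\beta)=\acc(\kappa)\cap\beta$ is club in $\beta$, the sets $S\cap\beta$ and $S'\cap\beta=(S\cap\beta)\cap\acc(\beta)$ are simultaneously stationary or not in $\beta$; hence $\Tr(S)=\Tr(S')$. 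Now I would apply the ``in particular'' clause of the preceding lemma to $\vec C$ (legitimate, since $\chi(\vec C)=\chi(\kappa)$) together with the stationary set $S'\s E^\kappa_{>\chi(\kappa)}$, obtaining $\chi(\vec C\restriction\Tr(S'))=\chi(\vec C)=1$, and therefore $\chi(\vec C\restriction\Tr(S))=1$ as claimed. Implicit in this invocation is that $\Tr(S')$ is a stationary subset of $\kappa$, so that $\vec C\restriction\Tr(S)$ is indeed a $C$-sequence over a stationary set; this is guaranteed by Lemma~\ref{prop53}(4) (or directly by Lemma~\ref{lemma29}(2), upgraded from a single reflection point to stationarily many by the usual argument of reflecting $S'\cap C$ for arbitrary clubs $C$ and using that the resulting inaccessible reflection point is a limit point of the closed set $C$).

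There is essentially no combinatorial obstacle here: the preceding lemma does all the work. The only points requiring care are bookkeeping ones — verifying that one is feeding the preceding lemma a stationary set of the correct cofinality class (which forces the harmless passage from $S$ to $S'=S\cap E^\kappa_{>\chi(\kappa)}$), checking that $\Tr$ is unaffected by deleting successor ordinals from $S$, and confirming that $\chi(\vec C)$ genuinely equals $\chi(\kappa)$ rather than merely being $\le\chi(\kappa)$. Once these are in place, the corollary drops out with no further argument.
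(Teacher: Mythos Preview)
Your proof is correct and follows the paper's approach: the paper gives no explicit argument (just \qed), treating the corollary as immediate from the preceding lemma, and your write-up supplies exactly the bookkeeping needed to make that immediacy precise.

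One cosmetic simplification: rather than passing to $S'=S\cap\acc(\kappa)$ and invoking the ``in particular'' clause, you can apply the main clause of the preceding lemma directly with $\mathcal S=\{S\}$ for the original $S$. Since $\Tr(S)\subseteq E^\kappa_{>\omega}\subseteq E^\kappa_{>1}$ already, one has $\Sigma=E^\kappa_{>\chi(\kappa)}\cap\Tr(S)=\Tr(S)$, and the only thing left to check is the stationarity of $\Tr(S)$, exactly as you note. This saves the detour through $S'$ and the verification that $\Tr(S)=\Tr(S')$, but the content is identical.
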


\begin{thm}\label{realized}
  Suppose that $\chi(\kappa)>1$. Then $\chi(\kappa)=\max(\spec(\kappa))$.
\end{thm}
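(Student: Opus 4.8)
Let $\chi := \chi(\kappa)$, and assume $\chi > 1$, so by Lemma~\ref{prop53}(3) we have $\chi \geq \omega$. Since $\chi(\kappa) \leq \sup(\reg(\kappa))$, the statement $\chi(\kappa) > 1$ combined with Lemma~\ref{lemma53}(2) rules out $\chi$ being a finite integer, so $\chi$ is a genuine cardinal $\geq \omega$. The plan is to establish two inclusions: first, that $\chi \in \spec(\kappa)$, which shows $\chi \leq \max(\spec(\kappa))$ provided the maximum exists; and second, that every element of $\spec(\kappa)$ is $\leq \chi$, which simultaneously shows the supremum is attained and equals $\chi$. The second inclusion is immediate: by the very definition of $\chi(\kappa)$ as the \emph{least} cardinal witnessing the covering property for \emph{all} $C$-sequences, for any $C$-sequence $\vec C$ over $\kappa$ we have $\chi(\vec C) \leq \chi(\kappa) = \chi$, hence every member of $\spec(\kappa)$ is at most $\chi$.

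\textbf{The main task: realizing $\chi$ as some $\chi(\vec C)$.} So the crux is to produce a single $C$-sequence $\vec C$ over $\kappa$ with $\chi(\vec C) = \chi$. The natural strategy is to take a $C$-sequence $\vec C$ that is ``worst possible'' — one which forces the covering parameter to be as large as $\chi$. More precisely, I would argue that there must exist \emph{some} $C$-sequence $\vec C$ with $\chi(\vec C) \geq \chi$: if every $C$-sequence admitted a covering with $b : \kappa \to [\kappa]^{<\chi}$, then by a pigeonhole/reflection argument (using that $\chi$ is either a successor or has uncountable cofinality, or handling the general case directly) one could push $\chi(\kappa)$ below $\chi$, contradicting minimality. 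Combined with $\chi(\vec C) \leq \chi$ from the previous paragraph, this gives $\chi(\vec C) = \chi$ exactly, hence $\chi \in \spec(\kappa)$ (note $\chi \geq \omega$, so it survives the removal of $\omega$ in the definition of $\spec$). Therefore $\spec(\kappa) \neq \emptyset$, every element is $\leq \chi$, and $\chi$ itself is an element — so $\chi = \max(\spec(\kappa))$.

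\textbf{Where the difficulty lies.} The delicate point is the claim that some $C$-sequence has $\chi(\vec C) \geq \chi$, i.e.\ that $\chi$ is actually ``used.'' If $\chi = \mu^+$ is a successor cardinal, this is cleanest: were every $C$-sequence coverable with sets of size $\leq \mu$, that would literally witness $\chi(\kappa) \leq \mu < \chi$, contradiction — so I would get a $\vec C$ with $\chi(\vec C) > \mu$, hence $\chi(\vec C) = \mu^+ = \chi$. If $\chi$ is a limit cardinal, one must be more careful: failing to have a single $\vec C$ of spectral value $\chi$ would mean every $\vec C$ has $\chi(\vec C) < \chi$, and one would need to aggregate a cofinal family of ``bad'' $C$-sequences — one for each cardinal $\nu < \chi$ witnessing that $\nu$ does not suffice — into one master $C$-sequence whose covering genuinely requires $\chi$-sized index sets. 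I expect this amalgamation to be the real work: one takes $C$-sequences $\vec C^\nu$ with $\chi(\vec C^\nu) \geq \nu$ for cofinally many $\nu < \chi$, and interleaves them across disjoint stationary pieces of $\kappa$ (partitioning $\acc(\kappa)$, or $E^\kappa_{>\chi}$, into $\cf(\chi)$-many stationary sets), so that any $\Delta \in [\kappa]^\kappa$ meets enough of each piece to force the index function into $[\kappa]^{\geq \nu}$ for each $\nu$, yielding $\chi(\vec C) = \sup_\nu \nu = \chi$. Lemma~\ref{lemma53} (progressivity, and the ability to pass to traces and to stationary subsets) and the trace lemmas immediately preceding this theorem are the tools I would lean on to control how $\Delta$ interacts with the various pieces. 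I would expect the authors' proof to either invoke one of the later results (Corollary~\ref{cor519}, Theorem~\ref{realized} is this one, so presumably Lemma~\ref{prop53} and the two preceding lemmas suffice) or to carry out precisely this partition-and-interleave construction.
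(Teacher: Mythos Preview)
Your overall structure is right: the inequality $\chi(\vec C) \leq \chi(\kappa)$ for all $\vec C$ is immediate, the successor-cardinal case follows by minimality exactly as you say, and the real content is the limit-cardinal amalgamation. But the amalgamation you propose --- partition $\acc(\kappa)$ into stationary pieces $S_\eta$ and place $\vec C^\eta$ on $S_\eta$ --- does not work. The problem is that a covering function $b(\alpha)$ is free to pick its indices $\beta$ from \emph{any} of the pieces $S_\eta$, so the resulting cover $\bigcup_{\beta\in b(\alpha)} C_\beta$ mixes clubs from different $\vec C^\eta$'s, and you cannot extract from it a small cover using any single $\vec C^\eta$. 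Concretely: if $\vec C^0$ and $\vec C^1$ happen to be ``complementary'' (each club in one covers what the corresponding club in the other misses), then each can have large $\chi$-value while the interleaved sequence is covered using just two indices per $\alpha$. Nothing in your setup rules this out.

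The paper's construction goes in the opposite direction: it \emph{intersects} rather than partitions. Fix $\mu:=\cf(\chi(\kappa))$ and $\vec C^\eta$ with $\chi(\vec C^\eta)>\chi_\eta$ for $\eta<\mu$. When $\mu<\kappa$, set $C_\beta:=\bigcap_{\eta<\mu} C^\eta_\beta$ for $\beta\in E^\kappa_{>\mu}$ (and something of small order type elsewhere). The point is that $C_\beta\subseteq C^\eta_\beta$ for every $\eta$, so any cover by the $C_\beta$'s is simultaneously a cover by the $C^\eta_\beta$'s for each $\eta$; hence $\chi(\vec C\restriction E^\kappa_{>\mu})\geq\chi(\vec C^\eta\restriction E^\kappa_{>\mu})$ for all $\eta$, and Lemma~\ref{high_cf_lemma} lets you pass between $\vec C$ and $\vec C\restriction E^\kappa_{>\mu}$. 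There is a genuinely harder subcase you did not anticipate: when $\mu=\kappa$ (so $\kappa$ is weakly inaccessible), one cannot intersect $\kappa$-many clubs in each $\beta$, and the paper instead uses a diagonal intersection $C_\beta=\pi_\beta[\diagonal_{\eta<\cf(\beta)}\pi_\beta^{-1}[C^\eta_\beta]]$ together with a Fodor argument to push the relevant initial segment below a fixed bound.
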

\begin{proof}
  If $\chi(\kappa)=\omega$ or if $\chi(\kappa)$ is a successor cardinal, then the result follows easily.
  Also, if $\kappa$ is the successor of a regular cardinal, then the proposition follows
  immediately from the proof of Clause~(2) of Lemma~\ref{prop53} (cf. Lemma~\ref{successorlemma} below).
  We may therefore assume that $\chi(\kappa)$ is an uncountable limit cardinal and that $\reg(\kappa)$ has no maximal element.

  Let $\mu := \cf(\chi(\kappa))$, and let
  $\langle \chi_\eta \mid \eta < \mu \rangle$ be a strictly increasing sequence
  of infinite cardinals that converges to $\chi(\kappa)$. For each $\eta < \mu$, 
  let $\vec C^\eta=\langle C^\eta_\beta \mid \beta < \kappa \rangle$ be a $C$-sequence with $\chi(\vec C^\eta) > \chi_\eta$.

  $\br$ Suppose first that $\mu<\kappa$.
  Form a $C$-sequence $\vec C=\langle C_\beta \mid \beta < \kappa \rangle$
  such that $\otp(C_\beta)\le\mu$ for all $\beta\in E^\kappa_{\le\mu}$,
  and such that $C_\beta = \bigcap_{\eta < \mu} C^\eta_\beta$ for all $\beta \in E^\kappa_{>\mu}$.
  We claim that $\chi(\vec C)=\chi(\kappa)$.
  Trivially, $\chi(\vec C)\le\chi(\kappa)$.
  Also, by Lemma~\ref{high_cf_lemma} (using $\nu=\mu^+$ and $D:=\kappa$), we have $\chi(\vec C)=\chi(\vec C\restriction E^\kappa_{>\mu})$.
  Thus, it suffices to show that $\chi_\eta\le \chi(\vec C\restriction E^\kappa_{>\mu})$ for all $\eta<\mu$.
  But this is clear, as for all $\eta<\mu$ and $\beta\in E^\kappa_{>\mu}$, we have $C_\beta\s C_\beta^\eta$,
  so that $\chi_\eta\le\chi(\vec C^\eta)$ $\le \chi(\vec C^\eta\restriction E^\kappa_{>\mu})\le \chi(\vec C\restriction E^\kappa_{>\mu})$.

  $\br$ Suppose now that $\mu = \kappa$. It follows that $\kappa$ is (weakly) inaccessible.
  For each $\beta \in \acc(\kappa)$, let $\pi_\beta:\cf(\beta)\rightarrow\beta$ be a strictly increasing and continuous function whose image is a club in $\beta$.
  Form a $C$-sequence $\vec C=\langle C_\beta \mid \beta < \kappa \rangle$
  such that $\otp(C_\beta)=\omega$ for all $\beta\in E^\kappa_\omega$
  and such that $C_\beta=\pi_\beta[\diagonal_{\eta<\cf(\beta)}\pi_\beta^{-1}[C^\eta_\beta]]$ for all $\beta\in E^\kappa_{>\omega}$.
  Towards a contradiction, suppose that $\chi:=\chi(\vec C)$ is smaller than $\kappa$.

  As $\chi<\kappa=\sup(\reg(\kappa))$ and $\otp(C_\beta)=\cf(\beta)$ for all $\beta\in\acc(\kappa)$,
  Lemma~\ref{high_cf_lemma} (using $\nu=\chi^+$ and $D:=\kappa$) implies that $\chi(\vec C\restriction E^\kappa_{>\chi})=\chi(\vec C)$.
  Thus, let us fix  $\Delta \in [\kappa]^\kappa$ and $b:\kappa \rightarrow  [E^\kappa_{>\chi}]^\chi$
  such that $\Delta \cap \alpha \s \bigcup_{\beta \in b(\alpha)} C_\beta$ for all $\alpha<\kappa$.

  Let $\alpha \in E^\kappa_{>\chi}$ be arbitrary, and set
  $\epsilon_\alpha := \sup\{\sup(\pi_\beta[\chi+1]\cap\alpha)\mid \beta\in b(\alpha)\}$.
  As $$\Delta\cap\alpha\s\bigcup_{\beta\in b(\alpha)}C_\beta\cap\alpha\s \bigcup_{\beta\in b(\alpha)}(\pi_\beta[\chi+1]\cap\alpha)\cup(C_\beta\setminus \pi_\beta[\chi+1]),$$
  we have $\Delta\cap(\epsilon_\alpha,\alpha)\s \bigcup_{\beta\in b(\alpha)}(C_\beta\setminus \pi_\beta[\chi+1])$.

  Use Fodor's lemma to find $\epsilon < \kappa$
  and a stationary $A \s E^\kappa_{>\chi}$ such that $\epsilon_\alpha = \epsilon$
  for all $\alpha \in A$.
  Let $\Delta' := \Delta \setminus (\epsilon + 1)$.
  Then, for all $\alpha\in A$, $$\Delta'\cap\alpha\s\bigcup_{\beta\in b(\alpha)}(C_\beta\setminus \pi_\beta[\chi+1]).$$
  Finally, note that for all $\alpha\in A$ and $\beta\in b(\alpha)$, by definition of $C_\beta$, any $\gamma\in C_\beta\setminus\pi_\beta[\chi+1]$
  is of the form $\pi_\beta(\xi)$ for some $\xi\in(\chi,\cf(\beta))$ such that $\xi\in\pi_\beta^{-1}[C_\beta^\eta]$ for all $\eta<\xi$.
  In particular, $\gamma\in C^\chi_\beta$.
  It follows that for all $\alpha\in A$, we have $\Delta'\cap\alpha\s\bigcup_{\beta\in b(\alpha)} C^\chi_\beta$.
  Thus, by appealing with $b\restriction A$ and $\Delta'$ to Lemma~\ref{lemma53},
  we obtain $b^*:\kappa\rightarrow[\kappa]^{\le\chi}$ and $\Delta^*\in[\kappa]^\kappa$ such that $\Delta^*\cap\alpha\s\bigcup_{\beta\in b^*(\alpha)}C^\chi_\beta$
  for all $\alpha<\kappa$, contradicting the fact that $\chi(\vec C^\chi)> \chi_\chi\ge\chi$.
\end{proof}

\begin{cor}
If $\kappa$ is a Mahlo cardinal, then there exists a $C$-sequence $\vec C$ over $\kappa$
for which $\chi(\vec C\restriction \reg(\kappa))=\chi(\kappa)$.\qed
\end{cor}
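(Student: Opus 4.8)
The plan is to reduce to the case $\chi(\kappa)>1$, and then to replace a maximally bad $C$-sequence by a \emph{coherent} one that carries its bad behaviour on the inaccessible (hence regular) coordinates. If $\kappa$ is weakly compact then $\chi(\kappa)=0$, and this case really lies outside the statement (since $\chi(\vec C)\ge 1$ for every $C$-sequence $\vec C$), so assume $\kappa$ is not weakly compact. If $\chi(\kappa)=1$, take $C_\beta:=\beta$ for every limit $\beta<\kappa$: as $\kappa$ is a limit cardinal, every $\alpha<\kappa$ lies below some $\rho\in\reg(\kappa)$, so $\Delta:=\kappa$ together with $b(\alpha):=\{\rho\}$ for such $\rho$ witnesses $\chi(\vec C\restriction\reg(\kappa))\le 1$, while $\chi(\vec C\restriction\reg(\kappa))\ge\chi(\vec C)\ge 1$; hence equality. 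So assume $\chi:=\chi(\kappa)>1$ and, by Theorem~\ref{realized}, fix a $C$-sequence $\vec D=\langle D_\beta\mid\beta<\kappa\rangle$ with $\chi(\vec D)=\chi$; after the harmless modification of replacing $D_\beta$ by a club of order type $\cf(\beta)$ whenever $\cf(\beta)\le\chi$, Lemma~\ref{high_cf_lemma} (applied with $\nu:=\chi^+$ and $D:=\kappa$) gives $\chi(\vec D)=\chi(\vec D\restriction E^\kappa_{\ge\chi^+})$.

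Next I would build $\vec C$ by recursion on $\beta<\kappa$. Using that $\kappa$ is a limit cardinal, fix an injection $\beta\mapsto\rho_\beta$ from $\{\beta\in E^\kappa_{\ge\chi^+}\mid\beta\text{ is not a regular cardinal}\}$ into the successor cardinals, with $\rho_\beta>\beta$. Set $C_\mu:=D_\mu$ for every inaccessible $\mu$; for $\beta$ which is not a regular cardinal let $C_\beta$ be a club in $\beta$ of order type $\cf(\beta)$; and for a successor cardinal $\lambda=\mu^+$ let $C_\lambda$ be the ordinal closure in $\lambda$ of $C_\mu\cup T_\lambda\cup\bigcup\{C_\beta\mid\rho_\beta=\lambda\}$, where $T_\lambda$ is a fixed sparse club of $\lambda$. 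One verifies that $\vec C$ is a $C$-sequence, that $\otp(C_\beta)<\chi^+$ whenever $\cf(\beta)\le\chi$ (so Lemma~\ref{high_cf_lemma} still applies to $\vec C$ with the parameters above), and — the point of the construction — that $\vec C$ is \emph{coherent}: every index $\beta\in E^\kappa_{\ge\chi^+}$ that is not a regular cardinal has $C_\beta\subseteq C_{\rho_\beta}$, with $\rho_\beta$ a regular cardinal.

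Coherence yields $\chi(\vec C\restriction\reg(\kappa))=\chi(\vec C)$: the inequality $\ge$ is automatic, and for $\le$ one takes $\Delta,b$ witnessing $\chi(\vec C)$, uses Lemma~\ref{lemma53} and Lemma~\ref{high_cf_lemma} (applied to $\vec C$) to assume $b$ is progressive with range in $E^\kappa_{\ge\chi^+}$, replaces each non-regular-cardinal index $\beta\in b(\alpha)$ by $\rho_\beta$ (which only enlarges $\bigcup_{\beta\in b(\alpha)}C_\beta$ and lands every index in $\reg(\kappa)$), and pads back up to the required cardinality using the cofinally many spare regular cardinals above $\alpha$. It therefore remains to prove $\chi(\vec C)=\chi$. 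The inequality $\chi(\vec C)\le\chi$ is immediate from Definition~\ref{c_seq_num_def}. For the reverse, suppose some $\Delta,b:\kappa\to[\kappa]^{<\chi}$ witness $\chi(\vec C)<\chi$; by Lemmas~\ref{high_cf_lemma} and~\ref{lemma53}(1) applied to $\vec C$ we may assume $b$ is progressive with range in $E^\kappa_{\ge\chi^+}$ and fix a stationary $A\subseteq E^\kappa_{\ge\chi^+}$ on which $\sup(C_\beta\cap\alpha)=\alpha$ for all $\beta\in b(\alpha)$. A pressing-down argument on the suprema of the absorbed pieces and the sparse tails inside the successor-cardinal coordinates (in the spirit of the proofs of Lemma~\ref{lemma53} and Theorem~\ref{realized}) should then show that, after discarding a bounded initial part of $\Delta$, for stationarily many $\alpha\in A$ the covering $\Delta\cap\alpha\subseteq\bigcup_{\beta\in b(\alpha)}C_\beta$ can be arranged so that the contribution of every non-inaccessible index is bounded below $\alpha$ — producing, via $C_\mu=D_\mu$ on the inaccessibles, a witness for $\chi(\vec D)<\chi$, contradicting the choice of $\vec D$.

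I expect this last step to be the main obstacle. The delicate point is that an absorbing successor-cardinal coordinate $C_{\mu^+}$ necessarily has accumulation points of cofinality $\chi^+$, so it \emph{can} be cofinal in some $\alpha<\mu^+$ with $\cf(\alpha)\ge\chi^+$ and thereby spuriously satisfy $\sup(C_{\mu^+}\cap\alpha)=\alpha$; making the argument go through requires keeping the tails $T_\lambda$ sparse and spreading the absorbed pieces over distinct successor cardinals, so that below any $\alpha$ of cofinality $\ge\chi^+$ the cofinal contribution of every successor-cardinal coordinate to $\bigcup_{\beta\in b(\alpha)}C_\beta$ is, after shrinking $\Delta$, subsumed by that of an inaccessible coordinate. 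This is precisely where the Mahloness of $\kappa$ is used: the stationarity of the class of inaccessibles below $\kappa$ is what lets $\vec D$'s badness be carried entirely on inaccessible coordinates and then survive the coherentization.
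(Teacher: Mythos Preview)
Your handling of the cases $\chi(\kappa)\le 1$ is fine, but for $\chi(\kappa)>1$ you have taken a much harder road than necessary, and the step you flag as the main obstacle is a genuine gap. Your $\vec C$ agrees with $\vec D$ only on the inaccessible coordinates; on successor cardinals $\lambda$, your $C_\lambda$ absorbs the auxiliary clubs $C_\beta$ (which you chose arbitrarily of order type $\cf(\beta)$), and these carry no information about $\vec D$. Your pressing-down sketch aims to show that non-inaccessible indices contribute only boundedly below $\alpha$, but once Lemma~\ref{lemma53}(1) is invoked, every $\beta\in b(\alpha)$ satisfies $\sup(C_\beta\cap\alpha)=\alpha$---the opposite of a bounded contribution. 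Decomposing a successor-cardinal club $C_\lambda\cap\alpha$ into its constituent pieces $C_\mu, T_\lambda, C_{\beta'}$ just trades one non-inaccessible index for others, with no controlled descent to an inaccessible. The tension is structural: the coherence you engineer to obtain $\chi(\vec C\restriction\reg(\kappa))\le\chi(\vec C)$ is exactly what threatens the equality $\chi(\vec C)=\chi(\vec D)$.

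The paper's argument is a one-line application of the lemma immediately preceding the corollary. Setting aside the trivial case $\chi(\kappa)=\kappa$, take any $\vec C$ over $\kappa$ with $\chi(\vec C)=\chi(\kappa)$ (via Theorem~\ref{realized}) and apply that lemma's ``in particular'' clause with $S:=\reg(\kappa)\setminus(\chi(\kappa)+1)$, which lies in $E^\kappa_{>\chi(\kappa)}$ and is stationary precisely because $\kappa$ is Mahlo. The clause yields $\chi(\vec C\restriction\Tr(S))=\chi(\vec C)$. The only additional observation is $\Tr(S)\subseteq\reg(\kappa)$: if the regular cardinals are stationary in some $\beta$ of uncountable cofinality, then $\beta$ must itself be a regular cardinal, since otherwise either $\reg(\kappa)\cap\beta$ is bounded by $|\beta|$ (when $\beta$ is not a cardinal) or any club in $\beta$ of order type $\cf(\beta)$ with minimum above $\cf(\beta)$ has no regular cardinals among its accumulation points (when $\beta$ is a singular cardinal). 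Hence
\[
  \chi(\vec C)\le\chi(\vec C\restriction\reg(\kappa))\le\chi(\vec C\restriction\Tr(S))=\chi(\vec C).
\]
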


\begin{cor}\label{chaincondition}
  Suppose $\mathbb{P}$ is a $\nu$-cc poset for an infinite regular cardinal $\nu$.
  Let $\chi:=\max\{1,\chi(\kappa)\}$.
  \begin{enumerate}
  \item If $\nu<\kappa$, then $V^{\mathbb{P}}\models \chi(\kappa) \leq \chi$;
  \item If $\nu\le\chi<\kappa$, then $V^{\mathbb{P}}\models \chi(\kappa) = \chi$.
  \end{enumerate}
\end{cor}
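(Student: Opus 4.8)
The plan is to treat the two clauses by a chain‑condition analysis, using Lemma~\ref{high_cf_lemma} to push the relevant indices into high cofinalities and Theorem~\ref{realized} to supply a ``hard'' $C$‑sequence for the lower bound in Clause~(2). First note that, since $\mathbb{P}$ is $\nu$‑cc with $\nu<\kappa$, it preserves all cardinals and cofinalities $\ge\nu$; in particular $\kappa$ is unchanged, and closedness and cofinality of sets of ordinals are absolute, so a $C$‑sequence of $V^{\mathbb P}$ remains one. If $\kappa=\mu^+$ with $\mu$ regular, then $\kappa$ is still the successor of the regular cardinal $\mu$ in $V^{\mathbb P}$, so by Lemma~\ref{prop53}(2) we get $\chi(\kappa)^{V^{\mathbb P}}=\mu=\chi$ and both clauses hold; thus I may assume $\nu^+<\kappa$, whence $E^\kappa_{>\nu}$ is stationary in $V$ and in $V^{\mathbb P}$.

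For Clause~(1), fix a $\mathbb P$‑name $\dot{\vec C}=\langle\dot C_\beta\mid\beta<\kappa\rangle$ for a $C$‑sequence. The first step is to extract clubs in the ground model: for $\beta\in E^\kappa_{>\nu}$ and $\gamma<\beta$, use $\nu$‑cc to fix a maximal antichain deciding $\min(\dot C_\beta\setminus\gamma)$, let $g_\beta(\gamma)<\beta$ bound its decided values (possible since $\cf(\beta)>\nu$), and set $E_\beta:=\acc^+(\{\xi<\beta\mid g_\beta``\xi\s\xi\})$. A routine argument shows $E_\beta\in V$ is a club in $\beta$ with $\mathbb P\forces\text{``}E_\beta\s\dot C_\beta\text{''}$, since for $\alpha\in E_\beta$ one has $\sup(\dot C_\beta\cap\alpha)=\alpha$ and $\dot C_\beta$ is closed. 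Next, in $V$ build a $C$‑sequence $\vec E^*=\langle E^*_\beta\mid\beta<\kappa\rangle$ with $E^*_\beta:=E_\beta$ for $\beta\in E^\kappa_{>\nu}$ and $\otp(E^*_\beta)\le\nu$ for $\beta\in\acc(\kappa)\setminus E^\kappa_{>\nu}$. Because $\chi=\max\{1,\chi(\kappa)^V\}$, the definition of $\chi(\kappa)$ gives $\chi(\vec E^*)^V\le\chi$; by Lemma~\ref{high_cf_lemma} (taking the $\nu$ there to be $\nu^+$ and $D:=\kappa$, so Clause~(2) of that lemma is vacuous) this yields $\chi(\vec E^*\restriction E^\kappa_{>\nu})^V=\chi(\vec E^*)^V\le\chi$. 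Fix $\Delta\in[\kappa]^\kappa$ and $b:\kappa\rightarrow[E^\kappa_{>\nu}]^{\le\chi}$ in $V$ with $\Delta\cap\alpha\s\bigcup_{\beta\in b(\alpha)}E^*_\beta$ for all $\alpha<\kappa$. Since $b(\alpha)\s E^\kappa_{>\nu}$, where $E^*_\beta=E_\beta$ and $\mathbb P\forces E_\beta\s\dot C_\beta$, we obtain $\mathbb P\forces\text{``}\forall\alpha\,(\Delta\cap\alpha\s\bigcup_{\beta\in b(\alpha)}\dot C_\beta)\text{''}$, and as $|\Delta|=\kappa$ and $|b(\alpha)|\le\chi$ in $V^{\mathbb P}$, this shows $\mathbb P\forces\chi(\dot{\vec C})\le\chi$. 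As $\dot{\vec C}$ was arbitrary, $V^{\mathbb P}\models\chi(\kappa)\le\chi$.

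For Clause~(2) assume also $\nu\le\chi<\kappa$, so $\chi\ge\nu\ge\omega$ and $\chi(\kappa)^V=\chi>1$; the inequality $\le$ is Clause~(1). For $\ge$: by Theorem~\ref{realized}, $\chi=\chi(\kappa)^V=\max(\spec(\kappa)^V)$, so fix in $V$ a $C$‑sequence $\vec C^*$ over $\kappa$ with $\chi(\vec C^*)^V=\chi$. Suppose toward a contradiction that $V^{\mathbb P}\models\chi(\kappa)<\chi$; then $\chi(\vec C^*)^{V^{\mathbb P}}=\chi'<\chi$, so fix a condition $p$ and names $\dot\Delta,\dot b$ with $p\forces\text{``}\dot\Delta\in[\kappa]^\kappa,\ \dot b:\kappa\rightarrow[\kappa]^{\chi'},\ \forall\alpha\,(\dot\Delta\cap\alpha\s\bigcup_{\beta\in\dot b(\alpha)}C^*_\beta)\text{''}$. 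Using the $\nu$‑cc, for each $\alpha$ fix below $p$ a name $\dot e_\alpha$ for a surjection $\chi'\rightarrow\dot b(\alpha)$, maximal antichains $A_{\alpha,i}$ $(i<\chi')$ deciding $\dot e_\alpha(i)$, and let $B(\alpha)\in V$ collect all the decided values, so $\mathbb P\forces\text{``}\dot b(\alpha)\s B(\alpha)\text{''}$ and $|B(\alpha)|\le\chi'\cdot\sup_{i<\chi'}|A_{\alpha,i}|<\chi$. Finally, let $\Delta^*:=\{\delta<\kappa\mid\exists q\le p\ q\forces\text{``}\delta\in\dot\Delta\text{''}\}\in V$; then $\Delta^*$ is unbounded in $\kappa$, and for $\delta\in\Delta^*$ and $\alpha>\delta$ the statement ``$\delta\in\bigcup_{\beta\in B(\alpha)}C^*_\beta$'' concerns only ground‑model objects and is forced by some $q\le p$, hence holds in $V$. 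So $(\Delta^*,B)$ witnesses $\chi(\vec C^*)^V\le\sup_\alpha|B(\alpha)|$; when $\nu<\chi$, and when $\nu$ is a successor cardinal, the antichain sizes—and hence the $|B(\alpha)|$—are uniformly bounded by some $\chi_0<\chi$, contradicting $\chi(\vec C^*)^V=\chi$ and completing the proof in these cases. (In the residual case $\nu=\chi$ with $\nu$ weakly inaccessible, one runs the covering argument against the auxiliary $C$‑sequences $\vec C^\eta$ furnished by the proof of Theorem~\ref{realized}, with $\chi(\vec C^\eta)^V>\chi_\eta$ and $\chi_\eta$ chosen to be successor cardinals converging to $\chi$.)

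The main obstacle is precisely this last covering step in Clause~(2): producing a bound on the size of the ground‑model cover $B(\alpha)$ that is both uniform in $\alpha$ and strictly below $\chi$. When $\nu$ is a successor cardinal—in particular for $\nu=\omega_1$, the case relevant to the applications, e.g.\ to adding Cohen reals—the uniform antichain bound makes this immediate; for $\nu$ weakly inaccessible and equal to $\chi$ it is genuinely delicate and forces one to use Theorem~\ref{realized} in the refined form that decomposes the maximal $C$‑sequence into the pieces $\vec C^\eta$. Everything else is bookkeeping: the absoluteness of closedness/cofinality under $\nu$‑cc forcing, the elementary properties of $E_\beta$, and the invocations of Lemma~\ref{lemma53}, Lemma~\ref{high_cf_lemma} and Definition~\ref{c_seq_num_def}.
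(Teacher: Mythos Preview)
Your overall strategy matches the paper's: for Clause~(1) you pass to ground-model clubs inside the generic ones and invoke Lemma~\ref{high_cf_lemma}, and for Clause~(2) you fix a $C$-sequence realizing $\chi(\kappa)$ via Theorem~\ref{realized} and use the chain condition to cover the extension's witnesses by ground-model sets. Clause~(1) is fine. For Clause~(2), the paper obtains its ground-model $\Delta$ a bit differently: rather than taking $\Delta^*:=\{\delta:\exists q\le p\ (q\forces\delta\in\dot\Delta)\}$, it works in $V^{\mathbb P}$, applies Lemma~\ref{lemma53}(1) to arrange $\acc^+(\Delta_1)\cap E^\kappa_{>\chi'}\subseteq\Delta_1$, and then uses the $\kappa$-cc to find a ground-model club $D\subseteq\acc^+(\Delta_1)$, setting $\Delta:=D\cap E^\kappa_{>\max\{\nu,\chi'\}}$. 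Your $\Delta^*$ also works, so this is a stylistic difference.

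The one genuine gap is your handling of the residual case $\nu=\chi$ with $\nu$ weakly inaccessible. Your proposed detour through the auxiliary sequences $\vec C^\eta$ from the proof of Theorem~\ref{realized} does not accomplish anything: running your covering argument against any single $\vec C^\eta$ still only yields $|B^\eta(\alpha)|<\nu$ with no uniform bound below $\nu$, regardless of whether $\chi_\eta$ is a successor cardinal, so you cannot conclude $\chi(\vec C^\eta)^V\le\chi_\eta$. The fix is much simpler and does not require opening up Theorem~\ref{realized} at all. You have $|B(\alpha)|<\nu$ for every $\alpha<\kappa$, and since there are at most $\nu<\kappa$ many possible values for $|B(\alpha)|$, there is a single cardinal $\mu<\nu$ and a cofinal set $A\subseteq\kappa$ with $|B(\alpha)|\le\mu$ for all $\alpha\in A$. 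Extending via $\alpha\mapsto B(\min(A\setminus\alpha))$ then witnesses $\chi(\vec C^*)^V\le\mu<\nu=\chi$, the desired contradiction. (The paper's own write-up of this case is terse and invokes ``$\nu<\chi$'', but the same pigeonhole observation closes the argument in the case $\nu=\chi$ there as well.)
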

\begin{proof} (1)
  Assume that $\nu < \kappa$, and suppose that $\dot{\vec{C}} = \langle \dot{C}_\beta \mid \beta < \kappa \rangle$ is a $\mathbb{P}$-name for a $C$-sequence.
  Using the fact that $\mathbb{P}$ has the $\nu$-cc, we may
  find a $C$-sequence $\langle D_\beta \mid \beta < \kappa \rangle$ in $V$ such that
  \begin{itemize}
    \item for all $\beta \in E^\kappa_{<\nu}$, $\otp(D_\beta)<\nu$;
    \item for all $\beta \in E^\kappa_{\geq \nu}$, $\Vdash_{\mathbb{P}} ``\check{D}_\beta \subseteq \dot{C}_\beta"$.
  \end{itemize}

  By Lemma~\ref{high_cf_lemma}, we have $\chi(\vec D\restriction E^\kappa_{\ge\nu})=\chi(\vec D)\le\max\{1,\chi(\kappa)\}=\chi$,
  so we may fix $\Delta\in[\kappa]^\kappa$ and $b:\kappa\rightarrow[E^\kappa_{\ge\nu}]^{\chi}$
  with $\Delta\cap\alpha\s\bigcup_{\beta\in b(\alpha)}D_\beta$ for every $\alpha<\kappa$.
  But then, for every $\alpha < \kappa$, we have $\Vdash_{\mathbb{P}}``\check{\Delta} \cap \check{\alpha} \s\bigcup_{\beta\in \check b(\check \alpha)}\dot{C}_{\beta}"$.
  So, $\Vdash_{\mathbb{P}}``\chi(\dot{\vec{C}})\le\check\chi"$.

  (2) Assume $\nu\le\chi<\kappa$, and let us show that $V^{\mathbb{P}}\models \chi(\kappa) = \chi$.
  By Lemma~\ref{prop53}(2), we may assume that $\kappa$ is either an inaccessible or the successor of a singular cardinal.
  In particular, $\nu$ is smaller than $\lambda:=\sup(\reg(\kappa))$. As $\mathbb P$ has the $\nu$-cc, $V^{\mathbb P}\models \sup(\reg(\kappa))=\lambda$.
  In $V$, using Theorem~\ref{realized}, let us pick a $C$-sequence $\vec C=\langle C_\beta\mid\beta<\kappa\rangle$
  with $\chi(\vec C)=\chi$.
  Towards a contradiction, suppose that $V^{\mathbb{P}}\models \chi(\kappa) < \chi$, so that, in particular,
  $V^{\mathbb{P}}\models \chi(\kappa) < \sup(\reg(\kappa))$.
  Let $\chi':=(\chi(\kappa))^{V^{\mathbb P}}$.
  In $V^{\mathbb{P}}$, using Lemma~\ref{lemma53},
  let us fix $\Delta_1\in[\kappa]^\kappa$ and a function $b:\kappa \rightarrow [\kappa]^{\le\chi'}$ such that
      \begin{itemize}
      \item $\acc^+(\Delta_1)\cap E^\kappa_{>\chi'}\s\Delta_1$;
      \item $\Delta_1\cap \alpha \s \bigcup_{\beta \in b(\alpha)}\acc(C_\beta)$ for all $\alpha < \kappa$.
      \end{itemize}
  As $\mathbb P$ has the $\kappa$-cc, let us fix in $V$ a subclub $D$ of $\acc^+(\Delta_1)$.
  In $V$, we let $\Delta:=D\cap E^\kappa_{>\max\{\nu,\chi'\}}$.
  As $\max\{\nu,\chi'\}<\lambda=\sup(\reg(\kappa))$ and $\lambda$ is a limit cardinal,
  $\Delta$ forms a cofinal subset of $\Delta_1$.
  In $V^{\mathbb P}$, for every $\alpha<\kappa$, we have
  $$\Delta\cap \alpha \s \Delta_1\cap\alpha\s \bigcup_{\beta \in b(\alpha)}\acc(C_\beta)\s \bigcup_{\beta \in b(\alpha)} C_\beta.$$

  $\br$ If $\nu\le\chi'$, then since $\mathbb P$ has the $\nu$-cc, any set of ordinals of size $\chi'$ in $V^{\mathbb P}$ is covered by a $V$-set of size $\chi'$.
  This means that, in $V$, for every $\alpha<\kappa$, there exists $b_\alpha\in[\kappa]^{\le\chi'}$ with $\Delta\cap\alpha \subseteq \bigcup_{\beta \in b_\alpha} C_\beta$.
  This contradicts the choice of $\vec C$ and the fact that $\chi'<\chi$.

  $\br$ If $\chi'<\nu$, then since $\nu$ is regular, the union of $\chi'$ many sets, each of size $<\nu$, is of size smaller than $\nu$.
  This means that, in $V$, for every $\alpha<\kappa$, there exists $b_\alpha\in[\kappa]^{<\nu}$ with $\Delta\cap\alpha \subseteq \bigcup_{\beta \in b_\alpha} C_\beta$.
  This contradicts the choice of $\vec C$ and the fact that $\nu<\chi<\kappa$.
\end{proof}
\begin{remark} Theorem~\ref{prikryforcing} takes care of the case $\chi<\nu=\kappa$, witnessing that the preceding is optimal.
\end{remark}

\begin{lemma}\label{successorlemma}
  For every infinite cardinal $\lambda$, $\cf(\lambda)\in\spec(\lambda^+)$.
\end{lemma}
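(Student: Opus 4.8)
The plan is to produce, for every infinite cardinal $\lambda$, a single $C$-sequence $\vec C$ over $\lambda^+$ with $\chi(\vec C)=\cf(\lambda)$; since $\cf(\lambda)\ge\omega$, this places $\cf(\lambda)$ in $\spec(\lambda^+)$. In both cases below the lower bound $\chi(\vec C)\ge\cf(\lambda)$ will come, as in the proof of Lemma~\ref{prop53}(2), from the fact that a union of fewer than $\cf(\lambda)$ sets of ordinals, each of order type $<\lambda$ (resp.\ $<\lambda^2$), again has order type $<\lambda$ (resp.\ $<\lambda^2$).

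When $\lambda$ is regular, I would take any $C$-sequence $\vec C$ with $\otp(C_\beta)\le\lambda$ for all $\beta$ (e.g.\ $\otp(C_\beta)=\cf(\beta)$ for $\beta\in\acc(\lambda^+)$). The computation from the proof of Lemma~\ref{prop53}(2), applied to $\alpha\in\Delta$ with $\otp(\Delta\cap\alpha)=\lambda^2$, shows $\chi(\vec C)\ge\lambda$; and $\chi(\vec C)\le\lambda$ is witnessed by $\Delta:=\lambda^+$ together with $b(\alpha):=\{\gamma+1\mid\gamma<\alpha\}$ (enlarged to size $\lambda$ if need be), since $\bigcup_{\gamma<\alpha}C_{\gamma+1}=\alpha=\Delta\cap\alpha$. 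Hence $\chi(\vec C)=\lambda=\cf(\lambda)$.

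Now suppose $\lambda$ is singular, put $\mu:=\cf(\lambda)$, and fix an increasing sequence $\langle\lambda_i\mid i<\mu\rangle$ of cardinals cofinal in $\lambda$. Two features of singularity will be used: $\lambda_i^+<\lambda$ for every $i<\mu$, and $\cf(\gamma)<\lambda$ for every $\gamma<\lambda^+$. Since $|\alpha|\le\lambda=\sum_{i<\mu}\lambda_i$, I may fix for each $\alpha<\lambda^+$ a partition $\langle Y^\alpha_i\mid i<\mu\rangle$ of $\alpha$ with $|Y^\alpha_i|\le\lambda_i$ for all $i$, and, by a routine bookkeeping, a surjection $\pi:\lambda^+\to\lambda^+\times\mu$ such that for each $(\alpha,i)$ there is a limit ordinal $\gamma>\alpha$ with $\pi(\gamma)=(\alpha,i)$. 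Then I would define $\vec C=\langle C_\beta\mid\beta<\lambda^+\rangle$ by: $C_{\beta+1}:=\{\beta\}$; for limit $\gamma$, if $\pi(\gamma)=(\alpha,i)$ with $\alpha<\gamma$, set $C_\gamma:=\cl(Y^\alpha_i)\cup E_\gamma$ where $E_\gamma$ is a closed cofinal subset of $\gamma$ with $\min(E_\gamma)>\alpha$ and $\otp(E_\gamma)=\cf(\gamma)$, and otherwise let $C_\gamma$ be any closed cofinal subset of $\gamma$ of order type $\cf(\gamma)$. Each $C_\gamma$ is closed cofinal in $\gamma$, and $\otp(C_\gamma)\le\otp(\cl(Y^\alpha_i))+\cf(\gamma)<\lambda_i^++\cf(\gamma)<\lambda$, so $\otp(C_\beta)<\lambda$ for every $\beta<\lambda^+$.

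It remains to check $\chi(\vec C)=\mu$. For the upper bound: given $\alpha<\lambda^+$, for each $i<\mu$ the bookkeeping gives a limit $\gamma^\alpha_i>\alpha$ with $\pi(\gamma^\alpha_i)=(\alpha,i)$, whence $Y^\alpha_i\s\cl(Y^\alpha_i)\s C_{\gamma^\alpha_i}$; so with $\Delta:=\lambda^+$ and $b(\alpha):=\{\gamma^\alpha_i\mid i<\mu\}$ (enlarged to size $\mu$) we get $\Delta\cap\alpha=\alpha=\bigcup_{i<\mu}Y^\alpha_i\s\bigcup_{\beta\in b(\alpha)}C_\beta$, so $\chi(\vec C)\le\mu$. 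For the lower bound: if $\Delta\in[\lambda^+]^{\lambda^+}$ and $b:\lambda^+\to[\lambda^+]^{\chi'}$ with $\chi'<\mu$ witnessed $\chi(\vec C)\le\chi'$, then picking $\alpha\in\Delta$ with $\otp(\Delta\cap\alpha)=\lambda$ makes $\bigcup_{\beta\in b(\alpha)}C_\beta\supseteq\Delta\cap\alpha$ have order type $\ge\lambda$ while being a union of fewer than $\cf(\lambda)$ sets of order type $<\lambda$ — a contradiction. Therefore $\chi(\vec C)=\mu=\cf(\lambda)$, and we conclude $\cf(\lambda)\in\spec(\lambda^+)$. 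The one delicate point is that the two halves must be arranged together: the pieces $Y^\alpha_i$ have to be small enough ($|Y^\alpha_i|\le\lambda_i<\lambda$) for every covering club $C_\gamma$ to keep order type below $\lambda$, yet only $\mu$ of them are needed per $\alpha$; this is exactly where singularity (through $\sum_{i<\mu}\lambda_i=\lambda$ and $\cf(\gamma)<\lambda$) is essential and why the regular case is treated separately.
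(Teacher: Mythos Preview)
Your proof is correct and follows essentially the same approach as the paper. In both arguments one builds a $C$-sequence over $\lambda^+$ all of whose clubs have order type below $\lambda$ (giving the lower bound $\chi(\vec C)\ge\cf(\lambda)$ exactly as in Lemma~\ref{prop53}(2)), while arranging that for each $\alpha<\lambda^+$ there are $\cf(\lambda)$-many indices $\beta$ whose clubs together cover $\alpha$; the paper achieves this via disjoint blocks $b(\alpha)$ and the nested pieces $\varphi_\alpha[\lambda_i]$, whereas you use a surjection $\pi$ and a partition $\langle Y^\alpha_i\mid i<\mu\rangle$, but this is only a cosmetic bookkeeping difference.
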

\begin{proof}
  If $\lambda$ is a regular cardinal, then the conclusion follows immediately
  from the proof of Lemma~\ref{prop53}(2). 
  Thus, suppose that $\lambda$ is a singular cardinal, and
  let $\{ \lambda_i\mid i<\cf(\lambda)\}$ be a cofinal subset of $\reg(\lambda)$.
  For each $\alpha<\lambda^+$, fix a surjection $\varphi_\alpha:\lambda\rightarrow\alpha$.
  Also fix an arbitrary progressive function $b:\lambda^+\rightarrow[\lambda^+]^{\cf(\lambda)}$
  such that $\otp(b(\alpha)) = \cf(\lambda)$ and $b(\alpha)\cap b(\alpha')=\emptyset$ for all $(\alpha,\alpha')\in[\lambda^+]^2$.
  For every $\beta<\lambda^+$,
  let $\alpha(\beta)$ denote the unique $\alpha$ such that $\beta\in b(\alpha)$, if such $\alpha$ exists; otherwise, $\alpha(\beta)$ is undefined.
  Let $\vec{C} = \langle C_\beta \mid \beta < \lambda^+ \rangle$ be a $C$-sequence satisfying the following conditions:
  \begin{itemize}
    \item for all $\beta<\lambda^+$, $\otp(C_\beta)<\lambda$;
    \item for all $\beta<\lambda^+$, if $\alpha(\beta)$ is defined, then $C_\beta\supseteq \varphi_{\alpha(\beta)}[\lambda_{\otp(b(\alpha(\beta))\cap\beta)}]$.
  \end{itemize}
  By the proof of Lemma~\ref{prop53}(2), $\chi(\vec C)\ge\cf(\lambda)$. To see that $\chi(\vec C)\le\cf(\lambda)$,
  fix an arbitrary $\alpha<\lambda^+$. Let $\{ \beta_i\mid i<\cf(\lambda)\}$ be the increasing enumeration of $b(\alpha)$.
  Then $\bigcup_{\beta\in b(\alpha)}C_\beta=\bigcup_{i<\cf(\lambda)}C_{\beta_i}\supseteq\bigcup_{i<\lambda}\varphi_\alpha[\lambda_i]=\alpha$,
  so $\lambda^+$ and $b$ witness that $\chi(\vec{C}) \leq \cf(\lambda)$.
\end{proof}

As one might expect, there are finer connections between the $C$-sequence
number and square principles. Let us note a few of them here.

\begin{lemma}\label{lemma47}
  Suppose that $\vec C=\langle C_\beta\mid\beta\in\Gamma\rangle$ is a transversal for $\square(\kappa,{<}\mu,\sq_\sigma)$,
  with $\sigma\in\reg(\kappa)$ and $\mu<\kappa$. Then, for every stationary $\Gamma'\s\Gamma$,
  \begin{itemize}
    \item $\chi(\vec C\restriction\Gamma')\ge\omega$;
    \item if $\chi(\vec C\restriction\Gamma')<\sup(\reg(\kappa))$, then $\chi(\vec C\restriction\Gamma')<\mu$.
  \end{itemize}
\end{lemma}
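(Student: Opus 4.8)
The plan is to reduce both assertions, via Lemma~\ref{lemma53}, to the analysis of a club $\Delta$, a progressive function $b$ with values in $[\Gamma']^{\le\chi}$ (where $\chi:=\chi(\vec C\restriction\Gamma')$), and the stationary set of ordinals $\alpha$ at which every $C_\beta$, $\beta\in b(\alpha)$, accumulates to $\alpha$; the point of the coherence relation $\sq_\sigma$ is then that, at such an $\alpha$ of large enough cofinality, each $C_\beta\cap\alpha$ is forced to be a member of the underlying $\square(\kappa,{<}\mu,\sq_\sigma)$-sequence $\langle\mathcal C_\alpha\mid\alpha<\kappa\rangle$ (the degenerate alternative in $\sq_\sigma$, asking for small order type, being excluded by the cofinality), and there are fewer than $\mu$ of those.

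For the first bullet I would argue by contradiction, supposing $\chi$ is finite. Since $\chi\ne 0$ is immediate from $\Delta\in[\kappa]^\kappa$, $\chi$ is a positive integer, so Lemma~\ref{lemma53}(2) yields $\Delta\in[\kappa]^\kappa$ and a single-valued progressive $b:\kappa\rightarrow\Gamma'$ with $\Delta\cap\alpha\s C_{b(\alpha)}$ for all $\alpha<\kappa$. Feeding this into Lemma~\ref{lemma53}(1) with $\Sigma:=\Gamma'$ (legitimate since $E^\kappa_{>\chi}=\acc(\kappa)\supseteq\Gamma'$ for finite $\chi$), I obtain a club $\Delta_1\in[\kappa]^\kappa$, a progressive single-valued $b_1:\kappa\rightarrow\Gamma'$ with $\Delta_1\cap\alpha\s\acc(C_{b_1(\alpha)})$ for all $\alpha$, and a stationary $A\s\Gamma'$ on which $\sup(C_{b_1(\alpha)}\cap\alpha)=\alpha$. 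Now I would fix $\alpha\in A$ with $\otp(\Delta_1\cap\alpha)\ge\sigma$; such $\alpha$ form a stationary set, as the points of $\acc(\Delta_1)$ with $\otp(\Delta_1\cap\alpha)<\sigma$ are bounded below $\kappa$. Writing $\beta:=b_1(\alpha)\ge\alpha$: if $\beta=\alpha$ then $\Delta_1\cap\alpha\s\acc(C_\alpha)\s C_\alpha$ outright; if $\beta>\alpha$ then $\alpha\in\acc(C_\beta)$, and $\otp(C_\beta\cap\alpha)\ge\otp(\Delta_1\cap\alpha)\ge\sigma$, so $\sq_\sigma$-coherence of $\vec C$ at $(\alpha,\beta)\in[\Gamma']^2\s[\Gamma]^2$ gives $C_\alpha\sq_\sigma C_\beta\cap\alpha$, which, the order type being at least $\sigma$, forces $C_\alpha=C_\beta\cap\alpha$, whence $\Delta_1\cap\alpha\s\acc(C_\beta)\cap\alpha\s\acc(C_\beta\cap\alpha)=\acc(C_\alpha)\s C_\alpha$. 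Either way $\Delta_1\cap\alpha\s C_\alpha$, so $\{\beta\in\Gamma\mid\Delta_1\cap\beta\s C_\beta\}$ is stationary, contradicting the amenability of the transversal $\vec C$. Hence $\chi\ge\omega$.

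For the second bullet, assume $\chi<\sup(\reg(\kappa))$, and set $\rho:=\max\{\chi^+,\sigma\}$, a regular cardinal below $\kappa$ with $\rho>\chi$ and $\rho\ge\sigma$, so that $\Sigma:=E^\kappa_\rho$ is a legitimate input to Lemma~\ref{lemma53}(1). It provides $\Delta\in[\kappa]^\kappa$, a progressive $b:\kappa\rightarrow[\Gamma']^{\le\chi}$ with $\Delta\cap\alpha\s\bigcup_{\beta\in b(\alpha)}C_\beta$ for all $\alpha$, and a stationary $A\s E^\kappa_\rho$ on which $\sup(C_\beta\cap\alpha)=\alpha$ for every $\beta\in b(\alpha)$. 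Fixing $\alpha\in A$ and $\beta\in b(\alpha)$: if $\beta=\alpha$ then $C_\beta\cap\alpha=C_\alpha\in\mathcal C_\alpha$ trivially; if $\beta>\alpha$ then $\alpha\in\acc(C_\beta)$ and $\otp(C_\beta\cap\alpha)\ge\cf(\alpha)=\rho\ge\sigma$, so the coherence of $\langle\mathcal C_\alpha\mid\alpha<\kappa\rangle$ provides $C'\in\mathcal C_\alpha$ with $C'\sq_\sigma C_\beta\cap\alpha$, and once more the order type being at least $\sigma$ forces $C'=C_\beta\cap\alpha$. Thus $\{C_\beta\cap\alpha\mid\beta\in b(\alpha)\}\s\mathcal C_\alpha$ and so has size $<\mu$; selecting, for each of its members, one $\beta\in b(\alpha)$ realizing it, I get $b'(\alpha)\in[\Gamma']^{<\mu}$ with $\bigcup_{\beta\in b'(\alpha)}C_\beta\supseteq\Delta\cap\alpha$. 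Defining $\hat b(\alpha):=b'(\min(A\setminus\alpha))$ for all $\alpha<\kappa$, the pair $(\Delta,\hat b)$ witnesses $\chi(\vec C\restriction\Gamma')<\mu$ (padding $\hat b$ to a common size, which is harmless as $|\mathcal C_\alpha|<\mu$ uniformly).

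The main obstacle I anticipate is precisely the bookkeeping around the side condition in $\sq_\sigma$: one must choose the stationary set fed to Lemma~\ref{lemma53}(1) so that the resulting points $\alpha$ simultaneously have cofinality—hence $\otp(C_\beta\cap\alpha)$—at least $\sigma$, and, for the first bullet only, lie in $\Gamma$ so that $C_\alpha$ is available and amenability of $\vec C$ can be invoked. These two needs pull in different directions (in the first bullet one exploits that $\chi$ is finite to take $\Sigma=\Gamma'$; in the second one cannot, and instead uses $\Sigma=E^\kappa_{\max\{\chi^+,\sigma\}}$ together with the fact that $\mathcal C_\alpha$ is defined at \emph{every} limit $\alpha$), and getting the case analysis to close cleanly—especially the reduction from finite $\chi$ to the single-valued situation, and the verification that $\Delta_1\cap\alpha\s C_\alpha$ really does contradict amenability rather than merely failing to produce a literal thread—is where the care is required.
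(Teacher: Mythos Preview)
Your second bullet is essentially correct and matches the paper's approach: once you know $C_\beta\cap\alpha\in\mathcal C_\alpha$ for each $\beta\in b(\alpha)$, the set $\{C_\beta\cap\alpha\mid\beta\in b(\alpha)\}$ has size $<\mu$, and picking one representative $\beta$ for each distinct initial segment gives the bound. (The paper gets $C_\beta\cap\alpha\in\mathcal C_\alpha$ more directly---since $\beta\in\Gamma'\subseteq\Gamma$, the \emph{definition} of $\Gamma$ already guarantees clean coherence at every accumulation point of $C_\beta$, so there is no need to arrange $\cf(\alpha)\ge\sigma$---but your route through the $\sigma$-threshold also works.)

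The first bullet, however, has a real gap. From $\alpha\in\acc(C_\beta)$ with $\beta\in\Gamma$ you can conclude that $C_\beta\cap\alpha\in\mathcal C_\alpha$, but you \emph{cannot} conclude that $C_\beta\cap\alpha=C_\alpha$. A transversal is merely a choice of one element from each $\mathcal C_\alpha$; the transversal $\vec C$ is not itself required to satisfy any coherence relation between $C_\alpha$ and $C_\beta$. Since $|\mathcal C_\alpha|$ can be as large as $\mu-1$, the element $C_\beta\cap\alpha$ may well be different from the chosen $C_\alpha$, so your conclusion $\Delta_1\cap\alpha\subseteq C_\alpha$ is unjustified, and you do not actually contradict amenability of $\vec C$.

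The paper's repair is to abandon $\vec C$ at this point and instead build a \emph{new} selection: set $D:=\acc^+(\Delta)$ and, for $\alpha\in D$, let $C_\alpha^\bullet:=C_{b(\alpha)}\cap\alpha$. Since $b(\alpha)\in\Gamma$ and $\alpha\in\acc(C_{b(\alpha)})$, one has $C_\alpha^\bullet\in\mathcal C_\alpha$, and $\Delta\cap\alpha\subseteq C_\alpha^\bullet$ for every $\alpha\in D$. Thus $\langle C_\alpha^\bullet\mid\alpha\in D\rangle$ is a non-amenable selection from $\langle\mathcal C_\alpha\mid\alpha\in D\rangle$, and this contradicts \cite[Lemma~1.23]{paper29} (which asserts that any such selection must be amenable). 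Note that this argument never needs $\alpha\in\Gamma$, nor any control on $\cf(\alpha)$ relative to $\sigma$, so the double application of Lemma~\ref{lemma53} and the case analysis on $\otp(\Delta_1\cap\alpha)$ are unnecessary.
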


\begin{proof}
  Recalling the definitions from \cite[\S1]{paper29}, the hypothesis amounts to asserting the existence of a sequence $\langle\mathcal C_\beta\mid \beta<\kappa\rangle$ such that
  \begin{itemize}
    \item for every $\beta \in\acc(\kappa)$, $\mathcal C_\beta$ is a collection of fewer than $\mu$ clubs in $\beta$;
    \item for every  $\beta \in \acc(\kappa)$, every $C \in \mathcal C_\beta$, and every $\alpha\in \acc(C)$,
      either $C\cap\alpha\in\mathcal C_\alpha$ or ($\otp(C)< \sigma$ and $\nacc(C)$ consists only of successor ordinals);
    \item for every club $D\s\kappa$, there exists $\beta\in\acc(D)$ such that $(D\cap\beta)\notin \mathcal  C_\beta$;
    \item $\Gamma = \{ \beta \in \acc(\kappa) \mid \forall C \in \mathcal C_\beta \forall \alpha \in \acc(C) [C \cap \alpha \in \mathcal C_{\alpha}] \}$;
    \item for every $\beta\in\Gamma$, $C_\beta\in\mathcal C_\beta$.
  \end{itemize}

  Now, let $\Gamma'$ be an arbitrary stationary subset of $\Gamma$.
  \begin{claim}
    $\chi(\vec C\restriction\Gamma')\ge\omega$.
  \end{claim}
  \begin{cproof}
    Suppose not. By Lemma~\ref{lemma53}(2), then, we may fix $\Delta\in[\kappa]^\kappa$
    and a function $b:\kappa \rightarrow \Gamma'$
    such that $\Delta \cap \alpha\s C_{b(\alpha)}$ for all  $\alpha < \kappa$.
    Let $D := \acc^+(\Delta)$.
    For every $\alpha\in D$, we have $b(\alpha)\in\Gamma$ and $\sup(C_{b(\alpha)}\cap\alpha)=\alpha$,
    so $C_\alpha^\bullet:=C_{b(\alpha)}\cap\alpha$ is in $\mathcal C_\alpha$, and $\Delta\cap\alpha\s C_\alpha^\bullet$.
    So $\Delta$ witnesses that $\langle C_\alpha^\bullet\mid \alpha\in D\rangle$ is not amenable.
    As $C_\alpha^\bullet\in\mathcal C_\alpha$ for all $\alpha\in D$, we thus get a contradiction to \cite[Lemma~1.23]{paper29} (cf.~\cite[Corollary 2.6]{hayut_lh}).
  \end{cproof}
  Next, suppose that $\chi:=\chi(\vec C\restriction\Gamma')$ is smaller than $\sup(\reg(\kappa))$.
  In particular, $\Sigma:=E^\kappa_{>\chi}$ is stationary.
  \begin{claim}
    $\chi(\vec C\restriction\Gamma')<\mu$.
  \end{claim}
  \begin{proof} \renewcommand{\qedsymbol}{\ensuremath{\boxtimes \ \square}}
    By Lemma~\ref{lemma53}(1),    we can fix $\Delta\in[\kappa]^\kappa$
    and a function $b:\kappa \rightarrow [\Gamma']^\chi$
    such that
    \begin{itemize}
      \item $\Delta\cap \alpha \s \bigcup_{\beta \in b(\alpha)} C_\beta$ for all $\alpha < \kappa$;
      \item $A:=\{\alpha\in \Sigma\mid \forall \beta\in b(\alpha)[\sup(C_\beta\cap\alpha)=\alpha]\}$ is stationary.
    \end{itemize}
    For every $\alpha\in A$ and $\beta\in b(\alpha)$, we have $\beta\in\Gamma$, so $C_\beta\cap\alpha\in\mathcal C_\alpha$.
    It follows that we may pick $b':A\rightarrow[\Gamma']^{<\mu}$ such that, for all $\alpha\in A$:
    \begin{itemize}
    \item $b'(\alpha)\in[ b(\alpha)]^{<\mu}$, and
    \item $\{ C_\beta\cap\alpha\mid \beta\in b'(\alpha)\}=\{ C_\beta\cap\alpha\mid \beta\in b(\alpha)\}$.
    \end{itemize}
    Fix $A'\in[A]^\kappa$ on which $\alpha\mapsto|b'(\alpha)|$ is constant, with value, say, $\mu'$.
    Then $\Delta\cap \alpha \s \bigcup_{\beta \in b'(\alpha)} C_\beta$ for all $\alpha \in A'$,
    so $\chi(\vec C\restriction\Gamma')\le\mu'<\mu$.
\end{proof}
\let\qed\relax
\end{proof}

\begin{cor}\label{cor48}
  Suppose that $\square(\kappa,{<}\mu,{\sq_\sigma})$ holds, with $\sigma\in\reg(\kappa)$.
  \begin{enumerate}
    \item If $\mu<\kappa$, then $\chi(\kappa)\ge\omega$;
    \item If $\mu\le\omega$, then $\chi(\kappa)=\sup(\reg(\kappa))$.
  \end{enumerate}
\end{cor}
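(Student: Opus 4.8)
The plan is to reduce both clauses to (the proof of) Lemma~\ref{lemma47}, after bridging the small gap between a $C$-sequence over the set $\Gamma$ of coherent points and a genuine $C$-sequence over $\kappa$. First I would unpack the hypothesis exactly as in the proof of Lemma~\ref{lemma47}: fix a $\square(\kappa,{<}\mu,{\sq_\sigma})$-sequence $\langle\mathcal C_\beta\mid\beta<\kappa\rangle$ with its associated stationary set $\Gamma$ of fully coherent points. Instead of working with a transversal over $\Gamma$, define a $C$-sequence $\vec D=\langle D_\beta\mid\beta<\kappa\rangle$ over \emph{all} of $\kappa$ by $D_{\beta+1}:=\{\beta\}$ and, for each $\beta\in\acc(\kappa)$, letting $D_\beta$ be some member of $\mathcal C_\beta$. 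By Lemma~\ref{prop53}(1), $\chi(\vec D)\le\sup(\reg(\kappa))$, and for every non-weakly-compact $\kappa$ we have $\chi(\kappa)\ge\chi(\vec D)$ straight from Definition~\ref{c_seq_num_def}; so it suffices to establish that (i) $\chi(\vec D)\ge\omega$, and (ii) if $\chi(\vec D)<\sup(\reg(\kappa))$ then $\chi(\vec D)<\mu$. (Note that (i), once proved, also forces $\kappa$ to be non-weakly-compact, via Theorem~\ref{todorcevic_thm}, so there is no circularity.)

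For (i) and (ii) I would re-run the two claims inside the proof of Lemma~\ref{lemma47}, with $\vec D$ in place of $\vec C\restriction\Gamma'$. For (i): assuming $\chi(\vec D)<\omega$, Lemma~\ref{lemma53}(2) hands us $\Delta\in[\kappa]^\kappa$ and $b:\kappa\rightarrow\kappa$ with $\Delta\cap\alpha\s D_{b(\alpha)}$ for all $\alpha<\kappa$, and one works over the stationary set $E:=\acc^+(\Delta)\cap E^\kappa_\sigma$. For (ii): assuming $\chi:=\chi(\vec D)<\sup(\reg(\kappa))$, apply Lemma~\ref{lemma53}(1) with $\Sigma:=E^\kappa_{>\chi}\cap E^\kappa_{\ge\sigma}$ to get $\Delta\in[\kappa]^\kappa$, a progressive $b:\kappa\rightarrow[\kappa]^{\le\chi}$, and a stationary $A\s\Sigma$ with $\Delta\cap\alpha\s\bigcup_{\beta\in b(\alpha)}D_\beta$ for all $\alpha$ and $\sup(D_\beta\cap\alpha)=\alpha$ for all $\alpha\in A$ and $\beta\in b(\alpha)$.

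The one genuinely new point, relative to Lemma~\ref{lemma47}, is that the indices $\beta$ arising above need no longer lie in $\Gamma$, so the coherence of $\langle\mathcal C_\beta\rangle$ only yields $D_\beta\cap\alpha\in\mathcal C_\alpha$ \emph{modulo} the exceptional clause ``$\otp(D_\beta)<\sigma$ and $\nacc(D_\beta)$ consists of successor ordinals''. This is exactly why I restrict attention to ordinals of cofinality $\ge\sigma$: whenever $\alpha\le\beta$, $\sup(D_\beta\cap\alpha)=\alpha$ and $\cf(\alpha)\ge\sigma$, closedness of $D_\beta$ gives $\alpha\in\acc(D_\beta)$, while $\otp(D_\beta)\ge\otp(D_\beta\cap\alpha)\ge\cf(\alpha)\ge\sigma$ kills the exception, so $D_\beta\cap\alpha\in\mathcal C_\alpha$ (reading $D_\alpha\cap\alpha$ as $D_\alpha$ when $\beta=\alpha$; and $b(\alpha)\ge\alpha$ is forced since otherwise $D_{b(\alpha)}\cap\alpha$ would be bounded below $\alpha$). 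With that in hand the two arguments finish just as in Lemma~\ref{lemma47}: in case (i), $\langle D_{b(\alpha)}\cap\alpha\mid\alpha\in E\rangle$ is a non-amenable sequence each of whose entries lies in the corresponding $\mathcal C_\alpha$, contradicting \cite[Lemma~1.23]{paper29} (cf.~\cite[Corollary~2.6]{hayut_lh}); in case (ii), $\{D_\beta\cap\alpha\mid\beta\in b(\alpha)\}\s\mathcal C_\alpha$ has fewer than $\mu$ members, so $b(\alpha)$ may be thinned to some $b'(\alpha)\in[b(\alpha)]^{<\mu}$ with the same traces, and stabilizing $|b'(\alpha)|$ on a stationary subset of $A$ gives $\chi(\vec D)<\mu$.

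Assembling: Clause~(1) is immediate from (i), since $\chi(\kappa)\ge\chi(\vec D)\ge\omega$. For Clause~(2), the hypothesis $\mu\le\omega$ makes (ii) incompatible with (i), so $\chi(\vec D)$ cannot be $<\sup(\reg(\kappa))$; hence $\chi(\vec D)=\sup(\reg(\kappa))$, and therefore $\chi(\kappa)=\sup(\reg(\kappa))$. The only real friction I anticipate is bookkeeping: confirming that restricting to $E^\kappa_\sigma$ (resp.\ $E^\kappa_{\ge\sigma}$) really does neutralize the $\sq_\sigma$-exception for \emph{every} index $\beta$ in play, and that \cite[Lemma~1.23]{paper29} (or the argument behind it) applies to a sequence indexed by a merely stationary set $E$ rather than a club; if the cited form is not flexible enough, one instead reproves the relevant non-existence of a non-amenable $\mathcal C$-selecting sequence over a stationary set, which is essentially how that lemma is established anyway.
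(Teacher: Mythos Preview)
Your approach is correct and shares the paper's key idea: neutralize the $\sq_\sigma$-exception by restricting attention to ordinals of cofinality $\ge\sigma$, then invoke the two claims from Lemma~\ref{lemma47}. The paper packages this more economically: rather than re-running those claims by hand, it extends the transversal $\vec C$ over $\Gamma$ to a $C$-sequence $\vec C^\bullet$ over $\kappa$ by choosing clubs of order type ${<}\sigma$ at points outside $\Gamma$, then applies Lemma~\ref{high_cf_lemma} (with $\nu:=\sigma$, $D:=\kappa$) to get $\chi(\vec C^\bullet)=\chi(\vec C^\bullet\restriction E^\kappa_{\ge\sigma})=\chi(\vec C\restriction E^\kappa_{\ge\sigma})$, after which Lemma~\ref{lemma47} applies directly as a black box with $\Gamma':=E^\kappa_{\ge\sigma}\subseteq\Gamma$. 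Your route avoids the detour through Lemma~\ref{high_cf_lemma} at the cost of reproving Lemma~\ref{lemma47}'s claims in situ; the paper's route is shorter but requires having Lemma~\ref{high_cf_lemma} available. Your worry about \cite[Lemma~1.23]{paper29} over a merely stationary $E$ is not a real obstacle: simply extend your non-amenable partial transversal $\langle D_{b(\alpha)}\cap\alpha\mid\alpha\in E\rangle$ to all of $\acc(\kappa)$ by choosing any member of $\mathcal C_\alpha$ at the remaining points, and the cited lemma applies verbatim.
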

\begin{proof}
  Fix a transversal $\vec C=\langle C_\beta\mid\beta\in\Gamma\rangle$ for $\square(\kappa,{<}\mu,{\sq_\sigma})$.
  As $\kappa\setminus\Gamma\s E^\kappa_{<\sigma}$, let us fix an extension $\vec C^\bullet=\langle C_\beta\mid\beta<\kappa\rangle$ of $\vec C$
  such that $\otp(C_\beta)<\sigma$ for all $\beta\in\kappa\setminus\Gamma$.
  Let $\Gamma':=E^\kappa_{\ge\sigma}$. Then $\Gamma'\s\Gamma$, so
  by Lemma~\ref{high_cf_lemma}, $\chi(\vec C^\bullet\restriction \Gamma')=\chi(\vec C^\bullet)\le\chi(\vec C)\le\chi(\vec C\restriction\Gamma')=\chi(\vec C^\bullet\restriction\Gamma')$.
  That is, $\chi(\vec C^\bullet)=\chi(\vec C)$.

  (1) If $\mu<\kappa$, then by Lemma~\ref{lemma47}(1), we have $\chi(\kappa)\ge\chi(\vec C^\bullet)=\chi(\vec C)\ge\omega$.

  (2) Suppose that $\chi(\kappa)<\sup(\reg(\kappa))$. Then $\chi(\vec C)=\chi(\vec C^\bullet)<\sup(\reg(\kappa))$,
  so, by Lemma~\ref{lemma47}(2), $\chi(\vec C)<\mu$.
  It now follows from Lemma~\ref{lemma47}(1) that $\mu>\omega$.
\end{proof}

\section{The $C$-sequence spectrum and closed colorings} \label{c_u_section}

The upcoming subsections will uncover some connections between elements of $\spec(\kappa)$ and the
third and fourth parameters in closed instances of $\U(\kappa, \ldots)$. Before getting to those connections,
let us recall some relevant definitions and results from \cite{paper34}.

\begin{defn}[\cite{paper34}]
  For a subset $\Sigma\s\kappa$, we say that $c:[\kappa]^2\rightarrow\theta$ is
  \emph{$\Sigma$-closed} if, for all $\beta<\kappa$ and $i\le \theta$,
  the set $D^c_{\le i}(\beta):=\{\alpha<\beta\mid c(\alpha,\beta)\le i\}$ satisfies
  $\acc^+(D^c_{\le i}(\beta))\cap\Sigma\s D^c_{\le i}(\beta)$.
  We say that $c$ is \emph{somewhere-closed} if it is $\Sigma$-closed for some stationary $\Sigma\s\kappa$,
  that $c$ is \emph{tail-closed} if it is $E^\kappa_{\ge\sigma}$-closed for some $\sigma\in\reg(\kappa)$,
  and that $c$ is \emph{closed} if it is $\kappa$-closed.
\end{defn}

\begin{fact}[\cite{paper34}]\label{l23}\label{increasechi}\label{predecessor}
 Suppose that $\lambda$ is an uncountable cardinal and $\theta\in\reg(\lambda^+)$.
  \begin{enumerate}
    \item There exists a closed witness to  $\U(\lambda^+,\lambda^+,\lambda,\lambda)$;
    \item If $\lambda$ is regular, then there exists a closed witness to $\U(\lambda^+,\lambda^+,\theta,\lambda)$;
    \item If there exists a tail-closed witness to $\U(\lambda^+,2,\theta,2)$,
  then there exists a closed witness to $\U(\lambda^+,\lambda^+,\theta,\cf(\lambda))$.
  \end{enumerate}
\end{fact}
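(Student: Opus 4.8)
The plan is to produce, in each of the three cases, a coloring defined by walks on ordinals along a suitable $C$-sequence over $\lambda^+$, and to verify separately the closedness requirement and the relevant instance of $\U(\ldots)$. For~(1), I would fix a $C$-sequence $\vec C=\langle C_\beta\mid\beta<\lambda^+\rangle$ with $\otp(C_\beta)\le\lambda$ for all $\beta$, chosen moreover so that the associated $\rho$-type function is lower semicontinuous in its first coordinate (it suffices, e.g., to arrange that $\nacc(C_\beta)$ consists of successor ordinals, together with the usual coherence demands on the walks). Writing the walk from $\beta$ down to $\alpha$ as $\langle\beta=\beta_0>\beta_1>\cdots>\beta_n=\alpha\rangle$, set $c(\alpha,\beta):=\sup\{\otp(C_{\beta_j}\cap\beta_{j+1})\mid j<n\}$, a maximal-weight type function; since every step weight is strictly below $\otp(C_{\beta_j})\le\lambda$, this is $\lambda$-valued. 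Closedness then follows from the continuity of the chosen walks: if $\bar\alpha=\sup\{\gamma<\bar\alpha\mid c(\gamma,\beta)\le i\}$ then, for all $\gamma<\bar\alpha$ sufficiently close to $\bar\alpha$, the walk $\tr(\gamma,\beta)$ factors through $\tr(\bar\alpha,\beta)$, whence $c(\bar\alpha,\beta)\le c(\gamma,\beta)\le i$, so each $D^c_{\le i}(\beta)$ is closed.

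The $\U$-property of~(1) is the heart of the matter. Given $\chi'<\lambda$, a family $\langle a_\eta\mid\eta<\lambda^+\rangle$ of pairwise disjoint members of $[\lambda^+]^{\chi'}$, and $i<\lambda$, I would pass to a $\lambda^+$-sized subfamily by a $\Delta$-system reduction on the $a_\eta$'s (with common root a set bounded below $\lambda$, which may be discarded), a pressing-down argument on the parameters governing the walks $\tr(\cdot,\min a_\eta)$, and a pigeonhole on the at most $\lambda$-many possible stabilized walk-segments. For $\eta<\zeta$ in the resulting index set, $\alpha\in a_\eta$ and $\beta\in a_\zeta$, this would force the walk $\tr(\alpha,\beta)$ to terminate with a step $\delta\to\alpha$ for which $\otp(C_\delta\cap\alpha)$ is as large as desired, in particular exceeding $i$, so $c(\alpha,\beta)>i$. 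Securing this large final step uniformly across a $\lambda^+$-sized subfamily is the principal obstacle, and is where the fine structure of walks on $\lambda^+$ is used.

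For~(2), with $\lambda$ regular I would run the same construction but arrange the step weights to land in $\theta$ rather than in $\lambda$; this genuinely requires $\lambda$ regular (for $\lambda$ singular the analogous coloring need not exist, as can be detected via closedness at points of cofinality strictly between $\theta$ and $\lambda$). Concretely, one builds $\vec C$ together with a coherent rank assignment on the $C_\beta$'s taking values in $\theta$ and substitutes it for $\otp(C_{\beta_j}\cap\cdot)$ in the definition of $c$; the verifications of closedness and of $\U$ then proceed verbatim with $\lambda$ replaced by $\theta$.

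For~(3), rather than building from scratch I would amalgamate the given tail-closed witness $c_0\colon[\lambda^+]^2\to\theta$ to $\U(\lambda^+,2,\theta,2)$ over pcf-theoretic data available at a successor of a singular: a sequence $\langle\lambda_j\mid j<\cf(\lambda)\rangle$ cofinal in $\lambda$, a scale $\langle f_\alpha\mid\alpha<\lambda^+\rangle$, and surjections $e_\beta\colon\lambda\to\beta$. The new coloring would have the shape $c(\alpha,\beta):=c_0\big(p_0(\alpha,\beta),p_1(\alpha,\beta)\big)$, with projections $p_0,p_1$ determined by the walk and by the least coordinate on which $f_\alpha$ and $f_\beta$ differ, engineered so that: first, the non-tail part of $c_0$ is repaired by a transfinite-recursive closure operation leaving $c_0$ unchanged at points of cofinality $\ge\sigma$ (which suffices, since the amalgamation only appeals to $c_0$ on suitable coordinates), so that every $D^c_{\le i}(\beta)$ becomes genuinely closed; and second, a $\lambda^+$-family of pairwise disjoint blocks of size $\chi'<\cf(\lambda)$ can be routed, block by block, into a single large rectangle on which the weak unboundedness of $c_0$ applies to all members of the block simultaneously, yielding a $\lambda^+$-sized subfamily with $\min(c[a\times b])>i$. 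The obstacle here is checking that a single amalgamation simultaneously restores full closedness and boosts $\mu$ from $2$ to $\lambda^+$ and $\chi$ from $2$ to $\cf(\lambda)$; the leverage is that blocks of size $<\cf(\lambda)$ are too short to spill across the cofinal sequence $\langle\lambda_j\mid j<\cf(\lambda)\rangle$.
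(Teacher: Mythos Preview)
This statement is recorded in the paper as a \emph{Fact} cited from \cite{paper34} (Part~I of the series); the present paper does not supply a proof, so there is no argument here against which to compare your proposal in detail. Your sketch is broadly in the spirit of the walks-on-ordinals machinery that \cite{paper34} develops: for (1) and (2) one does work with a $C$-sequence on $\lambda^+$ of small order type and a $\rho$-type characteristic, and closedness is indeed obtained from the factoring $\tr(\gamma,\beta)\sqsupseteq\tr(\bar\alpha,\beta)$ for $\gamma$ close to $\bar\alpha$ (cf.\ Fact~\ref{lambda2} in the present paper).

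That said, a few of your claims would need more care before they could be called a proof. In (1), the pressing-down and $\Delta$-system reduction you describe does not by itself force the \emph{final} step of $\tr(\alpha,\beta)$ to have large weight; the standard route is rather to obtain a stationary set on which the trace data stabilize and then use the fact that $\otp(C_\delta)\le\lambda$ to bound from below the index at which the walk enters a prescribed interval. In (2), ``arrange the step weights to land in $\theta$'' glosses over the real work: one does not simply substitute a $\theta$-valued rank for order type, but rather composes with a suitable reduction map or uses a different characteristic (in \cite{paper34} this goes through club-guessing and an auxiliary oscillation). In (3), your description of ``repairing the non-tail part of $c_0$ by a transfinite-recursive closure operation'' is vague, and the actual amplification from $\mu=2$ to $\mu=\lambda^+$ in \cite{paper34} proceeds through a general pumping lemma for somewhere-closed colorings (cf.\ Fact~\ref{pumpclosed} here) rather than a direct pcf-style amalgamation. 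If you want to reconstruct the proofs, consult \cite{paper34} directly; the present paper treats this as a black box.
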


\begin{fact}[\cite{paper34}]\label{pumpclosed}
  Suppose that $c:[\kappa]^2\rightarrow\theta$ is a coloring and $\omega\le\chi<\kappa$.
  Then $(1)\implies(2)\implies(3)$:
  \begin{enumerate}
    \item For some stationary $\Sigma\s E^\kappa_{\ge\chi}$, $c$ is a $\Sigma$-closed
      witness to $\U(\kappa,2,\theta,\chi)$.
    \item For every family $\mathcal A\s[\kappa]^{<\chi}$ consisting of $\kappa$-many
      pairwise disjoint sets, for every club $D\s\kappa$, and for every $i<\theta$,
      there exist $\gamma\in D$, $a\in\mathcal A$, and $\epsilon < \gamma$ such that
      \begin{itemize}
        \item $\gamma < a$;
        \item for all $\alpha \in (\epsilon, \gamma)$ and all $\beta\in a$,
        we have $c(\alpha,\beta)>i$.
      \end{itemize}
      \item $c$ witnesses $\U(\kappa,\kappa,\theta,\chi)$.
  \end{enumerate}
\end{fact}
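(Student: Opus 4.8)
The plan is to establish the two implications separately, working in each case directly from the defining properties.

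\textbf{The implication $(1)\implies(2)$.} Fix a stationary $\Sigma\s E^\kappa_{\ge\chi}$ witnessing that $c$ is a $\Sigma$-closed witness to $\U(\kappa,2,\theta,\chi)$, and fix $\mathcal A\s[\kappa]^{<\chi}$ consisting of $\kappa$-many pairwise disjoint sets, a club $D\s\kappa$, and $i<\theta$. Since the number of possible sizes of members of $\mathcal A$ is at most $\chi<\kappa$, a pigeonhole reduction lets us assume all members of $\mathcal A$ have a common size $\chi'<\chi$; note that if $(2)$ fails for this thinned family it also fails for the original one. Put $\Sigma':=\Sigma\cap\acc(D)$, which is stationary. \emph{Case 1:} there are $\gamma\in\Sigma'$ and $a\in\mathcal A$ with $\gamma<a$ and $\min(c[\{\gamma\}\times a])>i$. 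Then for each $\beta\in a$ we have $\gamma\notin D^c_{\le i}(\beta)$, so $\Sigma$-closedness (with $\gamma\in\Sigma$) gives $\epsilon_\beta<\gamma$ with $c(\alpha,\beta)>i$ for all $\alpha\in(\epsilon_\beta,\gamma)$; as $|a|=\chi'<\chi\le\cf(\gamma)$, the ordinal $\epsilon:=\sup_{\beta\in a}\epsilon_\beta$ is $<\gamma$, and $\gamma,a,\epsilon$ witness $(2)$ for this data. \emph{Case 2:} otherwise, i.e.\ for every $\gamma\in\Sigma'$ and every $a\in\mathcal A$ with $\gamma<a$ there is $\beta\in a$ with $c(\gamma,\beta)\le i$. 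We show this contradicts $(1)$. Recursively choose, for $\xi<\kappa$, an ordinal $\gamma_\xi\in\Sigma'$ above $\sup(\{\ssup(a_\eta)+1\mid\eta<\xi\}\cup\{\gamma_\eta+1\mid\eta<\xi\})$ (possible as $\Sigma'$ is unbounded) and a set $a_\xi\in\mathcal A$ with $\min(a_\xi)>\gamma_\xi$ (possible since $\mathcal A$ is pairwise disjoint), and set $b_\xi:=\{\gamma_\xi\}\cup a_\xi$. Then $\mathcal B:=\{b_\xi\mid\xi<\kappa\}$ is a $<$-increasing family of $\kappa$-many pairwise disjoint sets of a common size $<\chi$. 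Given $(b_\xi,b_\eta)\in[\mathcal B]^2$ (so $\xi<\eta$), we have $\gamma_\xi<\min(a_\eta)$, hence, by the Case~2 hypothesis applied to $\gamma_\xi\in\Sigma'$ and $a_\eta\in\mathcal A$, there is $\beta\in a_\eta\s b_\eta$ with $c(\gamma_\xi,\beta)\le i$; thus $\min(c[b_\xi\times b_\eta])\le i$. As this holds for every pair from $\mathcal B$, it contradicts the fact that $c$ witnesses $\U(\kappa,2,\theta,\chi)$.

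\textbf{The implication $(2)\implies(3)$.} Fix $\chi'<\chi$, a family $\mathcal A\s[\kappa]^{\chi'}$ of $\kappa$-many pairwise disjoint sets, and $i<\theta$; we must find $\mathcal B\in[\mathcal A]^\kappa$ with $\min(c[a\times b])>i$ for all $(a,b)\in[\mathcal B]^2$. Let
\[
  G:=\{\gamma<\kappa\mid \exists a\in\mathcal A\ \exists\epsilon<\gamma\,[\gamma<a\text{ and }c(\alpha,\beta)>i\text{ for all }\alpha\in(\epsilon,\gamma),\,\beta\in a]\}.
\]
Clause~$(2)$ asserts exactly that $G$ meets every club in $\kappa$, so $G$ is stationary; for each $\gamma\in G$ fix a witnessing pair $(a_\gamma,\epsilon_\gamma)$, $a_\gamma\in\mathcal A$, $\epsilon_\gamma<\gamma$. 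Since $\gamma\mapsto\epsilon_\gamma$ is regressive on the stationary set $G$, Fodor's Lemma provides a stationary $G_0\s G$ and $\epsilon^\ast<\kappa$ with $\epsilon_\gamma=\epsilon^\ast$ for all $\gamma\in G_0$; let $G_1:=G_0\setminus(\epsilon^\ast+1)$, still stationary, hence unbounded. Recursively thin $G_1$ to $X=\{\rho_\xi\mid\xi<\kappa\}$ by $\rho_0:=\min(G_1)$, $\rho_{\xi+1}:=\min(G_1\setminus(\ssup(a_{\rho_\xi})+1))$, and $\rho_\xi:=\min(G_1\setminus\sup_{\eta<\xi}\rho_\eta)$ at limits; these minima exist since $G_1$ is unbounded, and $\langle\rho_\xi\mid\xi<\kappa\rangle$ is strictly increasing. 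Put $\mathcal B:=\{a_\rho\mid\rho\in X\}$. For $\rho<\rho'$ in $X$ we have $\ssup(a_\rho)<\rho'$ (so $a_\rho\s\rho'$) and $\min(a_\rho)>\rho>\epsilon^\ast=\epsilon_{\rho'}$ (so $a_\rho\s(\epsilon_{\rho'},\rho')$), whence, by the choice of $(a_{\rho'},\epsilon_{\rho'})$, $c(\alpha,\beta)>i$ for all $\alpha\in a_\rho$ and $\beta\in a_{\rho'}$; in particular $a_\rho\ne a_{\rho'}$, so $\mathcal B$ is a subfamily of $\mathcal A$ of size $\kappa$ and it is as required. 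Since $\mathcal A,\chi',i$ were arbitrary, $c$ witnesses $\U(\kappa,\kappa,\theta,\chi)$.

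\textbf{Where the work is.} The pigeonhole reduction and the two recursions producing $\mathcal B$ are routine. The one step that needs the right idea is $(2)\implies(3)$: the useful reformulation of Clause~$(2)$ is the stationarity of $G$, and the crucial observation is that the ``left endpoints'' $\epsilon_\gamma$ constitute a \emph{regressive} function on $G$, so a single appeal to Fodor homogenises them and reduces the problem to the trivial ``interval thinning'' of $G_1$; without this, one is tempted into a transfinite recursion whose book-keeping does not close up (the endpoints $\epsilon_\gamma$ returned by Clause~$(2)$ cannot be controlled in advance). In $(1)\implies(2)$ the only delicate point is to arrange the blocks $b_\xi=\{\gamma_\xi\}\cup a_\xi$ so that the element $\gamma_\xi\in\Sigma'$ lies strictly below $a_\eta$ for every later $\eta$, which is exactly what makes $\Sigma$-closedness produce a low-colour pair straddling $b_\xi$ and $b_\eta$.
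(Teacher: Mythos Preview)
The paper does not prove this statement; it is recorded as a \textbf{Fact} cited from \cite{paper34} (Part~I of the series), so there is no in-paper proof to compare against. Your argument is correct and follows the natural route: for $(1)\Rightarrow(2)$ you use $\Sigma$-closedness to pass from a single high-colour point $\gamma\in\Sigma$ to an interval $(\epsilon,\gamma)$, and in the contrary case you manufacture a $\U(\kappa,2,\theta,\chi)$-bad family by tagging each $a_\xi$ with a point of $\Sigma'$; for $(2)\Rightarrow(3)$ you read $(2)$ as stationarity of the set $G$ of good $\gamma$'s, apply Fodor to freeze the left endpoint $\epsilon_\gamma$, and then thin so that successive $a_\rho$'s sit inside $(\epsilon^\ast,\rho')$. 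One cosmetic point: in the limit step of your recursion building $X$, you should take $\rho_\xi\in G_1$ strictly above $\sup_{\eta<\xi}\rho_\eta$ (not merely $\ge$), but this is automatic once you note that each $\rho_{\eta+1}>\ssup(a_{\rho_\eta})>\rho_\eta$ keeps the sequence strictly increasing, so the supremum at a limit stage is a genuine limit ordinal and any element of $G_1$ above it works.
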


We next recall some of the basic definitions from the theory of \emph{walks on ordinals},
which is our primary technique for constructing witnesses to $\U(\kappa, \ldots)$.

\begin{defn}[\cite{MR908147}]\label{walks}
  Given a $C$-sequence $\langle C_\alpha\mid\alpha<\kappa\rangle$, we derive various functions as follows.
  For all $\alpha<\beta<\kappa$,
  \begin{itemize}
    \item $\Tr(\alpha,\beta)\in{}^\omega\kappa$ is defined recursively by letting, for all $n<\omega$,
      $$\Tr(\alpha,\beta)(n):=\begin{cases}
      \beta & \text{if } n=0;\\
      \min(C_{\Tr(\alpha,\beta)(n-1)}\setminus\alpha) & \text{if } n>0\ \&\ \Tr(\alpha,\beta)(n-1)>\alpha;\\
      \alpha & \text{otherwise};
      \end{cases}$$
    \item (Number of steps) $\rho_2(\alpha,\beta):=\min\{n<\omega\mid \Tr(\alpha,\beta)(n)=\alpha\}$;
    \item (Upper trace) $\tr(\alpha,\beta):=\Tr(\alpha,\beta)\restriction \rho_2(\alpha,\beta)$.
  \end{itemize}
\end{defn}
\begin{remark}
  To avoid notational confusion, note that there is no relationship between the two-place instance $\Tr(\alpha,\beta)$
  and the one-place instance $\Tr(S)$.
\end{remark}

\begin{defn}[\cite{paper18}]  For $\gamma<\beta<\kappa$, let
  $$\lambda_2(\gamma,\beta):=\sup(\gamma\cap\{ \sup(C_\tau\cap\gamma)
  \mid \tau \in \im(\tr(\gamma, \beta))\}).$$
\end{defn}

Note that $\lambda_2(\gamma, \beta) < \gamma$ whenever $0<\gamma < \beta < \kappa$.
To motivate the preceding definition, let us point out the following fact.

\begin{fact}[\cite{paper34}]\label{lambda2}\label{rho2_closed}
  Suppose that $\lambda_2(\gamma,\beta)<\alpha<\gamma<\beta<\kappa$.
  Then $\tr(\gamma,\beta)\sq \tr(\alpha,\beta)$ and one of the following cases holds:
  \begin{enumerate}
    \item $\gamma\in\im(\tr(\alpha,\beta))$; or
    \item $\gamma\in\acc(C_\delta)$ for $\delta:=\min(\im(\tr(\gamma,\beta)))$.
      In particular, $\gamma\in\acc(C_\delta)$ for some  $\delta\in\im(\tr(\alpha,\beta))$.
  \end{enumerate}

  As a consequence, $\rho_2:[\kappa]^2\rightarrow\omega$ is closed.
\end{fact}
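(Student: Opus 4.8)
The plan is to unwind the recursion defining the walk and show that, once $\alpha$ clears the threshold $\lambda_2(\gamma,\beta)$, the walk from $\alpha$ to $\beta$ is forced to follow the walk from $\gamma$ to $\beta$ step for step until it reaches $\gamma$ (or overshoots $\gamma$ into an accumulation point of the final club). Write $\tr(\gamma,\beta)=\langle\tau_0,\dots,\tau_{n-1}\rangle$, so $\tau_0=\beta$, $\tau_{k+1}=\min(C_{\tau_k}\setminus\gamma)$, each $\tau_k>\gamma$, and the sequence is strictly decreasing (as $\tau_{k+1}\in C_{\tau_k}\s\tau_k$); hence $\delta:=\tau_{n-1}=\min(\im(\tr(\gamma,\beta)))$, and since $\rho_2(\gamma,\beta)=n$ we have $\min(C_\delta\setminus\gamma)=\gamma$, i.e.\ $\gamma\in C_\delta$.

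The key preliminary observation is that $\sup(C_{\tau_k}\cap\gamma)<\gamma$ for every $k<n-1$: otherwise $\gamma$ would be an accumulation point of the closed set $C_{\tau_k}$ and therefore $\gamma\in C_{\tau_k}$, contradicting $\min(C_{\tau_k}\setminus\gamma)=\tau_{k+1}>\gamma$. Since each such $\tau_k$ lies in $\im(\tr(\gamma,\beta))$, this gives $\sup(C_{\tau_k}\cap\gamma)\le\lambda_2(\gamma,\beta)<\alpha$, so $C_{\tau_k}\cap[\alpha,\gamma)=\emptyset$ for all $k<n-1$. Now I would prove by induction on $k\le n-1$ that $\Tr(\alpha,\beta)(k)=\tau_k$: the base case is trivial, and for the step, using $\tau_k>\gamma>\alpha$ together with $C_{\tau_k}\cap[\alpha,\gamma)=\emptyset$ we get $\min(C_{\tau_k}\setminus\alpha)=\min(C_{\tau_k}\setminus\gamma)=\tau_{k+1}$. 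As every $\tau_k>\alpha$, it follows that $\rho_2(\alpha,\beta)\ge n$ and $\tr(\gamma,\beta)=\Tr(\alpha,\beta)\restriction n\sq\tr(\alpha,\beta)$.

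It then remains to analyze step $n$. We have $\Tr(\alpha,\beta)(n)=\min(C_\delta\setminus\alpha)$, which is $\le\gamma$ because $\gamma\in C_\delta$ and $\alpha<\gamma$. If it equals $\gamma$, then (as $\gamma\neq\alpha$) $\rho_2(\alpha,\beta)>n$ and $\gamma\in\im(\tr(\alpha,\beta))$, giving Case~(1). Otherwise $C_\delta$ meets $[\alpha,\gamma)$, so $\sup(C_\delta\cap\gamma)\ge\alpha>\lambda_2(\gamma,\beta)$; since $\delta\in\im(\tr(\gamma,\beta))$, the definition of $\lambda_2$ forces $\sup(C_\delta\cap\gamma)=\gamma$, i.e.\ $\gamma\in\acc^+(C_\delta)$, and together with $\gamma\in C_\delta$ this yields $\gamma\in\acc(C_\delta)$, which is Case~(2); the ``in particular'' clause holds because $\delta=\tau_{n-1}=\Tr(\alpha,\beta)(n-1)\in\im(\tr(\alpha,\beta))$.

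For the consequence, fix $\beta<\kappa$ and $i<\omega$ and set $D:=D^{\rho_2}_{\le i}(\beta)$ (the cases $i=\omega$ being vacuous since $\rho_2$ maps into $\omega$); I must show $\acc^+(D)\s D$. Given $\gamma\in\acc^+(D)$, we have $0<\gamma<\beta$, hence $\lambda_2(\gamma,\beta)<\gamma$, so we may pick $\alpha\in D$ with $\lambda_2(\gamma,\beta)<\alpha<\gamma$. The main statement gives $\tr(\gamma,\beta)\sq\tr(\alpha,\beta)$, whence $\rho_2(\gamma,\beta)\le\rho_2(\alpha,\beta)\le i$, so $\gamma\in D$. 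I expect the only delicate points to be the observation that $\sup(C_{\tau_k}\cap\gamma)<\gamma$ for $k<n-1$ (which is exactly where closedness of the $C_\tau$'s is used) and the bookkeeping distinguishing ``the walk lands on $\gamma$'' from ``the walk jumps past $\gamma$''; the remainder is a routine induction.
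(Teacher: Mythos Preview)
Your proof is correct. Note, however, that the paper does not actually prove this statement: it is recorded as a \emph{Fact} cited from \cite{paper34}, so there is no proof in the paper to compare against. Your argument is the standard one: establish by induction that the walk from $\alpha$ coincides with the walk from $\gamma$ through step $n-1$ (using that $C_{\tau_k}\cap[\alpha,\gamma)=\emptyset$ for $k<n-1$, which follows from closedness of the $C_{\tau_k}$ and the definition of $\lambda_2$), and then split on whether $\min(C_\delta\setminus\alpha)$ equals $\gamma$ or falls strictly below it. The deduction that $\rho_2$ is closed is also handled correctly.
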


\subsection{From $C$-sequences to closed colorings}

In this subsection, we derive closed witnesses to $\U(\kappa, \ldots)$ from the existence of
$C$-sequences witnessing that certain infinite cardinals are in $\spec(\kappa)$. As we shall see,
there is a relationship between elements of $\spec(\kappa)$ and the third and fourth parameters
of $\U(\kappa, \ldots)$.

\begin{lemma}\label{cnontrivial}
  There exists a closed witness to $\U(\kappa,\kappa,\omega,\chi(\kappa))$.
\end{lemma}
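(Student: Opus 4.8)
The plan is to build the desired coloring via walks on ordinals, using a $C$-sequence that simultaneously witnesses the value $\chi(\kappa)$ in an optimal way. If $\chi(\kappa) \in \{0,1\}$ there is nothing to prove, since one can take a constant coloring (or a trivial one) which is vacuously $\kappa$-closed and witnesses $\U(\kappa,\kappa,\omega,\chi)$ for $\chi \le 1$; so assume $\chi(\kappa) > 1$, whence $\chi(\kappa) \ge \omega$ by Lemma~\ref{prop53}(3). Write $\chi := \chi(\kappa)$. Since $\chi(\kappa)>1$, we know by Theorem~\ref{realized} that $\chi = \max(\spec(\kappa))$, so there is a $C$-sequence $\vec{C} = \langle C_\beta \mid \beta < \kappa\rangle$ with $\chi(\vec C) = \chi$ — in particular $\vec C$ is \emph{amenable}, since $\chi(\vec C) > 1$. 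We will derive the coloring from walks along $\vec C$.

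The natural candidate is a coloring built from $\rho_2$ together with the upper trace. Recall that $\rho_2:[\kappa]^2 \to \omega$ is closed by Fact~\ref{rho2_closed}, so $\rho_2$ itself is already a closed coloring into $\omega$; the content of the lemma is that (for a suitably chosen $\vec C$) $\rho_2$ witnesses the \emph{strength} asserted by $\U(\kappa,\kappa,\omega,\chi)$. By Fact~\ref{pumpclosed}, since $\rho_2$ is $\kappa$-closed (hence $\Sigma$-closed for $\Sigma = E^\kappa_{\ge\omega}$, say), it suffices to check the weaker ``unboundedness'' property in clause~(2) of that fact, or directly verify that $\rho_2$ witnesses $\U(\kappa,2,\omega,\chi)$. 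Concretely, I would fix a family $\mathcal{A} \subseteq [\kappa]^{<\chi}$ of $\kappa$-many pairwise disjoint sets and an $i < \omega$, and try to find $\mathcal{B} \in [\mathcal A]^\kappa$ with $\min(\rho_2[a\times b]) > i$ for all $(a,b) \in [\mathcal B]^2$; then Fact~\ref{pumpclosed} upgrades the second coordinate to $\kappa$.

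The key technical point will be the standard fact that for a fixed $\beta$, the sets $\{\alpha < \beta \mid \rho_2(\alpha,\beta) \le n\}$ are small in an appropriate sense — more precisely, one uses $\lambda_2$ and Fact~\ref{lambda2}: once $\alpha$ lies above $\lambda_2(\gamma,\beta)$, the walk from $\alpha$ to $\beta$ passes through $\gamma$ (or through some $\delta$ with $\gamma\in\acc(C_\delta)$), forcing $\rho_2(\alpha,\beta)$ to grow. The plan is: pass to a witnessing $\Delta$ and $b$ for $\chi(\vec C)=\chi$; given $\mathcal A$ and $i$, use a pressing-down/counting argument over $\Delta$ to locate, for $\kappa$-many ordinals $\gamma \in \acc^+(\Delta)$, an element $a_\gamma \in \mathcal A$ with $\gamma < a_\gamma$ and $\lambda_2(\gamma,\beta)$ controlled for all $\beta\in a_\gamma$; amenability of $\vec C$ guarantees that no single club is covered, which is what forces the walk lengths past $i$. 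Then for two such $\gamma < \gamma'$ with associated $a_\gamma, a_{\gamma'}$, for every $\alpha\in a_\gamma$ and $\beta \in a_{\gamma'}$ the walk from $\alpha$ to $\beta$ is forced, by the position of $\gamma'$ relative to $\alpha$ and $\lambda_2(\gamma',\beta)$, to take more than $i$ steps. Collecting $\kappa$-many such $a_\gamma$ into $\mathcal B$ completes the argument.

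The main obstacle I anticipate is the bookkeeping that ties the \emph{cardinality bound} $<\chi$ on members of $\mathcal A$ to the \emph{order type / union} estimates coming from $\chi(\vec C) = \chi$: one must arrange that the relevant initial segments $\Delta\cap\gamma$ are covered by $\bigcup_{\beta\in b(\gamma)} C_\beta$ with $|b(\gamma)| = \chi$, and then play this off against a set $a\in\mathcal A$ of size $<\chi$ to extract a genuine lower bound on walk lengths — this is exactly where the choice $\chi = \chi(\kappa)$ (rather than something larger) is used, and where amenability prevents the trivial covering. The closedness half is essentially free from Fact~\ref{rho2_closed}; the strength half is the real work.
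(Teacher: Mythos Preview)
Your framework is exactly the paper's: take $\vec C$ with $\chi(\vec C)=\chi(\kappa)$ via Theorem~\ref{realized}, let the coloring be $\rho_2$, and get closedness for free from Fact~\ref{rho2_closed}. The gap is in the ``strength'' half, where your sketch is too optimistic about what a single avoidance step buys you.

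Two concrete issues. First, you repeatedly invoke the \emph{witnessing} pair $\Delta$, $b$ for $\chi(\vec C)=\chi$ and speak of ``amenability''. Neither is what is actually used. The paper uses only the \emph{failure} direction: since $|a|=\chi'<\chi=\chi(\vec C)$ for each $a\in\mathcal A$, the set $D\cap E^\kappa_{>\chi'}$ cannot be covered, uniformly in $\alpha$, by $\bigcup_{\beta\in a}C_\beta$. Concretely (this is the paper's Claim~\ref{claim5172}), for every club $D$ there exist $\gamma\in D\cap E^\kappa_{>\chi'}$ and $a\in\mathcal A$ with $\gamma<a$ and $\gamma\notin\bigcup_{\beta\in a}C_\beta$. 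Amenability ($\chi(\vec C)>1$) is far too weak here --- it only rules out covering by a \emph{single} club, whereas you need to rule out covering by $\chi'$-many.

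Second, and more seriously, one such avoidance step does not ``force the walk lengths past $i$''; it forces exactly one extra step. If $\gamma'\notin\bigcup_{\beta\in a_{\gamma'}}C_\beta$ and $\sup(C_\beta\cap\gamma')\le\epsilon<\alpha<\gamma'$, then $\tr(\alpha,\beta)=\langle\beta\rangle{}^\frown\tr(\alpha,\delta)$ where $\delta=\min(C_\beta\setminus\gamma')$, i.e.\ $\rho_2(\alpha,\beta)=1+\rho_2(\alpha,\delta)$. To get $\rho_2>i$ you must iterate, and the paper does this by \emph{induction on $n$}: the key move in the inductive step is to replace each $a_\gamma$ by the trace-expansion
\[
  \widetilde{a_\gamma}:=\bigcup_{\beta\in a_\gamma}\im(\tr(\gamma,\beta)),
\]
thin the $\gamma$'s so that $|\widetilde{a_\gamma}|$ is constant (and still ${<}\chi$), apply the induction hypothesis to the family $\{\widetilde{a_\gamma}\}$ to get $\rho_2\ge n$ between them, and observe that for $(\alpha,\beta)\in a_\gamma\times a_{\gamma'}$ the first step of $\tr(\alpha,\beta)$ lands in $\widetilde{a_{\gamma'}}$, yielding $\rho_2(\alpha,\beta)\ge n+1$. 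Your sketch has the single-layer picture with $\lambda_2$ control but is missing this trace-expansion induction, which is the heart of the argument. (Incidentally, once you run the induction you get $\U(\kappa,\kappa,\omega,\chi)$ directly; Fact~\ref{pumpclosed} is not needed, and indeed its hypothesis $\chi<\kappa$ would not cover the case $\chi(\kappa)=\kappa$.)
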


\begin{proof}
  To avoid trivialities, suppose that $\chi(\kappa)>1$, so that $\chi(\kappa)\ge\omega$.
  The proof strategy is identical to that of  \cite[Theorem~6.3.6]{todorcevic_book}.
  Using Theorem~\ref{realized}, let us fix a $C$-sequence $\vec C=\langle C_\beta\mid\beta<\kappa\rangle$ with $\chi(\vec C)=\chi(\kappa)$.
  Let $\rho_2:[\kappa]^2\rightarrow\omega$ denote the characteristic function derived from walking along $\vec C$.
  By Fact~\ref{rho2_closed}, $\rho_2$ is closed. We will show that $\rho_2$ witnesses $\U(\kappa, \kappa, \omega, \chi(\kappa))$.

  \begin{claim}\label{claim5172}
    For every $\chi<\chi(\kappa)$, every family $\mathcal A\s[\kappa]^{\chi}$ consisting of $\kappa$-many pairwise disjoint sets, and every club $D\s\kappa$,
    there exist $\gamma\in D\cap E^\kappa_{>\chi}$ and $a\in\mathcal A$ such that
    \begin{itemize}
      \item $\gamma < a$;
      \item $\gamma\notin\bigcup_{\beta\in a}C_\beta$.
    \end{itemize}
  \end{claim}

  \begin{cproof}
    Since $\chi<\chi(\kappa)$, and by the choice of $\vec C$, we may pick $\alpha<\kappa$ such that, for all $a \in \mathcal{A}$,
    $D\cap E^\alpha_{>\chi}\nsubseteq\bigcup_{\beta\in a} C_\beta$.
    Pick an arbitrary $a\in\mathcal A$ with $a>\alpha$,
    and then pick $\gamma\in D\cap E^\alpha_{>\chi}\setminus\bigcup_{\beta\in a} C_\beta$.
  \end{cproof}

  \begin{claim}
    For every $\chi<\chi(\kappa)$, every family $\mathcal A\s[\kappa]^{\chi}$ consisting of $\kappa$-many pairwise disjoint sets, and every $n<\omega$,
    there exist $\mathcal B\in[\mathcal A]^\kappa$ such that
    $\min(\rho_2[a\times b])\ge n$ for all $(a,b)\in[\mathcal A]^2$.
  \end{claim}

  \begin{proof} \renewcommand{\qedsymbol}{\ensuremath{\boxtimes \ \square}}
    We proceed by induction on $n$. Suppose that $n<\omega$ and that the claim holds for $n$.
    Fix a family $\mathcal A\s[\kappa]^{\chi}$ consisting of $\kappa$-many pairwise disjoint sets.
    By the preceding claim, we may find a stationary set $\Gamma\s E^\kappa_{>\chi}$ along with a sequence $\langle a_\gamma\mid \gamma\in\Gamma\rangle$
    such that for all $\gamma\in\Gamma$, we have $a_\gamma\in\mathcal A$, $\gamma < a_\gamma$, and $\gamma\notin\bigcup_{\beta\in a_\gamma}C_\beta$.
    For each $\gamma\in\Gamma$, let $\widetilde{a_\gamma}:=\bigcup\{ \im(\tr(\gamma,\beta))\mid \beta\in a_\gamma\}$.

    Define $f:\Gamma\rightarrow\kappa$, $g:\Gamma\rightarrow\kappa$ and $h:\Gamma\rightarrow\chi(\kappa)$ by setting, for all $\gamma \in \Gamma$,
    \begin{itemize}
      \item $f(\gamma):=\sup\{ \sup(C_\beta\cap\gamma)\mid \beta\in a_\gamma\}$;
      \item $g(\gamma):=\sup(a_\gamma)$;
      \item $h(\gamma):=|\widetilde{a_\gamma}|$.
    \end{itemize}

    Pick $\epsilon<\kappa$ and $\chi'<\chi(\kappa)$ for which
    \[
      A:=\{ \gamma\in\Gamma\mid f(\gamma)=\epsilon\ \&\ g[\gamma]\s\gamma\ \&\ h(\gamma)=\chi'\}
    \]
    is stationary. By the hypothesis on $n$, there exists $B\in[A]^\kappa$ such that for every $(\gamma,\gamma')\in[B]^2$,
    we have $\min(\rho_2[\widetilde{a_\gamma}\times \widetilde{a_{\gamma'}}])\ge n$.
    We claim that $\mathcal B:=\{ a_\gamma\mid \gamma\in B\}$ is as sought.
    To see this, fix arbitrary $(\gamma,\gamma')\in[B]^2$ and $(\alpha,\beta)\in a_\gamma\times a_{\gamma'}$.

    Let $\delta:=\min(C_\beta\setminus\gamma')$.
    Then $\epsilon<\gamma<\alpha<\gamma'<\delta<\beta$ and $\min(C_\beta\setminus\alpha)=\min(C_\beta\setminus\gamma')=\delta$,
    so $\tr(\alpha,\beta)=\tr(\delta,\beta)\conc\tr(\alpha,\delta)$.
    Since $(\alpha,\delta)\in \widetilde{a_\gamma}\times \widetilde{a_{\gamma'}}$, we have $\rho_2(\alpha,\delta)\ge n$,
    and hence $\rho_2(\alpha,\beta)\ge n+1$.
  \end{proof}
  \let\qed\relax
\end{proof}

\begin{remark}
  The preceding construction is not the only way of obtaining instances of $\U(\kappa,\kappa,\theta,\chi)$ with $\theta:=\omega$.
  By an analysis from the forthcoming Part~III of this series, in the model of Theorem~\ref{thm51},
  in which $\chi(\kappa)=\omega$, there is a closed witness to $\U(\kappa,\kappa,\omega,\kappa)$.
  Also, by Theorem~\ref{t55}, if $\lambda$ is a limit of an $\omega$-sequence of strongly compact cardinals, then $\chi(\lambda^+)=\omega$,
  but, by \cite[Corolllary~4.13]{paper34}, there is a closed witness to $\U(\lambda^+,\lambda^+,\omega,\lambda)$.
\end{remark}
\begin{cor} Suppose that $\kappa$ is strongly inaccessible and $\theta'\in E^\kappa_{>\omega}$ is a cardinal.
If every $\kappa$-Aronszajn tree admits a $\theta'$-ascent path, then $\chi(\kappa)\le\theta'$.
\end{cor}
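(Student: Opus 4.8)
The plan is to prove the contrapositive, by showing that if $\chi(\kappa) > \theta'$, then there is a $\kappa$-Aronszajn tree which admits no $\theta'$-ascent path. First I would invoke Lemma~\ref{cnontrivial}, which gives a closed witness $c:[\kappa]^2 \rightarrow \omega$ to $\U(\kappa, \kappa, \omega, \chi(\kappa))$. Since $\theta' < \chi(\kappa)$, this is in particular a closed (hence tail-closed, hence $\Sigma$-closed for $\Sigma = \kappa$) witness to $\U(\kappa, 2, \omega, \chi(\kappa))$, so Fact~\ref{lemma610} applies with $\theta := \omega$ and $\chi := \chi(\kappa)$.

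The key point is that $\theta' \in E^\kappa_{>\omega}$ is a cardinal, so $\theta'$ is an infinite cardinal with $\theta' < \kappa = \chi(\kappa)$... wait, one should be careful: we only know $\theta' < \kappa$ and $\cf(\theta') > \omega$. Since $\kappa$ is strongly inaccessible, $\theta'$ need not equal $\chi(\kappa)$; but we are assuming for contradiction that $\chi(\kappa) > \theta'$, so $\theta' < \chi(\kappa)$, and $\theta'$ is an infinite cardinal below $\chi(\kappa)$. Now apply Fact~\ref{lemma610}(2) with $\theta := \omega$: since $\cf(\theta') > \omega = \theta$, we get that the tree $\mathcal{T}(c)$ admits no $\theta'$-ascent path. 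Finally, since $c$ witnesses $\U(\kappa, 2, \omega, \chi(\kappa))$ and $\kappa$ is strongly inaccessible (hence $({<}\chi(\kappa))$-inaccessible trivially, as it is a strong limit), standard facts about the tree $\mathcal{T}(c)$ associated to such a coloring — specifically that it is a $\kappa$-Aronszajn tree — complete the argument: $\mathcal{T}(c)$ is a $\kappa$-Aronszajn tree with no $\theta'$-ascent path, contradicting the hypothesis.

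I would write this up compactly as follows.

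\begin{proof}
Suppose towards a contradiction that every $\kappa$-Aronszajn tree admits a $\theta'$-ascent path, yet $\chi(\kappa) > \theta'$. In particular $\chi(\kappa) > 1$, so $\chi(\kappa) \ge \omega$. By Lemma~\ref{cnontrivial}, fix a closed witness $c:[\kappa]^2 \rightarrow \omega$ to $\U(\kappa, \kappa, \omega, \chi(\kappa))$; in particular, $c$ is a (tail-)closed witness to $\U(\kappa, 2, \omega, \chi(\kappa))$, and since $\kappa$ is strongly inaccessible, $\mathcal{T}(c)$ is a $\kappa$-Aronszajn tree. As $\theta'$ is an infinite cardinal with $\theta' < \chi(\kappa)$ and $\cf(\theta') > \omega$, Fact~\ref{lemma610}(2) (applied with $\theta := \omega$) yields that $\mathcal{T}(c)$ admits no $\theta'$-ascent path. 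This contradicts our assumption, since $\mathcal{T}(c)$ is a $\kappa$-Aronszajn tree.
\end{proof}

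The main obstacle I anticipate is purely a matter of invoking the correct infrastructure from Part~I: namely, that a closed witness to $\U(\kappa, \kappa, \omega, \chi(\kappa))$ is in particular a witness to $\U(\kappa, 2, \omega, \chi(\kappa))$ in the precise sense needed for Fact~\ref{lemma610}, and that the tree $\mathcal{T}(c)$ derived from such a coloring is genuinely $\kappa$-Aronszajn when $\kappa$ is strongly inaccessible. If those facts require a hypothesis like $({<}\chi(\kappa))$-inaccessibility or regularity of $\omega$ and $\chi(\kappa)$, one needs to check they are met — but $\omega$ is regular, $\kappa$ is a strong limit so it is $({<}\chi(\kappa))$-inaccessible, and the parameters line up. No delicate combinatorial construction is needed; this is a short deduction from Lemma~\ref{cnontrivial}, Theorem~\ref{realized} (used inside it), and Fact~\ref{lemma610}.
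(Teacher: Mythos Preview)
Your proof is correct and follows essentially the same approach as the paper's: prove the contrapositive by invoking Lemma~\ref{cnontrivial} to obtain a closed witness $c$ to $\U(\kappa,\kappa,\omega,\chi(\kappa))$, and then apply Fact~\ref{lemma610}(2) with $\theta=\omega$ (using $\cf(\theta')>\omega$) to conclude that the $\kappa$-Aronszajn tree $\mathcal T(c)$ admits no $\theta'$-ascent path. The only minor point worth tightening is that Fact~\ref{lemma610} is stated for $\chi<\kappa$, whereas $\chi(\kappa)$ could equal $\kappa$; this is harmless, since $c$ also witnesses $\U(\kappa,2,\omega,(\theta')^+)$ and $(\theta')^+<\kappa$ by inaccessibility, so you may take $\chi=(\theta')^+$ when invoking the fact.
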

\begin{proof} Suppose that $\chi:=\chi(\kappa)$ is greater than $\theta'$.
Then, by Lemma~\ref{cnontrivial}, $\U(\kappa,\kappa,\theta,\chi)$ holds for $\theta:=\omega$.
But, then, by Fact~\ref{lemma610}(2), the $\kappa$-Aronszajn tree $\mathcal T(c)$ cannot admit a $\theta'$-ascent path. This is a contradiction.
\end{proof}

\begin{thm}\label{thm411}
  If $\chi\in\spec(\kappa)$, then there exists a closed witness to $\U(\kappa,\kappa,\chi,\chi)$.
\end{thm}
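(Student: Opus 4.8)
The plan is to build a closed witness to $\U(\kappa,\kappa,\chi,\chi)$ by walking along a $C$-sequence $\vec C$ that realizes $\chi$ in the spectrum, using the \emph{full} characteristic function $\rho_2$-style coloring but now extracting from the walk a color in $\chi$ rather than in $\omega$. Concretely, by hypothesis $\chi\in\spec(\kappa)$, so $\chi$ is an infinite cardinal $\le\kappa$ and there is a $C$-sequence $\vec C=\langle C_\beta\mid\beta<\kappa\rangle$ with $\chi(\vec C)=\chi$; by Lemma~\ref{lemma53} we may assume $\vec C$ is such that $\otp(C_\beta)$ is suitably controlled. Fix $\rho_2$, $\tr$, $\Tr$ and $\lambda_2$ derived from $\vec C$ as in Definition~\ref{walks}. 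The idea is to assign to each $\alpha<\beta$ an ordinal $c(\alpha,\beta)<\chi$ measuring, along the walk $\tr(\alpha,\beta)$, the ``$\chi$-complexity'' seen — for instance one takes $\chi$ itself to index the members of $b(\alpha)$-type sets and defines $c(\alpha,\beta)$ to record the least index $\xi<\chi$ such that $\alpha\notin\bigcup\{C_\tau\mid \tau\in\im(\tr(\alpha,\beta)),\ \tau\text{ has ``$\xi$-rank''}\}$, or more simply defines $c$ via a fixed surjection-indexing of each $C_\beta$ and reads off an ordinal $<\chi$ from where $\alpha$ sits relative to the initial segments $\pi_\beta[\xi]$ used in the proof of Theorem~\ref{realized}. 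The precise recipe will be chosen so that (i) the coloring is closed, which will follow from Fact~\ref{rho2_closed} and Fact~\ref{lambda2} exactly as in Lemma~\ref{cnontrivial}, since the set $D^c_{\le i}(\beta)$ will be a union of sets of the form $D^{\rho_2}_{\le n}(\beta)$ intersected with coherent pieces of the $C$-sequence; and (ii) for every $i<\chi$ the coloring still ``escapes above $i$'' on a stationary family.

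The main steps, in order, would be: first, invoke Theorem~\ref{realized} to get $\vec C$ with $\chi(\vec C)=\chi$, and (via Lemma~\ref{lemma53}) normalize it so that high-cofinality levels carry the full information; second, define the coloring $c:[\kappa]^2\to\chi$ from walks, being careful that the definition depends only on the pair and on $\vec C$; third, verify closedness of $c$ by the same argument as in Lemma~\ref{cnontrivial}/Fact~\ref{rho2_closed} — the key point is that $\{\alpha<\beta\mid c(\alpha,\beta)\le i\}$ decomposes into a finite/structured union along the walk and each piece is $\kappa$-closed below $\beta$ thanks to $\lambda_2$ controlling how the walk stabilizes; fourth, prove the analogue of Claim~\ref{claim5172}: for every $i<\chi$, every pairwise-disjoint family $\mathcal A\s[\kappa]^{<\chi}$ of size $\kappa$, and every club $D$, there are $\gamma\in D$ (of high cofinality) and $a\in\mathcal A$ with $\gamma<a$ and $c(\gamma,\beta)>i$ for all $\beta\in a$, using that $\chi(\vec C)=\chi>i$ means no single $\Delta$ can be covered by fewer than $\chi$ of the $C_\beta$'s, hence stationarily often $\gamma$ avoids $\bigcup_{\beta\in a}$ (the relevant $\xi$-indexed pieces of) $C_\beta$; fifth, run the induction on "number of steps" just as in Lemma~\ref{cnontrivial} — given that the color exceeds $i$ at the last step of the walk, and the walks on the pairs $(\alpha,\delta)$ already have color $>i$ by the inductive family $\mathcal B$, concatenation $\tr(\alpha,\beta)=\tr(\delta,\beta)\conc\tr(\alpha,\delta)$ pushes the color up, yielding $\mathcal B\in[\mathcal A]^\kappa$ with $\min(c[a\times b])>i$ for all $(a,b)\in[\mathcal B]^2$; sixth, conclude via Fact~\ref{pumpclosed} (the implication $(1)\Rightarrow(3)$, applied with $\Sigma$ the stationary set of high-cofinality $\gamma$'s produced) that $c$ witnesses $\U(\kappa,\kappa,\chi,\chi)$.

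The hard part will be step four together with the bookkeeping that makes the color land in $\chi$ rather than in $\omega$: in Lemma~\ref{cnontrivial} the target is $\omega$ and the "escape" claim only needs $\gamma\notin\bigcup_{\beta\in a}C_\beta$, but here we need, for each threshold $i<\chi$, to escape the union of the \emph{$i$-bounded} parts of the $C_\beta$'s, and we must arrange the coloring so that "escaping the $i$-bounded part" is exactly what forces $c$-value $>i$. The natural device is the one already used in the proof of Theorem~\ref{realized} in the inaccessible case: fix continuous increasing $\pi_\beta:\cf(\beta)\to\beta$ and set up $\vec C$ so that membership of $\alpha$ in $C_\beta$ "at stage $\xi$" is detectable, then define $c(\alpha,\beta)$ from the least such stage along the walk; showing that $\chi(\vec C)=\chi$ is genuinely preserved under this presentation, and that the resulting $D^c_{\le i}(\beta)$ are still $\kappa$-closed, is where the real care is needed. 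Once the coloring and the stationary escape set are in hand, the concatenation-of-walks induction and the appeal to Fact~\ref{pumpclosed} are routine.

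Since a full verification is lengthy, I would present the coloring explicitly, state the closedness via Fact~\ref{rho2_closed}-style reasoning, prove the two claims (escape + induction) mirroring Lemma~\ref{cnontrivial}, and finish by citing Fact~\ref{pumpclosed}; the uncountable-limit-cardinal case of the escape claim is handled by the $\pi_\beta$-diagonalization, exactly paralleling the corresponding case in the proof of Theorem~\ref{realized}.
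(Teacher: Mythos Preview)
Your proposal does not match the paper's proof and contains a genuine gap.

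The paper's argument does \emph{not} use walks on ordinals to define the coloring. Instead, having fixed $\vec C$ with $\chi(\vec C)=\chi$, it fixes the covering data $\Delta\in[\kappa]^\kappa$ and $b:\kappa\to{}^{\chi}\kappa$ (so $\Delta\cap(\alpha+1)\s\bigcup_{i<\chi}C_{b(\alpha)(i)}$), sets $\Sigma:=\acc^+(\Delta)\cap E^\kappa_{>\chi}$, and defines directly
\[
c(\gamma,\delta):=\min\{i<\chi\mid \min(\Sigma\setminus\gamma)\in C_{b(\min(\Sigma\setminus\delta))(i)}\}.
\]
This coloring is only $\Sigma$-closed, not closed. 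A single escape claim (no induction) then shows $c$ satisfies Clause~(2) of Fact~\ref{pumpclosed}, hence witnesses $\U(\kappa,\kappa,\chi,\chi)$; finally Lemma~\ref{lemma310} upgrades the $\Sigma$-closed witness to a fully closed one. The walk machinery enters only inside Lemma~\ref{lemma310}, not in the definition of $c$.

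The gap in your plan is twofold. First, you never actually define the $\chi$-valued coloring; the suggestions (``least index $\xi$ such that $\alpha\notin\ldots$'', ``where $\alpha$ sits relative to $\pi_\beta[\xi]$'') are not pinned down, and it is unclear how any of them interacts with the walk so as to be both closed and to admit the concatenation step. Second, and more seriously, the ``induction on number of steps'' from Lemma~\ref{cnontrivial} is intrinsically an $\omega$-phenomenon: each concatenation $\tr(\alpha,\beta)=\tr(\delta,\beta)\conc\tr(\alpha,\delta)$ increases $\rho_2$ by at least one, so finite induction climbs through all of $\omega$. For $\chi>\omega$ there is no analogous mechanism to push the color past an arbitrary $i<\chi$ by iterating a ``plus one'' operation, and you offer no substitute (nor a limit-stage argument). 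The $\pi_\beta$-diagonalization from Theorem~\ref{realized} combines many $C$-sequences into one realizing $\chi(\kappa)$; it does not produce a $\chi$-valued coloring from a walk, so invoking it does not fill the gap.
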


\begin{proof}
  Fix $\chi \in \spec(\kappa)$. As $c:[\kappa]^2\rightarrow\kappa$ defined by letting
  $c(\alpha,\beta):=\min\{\alpha,\beta\}$ is a closed witness to $\U(\kappa,\kappa,\kappa,\kappa)$,
  we may assume that $\chi<\kappa$.
  By Fact~\ref{predecessor}, we may furthermore assume that $\chi^+<\kappa$.
  So, by Lemma~\ref{cnontrivial}, we may altogether assume that $\omega<\chi<\chi^+<\kappa$.

  Fix a $C$-sequence $\langle C_\beta\mid\beta<\kappa\rangle$ with $\chi(\vec C)=\chi$.
  Fix $\Delta\in[\kappa]^\kappa$ and $b:\kappa\rightarrow{}^{\chi}\kappa$ such that, for all $\alpha<\kappa$,
  $\alpha < \im(b(\alpha))$ and
  $\Delta\cap(\alpha+1)\s\bigcup_{\beta\in\im(b(\alpha))}C_\beta$.
  As $\chi<\sup(\reg(\kappa))$, by Lemma~\ref{lemma53}, we may assume that the stationary set $\Sigma:=\acc^+(\Delta)\cap E^\kappa_{>\chi}$ is a subset of $\Delta$.
  Define a coloring $c:[\kappa]^2\rightarrow\chi$ by stipulating that
  $$c(\gamma,\delta):=\min\{i<\chi\mid \min(\Sigma\setminus\gamma)\in C_{b(\min(\Sigma\setminus\delta))(i)}\}.$$

  \begin{claim}
    $c$ is $\Sigma$-closed.
  \end{claim}
  \begin{cproof}
    Suppose that $\delta<\kappa$, $i < \chi$, and $A\s D^c_{\leq i}(\delta)$,
    with $\gamma := \sup(A)$ in $\Sigma\cap\delta\setminus A$.
    We shall show that $c(\gamma,\delta)\le i$. We commence with a few simplifications.
    \begin{itemize}
      \item By the definition of $c$, we may assume that $\delta=\min(\Sigma\setminus\delta)$.
      \item We may assume that $\min(\Sigma\setminus\alpha)<\gamma$ for all $\alpha\in A$,
    as otherwise we would have $\min(\Sigma \setminus \alpha) = \min(\Sigma \setminus \gamma)$,
    and hence $c(\gamma, \delta) = c(\alpha, \delta) \leq i$.
    In particular, by replacing each element $\alpha \in A$ with $\min(\Sigma \setminus \alpha)$,
    we may assume that $A$ is a cofinal subset of $\Sigma\cap\gamma$.
      \item As $\cf(\gamma)>\chi$, we can thin out $A$ and assume that there is $i^* \leq i$ such that
    $c(\alpha,\delta)=i^*$ for all $\alpha\in A$.
    \end{itemize}

    It follows that $A\s C_{b(\delta)(i^*)}$. As the latter is closed, we conclude that $\gamma\in C_{b(\delta)(i)}$, so $c(\gamma,\delta)\le i$.
  \end{cproof}

  \begin{claim}
    Suppose that $\omega\le\chi'<\chi$, $\mathcal A\s[\kappa]^{\chi'}$ is a family consisting of
    $\kappa$-many pairwise disjoint sets,  $D\s\kappa$ is a club, and $i<\chi$ is a prescribed color.
    Then there exist $\gamma\in D\cap\Sigma$ and $a\in\mathcal A$ such that
    \begin{itemize}
      \item $\gamma < a$;
      \item for all $\beta\in a$, $c(\gamma,\beta)>i$.
    \end{itemize}
  \end{claim}

  \begin{cproof}
    Suppose not, and set $\nabla:=D\cap\Sigma$.
    We shall obtain a contradiction to the choice of $\vec C$ by showing that for every $\alpha<\kappa$, there exists some
    $b_\alpha\in[\kappa]^{< \chi}$ such that $\nabla\cap\alpha\s\bigcup_{\delta\in b_\alpha}C_\delta$.

    To this end, let $\alpha<\kappa$ be arbitrary.
    Pick $a\in\mathcal A$ with $a>\alpha$,
    and set
    \[
      b_\alpha:=\{ b(\min(\Sigma\setminus\beta))(j)\mid \beta\in a, ~ j\le i\},
    \]
    so $|b_\alpha|<\chi$. Now, let $\gamma\in\nabla\cap\alpha$ be arbitrary.
    As $a>\alpha>\gamma$, we may find some $\beta\in a$ such that $c(\gamma,\beta)\le i$.
    As $\min(\Sigma\setminus\gamma)=\gamma$, this means that $\gamma\in C_{b(\min(\Sigma\setminus\beta))(j)}$ for some $j\le i$. That is, $\gamma\in C_\delta$ for some $\delta\in b_\alpha$, as sought.
  \end{cproof}

  As $c$ is $\Sigma$-closed and $\Sigma\s E^\kappa_{\ge\chi}$, the preceding claim, together with the implication $(2)\implies(3)$
  of Fact~\ref{pumpclosed}, implies that $c$ witnesses $\U(\kappa,\kappa,\chi,\chi)$.
  By Lemma~\ref{lemma310} below, then, there exists a closed witness to $\U(\kappa,\kappa,\chi,\chi)$.
\end{proof}

\begin{cor} Suppose that $\kappa$ is strongly inaccessible,
and every $\kappa$-Aronszajn tree admits an $\omega$-ascent path.
Then $\spec(\kappa)\cap\kappa\s E^\kappa_\omega$.
\end{cor}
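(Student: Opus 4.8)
The plan is to apply Theorem~\ref{thm411} to a putative element of the spectrum and then derive a contradiction from Fact~\ref{lemma610}, after a routine color-compression step that reduces matters to a regular number of colors. So fix $\chi\in\spec(\kappa)\cap\kappa$; since $\spec(\kappa)$ consists of infinite cardinals, $\chi$ is an infinite cardinal below $\kappa$, and I must show $\cf(\chi)=\omega$. If $\chi=\omega$ this is immediate, so I assume $\chi>\omega$ and, towards a contradiction, that $\cf(\chi)>\omega$.

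By Theorem~\ref{thm411} there is a closed witness $c:[\kappa]^2\to\chi$ to $\U(\kappa,\kappa,\chi,\chi)$; weakening the second parameter from $\kappa$ to $2$, $c$ in particular witnesses $\U(\kappa,2,\chi,\chi)$. Since $\chi$ need not be regular, I cannot feed $c$ directly into Fact~\ref{lemma610}, so I first compress the colors: fix a strictly increasing sequence $\langle\chi_i\mid i<\cf(\chi)\rangle$ of ordinals cofinal in $\chi$ and define $c':[\kappa]^2\to\cf(\chi)$ by letting $c'(\alpha,\beta)$ be the least $i<\cf(\chi)$ with $c(\alpha,\beta)<\chi_i$ (this is well-defined, as $\sup_i\chi_i=\chi>c(\alpha,\beta)$). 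Then $c'$ witnesses $\U(\kappa,2,\cf(\chi),\chi)$: given $\chi'<\chi$, a family $\mathcal A\s[\kappa]^{\chi'}$ consisting of $\kappa$-many pairwise disjoint sets, and a color $i<\cf(\chi)$, one applies the fact that $c$ witnesses $\U(\kappa,2,\chi,\chi)$ with the color $\chi_i<\chi$ to obtain $\{a,b\}\in[\mathcal A]^2$ with $\min(c[a\times b])>\chi_i$; then for every $(\alpha,\beta)\in a\times b$ and every $j\le i$ we have $c(\alpha,\beta)\ge\chi_i\ge\chi_j$, hence $c'(\alpha,\beta)>i$, and so $\min(c'[a\times b])>i$, as required.

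Now $\cf(\chi)\in\reg(\kappa)$ and $\chi<\kappa$, so Fact~\ref{lemma610}(2), applied to $c'$ with $\theta:=\cf(\chi)$ and $\theta':=\omega<\chi$, gives: since $\cf(\omega)=\omega\neq\cf(\chi)$, the tree $\mathcal T(c')$ admits no $\omega$-ascent path. Here $\mathcal T(c')$ is a $\kappa$-Aronszajn tree, because $\kappa$ is strongly inaccessible, $\cf(\chi)<\kappa$, and $c'$ witnesses an instance of $\U(\kappa,\ldots)$ (cf.\ \cite{paper34}, exactly as in the corollary following Lemma~\ref{cnontrivial}). But by hypothesis every $\kappa$-Aronszajn tree admits an $\omega$-ascent path --- a contradiction. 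Hence $\cf(\chi)=\omega$, i.e.\ $\chi\in E^\kappa_\omega$. The only points needing genuine care --- and what I would flag as the main obstacle, though the argument is short once Theorem~\ref{thm411} and Fact~\ref{lemma610} are available --- are the verification that the compressed coloring $c'$ still witnesses the relevant instance of $\U(\ldots)$ and the observation that $\mathcal T(c')$ is $\kappa$-Aronszajn; both are straightforward given the machinery already developed.
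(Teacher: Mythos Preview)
Your proof is correct and follows essentially the same approach as the paper: obtain $\U(\kappa,\kappa,\chi,\chi)$ from Theorem~\ref{thm411}, pass to $\cf(\chi)$ many colors, and apply Fact~\ref{lemma610}(2) with $\theta'=\omega$. The paper compresses this into a single line, asserting directly that Theorem~\ref{thm411} yields $\U(\kappa,\kappa,\cf(\chi),\chi)$ and leaving the color-compression step implicit; you spell it out, which is arguably clearer.
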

\begin{proof}
Let $\chi\in\spec(\kappa)\cap\kappa$ be arbitrary.
Set $\theta:=\cf(\chi)$, so that $\theta\in\reg(\kappa)$.
By Theorem~\ref{thm411}, $\U(\kappa,\kappa,\theta,\chi)$ holds.

Let $\theta':=\omega$.
Now, if $\cf(\chi)\neq\omega$, then $\chi>\theta'$ and $\cf(\theta')\neq\theta$, so that, by Fact~\ref{lemma610}(2), the $\kappa$-Aronszajn tree $\mathcal T(c)$ cannot admit a $\theta'$-ascent path. This is a contradiction.
\end{proof}

\subsection{From closed colorings to $C$-sequences}

\begin{defn}
  A coloring $c:[\kappa]^2\rightarrow\theta$ is said to have the \emph{covering property}
  if, for every $\alpha<\kappa$, there is an injection $f_\alpha:\theta\rightarrow \acc(\kappa\setminus\alpha)\cap E^\kappa_{\neq\theta}$
  such that $\alpha\setminus\bigcup_{i<\theta}D^{c}_{\le i}(f_\alpha(i))$ is bounded below $\alpha$.
\end{defn}

In the next subsection, we shall see how, in certain circumstances, we can
derive colorings with the covering property that witness $\U(\ldots)$. For now,
we show that the existence of such colorings provides information about the
$C$-sequence spectrum.

\begin{lemma}\label{lemma511}
  Suppose that $\omega\le\chi\le \theta = \cf(\theta) <\kappa$,
  $\Sigma\s E^\kappa_{\ge\chi}$ is stationary,
  and $c:[\kappa]^2\rightarrow\theta$ is a coloring.
  Then there exists a corresponding $C$-sequence $\vec C$ over $\kappa$ satisfying the following conditions.
  \begin{enumerate}
    \item If $c$ is a $\Sigma$-closed witness to $\U(\kappa,2,\theta,\chi)$, then $\chi(\vec C)\ge\chi$.
    \item If $c$ has the covering property, then $\chi(\vec C)\le\theta$.
  \end{enumerate}
\end{lemma}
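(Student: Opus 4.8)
The plan is to build $\vec C=\langle C_\beta\mid\beta<\kappa\rangle$ directly from the fibers $D^c_{\le i}(\beta)=\{\gamma<\beta\mid c(\gamma,\beta)\le i\}$ of $c$. Put $C_0:=\emptyset$ and $C_{\beta+1}:=\{\beta\}$; for $\beta\in\acc(\kappa)$ fix an increasing ladder $\langle\eta^\beta_i\mid i<\theta\rangle$ that is cofinal in $\beta$ and set
\[
  C_\beta:=\cl\Bigl(\bigcup_{i<\theta}\bigl(D^c_{\le i}(\beta)\setminus\eta^\beta_i\bigr)\Bigr)
         =\cl\bigl(\{\gamma<\beta\mid \eta^\beta_{c(\gamma,\beta)}\le\gamma\}\bigr),
\]
choosing the ladders so that each $C_\beta$ is genuinely closed and cofinal in $\beta$ and so that $\otp(C_\beta)=\cf(\beta)$ whenever $\cf(\beta)<\theta$ (so that Lemma~\ref{high_cf_lemma}, applied with $\nu=\theta$ and $D=\kappa$, lets us compute $\chi(\vec C)$ on $E^\kappa_{\ge\theta}$ when convenient). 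The two features this definition is designed to deliver are \emph{thinness} --- if $\gamma\in\acc^+(C_\beta)$ sits in the block $[\eta^\beta_i,\eta^\beta_{i+1})$ then $\gamma\in\acc^+(D^c_{\le i}(\beta))$, so that $\gamma\in\Sigma$ together with $\Sigma$-closedness of $c$ forces $c(\gamma,\beta)\le i$ --- and \emph{thickness} --- $D^c_{\le i}(\beta)\setminus\eta^\beta_i\subseteq C_\beta$ for every $i<\theta$, so final segments of all fibers of $\beta$ sit inside $C_\beta$.

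For Clause~(1), assume $c$ is a $\Sigma$-closed witness to $\U(\kappa,2,\theta,\chi)$ and, towards a contradiction, that $\chi(\vec C)=\chi'<\chi$. As $\Sigma\subseteq E^\kappa_{\ge\chi}\subseteq E^\kappa_{>\chi'}$, Lemma~\ref{lemma53}(1) applied to the stationary set $\Sigma$ supplies $\Delta_1\in[\kappa]^\kappa$ with $\acc^+(\Delta_1)\cap E^\kappa_{>\chi'}\subseteq\Delta_1$, a progressive $b:\kappa\to[\kappa]^{\le\chi'}$ with $\Delta_1\cap\alpha\subseteq\bigcup_{\beta\in b(\alpha)}\acc(C_\beta)$ for all $\alpha$, and a stationary $A:=\{\alpha\in\Sigma\mid\forall\beta\in b(\alpha)\,[\sup(C_\beta\cap\alpha)=\alpha]\}$; shrinking $A$ to $A\cap\acc^+(\Delta_1)$ we may assume $A\subseteq\Delta_1$. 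A pressing-down/pigeonhole argument (at each $\alpha$ there are $\le\chi'<\theta=\cf(\theta)$ fibers involved, and $\theta<\kappa$) then thins $A$ to a stationary $A'$ on which all block indices that can be attached to accumulation points landing inside the sets $C_\beta$, $\beta\in b(\alpha)$, are bounded by a fixed $i^*<\theta$. Using progressivity of $b$, pick $A''\subseteq A'$ of size $\kappa$ such that $\mathcal A:=\{\,\{\alpha\}\cup b(\alpha)\mid\alpha\in A''\}$ (padded to a common size $<\chi$) is a $<$-increasing family of pairwise disjoint sets, and apply $\U(\kappa,2,\theta,\chi)$ with parameter $i^*$: we obtain $\alpha_0<\alpha_1$ in $A''$ with $c(\gamma,\delta)>i^*$ for all $\gamma$ in the $\alpha_0$-set and $\delta$ in the $\alpha_1$-set. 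But $\alpha_0\in\Delta_1\cap\alpha_1\subseteq\bigcup_{\beta\in b(\alpha_1)}\acc(C_\beta)$, so there is $\beta\in b(\alpha_1)$ with $\alpha_0\in\acc(C_\beta)$, and then thinness together with $\alpha_0\in\Sigma$ gives $c(\alpha_0,\beta)\le i^*$, contradicting $c(\alpha_0,\beta)>i^*$ since $\alpha_0$ is in the $\alpha_0$-set and $\beta\in b(\alpha_1)$ is in the $\alpha_1$-set.

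For Clause~(2), assume $c$ has the covering property, and for each $\alpha<\kappa$ fix a witnessing injection $f_\alpha:\theta\to\acc(\kappa\setminus\alpha)\cap E^\kappa_{\ne\theta}$ and a bound $\eta_\alpha<\alpha$ with $\alpha\setminus\bigcup_{i<\theta}D^c_{\le i}(f_\alpha(i))\subseteq\eta_\alpha$. Along the strictly descending --- hence finite --- chain $\alpha>\eta_\alpha>\eta_{\eta_\alpha}>\cdots$, set $b(\alpha):=\bigcup_k\im(f_{\eta^{(k)}_\alpha})$, a set of size $\le\theta$. Choosing $\Delta\in[\kappa]^\kappa$ appropriately --- thinned so that whenever $\gamma\in\Delta$ is caught by a fiber $D^c_{\le i}(f_{\alpha'}(i))$ encountered along the recursion the thickness clause actually places $\gamma$ inside $C_{f_{\alpha'}(i)}$, which is exactly what the covering property is arranged to guarantee --- one verifies by induction on $\alpha$ that $\Delta\cap\alpha\subseteq\bigcup_{\beta\in b(\alpha)}C_\beta$, so $\chi(\vec C)\le\theta$.

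The main obstacle is the design of the ladders $\langle\eta^\beta_i\mid i<\theta\rangle$: one must at once keep $C_\beta$ closed and cofinal in $\beta$, keep its block decomposition fine enough that every $\Sigma$-accumulation point is an accumulation point of a \emph{single} fiber with a controllable index, and keep the fibers' tails inside $C_\beta$. The genuinely constrained case is $\beta\in\Tr(\Sigma)$, where every club in $\beta$ meets $\Sigma$ so one cannot simply route the ladder through the complement of $\Sigma$; the resolution is to index the skeleton of $C_\beta$ through $\Sigma\cap\beta$ itself and to exploit that, when $\cf(\beta)\le\theta$, a supremum of fewer than $\cf(\theta)$-many colors is again below $\theta$ --- this is what ultimately produces the uniform bound $i^*$ in Clause~(1). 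The remaining bookkeeping (the padding and block-disjointness in Clause~(1), and the choice of $\Delta$ and the finite-recursion estimate in Clause~(2)) is routine once the ladders are in place.
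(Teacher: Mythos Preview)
Your construction has a definitional problem that propagates into both clauses. You ask for an \emph{increasing} ladder $\langle\eta^\beta_i\mid i<\theta\rangle$ that is \emph{cofinal} in $\beta$; this exists only when $\cf(\beta)\le\theta$. For $\beta\in E^\kappa_{>\theta}$ no such ladder is available, and that case cannot be avoided: in your Clause~(1) argument you invoke Lemma~\ref{high_cf_lemma} to pass to $E^\kappa_{\ge\theta}$, so the $\beta$'s appearing in $b(\alpha_1)$ may well have $\cf(\beta)>\theta$. If you instead take a non-cofinal ladder for such $\beta$, then $C_\beta$ contains the entire final segment $[\sup_i\eta^\beta_i,\beta)$, and for $\alpha_1$ lying in that segment there is no block index to bound --- so your pressing-down step ``all block indices \dots bounded by a fixed $i^*$'' fails exactly here. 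The paper sidesteps this by taking $C_\beta=\cl(D^c_{\le i(\beta)}(\beta))$ for a \emph{single} index $i(\beta)<\theta$ whenever some fiber is cofinal (which is automatic for $\cf(\beta)\ne\theta$), and setting $\otp(C_\beta)=\theta$ otherwise; then the role of your ``block index'' is played simply by $i(\beta)$, which is always below $\theta$, and the argument runs by finding $\beta$ with $\otp(C_\beta)>\theta$, hence $\beta\in\Gamma$, hence a color bound.

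Your Clause~(2) has a more serious gap. Thickness gives $D^c_{\le i}(\beta)\setminus\eta^\beta_i\subseteq C_\beta$, so from the covering property you know $\gamma\in D^c_{\le i}(f_{\alpha'}(i))$ for some $i$, but to conclude $\gamma\in C_{f_{\alpha'}(i)}$ you need $\gamma\ge\eta^{f_{\alpha'}(i)}_i$. Since the relevant $i$ is not bounded and $\eta^\beta_i$ can be as large as you like below $\beta$, no uniform thinning of $\Delta$ can arrange this; your finite descending chain through the $\eta_\alpha$'s only handles the exceptional set $\alpha\setminus\bigcup_i D^c_{\le i}(f_\alpha(i))$, not this mismatch. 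The paper resolves this by exploiting the freedom in the choice of $i(\beta)$: after fixing a sparse $A$ on which the $f_\alpha$'s have disjoint images, one takes $i(\beta)\ge j$ whenever $\beta=f_\alpha(j)$ for some $\alpha\in A$, so that $D^c_{\le j}(\beta)\subseteq D^c_{\le i(\beta)}(\beta)\subseteq C_\beta$ outright. Your $\vec C$ is rigid --- the ladders are chosen before looking at the covering data --- and that is precisely what blocks Clause~(2). The fix is to follow the paper and let the single fiber index $i(\beta)$ absorb the covering information; once you do that, Clause~(1) goes through for any such choice.
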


\begin{proof}
  Define $f:\acc(\kappa)\rightarrow\theta+1$ by letting, for all $\beta\in\acc(\kappa)$,
  $$
    f(\beta):=\min\{ i\le \theta\mid \sup(D^c_{\le i}(\beta))=\beta\}.
  $$
  Let $\Gamma:=\{\beta\in\acc(\kappa)\mid f(\beta)<\theta\}$. Note that $\acc(\kappa)\setminus\Gamma\s E^\kappa_{\theta}$.
  At this stage, for each $\beta\in\Gamma$, we fix $i(\beta)\in[f(\beta),\theta)$ arbitrarily.
  Later on, in our handling of Clause~(2), we shall make a more educated choice of $i(\beta)$.
  Now, pick a $C$-sequence $\vec C=\langle C_\beta\mid\beta<\kappa\rangle$ such that, for all
  $\beta\in\Gamma$, $C_\beta=\cl(D^c_{\le i(\beta)}(\beta))$,
  and for each $\beta\in\acc(\kappa)\setminus\Gamma$, $\otp(C_\beta)=\theta$.

  (1) Suppose that $c$ is a $\Sigma$-closed witness to $\U(\kappa,2,\theta,\chi)$.
  Towards a contradiction, suppose that $\chi':=\chi(\vec C)$ is smaller than $\chi$.
  In particular, $\Sigma$ is a stationary subset of $E^\kappa_{>\chi'}$.
  Using Lemma~\ref{lemma53}, fix $\Delta \in [\kappa]^\kappa$ and a progressive $b:\kappa \rightarrow [\kappa]^{\chi'}$
  such that $\acc^+(\Delta)\cap E^\kappa_{>\chi'}\s\Delta$ and
  $\Delta \cap \alpha \s \bigcup_{\beta \in b(\alpha)} C_\beta$ for all
  $\alpha < \kappa$. Note that for every $\alpha<\kappa$,
  $b_\alpha:=\{\alpha\}\cup b(\alpha)$ has cardinality less than $\chi$ and hence less than the regular cardinal $\theta$.
  In particular, we may find an $i<\theta$ for which the following set is stationary:
  $$
    S:=\{\alpha\in\acc^+(\Delta)\cap\Sigma\mid \sup\{i(\beta)\mid \beta\in b_\alpha\cap\Gamma\}=i\}.
  $$
  Pick a sparse enough $A\in[S]^\kappa$ such that, for every $(\alpha,\alpha')\in[A]^2$, we have
  $b_\alpha < \alpha'$. Then $\mathcal A:=\{b_\alpha \mid \alpha\in A\}$
  consists of $\kappa$-many pairwise disjoint sets.

  As, by the implication $(1)\implies(3)$ of Fact~\ref{pumpclosed},
  $c$ witnesses $\U(\kappa,\kappa,\theta,\chi)$,
  we can find $B\in[A]^\kappa$ such that
  $\min(c[b_\alpha\times b_{\alpha'}])>i$ for every $(\alpha,\alpha')\in[B]^2$.

  Let $\alpha'\in B$ be such that $\otp(B\cap\alpha') = \theta^2$.
  Since
  \[
    B\s A\s S\s \acc^+(\Delta)\cap\Sigma\s \acc^+(\Delta) \cap E^\kappa_{>\chi'}\s\Delta,
  \]
  we have $B\cap\alpha'\s\bigcup_{\beta\in b_{\alpha'}}C_\beta$.
  As $|b_{\alpha'}|<\theta = \cf(\theta)$ and $\otp(B\cap\alpha') = \theta^2$,
  we may find some $\beta\in b_{\alpha'}$ such that $\otp(B\cap\alpha'\cap C_\beta) = \theta^2$.
  In particular, $\otp(C_\beta)>\theta$,
  so $\beta\in\Gamma$ and hence $i(\beta)\le i$. Pick $\alpha\in B\cap\alpha'\cap C_\beta$.
  Then $\alpha\in \Sigma\cap C_\beta\s D^c_{\le i}(\beta)$,
  so $c(\alpha,\beta)\le i$, contradicting the fact that $(\alpha,\alpha')\in[A]^2$
  and $(\alpha,\beta)\in b_\alpha\times b_{\alpha'}$.

  (2) Suppose that $c$ has the covering property, as witnessed by a sequence of functions $\langle f_\alpha\mid\alpha<\kappa\rangle$.
  Fix $\epsilon<\kappa$ and a stationary $S\s\kappa$ such that, for all $\alpha\in S$,
  $\sup(\alpha\setminus\bigcup_{i<\theta}D^{c}_{\le i}(f_\alpha(i)))=\epsilon$.
  As $\min(\im(f_\alpha))>\alpha$ for all $\alpha<\kappa$, let us pick a sparse enough $A\in[S]^\kappa$ such that,
  for all $(\alpha,\alpha')\in[A]^2$, we have $\im(f_\alpha)\cap\im(f_{\alpha'})=\emptyset$.
  In particular, for every $\beta<\kappa$, there exists at most a single
  pair $(\alpha,j)\in A\times\theta$ satisfying $f_\alpha(j)=\beta$.
  Now, let us revisit our definition of $i(\beta)$ from the beginning of our proof, requiring,
  for all $\beta\in\Gamma$, not only that $i(\beta)\ge f(\beta)$, but also that if $\alpha \in A$ and
  $f_\alpha(j)=\beta$, then $i(\beta)\ge j$.

  Let $\Delta:=\kappa\setminus(\epsilon+1)$, and define $b:\kappa\rightarrow[\Gamma]^\theta$
  by letting $b(\alpha):=\im(f_{\min(A\setminus\alpha)})$.
  The function $b$ is well-defined, since, for all $\alpha<\kappa$, $\im(f_\alpha)\s \acc(\kappa)\setminus E^\kappa_{\theta}\s \Gamma$.
  We claim that $\Delta$ and $b$ witness that $\chi(\vec C)\le\theta$.
  To see this, let $\alpha$ be arbitrary, and write $\alpha':=\min(A\setminus\alpha)$.
  By the choice of $A$ and $\epsilon$, we have
  \[
    \Delta\cap\alpha\s\Delta\cap\alpha'\s \bigcup_{j<\theta}D^{c}_{\le j}(f_{\alpha'}(j)))\s
    \bigcup_{\beta\in b(\alpha)} D^c_{\le i(\beta)}(\beta)\s\bigcup_{\beta\in b(\alpha)}C_\beta,
  \]
  as sought.
\end{proof}

\begin{cor}\label{lemma33}
  Suppose that $\omega\le\chi\le \theta = \cf(\theta) <\kappa$ and there exists a
  $\Sigma$-closed witness to $\U(\kappa,2,\theta,\chi)$ for some stationary
  $\Sigma\s E^\kappa_{\ge\chi}$. Then $\chi(\kappa)\ge\chi$.\qed
\end{cor}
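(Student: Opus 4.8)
The plan is to read this off directly from Lemma~\ref{lemma511}. Fix a stationary $\Sigma\s E^\kappa_{\ge\chi}$ and a $\Sigma$-closed witness $c:[\kappa]^2\rightarrow\theta$ to $\U(\kappa,2,\theta,\chi)$, as granted by the hypothesis. Since $\omega\le\chi\le\theta=\cf(\theta)<\kappa$, the hypotheses of Lemma~\ref{lemma511} are exactly met, so that lemma furnishes a corresponding $C$-sequence $\vec C$ over $\kappa$; and since the antecedent of Clause~(1) of the lemma is precisely that $c$ be a $\Sigma$-closed witness to $\U(\kappa,2,\theta,\chi)$, we obtain $\chi(\vec C)\ge\chi$.

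It then remains only to pass from $\chi(\vec C)$ to the global characteristic $\chi(\kappa)$. Since $\chi(\vec C)\ge\chi\ge\omega$, by definition $\chi(\vec C)\in\spec(\kappa)$, so $\spec(\kappa)\neq\emptyset$; recalling that (by Lemma~\ref{lemma53}(2)) $\spec(\kappa)=\emptyset$ if and only if $\chi(\kappa)\in\{0,1\}$, we conclude $\chi(\kappa)>1$. Now invoke Theorem~\ref{realized}, which gives $\chi(\kappa)=\max(\spec(\kappa))$, whence $\chi(\kappa)\ge\chi(\vec C)\ge\chi$, as desired.

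There is essentially no obstacle here beyond correctly invoking Lemma~\ref{lemma511}; the only point worth a sanity check is that the weakly compact case (where $\chi(\kappa)=0$) is automatically excluded, which is exactly what the computation $\spec(\kappa)\neq\emptyset\Rightarrow\chi(\kappa)>1$ records. One could also bypass Theorem~\ref{realized} altogether: the $C$-sequence $\vec C$ produced above is nontrivial in the sense of Theorem~\ref{todorcevic_thm} (as $\chi(\vec C)>1$), so $\kappa$ is not weakly compact and $\chi(\kappa)$ is computed by the second clause of Definition~\ref{c_seq_num_def}; applying that clause to $\vec C$ itself yields $\Delta\in[\kappa]^\kappa$ and $b:\kappa\rightarrow[\kappa]^{\chi(\kappa)}$ with $\Delta\cap\alpha\s\bigcup_{\beta\in b(\alpha)}C_\beta$ for all $\alpha<\kappa$, i.e.\ $\chi(\vec C)\le\chi(\kappa)$, and again $\chi\le\chi(\vec C)\le\chi(\kappa)$.
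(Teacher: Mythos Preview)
Your proof is correct and follows the paper's intended route: the corollary is stated with a bare \qed, meaning it is taken as immediate from Lemma~\ref{lemma511}(1). Your only addition is the explicit verification that $\chi(\vec C)\ge\chi$ forces $\chi(\kappa)\ge\chi$; the paper treats this as obvious, and indeed your second argument (bypassing Theorem~\ref{realized}) is the cleaner way to say it---once some $C$-sequence has $\chi(\vec C)>1$, $\kappa$ is not weakly compact, and the second clause of Definition~\ref{c_seq_num_def} gives $\chi(\kappa)\ge\chi(\vec C)$ directly.
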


\begin{remark}
   The hypothesis ``$\chi\le\cf(\theta)$'' in the preceding corollary cannot be waived.
   Indeed, by a result from Part~III of this series, $\square^{\ind}(\kappa,\theta)$ implies the existence of a closed witness to $\U(\kappa,\kappa,\theta,\sup(\reg(\kappa))$,
   but in the model of Theorem \ref{thm51}, we have $\chi(\kappa)<\sup(\reg(\kappa))$.
\end{remark}

\subsection{From colorings to closed colorings}

Fact~\ref{increasechi} shows that, at the level of successor cardinals,
the existence of a tail-closed witness to $\U(\ldots)$ entails the existence of fully closed witnesses.
The next lemma provides certain circumstances in which we can improve certain somewhere-closed
witnesses to $\U(\ldots)$ to fully closed witnesses, while also gaining the covering property.

\begin{lemma}\label{lemma310}
  For every coloring $c:[\kappa]^2\rightarrow\theta$,   there exists a corresponding
  coloring $d:[\kappa]^2\rightarrow\theta$ satisfying the following conditions.
  \begin{enumerate}
    \item $d$ is closed;
    \item If $\kappa\ge\aleph_2$, then $d$ has the covering property;
    \item For every infinite cardinal $\chi\le\chi(\kappa)$,
      if $c$ is a $\Sigma$-closed witness to $\U(\kappa,2,\theta,\chi)$
      for some stationary $\Sigma\s E^\kappa_{\ge\chi}$,
      then $d$ witnesses $\U(\kappa,\kappa,\theta,\chi)$.
  \end{enumerate}
\end{lemma}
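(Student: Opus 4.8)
The plan is to use the method of walks on ordinals, closely following the proof of Lemma~\ref{cnontrivial}. Fix a $C$-sequence $\vec C=\langle C_\beta\mid\beta<\kappa\rangle$ over $\kappa$, chosen with some care so that (for the sake of Clause~(2)) the ``$\theta$-shaped'' fibres are concentrated on $E^\kappa_\theta$ and there remain cofinally many usable slots of cofinality $\ne\theta$ above every $\alpha<\kappa$; if the argument for Clause~(3) demands it, one may instead take $\vec C$ adapted to the level sets $D^c_{\le i}(\beta)$ in the style of Lemma~\ref{lemma511}, so that trace points automatically fall inside suitable closures of those level sets, but the overall skeleton is unchanged. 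Let $\rho_2,\tr,\lambda_2$ be the functions derived from $\vec C$; recall from Fact~\ref{rho2_closed} the coherence of traces: whenever $\lambda_2(\gamma,\beta)<\alpha<\gamma<\beta<\kappa$ we have $\tr(\gamma,\beta)\sq\tr(\alpha,\beta)$, so in particular $\im(\tr(\gamma,\beta))\s\im(\tr(\alpha,\beta))$. Define
\[
  d(\alpha,\beta):=\max\bigl(\{0\}\cup\{c(\xi,\eta)\mid \{\xi,\eta\}\in[\im(\tr(\alpha,\beta))]^2\}\bigr),
\]
so that $d(\alpha,\beta)$ depends only on the finite set $\im(\tr(\alpha,\beta))$ and not on $\alpha$ itself.

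Step~1 (Clause~(1)) is then immediate and uses nothing about $c$: if $\gamma\in\acc^+(D^d_{\le i}(\beta))$ then $\gamma$ is a limit ordinal, and picking $\alpha\in D^d_{\le i}(\beta)$ with $\lambda_2(\gamma,\beta)<\alpha<\gamma$ (possible as $\lambda_2(\gamma,\beta)<\gamma$), coherence gives $\im(\tr(\gamma,\beta))\s\im(\tr(\alpha,\beta))$ and hence $d(\gamma,\beta)\le d(\alpha,\beta)\le i$. Thus $\acc^+(D^d_{\le i}(\beta))\s D^d_{\le i}(\beta)$ for all $\beta<\kappa$ and all $i\le\theta$, i.e.\ $d$ is closed. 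Step~2 (Clause~(2)): for each $\alpha<\kappa$ one selects, using the fixed $\vec C$, $\theta$-many distinct ordinals $\beta>\alpha$ of cofinality $\ne\theta$ whose fibres descend through $\alpha$ in a controlled way, arranging that for a single threshold $\epsilon_\alpha<\alpha$ the set $D^d_{\le i}(f_\alpha(i))$ captures an increasing-in-$i$ portion of $(\epsilon_\alpha,\alpha)$ whose union over $i<\theta$ exhausts $(\epsilon_\alpha,\alpha)$. The only genuine work is the bookkeeping making $f_\alpha$ injective with range inside $\acc(\kappa\setminus\alpha)\cap E^\kappa_{\ne\theta}$, which is where the hypothesis $\kappa\ge\aleph_2$ (room) and the care in the choice of $\vec C$ are spent.

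Step~3 (Clause~(3)) is the crux. Suppose $c$ is a $\Sigma$-closed witness to $\U(\kappa,2,\theta,\chi)$ with $\Sigma\s E^\kappa_{\ge\chi}$ and $\omega\le\chi\le\chi(\kappa)$ (the case $\chi=\kappa$ being trivial via $d(\alpha,\beta):=\min\{\alpha,\beta\}$). By Fact~\ref{pumpclosed}, $c$ itself witnesses $\U(\kappa,\kappa,\theta,\chi)$ and satisfies Clause~(2) of that fact. I would verify Clause~(2) of Fact~\ref{pumpclosed} for $d$ and then invoke the implication $(2)\Rightarrow(3)$: given a family $\mathcal A\s[\kappa]^{<\chi}$ of $\kappa$-many pairwise disjoint sets, a club $D$, and a color $i<\theta$, pass (as in the proof of Lemma~\ref{cnontrivial}) to a stationary $\Gamma\s D\cap E^\kappa_{\ge\chi}$ and a selection $\langle a_\gamma\mid\gamma\in\Gamma\rangle$ with $\gamma<a_\gamma$, $\gamma\notin\bigcup_{\beta\in a_\gamma}C_\beta$, and constant ``drop''; then for $\gamma<\gamma'$ in $\Gamma$, $\alpha\in a_\gamma$, $\beta\in a_{\gamma'}$ the trace $\tr(\alpha,\beta)$ factors as $\langle\beta\rangle\conc\tr(\alpha,\delta)$ with $\delta:=\min(C_\beta\setminus\gamma')\in\im(\tr(\gamma',\beta))$, and applying the $\U(\kappa,\kappa,\theta,\chi)$-property of $c$ to the derived family $\{\bigcup_{\beta\in a_\gamma}\im(\tr(\gamma,\beta))\mid\gamma\in\Gamma\}$, iterated if necessary, plants a pair $\xi<\eta$ of trace points on $\tr(\alpha,\beta)$ with $c(\xi,\eta)>i$, so $d(\alpha,\beta)>i$. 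Since this is only needed for $\alpha$ in the tail $(\sup_{\beta\in a}\lambda_2(\gamma,\beta),\gamma)$ of one chosen $\gamma$ — a genuine tail because $\cf(\gamma)\ge\chi>|a|$ — coherence of traces completes the verification.

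The main obstacle is precisely Step~3: turning ``$c$ is large between members of suitable families'' into ``$c$ is large between two ordinals lying on a single walk''. Because a trace point is a very special ordinal ($\min(C_\delta\setminus\alpha)$ for $\delta$ higher on the walk), one must either run the Lemma~\ref{cnontrivial}-style factorisation carefully enough to force the high-$c$ pair onto the trace, or tune $\vec C$ to cohere with the level sets of $c$; and the latter option must be reconciled with the \emph{unconditional} closedness demanded by Clause~(1), which forbids any dependence of $d$ on $c$ at the moving endpoint. Clause~(2) is comparatively routine but still requires the $\kappa\ge\aleph_2$ bookkeeping.
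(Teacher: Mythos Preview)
Your architecture is right—walk along a carefully chosen $C$-sequence and define $d$ by aggregating values of $c$ along the trace—but your specific definition of $d$ creates a real obstacle for Clause~(2), and your sketch of Clause~(3) skips the mechanism that actually makes it work.

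The paper's $d$ is not the maximum of $c$ over \emph{all} pairs from $\im(\tr(\alpha,\beta))$; it is
\[
  d(\alpha,\beta):=\max\bigl\{c\bigl(\tr(\alpha,\beta)(n),\tr(\alpha,\beta)(1)\bigr)\bigm| 1<n<\rho_2(\alpha,\beta)\bigr\},
\]
so the top point $\beta=\tr(\alpha,\beta)(0)$ never enters. This omission is exactly what drives Clause~(2): the paper first engineers (using Theorem~\ref{realized} and a partition of $E^\kappa_{\ne\theta}$) a $C$-sequence $\vec C$ with $\chi(\vec C)=\chi(\kappa)$ in which every $\alpha'\in E^\kappa_\theta$ has stationarily many $\beta\in E^\kappa_{\ne\theta}$ with $C_\beta\cap\alpha'=C_{\alpha'}$. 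For such $\beta$ and any $\eta<\alpha'$, the walks $\tr(\eta,\beta)$ and $\tr(\eta,\alpha')$ agree from step~$1$ on; since $d$ ignores step~$0$, one gets $d(\eta,\beta)=d(\eta,\alpha')$ outright, and the covering injection $f_\alpha$ is immediate. Your $d$ includes the pairs $c(\xi,\beta)$, which bear no relation to $c(\xi,\alpha')$, so this identity fails and your Clause~(2) paragraph offers no replacement. (Clause~(1) is fine for your $d$, and since your $d$ dominates the paper's pointwise, Clause~(3) would in fact follow from the paper's argument—but not from yours.)

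For Clause~(3), the paper does not merely apply $\U$ for $c$ to the family of trace-images. It first passes to the auxiliary family $\mathcal B=\{\{\iota\}\cup\{\min(C_\beta\setminus\iota):\beta\in a_\iota\}:\iota\in H\}$, whose members are precisely the candidates for $\tr(\cdot,\beta)(1)$, and then uses the $\Sigma$-\emph{closedness} of $c$ (not just $\U$) to produce, for stationarily many $\gamma\in\Sigma$, a $b_\gamma\in\mathcal B$ and an $\epsilon_\gamma<\gamma$ with $c(\eta,\beta')>i$ for every $\eta\in(\epsilon_\gamma,\gamma]$ and every $\beta'\in b_\gamma$. This interval control is what forces the dichotomy of Fact~\ref{lambda2} to yield $d(\tau,\beta)>i$ in both branches (either $\gamma$ itself appears on the trace, or the next trace point lands in $(\epsilon_\gamma,\gamma)$); a final indirect argument against $\chi(\vec C)=\chi(\kappa)$ finishes. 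Your ``apply $\U$ to the derived family, iterated if necessary'' does not supply this interval, and without it there is no reason the high-$c$ pair you find lies on a single walk $\tr(\alpha,\beta)$ for $\alpha\in a_{\gamma'}$ and $\beta\in a_\gamma$.
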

\begin{proof}
  To avoid trivialities, suppose that $\chi(\kappa)$ is an infinite cardinal.
  By Fact~\ref{l23}(2), we may also assume that $\kappa\ge\aleph_2$.
  As the function $d:[\kappa]^2\rightarrow\kappa$ defined by letting $d(\alpha,\beta):=\min\{\alpha,\beta\}$
  is a closed witness to $\U(\kappa,\kappa,\kappa,\kappa)$ that has the covering property,
  we may also assume that $\theta\in\reg(\kappa)$.

  \begin{claim}\label{realized2}
    There exists a $C$-sequence $\vec C=\langle C_\beta \mid \beta < \kappa \rangle$ such that $\chi(\vec C)=\chi(\kappa)$,
    and, for all $\alpha\in E^\kappa_\theta$, there are stationarily many $\beta\in E^\kappa_{\neq\theta}$ such that $C_\alpha=C_\beta\cap\alpha$.
  \end{claim}

  \begin{cproof}
    Let $\langle S_\alpha\mid\alpha<\kappa\rangle$ be a partition of $E^\kappa_{\neq\theta}$ into stationary sets.
    For every $\beta\in E^\kappa_{\neq\theta}$, let $\alpha(\beta)$ denote the unique ordinal $\alpha<\kappa$ such that $\beta\in S_{\alpha}$.
    Using Theorem~\ref{realized}, fix a $C$-sequence $\vec C=\langle C_\beta \mid \beta < \kappa \rangle$ such that $\chi(\vec C)=\chi(\kappa)$.
    Now, define a $C$-sequence $\vec C^\bullet=\langle C^\bullet_\beta\mid\beta<\kappa\rangle$ by setting, for all $\beta<\kappa$,
    \[
      C^\bullet_\beta:=\begin{cases}
      C_{\alpha(\beta)}\cup\{\alpha(\beta)\}\cup(C_\beta\setminus\alpha(\beta)) &\text{if }\beta\in\acc(\kappa)\cap E^\kappa_{\neq\theta}\text{ and }\alpha(\beta)\in E^\beta_\theta;\\
      C_\beta &\text{otherwise}.
      \end{cases}
    \]
    Clearly, for every $\alpha\in E^\kappa_\theta$, for a tail of $\beta\in S_\alpha$, we have $\alpha(\beta)=\alpha<\beta$,
    so $C^\bullet_\beta\cap\alpha=(C_{\alpha}\cup\{\alpha\}\cup(C_\beta\setminus\alpha))\cap\alpha=C_\alpha=C^\bullet_\alpha$.
    In addition, every club in $\vec C^\bullet$ is covered by at most three clubs from $\vec C$,
    so $\chi(\vec C^\bullet)=\chi(\vec C)=\chi(\kappa)$.
  \end{cproof}
  Let $\vec C$ be given by the claim.
  Walk along $\vec C$ and derive $d:[\kappa]^2\rightarrow\theta$ by setting,
  for all $\alpha < \beta < \kappa$,
  \[
    d(\alpha,\beta):=\begin{cases}\max\{ c(\tr(\alpha,\beta)(n),\tr(\alpha,\beta)(1))\mid 1<n<\rho_2(\alpha,\beta)\} &\text{if }\rho_2(\alpha,\beta)>2;\\
    0 &\text{otherwise}.\end{cases}
  \]

  \begin{claim}
    $d$ is closed.
  \end{claim}
  \begin{cproof}
    Suppose that $\beta<\kappa$, $i < \theta$, and $A\s D^d_{\leq i}(\beta)$,
    with $\gamma := \sup(A)$ a limit ordinal less than $\beta$. To see that $\gamma \in D^d_{\leq i}(\beta)$,
    fix $\alpha\in A$ above $\lambda_2(\gamma,\beta)$. By Fact~\ref{lambda2},
    $\tr(\gamma,\beta)\sq \tr(\alpha,\beta)$, and hence, by definition of $d$,
    we have $d(\gamma,\beta)\le d(\alpha,\beta)\le i$.
  \end{cproof}

  \begin{claim}
    $d$ has the covering property.
  \end{claim}
  \begin{cproof}
    Let $\alpha<\kappa$ be arbitrary, and put $\alpha':=\min(E^\kappa_\theta\setminus\alpha)$.
    By the choice of $\vec C$, we can fix an injection $f_\alpha:\theta\rightarrow\acc(\kappa\setminus\alpha)\cap E^\kappa_{\neq\theta}$
    with the property that for all $\beta\in\im(f_\alpha)$, we have $C_\beta\cap\alpha'=C_{\alpha'}$.
    To see that $\alpha=\bigcup_{i<\theta}D^{d}_{\le i}(f_\alpha(i))$, let $\eta<\alpha$ be arbitrary
    and set $i:=d(\eta,\alpha')$ and $\beta:=f_\alpha(i)$.
    As $\eta<\alpha'$ and $C_\beta\cap\alpha'=C_{\alpha'}$, we have $\Tr(\eta,\beta)(n)=\Tr(\eta,\alpha')(n)$ for all $n\ge1$,
    so $d(\eta, \beta) = d(\eta, \alpha')$ and hence $\eta\in D^{d}_{\le i}(f_\alpha(i))$.
  \end{cproof}

  Suppose now that  $\chi \le \chi(\kappa)$ is an infinite cardinal, that
  $\Sigma\s E^\kappa_{\ge\chi}$ is stationary (in particular, $\chi<\kappa$),
  and that $c$ is a $\Sigma$-closed witness to $\U(\kappa,2,\theta,\chi)$.
  By the implication $(1)\implies(3)$ of Fact~\ref{pumpclosed}, to show that $d$ witnesses $\U(\kappa,\kappa,\theta,\chi)$,
  it suffices to verify that $d$ witnesses $\U(\kappa,2,\theta,\chi)$.
  To this end,  suppose that $i<\theta$ is a prescribed color, $\chi' < \chi$,
  and $\mathcal A\s\mathcal [\kappa]^{\chi'}$ is a family of $\kappa$-many pairwise disjoint sets.

  By Claim~\ref{claim5172}, we may fix a sequence $\langle a_\iota\mid \iota\in H\rangle$
  such that $H$ is a stationary subset of $E^\kappa_{\ge\chi}$ and, for all $\iota\in H$, we have
  $a_\iota\in\mathcal A$, $a_\iota>\iota$ and $\iota\notin\bigcup_{\beta\in a_\iota}C_\beta$.
  Let $\mathcal B:=\{ \{\iota\}\cup\{\min(C_\beta\setminus\iota)\mid \beta\in a_\iota\} \mid \iota\in H\}$.
  By shrinking $H$, we may assume that
  \begin{itemize}
  \item $\mathcal B$ consists of pairwise disjoint sets, and
  \item there exists $\varepsilon<\kappa$ such that, for all $\iota\in H$,
  $\sup\{\sup(C_\beta\cap\iota)\mid \beta\in a_\iota\}=\varepsilon$.
  \end{itemize}

  \begin{claim} There is a stationary set $\Gamma\s\Sigma$ such that for all $\gamma\in\Gamma$,
  there are $b_\gamma\in\mathcal B$ and $\epsilon_\gamma < \kappa$ with $b_\gamma >\gamma>\epsilon_\gamma$
  such that for all $\eta \in (\epsilon_\gamma, \gamma]$ and all $\beta'\in b_\gamma$,
  we have $c(\eta,\beta')>i$.
  \end{claim}
  \begin{cproof}
  Let $D$ be an arbitrary club in $\kappa$.
  Find $\mathcal X\s\mathcal P(\kappa)$ consisting of $\kappa$-many pairwise disjoint sets
  such that every $x\in\mathcal X$ is of the form $\{\gamma\} \cup b$ for some
  $\gamma \in \Sigma \cap D$ and $b \in \mathcal B$. As $c$ witnesses $\U(\kappa,2,\theta,\chi)$,
  we may pick $(x,y)\in[\mathcal X]^2$ such that $\min(c[x\times y])>i$.
  Fix $\gamma\in(\Sigma\cap D)\cap x$ and $b\in\mathcal B\cap\mathcal P(y)$.
  Clearly, $\gamma < b$ and $|b|<\chi\le\cf(\gamma)$.

  Now, let $\beta'\in b$ be arbitrary.
  Since $(\gamma,\beta')\in x\times y$, we have $c(\gamma,\beta')>i$.
  Then, as $\gamma\in\Sigma$, there must exist $\epsilon(\gamma,\beta')<\gamma$ such that,
  for all $\eta\in(\epsilon(\gamma,\beta'),\gamma)$, $c(\eta,\beta')>i$.
  Since $\cf(\gamma) \geq \chi > |b|$, we know that $\epsilon := \sup\{\epsilon(\gamma, \beta') \mid \beta' \in b\}$ is less than $\gamma$.
  So $\gamma$ is in $\Sigma\cap D$, and $b_\gamma:=b$ and $\epsilon_\gamma:=\epsilon$ are as sought.
  \end{cproof}

  Fix $\langle (b_\gamma,\epsilon_\gamma)\mid \gamma\in\Gamma\rangle$  as in the preceding claim.
  For all $\gamma\in\Gamma$, set $a^\gamma:=a_{\min(b_\gamma)}$.
  Define $f, g:\Gamma\rightarrow\kappa$ by setting, for all $\gamma < \kappa$,
  \begin{itemize}
    \item $f(\gamma)=\sup\{\epsilon_\gamma, ~ \lambda_2(\gamma,\beta)\mid \beta\in a^\gamma\}$;
    \item $g(\gamma):=\sup(a^\gamma)$.
  \end{itemize}
  Pick $\epsilon<\gamma$ for which
  $$
    S:=\{ \gamma\in\Gamma\mid f(\gamma)=\epsilon\ \&\ g[\gamma]\s\gamma\}
  $$
  is stationary, and consider the stationary set $\Delta:=\acc^+(S\setminus\varepsilon)\cap E^\kappa_{\ge\chi}$.

  \begin{claim}
    There exist $\gamma\in S$ and $\tau\in\Delta\cap\gamma$ such that
    $\min(d[\{\tau\}\times a^\gamma])> i$.
  \end{claim}

  \begin{cproof}
    Suppose not, and
    define $b:\kappa\rightarrow[\kappa]^{\le\chi'}$ as follows. For each $\alpha<\kappa$,
    set $\gamma_\alpha:=\min(S\setminus\alpha)$, and then let 
    $b(\alpha):=\{\min(\im(\tr(\gamma_\alpha,\beta)))\mid \beta\in a^{\gamma_\alpha}\}$.
    We claim that $\Delta\cap\alpha\s\bigcup_{\delta\in b(\alpha)}C_\delta$ for all $\alpha<\kappa$,
    contradicting the fact that $\chi'<\chi(\vec C)$.

    Let  $\alpha<\kappa$ and $\tau\in\Delta\cap\alpha$ be arbitrary.
    We shall find $\delta\in b(\alpha)$ with $\tau\in C_\delta$.
    For notational simplicity, set $\gamma:=\gamma_\alpha$ and $\iota:=\min(b_\gamma)$.
    Note that $$b_\gamma=\{\iota\}\cup\{\min(C_\beta\setminus\iota)\mid \beta\in a_\iota\}.$$

    As $\gamma\in S$ and $\tau\in\Delta\cap\gamma$, our indirect hypothesis
    provides us with some  $\beta\in a^\gamma$ such that $d(\tau,\beta)\le i$.
    Let $\beta':=\min(C_\beta\setminus\iota)$. Clearly, $\beta'\in b_\gamma$.
    Also, as $$\sup(C_\beta\cap\iota)\le\varepsilon<\min(\Delta)\le\tau<\gamma<\iota<\beta'<\beta,$$
    we have $\tr(\tau,\beta)(1)=\min(C_\beta\setminus\tau)=\beta'=\min(C_\beta\setminus\gamma)=\tr(\gamma,\beta)(1)$.
    In particular, $n:=\rho_2(\gamma,\beta)$ is greater than $1$.
    Set $\delta:=\tr(\gamma,\beta)(n-1)$, so that $\delta\in b(\alpha)$.
    We have
    \[
      \lambda_2(\gamma,\beta)\le\epsilon<\min(S)<\min(\Delta)\le\tau<\gamma<\beta,
    \]
    so, by Fact~\ref{lambda2}, $\tr(\gamma,\beta)\sq \tr(\tau,\beta)$ and
    $\gamma\in \im(\tr(\tau,\beta)) \cup \acc(C_\delta)$.

    $\br$ If $\gamma\in\im(\tr(\tau,\beta))$, then $\gamma=\tr(\tau,\beta)(n)$
    and $1<n<\rho_2(\tau,\beta)$,
    meaning that  $d(\tau,\beta)\ge c(\gamma,\beta')>i$,    which is not the case.

    $\br$ If $\gamma\in\acc(C_\delta)$, then we claim that $\tau\in C_\delta$, as desired.
    Indeed, otherwise, for $\eta:=\tr(\tau,\beta)(n)$, we would have $\epsilon_\gamma<\tau<\eta<\gamma$,
    so $1<n<\rho_2(\tau,\beta)$ and $d(\tau,\beta)\ge c(\eta,\beta')>i$, which is not the case.
  \end{cproof}

  Let $\gamma\in S$ and $\tau\in\Delta\cap\gamma$ be given by the preceding claim.
  Since $d$ is closed, for each $\beta\in a^\gamma$, there exists
  $\varepsilon(\beta)<\tau$ such that, for all $\alpha\in(\varepsilon(\beta),\tau)$, $d(\alpha,\beta)>i$.
  Since $\cf(\tau)\ge\chi>|a^\gamma|$, we know that $\sup_{\beta\in a^\gamma}\varepsilon(\beta)<\tau$.
  As $\tau\in\Delta\s\acc^+(S)$, we may then find some $\gamma'\in S$ with
  $\sup_{\beta\in a^\gamma}\varepsilon(\beta)<\gamma'<\tau$. As $\tau\in\acc^+(S)$
  and every element of $S$ is a closure point of $g$, we infer that
  $g[\tau]\s\tau$, and hence $a^{\gamma'}\s(\gamma',\tau)$. It follows that, for all
  $\alpha\in a^{\gamma'}$ and $\beta\in a^\gamma$, we have 
  $\varepsilon(\beta)< \gamma'<\alpha<\tau<\beta$, so $d(\alpha,\beta)>i$.
\end{proof}

The next corollary shows that, contrary to initial appearances, there is
indeed some monotonicity in the third parameter of $\U(\ldots)$. Namely,
If $\U(\kappa,\kappa,\theta,\omega)$ holds, then so does $\U(\kappa,\kappa,\omega,\omega)$.

\begin{cor}
  The following are equivalent:
  \begin{enumerate}
    \item $\chi(\kappa)\ge\omega$;
    \item There is a closed witness to $\U(\kappa,\kappa,\omega,\omega)$;
    \item There is a somewhere-closed witness to $\U(\kappa,2,\theta,\omega)$ for some infinite $\theta<\kappa$.
  \end{enumerate}
\end{cor}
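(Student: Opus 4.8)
The plan is to prove the cycle $(1)\Rightarrow(2)\Rightarrow(3)\Rightarrow(1)$, with the first two implications being soft monotonicity arguments and all the work concentrated in the last. For $(1)\Rightarrow(2)$: assuming $\chi(\kappa)\ge\omega$, Lemma~\ref{cnontrivial} produces a closed coloring witnessing $\U(\kappa,\kappa,\omega,\chi(\kappa))$, and since $\omega\le\chi(\kappa)$ the same coloring witnesses $\U(\kappa,\kappa,\omega,\omega)$ --- lowering the fourth parameter only weakens the requirement, as it then quantifies over fewer cardinals $\chi'$. For $(2)\Rightarrow(3)$: a closed witness to $\U(\kappa,\kappa,\omega,\omega)$ is in particular a $\kappa$-closed (hence somewhere-closed, $\kappa$ being stationary in itself) witness to $\U(\kappa,2,\omega,\omega)$, since passing from sought subfamilies of size $\kappa$ to size $2$ again only weakens the requirement; and $\omega<\kappa$, so $\theta:=\omega$ is an admissible choice.

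For $(3)\Rightarrow(1)$, fix an infinite cardinal $\theta<\kappa$, a stationary $\Sigma\s\kappa$, and a $\Sigma$-closed coloring $c:[\kappa]^2\to\theta$ witnessing $\U(\kappa,2,\theta,\omega)$; since $\Sigma$-closedness passes to stationary subsets of $\Sigma$, I may assume $\Sigma\s E^\kappa_{\ge\omega}$. If $\theta$ is regular, then Corollary~\ref{lemma33} (applied with $\chi:=\omega$ and this $\Sigma$) immediately delivers $\chi(\kappa)\ge\omega$, so the remaining task is to reduce the case of singular $\theta$ to the regular case by collapsing the colour space $\theta$ onto $\cf(\theta)$. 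To this end I would fix an increasing sequence $\langle\theta_j\mid j<\cf(\theta)\rangle$ of regular cardinals cofinal in $\theta$, put $\phi(\xi):=\min\{j<\cf(\theta)\mid \xi<\theta_j\}$, and set $c':=\phi\circ c:[\kappa]^2\to\cf(\theta)$. That $c'$ witnesses $\U(\kappa,2,\cf(\theta),\omega)$ is immediate: given a finite pairwise disjoint family $\mathcal A$ and a colour $k<\cf(\theta)$, apply the witnessing property of $c$ with the colour $\theta_k<\theta$ to get $(a,b)\in[\mathcal A]^2$ with $\min(c[a\times b])>\theta_k$, whence $\min(c'[a\times b])>k$.

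The real point is that $c'$ must remain somewhere-closed, which is not automatic. The identity that matters is $D^{c'}_{\le j}(\beta)=\bigcup_{i<\theta_j}D^c_{\le i}(\beta)$: an increasing union, of length $\theta_j$, of sets inheriting the closure condition $\acc^+(D^c_{\le i}(\beta))\cap\Sigma\s D^c_{\le i}(\beta)$ from $c$. Given $\gamma\in\Sigma$ with $\gamma\in\acc^+(D^{c'}_{\le j}(\beta))$, one of the terms of this union is already cofinal in $\gamma$ provided $\theta_j<\cf(\gamma)$, while if $\cf(\gamma)<\theta_j$ the colours $i$ witnessing cofinal accumulation below $\gamma$ stay bounded strictly below the regular cardinal $\theta_j$; in either situation, $\Sigma$-closedness of $c$ forces $\gamma\in D^{c'}_{\le j}(\beta)$. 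Accordingly, I would split on whether $\Sigma\cap E^\kappa_{\ge\theta}$ is stationary --- then replace $\Sigma$ by it, so that every $\gamma\in\Sigma$ has $\cf(\gamma)\ge\theta>\theta_j$ --- or $\Sigma\cap E^\kappa_{<\theta}$ is stationary, in which case, as $|\reg(\theta)|<\kappa$ and $\kappa$ is regular, a pigeonhole isolates a single regular $\rho^*<\theta$ with $\Sigma\cap E^\kappa_{\rho^*}$ stationary, and I would additionally arrange $\theta_0>\rho^*$ so that every $\gamma$ in the restricted $\Sigma$ has $\cf(\gamma)=\rho^*<\theta_j$ for all $j$. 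Either way $c'$ is a somewhere-closed witness to $\U(\kappa,2,\cf(\theta),\omega)$ with $\cf(\theta)$ regular, and Corollary~\ref{lemma33} finishes the proof. The main obstacle is exactly this last step: collapsing $\theta$ to $\cf(\theta)$ preserves the $\U$-witnessing property for free but jeopardizes the closedness property, and rescuing closedness forces one first to thin $\Sigma$ to a stationary set on which all cofinalities sit uniformly on one side of the tail-sequence $\langle\theta_j\rangle$.
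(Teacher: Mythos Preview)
Your proof is correct and follows the same three-step cycle as the paper: Lemma~\ref{cnontrivial} for $(1)\Rightarrow(2)$, trivial monotonicity for $(2)\Rightarrow(3)$, and Corollary~\ref{lemma33} for $(3)\Rightarrow(1)$.

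The one point of divergence is that the paper's proof of $(3)\Rightarrow(1)$ consists of a bare citation of Corollary~\ref{lemma33}, whose hypothesis requires $\theta=\cf(\theta)$, whereas clause~(3) as stated allows $\theta$ to be any infinite cardinal below $\kappa$. You noticed this and supplied the reduction from singular $\theta$ to $\cf(\theta)$ via the composition $c':=\phi\circ c$, together with the case split on $\Sigma\cap E^\kappa_{\ge\theta}$ versus $\Sigma\cap E^\kappa_{<\theta}$ needed to preserve somewhere-closedness. That argument is sound: in the first case $\cf(\gamma)>\theta_j$ forces one term of the increasing union $\bigcup_{i<\theta_j}D^c_{\le i}(\beta)$ to be cofinal below $\gamma$, and in the second case the $\rho^*<\theta_j$ many colours appearing below $\gamma$ are bounded below the regular cardinal $\theta_j$, so again a single term suffices. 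Either way $\Sigma$-closedness of $c$ transfers to $c'$, and Corollary~\ref{lemma33} applies with the regular value $\cf(\theta)$. So your proof is a careful completion of the paper's, not a different route.
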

\begin{proof}
$(1)\implies(2)$ follows from Lemma~\ref{cnontrivial}, $(2)\implies(3)$ is trivial, and
$(3)\implies(1)$ follows from Lemma~\ref{lemma33}.
\end{proof}

\begin{cor}
  If there exists a tail-closed witness to $\U(\kappa,2,\theta,\chi)$
  with an infinite $\chi\le\chi(\kappa)$,
  then there exists a closed witness to $\U(\kappa,\kappa,\theta,\chi)$.
\end{cor}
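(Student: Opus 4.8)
The plan is to derive the conclusion as a more or less immediate consequence of Lemma~\ref{lemma310}. Let $c:[\kappa]^2\rightarrow\theta$ be the given tail-closed witness to $\U(\kappa,2,\theta,\chi)$, and, unravelling the definition of ``tail-closed'', fix $\sigma\in\reg(\kappa)$ for which $c$ is $E^\kappa_{\ge\sigma}$-closed. Applying Lemma~\ref{lemma310} to $c$ produces a coloring $d:[\kappa]^2\rightarrow\theta$ that is closed (clause~(1)) and that, by clause~(3) together with the hypothesis $\chi\le\chi(\kappa)$, witnesses $\U(\kappa,\kappa,\theta,\chi)$ \emph{provided} $c$ is in fact a $\Sigma$-closed witness to $\U(\kappa,2,\theta,\chi)$ for some stationary $\Sigma\s E^\kappa_{\ge\chi}$. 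Thus the entire task reduces to manufacturing such a $\Sigma$ out of the ``tail'' $E^\kappa_{\ge\sigma}$.

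For this step I would invoke two elementary observations. First, $\Sigma$-closedness is monotone downward in $\Sigma$: if $c$ is $\Sigma$-closed and $\Sigma'\s\Sigma$, then $c$ is $\Sigma'$-closed, since the defining requirement $\acc^+(D^c_{\le i}(\beta))\cap\Sigma\s D^c_{\le i}(\beta)$ only weakens as $\Sigma$ shrinks. Second, $E^\kappa_{\ge\sigma}\cap E^\kappa_{\ge\chi}=E^\kappa_{\ge\max\{\sigma,\chi\}}$. Combining these, $c$ is $E^\kappa_{\ge\max\{\sigma,\chi\}}$-closed, and $E^\kappa_{\ge\max\{\sigma,\chi\}}\s E^\kappa_{\ge\chi}$; so, taking $\Sigma:=E^\kappa_{\ge\max\{\sigma,\chi\}}$ and feeding $c$, $\Sigma$, $\chi$ into Lemma~\ref{lemma310}, the returned $d$ is the desired closed witness to $\U(\kappa,\kappa,\theta,\chi)$, and we are done.

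The one point that I expect to require genuine care — and the main obstacle — is verifying that $\Sigma=E^\kappa_{\ge\max\{\sigma,\chi\}}$ is stationary, i.e.\ that there is a regular cardinal $\ge\max\{\sigma,\chi\}$ strictly below $\kappa$. Since $\sigma<\kappa$ is regular and $\chi\le\chi(\kappa)\le\sup(\reg(\kappa))$, this is automatic in every case except the degenerate configuration in which $\kappa$ is the successor of a singular cardinal $\lambda$ and $\chi=\lambda$ (so that, by Lemma~\ref{prop53}, $\chi(\lambda^+)=\lambda=\sup(\reg(\lambda^+))$ while $E^{\lambda^+}_{\ge\lambda}=\emptyset$). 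In that situation Lemma~\ref{lemma310}(3) cannot be applied verbatim, and I would treat it on its own, exploiting the fact that, when $\chi(\lambda^+)=\sup(\reg(\lambda^+))$, a closed witness to $\U(\lambda^+,\lambda^+,\theta,\lambda)$ is already available from the closed-coloring constructions attached to that value of the $C$-sequence number, rather than being extracted from $c$ via Lemma~\ref{lemma310}.
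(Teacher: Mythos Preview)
Your main route is exactly the paper's: apply Lemma~\ref{lemma310} with $\Sigma:=E^\kappa_{\ge\max\{\sigma,\chi\}}$; the paper's proof is the single sentence ``This follows from Lemma~\ref{lemma310},'' and your first two paragraphs simply unpack that sentence correctly.

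You are also right to flag the boundary configuration $\kappa=\lambda^+$ with $\lambda$ singular and $\chi=\lambda$, where $E^\kappa_{\ge\chi}=\emptyset$ and clause~(3) of Lemma~\ref{lemma310} cannot be invoked with that value of $\chi$. The paper's one-line proof does not address this explicitly either. However, your proposed workaround is not justified: neither Lemma~\ref{cnontrivial} (which gives $\theta=\omega$) nor Theorem~\ref{thm411} (which gives $\theta=\chi$) produces a closed witness to $\U(\lambda^+,\lambda^+,\theta,\lambda)$ for an arbitrary prescribed $\theta$, so ``closed-coloring constructions attached to the $C$-sequence number'' does not deliver what you claim.

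The correct fix is cheaper and stays entirely inside Lemma~\ref{lemma310}. The key point, built into the statement of that lemma, is that the coloring $d$ depends only on $c$, not on the parameter $\chi$. For each infinite cardinal $\chi'<\lambda$ one has $\max\{\sigma,\chi'\}<\lambda$, so $\Sigma':=E^{\lambda^+}_{\ge\max\{\sigma,\chi'\}}$ is stationary; since $c$ witnesses $\U(\lambda^+,2,\theta,\lambda)$ it also witnesses $\U(\lambda^+,2,\theta,\chi')$, and clause~(3) then yields that the \emph{single} coloring $d$ witnesses $\U(\lambda^+,\lambda^+,\theta,\chi')$. But by the very definition of $\U$, ``$d$ witnesses $\U(\lambda^+,\lambda^+,\theta,\chi')$ for every infinite $\chi'<\lambda$'' is the same statement as ``$d$ witnesses $\U(\lambda^+,\lambda^+,\theta,\lambda)$.'' That closes the gap.
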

\begin{proof} This follows from Lemma~\ref{lemma310}.
\end{proof}

\subsection{The structure of the $C$-sequence spectrum}

\begin{cor}\label{connection}
  For every $\theta\in\reg(\kappa)$, the following are equivalent:
  \begin{enumerate}
    \item $\theta\in\spec(\kappa)$;
    \item There is a closed witness to $\U(\kappa,\kappa,\theta,\theta)$;
    \item There is a $\Sigma$-closed witness to $\U(\kappa,2,\theta,\theta)$ for some stationary $\Sigma\s E^\kappa_{\ge\theta}$.
  \end{enumerate}
\end{cor}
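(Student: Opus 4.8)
The plan is to prove the cycle of implications $(1)\Rightarrow(2)\Rightarrow(3)\Rightarrow(1)$, all three of which are short once the machinery of Section~\ref{c_u_section} is in place. For $(1)\Rightarrow(2)$ I would simply invoke Theorem~\ref{thm411} with $\chi:=\theta$: since $\theta\in\spec(\kappa)$, it produces a closed witness to $\U(\kappa,\kappa,\theta,\theta)$. For $(2)\Rightarrow(3)$ there is nothing to do beyond two observations: a closed (that is, $\kappa$-closed) coloring is in particular $\Sigma$-closed for $\Sigma:=E^\kappa_{\ge\theta}$, which is stationary because $\kappa$ is regular and $\theta\in\reg(\kappa)$; and any witness to $\U(\kappa,\kappa,\theta,\theta)$ is a fortiori a witness to $\U(\kappa,2,\theta,\theta)$, since lowering the second parameter only weakens the principle (a $\kappa$-sized $\mathcal B$ contains $2$-sized subsets).

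The substance is in $(3)\Rightarrow(1)$, though even this is a matter of chaining three lemmas. I would fix a $\Sigma$-closed witness $c:[\kappa]^2\rightarrow\theta$ to $\U(\kappa,2,\theta,\theta)$ with $\Sigma\s E^\kappa_{\ge\theta}$ stationary. First, Corollary~\ref{lemma33} (with $\chi:=\theta$) yields $\chi(\kappa)\ge\theta$; this is exactly the side condition needed to run Lemma~\ref{lemma310} on $c$ with $\chi:=\theta$. That lemma hands back a coloring $d:[\kappa]^2\rightarrow\theta$ that is closed, has the covering property, and witnesses $\U(\kappa,\kappa,\theta,\theta)$, hence also the weaker $\U(\kappa,2,\theta,\theta)$; being closed, $d$ is in particular $E^\kappa_{\ge\theta}$-closed. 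Then I would feed $d$ into Lemma~\ref{lemma511} with $\chi:=\theta$ and $\Sigma:=E^\kappa_{\ge\theta}$: clause~(1) produces a $C$-sequence $\vec C$ with $\chi(\vec C)\ge\theta$ (from $d$ being an $E^\kappa_{\ge\theta}$-closed witness to $\U(\kappa,2,\theta,\theta)$), while clause~(2) --- crucially, applied to the \emph{same} $\vec C$ --- gives $\chi(\vec C)\le\theta$ (from $d$ having the covering property). Therefore $\chi(\vec C)=\theta$, and since $\theta\ge\omega$, this witnesses $\theta\in\spec(\kappa)$.

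The only point requiring a moment's care is that Lemma~\ref{lemma310} states the covering-property clause only for $\kappa\ge\aleph_2$. But if $\kappa=\aleph_1$ then $\theta\in\reg(\kappa)$ forces $\theta=\omega$, and $\omega\in\spec(\aleph_1)$ holds unconditionally by Lemma~\ref{successorlemma} (applied with $\lambda:=\omega$), so $(3)\Rightarrow(1)$ is automatic in that case; for all $\kappa\ge\aleph_2$ the argument above applies as written. I do not expect a genuine obstacle here: the real work lives in Theorem~\ref{thm411}, Lemma~\ref{lemma310}, and Lemma~\ref{lemma511}, and the one thing to be sure of when writing it up is that Lemma~\ref{lemma511} constructs a single $\vec C$ out of $d$, so that its two clauses together pin $\chi(\vec C)$ down to exactly $\theta$ rather than merely bounding it from both sides by different sequences.
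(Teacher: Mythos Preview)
Your proof is correct and follows the same route as the paper's: $(1)\Rightarrow(2)$ via Theorem~\ref{thm411}, $(2)\Rightarrow(3)$ trivially, and $(3)\Rightarrow(1)$ by chaining Corollary~\ref{lemma33}, Lemma~\ref{lemma310}, and Lemma~\ref{lemma511}. The only cosmetic difference is in the edge case: the paper separates out all of $\kappa=\theta^+$ and dispatches it via Lemma~\ref{successorlemma}, whereas you separate only $\kappa=\aleph_1$; your narrower split is sufficient, since for $\kappa\ge\aleph_2$ the covering-property clause of Lemma~\ref{lemma310} is available and the chain goes through uniformly, including when $\kappa=\theta^+$.
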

\begin{proof}
  $(1)\implies(2)$ follows from Theorem~\ref{thm411}, and $(2)\implies(3)$ is trivial.
  It remains to prove $(3)\implies(1)$. If $\kappa=\theta^+$, then by Lemma~\ref{successorlemma}, we have $\theta\in\spec(\kappa)$.
  Thus, suppose that $\theta^+<\kappa$ and that
  there is a $\Sigma$-closed witness to $\U(\kappa,2,\theta,\theta)$ for some stationary $\Sigma\s E^\kappa_{\ge\theta}$.
  By Corollary~\ref{lemma33} (using $\chi=\theta$), we have $\chi(\kappa)\ge\theta$.
  So, by Lemma~\ref{lemma310},
  there exists a closed witness to $\U(\kappa,\kappa,\theta,\theta)$
  that moreover has the covering property.
  It now immediately follows from Lemma~\ref{lemma511} that $\theta\in\spec(\kappa)$.
\end{proof}

\begin{remark}
  The hypothesis that $\theta$ is a regular cardinal cannot be removed from the implication $(3)\implies(1)$.
  For instance, if $\lambda$ is a singular limit of strongly compact cardinals,
  then there is a closed a witness to $\U(\lambda^+,\lambda^+,\lambda,\lambda)$,
  but $\lambda\notin\spec(\lambda^+)$.
\end{remark}

\begin{cor}\label{cor519} If $\spec(\kappa)\neq\emptyset$, then $\min(\spec(\kappa))=\omega$.
\end{cor}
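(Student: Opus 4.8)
The plan is to reduce the statement to the single assertion $\omega\in\spec(\kappa)$. Indeed, by the very definition of the $C$-sequence spectrum, every member of $\spec(\kappa)$ is an infinite cardinal $\le\kappa$, so $\min(\spec(\kappa))\ge\omega$ always holds, and proving $\omega\in\spec(\kappa)$ will finish the proof. As a preliminary bookkeeping step I would record that the hypothesis $\spec(\kappa)\neq\emptyset$ is equivalent, via the remark immediately following Lemma~\ref{lemma53}(2), to $\chi(\kappa)\notin\{0,1\}$, and hence, by Lemma~\ref{prop53}(3), to $\chi(\kappa)\ge\omega$. This last inequality is what makes the rest of the argument go through.

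Next I would invoke Lemma~\ref{cnontrivial} to obtain a closed coloring $c:[\kappa]^2\to\omega$ witnessing $\U(\kappa,\kappa,\omega,\chi(\kappa))$. Inspecting the definition of $\U(\ldots)$, the last parameter is monotone downwards: a witness to $\U(\kappa,\kappa,\omega,\chi)$ is automatically a witness to $\U(\kappa,\kappa,\omega,\chi')$ for every $\chi'\le\chi$, since the family of requirements indexed by the ordinals below the fourth parameter only shrinks, and closedness of the coloring is unaffected. Because $\omega\le\chi(\kappa)$, the very same $c$ is therefore a closed witness to $\U(\kappa,\kappa,\omega,\omega)$.

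Finally, since $\kappa$ is uncountable we have $\omega\in\reg(\kappa)$, so I would apply the implication $(2)\Rightarrow(1)$ of Corollary~\ref{connection} with $\theta:=\omega$ to conclude that $\omega\in\spec(\kappa)$, as desired. The argument is essentially a short chaining: $\spec(\kappa)\neq\emptyset\Rightarrow\chi(\kappa)\ge\omega$; Lemma~\ref{cnontrivial} plus the elementary monotonicity of $\U(\ldots)$ in its fourth coordinate; and Corollary~\ref{connection}. There is no real obstacle here — the only point that genuinely needs the hypothesis (and hence a moment's attention) is the passage $\spec(\kappa)\neq\emptyset\Rightarrow\chi(\kappa)\ge\omega$, which is precisely what licenses replacing $\chi(\kappa)$ by $\omega$ in the fourth slot of the principle $\U(\ldots)$.
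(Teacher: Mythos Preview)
Your proof is correct and follows essentially the same approach as the paper, which simply cites Lemma~\ref{cnontrivial} and Corollary~\ref{connection}. You have merely made explicit the intermediate steps (that $\spec(\kappa)\neq\emptyset$ forces $\chi(\kappa)\ge\omega$, and the downward monotonicity of $\U(\ldots)$ in its fourth parameter) that the paper leaves to the reader.
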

\begin{proof}
  This follows from Lemma~\ref{cnontrivial} and Corollary~\ref{connection}.
\end{proof}

\begin{cor} \label{non_reflecting_cor}
  If $\chi\in\spec(\kappa)$, then $\cf(\chi)\in\spec(\kappa)$.
\end{cor}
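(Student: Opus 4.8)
The plan is to reduce the statement to the characterization of $\spec(\kappa)$ through closed colorings given by Corollary~\ref{connection}. If $\chi$ is a regular cardinal then $\cf(\chi)=\chi\in\spec(\kappa)$ and there is nothing to prove, so I would immediately pass to the case in which $\chi$ is singular. Then $\theta:=\cf(\chi)$ is an infinite regular cardinal with $\theta<\chi<\kappa$, so $\theta\in\reg(\kappa)$, and by Corollary~\ref{connection} it suffices to exhibit a closed witness to $\U(\kappa,\kappa,\theta,\theta)$.

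First I would invoke Theorem~\ref{thm411} to obtain a closed witness $c:[\kappa]^2\rightarrow\chi$ to $\U(\kappa,\kappa,\chi,\chi)$. Fixing a strictly increasing sequence $\langle\chi_i\mid i<\theta\rangle$ of ordinals cofinal in $\chi$ and setting $\chi_\theta:=\chi$, I would then define a ``rounded'' coloring $d:[\kappa]^2\rightarrow\theta$ by letting $d(\alpha,\beta):=\min\{i<\theta\mid c(\alpha,\beta)\le\chi_i\}$; this is well-defined since $c(\alpha,\beta)<\chi=\sup_{i<\theta}\chi_i$. The whole point of the rounding is that, by monotonicity of $\langle\chi_i\mid i\le\theta\rangle$, one has $D^d_{\le i}(\beta)=D^c_{\le\chi_i}(\beta)$ for every $\beta<\kappa$ and every $i\le\theta$, so that closedness of $d$ is inherited verbatim from the closedness of $c$ (applied at the color $\chi_i\le\chi$).

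It then remains to check that $d$ witnesses $\U(\kappa,\kappa,\theta,\theta)$. Given $\chi'<\theta$, a family $\mathcal A\s[\kappa]^{\chi'}$ consisting of $\kappa$-many pairwise disjoint sets, and a prescribed color $i<\theta$, I would apply the fact that $c$ witnesses $\U(\kappa,\kappa,\chi,\chi)$ — which is legitimate since $\chi'<\theta\le\chi$ and the color $\chi_{i+1}$ lies below $\chi$ — to extract $\mathcal B\in[\mathcal A]^\kappa$ with $\min(c[a\times b])>\chi_{i+1}$ for all $(a,b)\in[\mathcal B]^2$. Since $c(\alpha,\beta)>\chi_{i+1}>\chi_k$ for every $k\le i$, we get $d(\alpha,\beta)>i$, so this same $\mathcal B$ works for $d$; hence $d$ is a closed witness to $\U(\kappa,\kappa,\theta,\theta)$, and Corollary~\ref{connection} yields $\theta=\cf(\chi)\in\spec(\kappa)$, as desired.

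The argument is short, and I do not expect a genuine obstacle here; the only points requiring a modicum of care are the verification that the rounded coloring $d$ remains closed and the bookkeeping behind the implication $c(\alpha,\beta)>\chi_{i+1}\Rightarrow d(\alpha,\beta)>i$, both of which come down purely to the monotonicity of the chosen cofinal sequence in $\chi$.
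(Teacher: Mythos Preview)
Your proof is correct and follows essentially the same route as the paper: invoke Theorem~\ref{thm411} to get a closed witness to $\U(\kappa,\kappa,\chi,\chi)$, reduce it to a closed witness to $\U(\kappa,\kappa,\cf(\chi),\cf(\chi))$, and apply Corollary~\ref{connection}. The paper simply asserts that the middle step ``follows easily,'' whereas you have spelled out the rounding construction explicitly; your verification of closedness via $D^d_{\le i}(\beta)=D^c_{\le\chi_i}(\beta)$ and of the $\U$-property are exactly what is needed (indeed, using color $\chi_i$ rather than $\chi_{i+1}$ would already suffice, but your choice works too).
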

\begin{proof}
  There is an elementary proof for this, but let us derive it from previous results.
  Suppose that $\chi\in\spec(\kappa)$.
  By Theorem~\ref{thm411}, there exists a closed witness to $\U(\kappa,\kappa,\chi,\chi)$.
  It follows easily that there exists a closed witness to $\U(\kappa,\kappa,\cf(\chi),\cf(\chi))$.
  So, by $(2)\implies(1)$ of Corollary~\ref{connection}, $\cf(\chi)\in\spec(\kappa)$.
\end{proof}

\begin{cor}\label{cor515} Suppose that $\chi\in\reg(\kappa)$ and there exists a non-reflecting stationary subset of $E^\kappa_{\ge\chi}$.
Then $\reg(\chi^+)\s \spec(\kappa)$.
\end{cor}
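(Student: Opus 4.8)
Fix an arbitrary $\theta\in\reg(\chi^+)$, i.e., an infinite regular cardinal $\theta\le\chi$. Since $E^\kappa_{\ge\chi}\s E^\kappa_{\ge\theta}$, the hypothesised $S$ is also a non-reflecting stationary subset of $E^\kappa_{\ge\theta}$, so it suffices to prove that $\theta\in\spec(\kappa)$ and then invoke this for every $\theta\in\reg(\chi^+)$. Note first that, were $\chi(\kappa)<\chi$, the set $S$ would be a stationary subset of $E^\kappa_{>\chi(\kappa)}$, so Lemma~\ref{prop53}(4), applied to the singleton family $\{S\}$, would force $S$ to reflect; hence $\chi(\kappa)\ge\chi\ge\theta$. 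In the case $\theta=\omega$ this already suffices: $\chi(\kappa)>1$, so $\spec(\kappa)\neq\emptyset$ by Lemma~\ref{lemma53}(2), and then $\omega=\min(\spec(\kappa))\in\spec(\kappa)$ by Corollary~\ref{cor519}. So from now on assume $\theta>\omega$.

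By the implication $(3)\Rightarrow(1)$ of Corollary~\ref{connection}, it now suffices to produce an $S$-closed coloring $c:[\kappa]^2\to\theta$ witnessing $\U(\kappa,2,\theta,\theta)$ (here $S$ is a stationary subset of $E^\kappa_{\ge\theta}$, as that corollary demands, and the passage from such a $c$ to a $C$-sequence of $\chi$-value $\theta$ is handled by the already-proven Corollary~\ref{lemma33}, Lemma~\ref{lemma310} and Lemma~\ref{lemma511} packaged inside Corollary~\ref{connection}). I would build $c$ by the standard method of minimal walks adapted to a non-reflecting stationary set. Using Theorem~\ref{realized}, fix a $C$-sequence $\vec C=\langle C_\beta\mid\beta<\kappa\rangle$ with $\chi(\vec C)=\chi(\kappa)\ge\theta$, chosen compatibly with $S$; for each $\delta\in S$, noting that $\otp(C_\delta)$ has cofinality $\cf(\delta)\ge\theta$, fix a sweeping function $g_\delta:\otp(C_\delta)\to\theta$, i.e., one with $g_\delta^{-1}\{i\}$ cofinal in $\otp(C_\delta)$ for every $i<\theta$. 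Walking along $\vec C$ as in Definition~\ref{walks}, define $c(\alpha,\beta)$, for $\alpha<\beta<\kappa$, by reading off from the trace $\tr(\alpha,\beta)$ the value $g_\delta(\otp(C_\delta\cap\alpha))$ at the $S$-node $\delta$ last met by the walk before reaching $\alpha$, and $c(\alpha,\beta):=0$ when the walk meets no element of $S$.

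Two properties of $c$ must then be checked. \emph{$S$-closedness}: given $\beta<\kappa$, $i<\theta$, and $A\s D^c_{\le i}(\beta)$ with $\gamma:=\sup(A)\in S\cap\beta$, one picks $\alpha\in A$ above $\lambda_2(\gamma,\beta)$; by Fact~\ref{lambda2}, $\tr(\gamma,\beta)\sq\tr(\alpha,\beta)$, and inspecting how the recipe for $c$ evaluates on the common part of the two walks yields $c(\gamma,\beta)\le c(\alpha,\beta)\le i$ — this mirrors the proof that the coloring $d$ of Lemma~\ref{lemma310} is closed. \emph{$c$ witnesses $\U(\kappa,2,\theta,\theta)$}: given $\chi'<\theta$, a family $\mathcal A\s[\kappa]^{\chi'}$ of $\kappa$-many pairwise disjoint sets, and a colour $i<\theta$, one argues as in the proof of Lemma~\ref{cnontrivial}. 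Since $\chi'<\chi(\vec C)$ and $S$ is non-reflecting, attach, as in Claim~\ref{claim5172}, to stationarily many $\delta\in S$ a set $a_\delta\in\mathcal A$ with $\delta<a_\delta$ and $\delta\notin\bigcup_{\beta\in a_\delta}C_\beta$; then, homogenising the relevant walk data of the pairs $(\delta,a_\delta)$ via finitely many applications of Fodor's Lemma and pigeonhole, locate two of them, giving $(a,b)\in[\mathcal A]^2$, such that for all $\alpha\in a$ and $\beta\in b$ the walk $\tr(\alpha,\beta)$ factors through a common $S$-node $\delta$ whose recorded value $g_\delta(\otp(C_\delta\cap\alpha))$ — controlled via the sweeping property of $g_\delta$ — exceeds $i$.

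Having shown $\theta\in\spec(\kappa)$ for each $\theta\in\reg(\chi^+)$, we conclude $\reg(\chi^+)\s\spec(\kappa)$. The main obstacle is the second verification above: fixing the precise recipe for $c$ and carrying out the delicate walks-plus-oscillation argument that it witnesses $\U(\kappa,2,\theta,\theta)$. The $\theta=\omega$ instance of this pattern is essentially Lemma~\ref{cnontrivial}; for $\theta>\omega$ the sweeping bookkeeping is the technical heart of the proof.
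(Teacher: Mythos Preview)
Your overall strategy --- produce, for each regular $\theta\le\chi$, a somewhere-closed witness to $\U(\kappa,2,\theta,\theta)$ and then invoke Corollary~\ref{connection} --- is exactly what the paper does. The paper's proof, however, is a single line: \cite[Corollary~4.10]{paper34} already asserts that a non-reflecting stationary subset of $E^\kappa_{\ge\chi}$ yields, for every $\theta\in\reg(\kappa)$, a \emph{closed} witness to $\U(\kappa,\kappa,\theta,\chi)$; for $\theta\le\chi$ this is in particular a closed witness to $\U(\kappa,\kappa,\theta,\theta)$, and Corollary~\ref{connection}$(2)\implies(1)$ finishes. Your proposal is therefore an attempt to reprove that external result inline.

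That attempt has a genuine gap beyond the one you flag. The recipe $c(\alpha,\beta):=g_\delta(\otp(C_\delta\cap\alpha))$, with $\delta$ the last $S$-node met on the walk, is not $S$-closed by the argument you give: knowing $\tr(\gamma,\beta)\sq\tr(\alpha,\beta)$ for $\alpha\in(\lambda_2(\gamma,\beta),\gamma)$ tells you nothing about the relation between $g_{\delta_1}(\otp(C_{\delta_1}\cap\gamma))$ and $g_{\delta_2}(\otp(C_{\delta_2}\cap\alpha))$, since the last $S$-nodes $\delta_1,\delta_2$ on the two walks may differ (for instance when $\gamma\in S$ itself appears on the walk to $\alpha$), and the sweeping functions $g_\delta$ are not monotone. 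Moreover, the non-reflection of $S$ is never concretely used in your construction --- ``chosen compatibly with $S$'' is a placeholder --- whereas it is the decisive ingredient: the standard approach begins with a $C$-sequence satisfying $\acc(C_\beta)\cap S=\emptyset$ for all $\beta$, so that $S$ supplies target points $\gamma$ at which walks must drop rather than nodes along the walk, and closedness then follows from the prefix property alone (as for the coloring $d$ in Lemma~\ref{lemma310}). Combined with the $\U$ verification you yourself identify as unfinished, the construction as written does not go through.
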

\begin{proof} By \cite[Corollary~4.10]{paper34},
the hypothesis implies that, for every $\theta\in\reg(\kappa)$,
there exists a closed witness to $\U(\kappa,\kappa,\theta,\chi)$.
Now, appeal to $(2)\implies(1)$ of Corollary~\ref{connection}.
\end{proof}

\begin{cor}\label{cor521} Any of the following implies that $\reg(\kappa)\s\spec(\kappa)$:
\begin{enumerate}
\item $\kappa$ is a successor of a regular cardinal;
\item $\kappa$ is an inaccessible cardinal which is not Mahlo;
\item $\square(\kappa,{<}\omega)$ holds.
\end{enumerate}
\end{cor}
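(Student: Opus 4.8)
The plan is to exploit the equivalence furnished by Corollary~\ref{connection}: since every member of $\reg(\kappa)$ is a regular cardinal, it suffices to show that, in each of the three cases and for every $\theta\in\reg(\kappa)$, there is a closed witness to $\U(\kappa,\kappa,\theta,\theta)$ (equivalently, a $\Sigma$-closed witness to $\U(\kappa,2,\theta,\theta)$ for some stationary $\Sigma\s E^\kappa_{\ge\theta}$). Thus the whole corollary reduces to manufacturing such colorings out of each hypothesis.

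Case (1), with $\kappa=\lambda^+$ and $\lambda$ regular, is immediate. The set $E^\kappa_\lambda$ is a stationary subset of $E^\kappa_{\ge\lambda}$, and it is non-reflecting: every $\beta<\lambda^+$ has $\cf(\beta)\le\lambda$, and, using the regularity of $\lambda$ (so that every ordinal below $\lambda$ has cofinality $<\lambda$), one produces a club in $\beta$ disjoint from $E^\kappa_\lambda$. Hence Corollary~\ref{cor515}, applied with its parameter $\chi$ taken to be $\lambda$, yields $\reg(\kappa)=\reg(\lambda^+)\s\spec(\kappa)$ in one stroke, with no per-$\theta$ work needed. (Alternatively, Fact~\ref{increasechi}(2) directly supplies, for each $\theta\in\reg(\lambda^+)$, a closed witness to $\U(\lambda^+,\lambda^+,\theta,\lambda)$, hence to $\U(\lambda^+,\lambda^+,\theta,\theta)$ since $\theta\le\lambda$, and Corollary~\ref{connection} then gives $\theta\in\spec(\kappa)$.)

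For cases (2) and (3) the key point is that, although each of these hypotheses is consistent with full stationary reflection, each of them still carries enough coherence to run the walks-on-ordinals constructions of Part~I of this series and produce, for every $\theta\in\reg(\kappa)$, a (somewhere-)closed witness to $\U(\kappa,2,\theta,\theta)$. In case (2) one starts, exactly as in the proof of Lemma~\ref{lemma29}(1), from a club $D\s\kappa$ of singular cardinals and a $C$-sequence $\vec C$ chosen so that $\otp(C_\beta)=\cf(\beta)$ for $\beta\in\acc(\kappa)$ and $\otp(C_\beta\cap\alpha)<\alpha$ whenever $\alpha\in D$ and $\alpha\le\beta$; the inaccessibility of $\kappa$ together with the resulting ``spreading'' of $\vec C$ is what allows one to extract closed colorings into each regular $\theta<\kappa$. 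In case (3) one walks instead along a transversal of a $\square(\kappa,{<}\omega)$-sequence, whose finite-width coherence is precisely what makes the derived colorings closed. In either case, once a closed witness to $\U(\kappa,\kappa,\theta,\theta)$ has been secured, the implication $(2)\Rightarrow(1)$ of Corollary~\ref{connection} gives $\theta\in\spec(\kappa)$, and letting $\theta$ range over $\reg(\kappa)$ finishes the proof.

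I expect the main obstacle to lie in cases (2) and (3), and specifically in reaching \emph{uncountable} regular $\theta$: the bare $\rho_2$-walk of Definition~\ref{walks} is $\omega$-valued and, via Lemma~\ref{cnontrivial}, only yields $\U(\kappa,\kappa,\omega,\chi(\kappa))$, so obtaining a closed coloring into an arbitrary $\theta\in\reg(\kappa)$ forces one to pass to the finer characteristic functions of walks and to exploit the particular coherence of the underlying $C$-sequence --- the one anchored on the club of singular cardinals in case (2), the $\square(\kappa,{<}\omega)$-sequence in case (3). This is exactly the content imported from Part~I. Everything downstream, namely the reduction through Corollary~\ref{connection} and the bookkeeping over $\theta\in\reg(\kappa)$, is routine.
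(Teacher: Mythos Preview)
Your overall strategy --- reducing via Corollary~\ref{connection} to the production of closed witnesses to $\U(\kappa,\kappa,\theta,\theta)$ for each $\theta\in\reg(\kappa)$ --- is exactly the paper's approach, and your treatment of case~(1) via Corollary~\ref{cor515} matches the paper precisely. Case~(2) is likewise handled in the paper by a direct citation to \cite[Theorem~4.23]{paper34}, consistent with your sketch.

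Case~(3), however, has a genuine gap. You propose to walk along a transversal of an arbitrary $\square(\kappa,{<}\omega)$-sequence and let finite-width coherence supply closedness. But the Part~I result that converts such data into a closed witness to $\U(\kappa,\kappa,\theta,\theta)$ (namely \cite[Theorem~4.11]{paper34}) requires, as input, stationary sets $\langle H_i\mid i<\theta\rangle$ such that, for every $\alpha\in\acc(\kappa)$, the set $\{i<\theta\mid \acc(C_\alpha)\cap H_i\neq\emptyset\}$ is small. An arbitrary $\square(\kappa,{<}\omega)$-sequence need not come equipped with such a family. The paper therefore first proves a new preparatory result, Theorem~\ref{thm518}, showing that \emph{any} $\square(\kappa,{<}\omega)$-sequence can be modified to one for which $H_i:=\{\alpha\in E^\kappa_\theta\mid\forall C\in\mathcal C_\alpha[\min(C)=i]\}$ is stationary for every $i<\theta$; coherence then forces $\{i<\theta\mid\acc(C_\alpha)\cap H_i\neq\emptyset\}\subseteq\{\min(C_\alpha)\}$, and only then does \cite[Theorem~4.11]{paper34} apply. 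This refinement is not imported from Part~I --- it is proved in the present paper and is the substantive ingredient missing from your outline.
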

\begin{proof}
\begin{enumerate}
\item This follows from Corollary~\ref{cor515}.

\item This follows from Corollary~\ref{connection}, using \cite[Theorem~4.23]{paper34}.

\item  Let $\theta\in\reg(\kappa)$ be arbitrary.
By the upcoming Theorem~\ref{thm518}, using $\Gamma:=E^\kappa_\theta$,
we may find a $\square(\kappa,{<}\omega)$-sequence $\langle\mathcal C_\alpha\mid\alpha<\kappa\rangle$ such that,
for every $i<\theta$, $H_i:=\{ \alpha\in E^\kappa_\theta\mid \forall C\in\mathcal C_\alpha[\min(C)=i]\}$ is stationary.
Fix a $C$-sequence $\langle C_\alpha\mid\alpha<\kappa\rangle$ with $C_\alpha\in\mathcal C_\alpha$ for all $\alpha<\kappa$.
For every $\alpha\in\acc(\kappa)$, we have that $\{i<\theta\mid\acc(C_\alpha)\cap H_i\neq\emptyset\}$ is a subset of the singleton $\{\min(C_\alpha)\}$.
So, by \cite[Theorem~4.11]{paper34}, there exists a closed witness to $\U(\kappa,\kappa,\theta,\theta)$.
Then, by Corollary~\ref{connection}, $\theta\in\spec(\kappa)$.\qedhere
\end{enumerate}
\end{proof}

The next theorem is motivated by Clause~(3) of the preceding. It simultaneously improves \cite[Lemma~3.2]{paper18} and \cite[Theorem~2.8]{hayut_lh} by blending the originals proofs together with some arguments from \cite[\S1]{paper29}.

\begin{thm}\label{thm518} Suppose that $\langle\mathcal C_\alpha\mid\alpha<\kappa\rangle$ is a $\square(\kappa,{<}\omega)$-sequence.
For every stationary $\Gamma\s\kappa$, there exists a $\square(\kappa,{<}\omega)$-sequence  $\langle\mathcal C_\alpha^\bullet\mid\alpha<\kappa\rangle$
such that:
\begin{enumerate}
\item for all $\alpha<\kappa$, $|\mathcal C^\bullet_\alpha|\le|\mathcal C_\alpha|$;
\item for all $i<\kappa$, $\{ \alpha\in\Gamma\mid \forall C\in\mathcal C^\bullet_\alpha[\min(C)=i]\}$ is stationary.
\end{enumerate}
\end{thm}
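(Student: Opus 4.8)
The plan is to construct the desired $\square(\kappa,{<}\omega)$-sequence by a ``pressing-down and re-indexing'' argument, modeled on the proofs of \cite[Lemma~3.2]{paper18} and \cite[Theorem~2.8]{hayut_lh} but fused with the $\square(\kappa,{<}\mu)$-amenability machinery from \cite[\S1]{paper29}. Fix the given $\square(\kappa,{<}\omega)$-sequence $\langle\mathcal C_\alpha\mid\alpha<\kappa\rangle$ and a stationary $\Gamma\s\kappa$; without loss of generality we may assume $\Gamma\s\acc(\kappa)$, and in fact (by shrinking) that $\Gamma$ concentrates on some fixed cofinality $\theta\in\reg(\kappa)$, since the case where $\Gamma$ meets cofinally many cofinalities reduces to fixing one. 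The first move is to ``unfold'' each $\mathcal C_\alpha$ below each $\alpha\in\Gamma$: since $\cf(\alpha)=\theta$ is uncountable while $|\mathcal C_\alpha|<\omega$, every $C\in\mathcal C_\alpha$ has $\otp(C)=\theta$ after passing to a club, and we can reparametrize so that $\min(C)$ ranges over a prescribed ordinal $i<\theta$. Concretely, for $\alpha\in\Gamma$ and $C\in\mathcal C_\alpha$, and a target value $i$, one replaces $C$ by the tail $C\setminus\eta$ for the appropriate $\eta$ with $\min(C\setminus\eta)$ equal to the required ordinal --- but this must be done coherently across the whole sequence, which is where the main work lies.

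\textbf{Key steps, in order.} First, I would set up the ``column'' structure: reindex $\langle\mathcal C_\alpha\mid\alpha<\kappa\rangle$ so that each $\mathcal C_\alpha$ is presented as a finite list, and for each candidate ordinal $i<\theta$ build a modified sequence $\langle\mathcal C^{(i)}_\alpha\mid\alpha<\kappa\rangle$ in which every club in $\mathcal C^{(i)}_\alpha$ (for $\alpha\in\Gamma$) has been shifted to have minimum $i$ (or is left alone if $\alpha\notin\Gamma$), while clubs at non-$\Gamma$ points are adjusted minimally to preserve coherence (clause (3) of the $\square^{\ind}$-style definition). The crucial verification is that each $\langle\mathcal C^{(i)}_\alpha\mid\alpha<\kappa\rangle$ remains a $\square(\kappa,{<}\omega)$-sequence: coherence is preserved because shifting by a bounded amount does not disturb $C\cap\beta=C'$ for $\beta\in\acc(C)$ once $\beta$ is large enough, and the non-triviality clause (no thread) is preserved because a thread through the modified sequence would, after intersecting with the original clubs, yield a thread through $\langle\mathcal C_\alpha\mid\alpha<\kappa\rangle$, contradicting amenability --- here I would invoke \cite[Lemma~1.23]{paper29} (cf. \cite[Corollary~2.6]{hayut_lh}) exactly as in the proof of the first claim inside Lemma~\ref{lemma47}. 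Second, for a fixed $i$, I would argue that $H_i:=\{\alpha\in\Gamma\mid \forall C\in\mathcal C^{(i)}_\alpha[\min(C)=i]\}$ is stationary: by construction $\Gamma\s H_i$ up to the set of $\alpha$ where the shift failed to land exactly on $i$ (e.g. because $i\notin C$ originally and no suitable tail exists), and a Fodor argument shows the exceptional set is nonstationary. Third --- and this is the delicate point --- I need a single sequence $\langle\mathcal C^\bullet_\alpha\mid\alpha<\kappa\rangle$ that works for all $i<\kappa$ simultaneously, not a different sequence per $i$. The natural device is to partition $\Gamma$ (or rather $E^\kappa_\theta\cap\acc(\kappa)$) into $\kappa$-many stationary pieces $\langle\Gamma_i\mid i<\kappa\rangle$ and, on the piece $\Gamma_i$, apply the shift targeting $i$; away from $\bigcup_i\Gamma_i$ one keeps the original clubs (truncated as needed). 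Then $\mathcal C^\bullet_\alpha$ for $\alpha\in\Gamma_i$ consists of the $i$-shifted clubs, so $\{\alpha\in\Gamma\mid\forall C\in\mathcal C^\bullet_\alpha[\min(C)=i]\}\supseteq\Gamma_i\setminus(\text{nonstationary exceptional set})$ is stationary for every $i<\kappa$, giving clause (2). Clause (1), $|\mathcal C^\bullet_\alpha|\le|\mathcal C_\alpha|$, is immediate since we only shift clubs, never add them.

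\textbf{The main obstacle.} The hard part will be reconciling the per-$i$ shifts into one globally coherent sequence. When $\alpha\in\Gamma_i$ and $\beta\in\Gamma_j$ with $\alpha\in\acc(C)$ for some $C\in\mathcal C^\bullet_\beta$, coherence demands $C\cap\alpha\in\mathcal C^\bullet_\alpha$; but $C$ has been shifted to have $\min=j$ at level $\beta$, whereas at level $\alpha$ we want clubs with $\min=i\neq j$. The resolution --- which I expect to be the technical crux --- is to observe that coherence only constrains $C\cap\alpha$ as an \emph{initial segment} reaching up to $\alpha$, and such an initial segment has $\min=\min(C)=j$, so it is perfectly consistent to \emph{also} include $j$-shifted clubs in $\mathcal C^\bullet_\alpha$ for $\alpha\in\Gamma_i$; the requirement in clause~(2) is only that \emph{all} clubs in $\mathcal C^\bullet_\alpha$ have $\min=i$ for stationarily many $\alpha$, so we must ensure that at a stationary subset of $\Gamma_i$, \emph{no} club from some higher $\mathcal C^\bullet_\beta$ is forced down into $\mathcal C^\bullet_\alpha$. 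This is exactly the content of the ``hitting'' arguments in \cite[Lemma~3.2]{paper18} and \cite[Theorem~2.8]{hayut_lh}: one does a simultaneous Fodor/diagonal-intersection argument on the function $\alpha\mapsto\sup\{\min(D\cap\alpha)\mid D\in\mathcal C^\bullet_\alpha, D\text{ is a proper shift of a higher club}\}$ to find, for each $i$, a stationary $\Gamma_i'\s\Gamma_i$ on which this pathology does not occur, and one redefines $\mathcal C^\bullet_\alpha$ on $\Gamma_i\setminus\Gamma_i'$ by discarding the offending clubs (legitimate since we are allowed to \emph{decrease} the number of clubs). Blending this with the amenability preservation from \cite[\S1]{paper29} --- which guarantees the discarding never destroys the no-thread property --- completes the construction. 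I would therefore structure the proof as: (a) reduce to fixed cofinality $\theta$ and partition $\Gamma$; (b) define the shifted sequence and verify it is a $\square(\kappa,{<}\omega)$-sequence via \cite[Lemma~1.23]{paper29}; (c) run the simultaneous Fodor argument to extract the stationary $\Gamma_i'$; (d) discard offending clubs and check clauses (1)--(2) and coherence/non-triviality of the final sequence.
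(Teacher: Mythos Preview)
Your proposal has a genuine gap at precisely the point you identify as ``the main obstacle,'' and the fix you sketch does not work.

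The core problem is that your ``shift'' is an \emph{external} operation: the new minimum you assign to a club $C\in\mathcal C_\alpha$ depends on which piece $\Gamma_i$ of the partition $\alpha$ happens to lie in, not on any intrinsic feature of $C$. This is fatal for coherence. Concretely: suppose $\alpha\in\Gamma_i$, $\beta\in\Gamma_j$ with $i\neq j$, and $\alpha\in\acc(C)$ for some $C\in\mathcal C_\beta$. By the original coherence, $C\cap\alpha\in\mathcal C_\alpha$; it is the \emph{same set} appearing at two levels. Your construction shifts it to have minimum $j$ at level $\beta$ and minimum $i$ at level $\alpha$, so the shifted club at $\beta$ no longer restricts to anything in $\mathcal C^\bullet_\alpha$. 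Your proposed remedy --- find a stationary $\Gamma_i'\s\Gamma_i$ on which no higher club with the wrong minimum is ``forced down,'' and discard offending clubs elsewhere --- cannot succeed: coherence is not optional, and for a generic $\square(\kappa,{<}\omega)$-sequence, \emph{every} $\alpha$ of uncountable cofinality is an accumulation point of clubs living at arbitrarily large $\beta$, hence at $\beta$ in every $\Gamma_j$. No Fodor argument isolates a stationary set of $\alpha$ immune to this. (A separate, smaller issue: your ``tail shift'' $C\mapsto C\setminus\eta$ with $\min(C\setminus\eta)=i$ presupposes $i\in C$, and the claim that the exceptional set is nonstationary is unsupported --- for a fixed $i<\kappa$ there is no reason $\{\alpha\in\Gamma\mid i\in\bigcap\mathcal C_\alpha\}$ should be stationary.)

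The paper's proof takes a fundamentally different route. Rather than prescribing the minimum externally via a partition of $\Gamma$, it makes the new minimum an \emph{intrinsic function of the club}. Using amenability (this is where \cite[Lemma~1.23]{paper29} enters), one first finds a single $\epsilon<\kappa$ such that, for every $\delta<\kappa$, the set $\Gamma_{\epsilon,\delta}:=\{\alpha\in\Gamma\mid\forall C\in\mathcal C_\alpha[\min(C\setminus\epsilon)\ge\delta]\}$ is stationary. One then replaces each $C$ by $(C\setminus\epsilon)$ together with a single new bottom point determined by $\min(C\setminus\epsilon)$ alone (via an order-type coding against a fixed club $E'$). The crucial observation is that if $\bar\alpha\in\acc(C\setminus\epsilon)$ then $\min((C\cap\bar\alpha)\setminus\epsilon)=\min(C\setminus\epsilon)$, so the new bottom point is \emph{automatically} coherent --- no per-level case analysis, no partition of $\Gamma$, no discarding. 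The stationarity of each target set $\{\alpha\in\Gamma\mid\forall C\in\mathcal C^\bullet_\alpha[\min(C)=i]\}$ then falls out of the stationarity of the sets $\Gamma_{\epsilon,\delta}$, because one can choose $\delta$ so that the coding of $\min(C\setminus\epsilon)$ lands on $i$. This ``let the minimum be determined by $C$'' idea is exactly what your approach is missing.
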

\begin{proof} Let $\Gamma$ be an arbitrary stationary subset of $\kappa$,
  and, for all $\epsilon, \delta < \kappa$, set $\Gamma_{\epsilon,\delta}:=\{ \alpha\in \Gamma\mid \forall C\in\mathcal C_\alpha[\min(C\setminus\epsilon)\ge\delta]\}$.

\begin{claim} There exists an ordinal $\epsilon<\kappa$ such that, for all $\delta<\kappa$,  $\Gamma_{\epsilon,\delta}$ is stationary.
\end{claim}
\begin{cproof}
Suppose not. Then, for every $\epsilon<\kappa$, we fix $\delta(\epsilon)<\kappa$ and a club $D_\epsilon$
such that $\Gamma_{\epsilon,\delta(\epsilon)}\cap D_\epsilon=\emptyset$.
Consider the club $D:=\left\{\alpha\in\diagonal_{\epsilon<\kappa}D_\epsilon ~ \middle| ~ \forall \epsilon<\alpha[\delta(\epsilon)<\alpha]\right\}$.
\begin{subclaim} There exist $\alpha\in D\cap\Gamma$ and $\beta\in D\cap\alpha$ such that $\beta\notin\bigcup\mathcal C_\alpha$.
\end{subclaim}
\begin{scproof} Suppose not. Then, for every $\alpha\in D\cap\Gamma$, we have $D\cap\alpha\s\bigcup\mathcal C_\alpha$.
In particular, there exists a least positive integer $n_\alpha$ along with $\mathcal C'_\alpha\in[\mathcal C_\alpha]^{n_\alpha}$
such that $D\cap\alpha\setminus\bigcup\mathcal C'_\alpha$ is bounded below $\alpha$.
Fix a stationary $S\s\Gamma\cap D$ and some $n$ such that $n_\alpha=n$ for all $\alpha\in S$.
By possibly shrinking $S$, we may also assume the existence of some $\varepsilon<\kappa$
such that, for all $\alpha\in S$, we have $D\cap\alpha\setminus\bigcup\mathcal C'_\alpha\s\varepsilon$.

Consider the stationary sets $B:=\acc^+(S\setminus\varepsilon)\cap S$ and $A:=\acc^+(B)\cap S$.
Let $\alpha\in A$ and  $\beta\in B\cap\alpha$ be arbitrary.
As $\alpha\in S$, we have $D\cap\alpha\setminus\bigcup\mathcal C'_\alpha\s\varepsilon$.
In particular, $D\cap\beta\setminus\bigcup\{ C\cap\beta\mid C\in \mathcal C'_\alpha\}\s\varepsilon$.
But $\beta>\varepsilon$ and $\mathcal C_\alpha'$ is finite, and hence also
\[
  D\cap\beta\setminus\bigcup\left\{ C\cap\beta\mid C\in \mathcal C'_\alpha, ~
  \sup(C\cap\beta)=\beta\right\}
\]
  is bounded below $\beta$.
Let $\mathcal D_\beta:=\{ C\cap\beta\mid C\in\mathcal C_\alpha', \sup(C\cap\beta)=\beta\}$.
By coherence, $\mathcal D_\beta\s\mathcal C_\beta$, and as $D\cap\beta\setminus\bigcup\mathcal D_\beta$ is bounded below $\beta$,
the fact that $\beta\in S$ implies that $|\mathcal D_\beta|=n_\beta=n=n_\alpha=|\mathcal C_\alpha'|$.
It follows that, for all $C\in\mathcal C_\alpha'$, we have $\beta\in\acc(C)$.

Thus, we have established that for every $C$-sequence $\vec C=\langle C_\alpha\mid\alpha\in\acc(\kappa)\rangle$ such that:
\begin{itemize}
\item for all $\alpha\in\acc(\kappa)$, $C_\alpha\in\mathcal C_\alpha$;
\item for all $\alpha\in S$, $C_\alpha\in\mathcal C_\alpha'$,
\end{itemize}
we have that $\{ \alpha\in\acc(\kappa)\mid B\cap\alpha\s C_\alpha\}$ covers the stationary set $A$,
so, $\vec C$ is not amenable.
However, this contradicts \cite[Lemma~1.23]{paper29}.
\end{scproof}
Fix $\alpha\in D\cap\Gamma$ and $\beta\in D\cap\alpha$ such that $\beta\notin\bigcup\mathcal C_\alpha$.
As $\beta<\alpha$ and $\alpha\in D$, we know that $\alpha\in D_\epsilon$ for all $\epsilon<\beta$.
So, for all $\epsilon<\beta$, there is $C^\epsilon\in\mathcal C_\alpha$ with $\min(C^\epsilon\setminus\epsilon)<\delta(\epsilon)<\beta$, since $\beta\in D$.
As $\beta$ is a limit ordinal and $\mathcal C_\alpha$ is finite, we may find $C^*\in\mathcal C_\alpha$ such that $C^\epsilon=C^*$ for cofinally many $\epsilon<\beta$.
So, for cofinally many $\epsilon<\beta$, we have $\min(C^*\setminus\epsilon)<\beta$,
and hence $\beta$ is an accumulation point of the club $C^*$ from $\mathcal C_\alpha$, contradicting the fact that $\beta\notin\bigcup\mathcal C_\alpha$.
\end{cproof}
Fix an ordinal $\epsilon<\kappa$ as in the claim. By Fodor's lemma, for every $\delta<\kappa$, we may fix some $x_\delta\in[\kappa\setminus\delta]^{<\omega}$
for which
\[
  \Gamma^\delta:=\left\{\alpha\in\Gamma_{\epsilon,\delta}\setminus(\epsilon+1)\mid
   \{\min(C\setminus\epsilon)\mid C\in\mathcal C_\alpha\}=x_\delta\right\}
\]
is stationary.
Consider the clubs $E:=\{\gamma<\kappa\mid  \forall\delta<\gamma[x_\delta\s\gamma]\}$ and $E':=\acc(E)$.
For every $\alpha<\kappa$, let $$\mathcal C_\alpha^\bullet:=\begin{cases}
\{\emptyset\}&\text{if }\alpha=0;\\
\{\beta\}&\text{if }\alpha=\beta+1;\\
\mathcal C_\alpha&\text{if }\alpha\in\acc(\epsilon+1);\\
\{ (C\setminus\epsilon)\cup\{\sup(\otp(\min(C\setminus\epsilon)\cap E'))\}\mid C\in\mathcal C_\alpha\}&\text{otherwise.}\end{cases}.$$
It is clear that $|\mathcal C_\alpha^\bullet|\le|\mathcal C_\alpha|$ and that each $C^\bullet\in\mathcal C_\alpha^\bullet$ is closed and satisfies $\sup(C^\bullet)=\sup(\alpha)$.
\begin{claim} Suppose that $\alpha<\kappa$, $C^\bullet\in\mathcal C_\alpha^\bullet$ and $\bar\alpha\in\acc(C^\bullet)$. Then $C^\bullet\cap\bar\alpha\in\mathcal C_{\bar\alpha}^\bullet$.
\end{claim}
\begin{cproof} To avoid trivialities, suppose that $\alpha\in\acc(\kappa\setminus\epsilon)$. Pick $C\in\mathcal C_\alpha$
such that $C^\bullet=(C\setminus\epsilon)\cup\{\sup(\otp(\min(C\setminus\epsilon)\cap E'))\}$.
Clearly, $\bar\alpha\in\acc(C\setminus\epsilon)$, so $\bar C:=C\cap\bar\alpha$ is in $\mathcal C_{\bar\alpha}$,
and $C^\bullet\cap\bar\alpha=(\bar C\setminus\epsilon)\cup\{\sup(\otp(\min(\bar C\setminus\epsilon)\cap E'))\}$
is in $\mathcal C^\bullet_{\bar\alpha}$.
\end{cproof}
\begin{claim}
Let $i<\kappa$. Then $\{ \alpha\in\Gamma\mid \forall C^\bullet\in\mathcal C^\bullet_\alpha[\min(C^\bullet)=i]\}$ is stationary.
\end{claim}
\begin{proof} \renewcommand{\qedsymbol}{\ensuremath{\boxtimes \ \square}}
Let $\delta_0$ be the unique element of $E'$ such that $\otp(E'\cap\delta_0)=i$.
Let $\delta_1:=\min(E\setminus(\delta_0+1))$ and $\delta_2:=\min(E'\setminus(\delta_0+1))$,
and note that $\delta_0<x_{\delta_1}<\delta_2$.
Let $\alpha\in\Gamma^{\delta_1}$ be arbitrary.
For each $C\in\mathcal C_\alpha$, we have $\min(C\setminus\epsilon)\in x_{\delta_1}$,
so $$\sup(\otp(\min(C\setminus\epsilon)\cap E'))=\sup(\otp((\delta_0+1)\cap E'))=\sup(i+1)=i.$$
Consequently,  $\{ \alpha\in\Gamma\mid \forall C^\bullet\in\mathcal C^\bullet_\alpha[\min(C^\bullet)=i]\}$ covers the stationary set $\Gamma^{\delta_1}$.
\end{proof}
\let\qed\relax
\end{proof}

\begin{cor} Suppose that $\square(\kappa,{<}\omega)$ holds. Then
for every stationary $\Gamma\s\kappa$, there exists a partition $\langle \Gamma_i\mid i<\kappa\rangle$ of $\Gamma$
into stationary sets such that $\Tr(\Gamma_i)\cap\Tr(\Gamma_j)=\emptyset$ for all $i<j<\kappa$.
\end{cor}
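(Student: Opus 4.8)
The plan is to deduce the corollary from Theorem~\ref{thm518}. Fix an arbitrary $\square(\kappa,{<}\omega)$-sequence and apply Theorem~\ref{thm518} with the given stationary set $\Gamma$ to obtain a $\square(\kappa,{<}\omega)$-sequence $\langle \mathcal C^\bullet_\alpha\mid\alpha<\kappa\rangle$ such that, for every $i<\kappa$, the set $H_i:=\{\alpha\in\Gamma\mid\forall C\in\mathcal C^\bullet_\alpha[\min(C)=i]\}$ is stationary. For $\alpha\in\acc(\kappa)$ write $m(\alpha):=\{\min(C)\mid C\in\mathcal C^\bullet_\alpha\}$, a nonempty finite set of ordinals $<\alpha$, and set $e(\alpha):=\min(m(\alpha))=\min(\bigcup\mathcal C^\bullet_\alpha)$; on non-accumulation points put $e(\alpha):=0$. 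The candidate partition is $\Gamma_i:=\{\alpha\in\Gamma\mid e(\alpha)=i\}$ for $i<\kappa$. Then $\langle\Gamma_i\mid i<\kappa\rangle$ is a partition of $\Gamma$, and since $\alpha\in H_i$ forces $m(\alpha)=\{i\}$ and hence $e(\alpha)=i$, we have $H_i\s\Gamma_i$, so every $\Gamma_i$ is stationary. It remains to establish that $\Tr(\Gamma_i)\cap\Tr(\Gamma_j)=\emptyset$ for all $i<j<\kappa$.

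The engine here is the coherence of $\langle\mathcal C^\bullet_\alpha\rangle$. Suppose $\beta\in E^\kappa_{>\omega}$. Then $C^*_\beta:=\bigcap\mathcal C^\bullet_\beta$ is a club in $\beta$, and for every $\bar\alpha$ in the club $\acc(C^*_\beta)\setminus(\max(m(\beta))+1)$ of $\beta$ and every $C\in\mathcal C^\bullet_\beta$ we have $\bar\alpha\in\acc(C)$ with $\min(C)<\bar\alpha$, so by coherence $C\cap\bar\alpha\in\mathcal C^\bullet_{\bar\alpha}$ and $\min(C\cap\bar\alpha)=\min(C)$; consequently $(\bigcup\mathcal C^\bullet_\beta)\cap\bar\alpha\s\bigcup\mathcal C^\bullet_{\bar\alpha}$, which gives $m(\beta)\s m(\bar\alpha)$ and $e(\bar\alpha)\le e(\beta)$, and moreover $e(\beta)\in\bigcup\mathcal C^\bullet_{\bar\alpha}$. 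Now, if $\Tr(\Gamma_i)\cap\Tr(\Gamma_j)$ contained some $\beta$ (necessarily in $E^\kappa_{>\omega}$), then picking $\bar\alpha\in\Gamma_j$ in that club yields $e(\beta)\ge e(\bar\alpha)=j$, so no club of $\mathcal C^\bullet_\beta$ has minimum $\le i$; yet $\Gamma_i\cap\beta$ is stationary in $\beta$, so for stationarily many $\bar\alpha<\beta$ the finite collection $\mathcal C^\bullet_{\bar\alpha}$ contains a club of minimum exactly $i$ which is \emph{not} of the form $C\cap\bar\alpha$ with $C\in\mathcal C^\bullet_\beta$ — a ``rogue'' club.

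From here the plan is to mimic the Subclaim inside the proof of Theorem~\ref{thm518}: use a Fodor-type normalization (choosing $\epsilon<\kappa$ so that the tail behaviour of these rogue clubs below the relevant points is uniform), string the rogue clubs together along a coherent scaffold, and invoke the amenability of $\langle\mathcal C^\bullet_\alpha\rangle$ (\cite[Lemma~1.23]{paper29}) to obtain a contradiction, using the finiteness of the columns to stabilize all relevant choices on a stationary set. I expect this last step — amplifying the purely local information about simultaneous reflection at a single $\beta$ into a genuinely $\kappa$-level configuration violating amenability — to be the main obstacle, exactly as in Theorem~\ref{thm518} and in \cite[Theorem~2.8]{hayut_lh}. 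Once the disjoint-trace property is in hand, the corollary follows at once.
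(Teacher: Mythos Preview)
Your approach has a genuine gap, and the fix is much simpler than the route you are attempting.

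Your partition $\Gamma_i=\{\alpha\in\Gamma\mid e(\alpha)=i\}$ need not have pairwise disjoint traces: nothing rules out a $\beta$ with $m(\beta)=\{j\}$ below which stationarily many $\alpha$ satisfy $m(\alpha)=\{i,j\}$ (so $e(\alpha)=i$) while stationarily many others satisfy $m(\alpha)=\{j\}$ (so $e(\alpha)=j$). Your proposed ``amplification'' to a $\kappa$-level amenability contradiction cannot work as stated: the hypothetical bad $\beta$ is a single ordinal below $\kappa$, and the information you have is purely local to $\beta$; there is no $\kappa$-sized configuration to feed into \cite[Lemma~1.23]{paper29}. The analogy with the Subclaim in Theorem~\ref{thm518} is misleading, since there the contradiction was derived from a hypothesis that quantified over \emph{all} $\epsilon<\kappa$.

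The paper avoids this entirely by partitioning differently. Note that the $H_i$ are already pairwise disjoint (if $\forall C\in\mathcal C^\bullet_\alpha[\min(C)=i]$ then $i$ is determined by $\alpha$), so $G:=\Gamma\setminus\bigcup_{i<\kappa}H_i$ has size at most $\kappa$, and one can choose the partition so that $H_i\s\Gamma_i$ and $|\Gamma_i\setminus H_i|\le 1$. Then $\Tr(\Gamma_i)=\Tr(H_i)$, and the disjoint-trace property is immediate: if $\beta\in\Tr(H_i)\cap\Tr(H_j)$, fix any $C\in\mathcal C^\bullet_\beta$ and pick $\alpha_i\in\acc(C)\cap H_i$, $\alpha_j\in\acc(C)\cap H_j$; coherence gives $C\cap\alpha_i\in\mathcal C^\bullet_{\alpha_i}$ and $C\cap\alpha_j\in\mathcal C^\bullet_{\alpha_j}$, whence $i=\min(C)=j$. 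The point you missed is that the full strength of membership in $H_i$ (namely that \emph{every} club has minimum $i$) makes the coherence argument a one-liner, whereas your weaker invariant $e(\alpha)=i$ leaves room for exactly the rogue clubs you were worried about.
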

\begin{proof} Let $\Gamma$ be an arbitrary stationary subset of $\kappa$. By Theorem~\ref{thm518},
we may fix a $\square(\kappa,{<}\omega)$-sequence $\langle\mathcal C_\alpha\mid\alpha<\kappa\rangle$
such that, for every $i<\kappa$, $H_i:=\{ \alpha\in\Gamma\mid \forall C\in\mathcal C_\alpha[\min(C)=i]\}$ is stationary.
 As $G:=\Gamma\setminus\bigcup_{i<\kappa} H_i$ has cardinality at most $\kappa$,
it is easy to find a partition $\langle \Gamma_i\mid i<\kappa\rangle$ of $\Gamma$ such that, for all $i<\kappa$,
 $H_i\s \Gamma_i$ and $|H_i\cap G|\le 1$.
To see that  $\langle \Gamma_i\mid i<\kappa\rangle$ is as sought, we are left with verifying the following.

\begin{claim} Let $i<j<\kappa$. Then $\Tr(\Gamma_i)\cap\Tr(\Gamma_j)=\emptyset$.
\end{claim}
  \begin{proof} \renewcommand{\qedsymbol}{\ensuremath{\boxtimes \ \square}}
Suppose not. Fix $\beta\in\Tr(\Gamma_i)\cap\Tr(\Gamma_j)$. Fix a club $C\in\mathcal C_\beta$.
Pick $\alpha_i\in\acc(C)\cap H_i$ and $\alpha_j\in\acc(C)\cap H_j$.
By coherence, we have $C\cap\alpha_i\in\mathcal C_{\alpha_i}$ and $C\cap\alpha_j\in\mathcal C_{\alpha_j}$,
so $i=\min(C\cap\alpha_i)=\min(C)=\min(C\cap\alpha_j)=j$. This is a contradiction.
\end{proof}
\let\qed\relax
\end{proof}

By Lemma~\ref{successorlemma}, $\cf(\lambda)\in\spec(\lambda^+)$ for every infinite cardinal $\lambda$.
In addition, by Corollary~\ref{cor519}(1), $\reg(\lambda)\s\spec(\lambda^+)$ for every infinite regular cardinal $\lambda$.
This, together with Theorem~\ref{t55}, suggests that $\reg(\cf(\lambda))\s\spec(\lambda^+)$ for every singular cardinal $\lambda$.
The following theorem is a step in the right direction.

\begin{thm}\label{thm524} Suppose that $\lambda$ is a singular cardinal.
\begin{enumerate}
\item If $2^{\cf(\lambda)}<\lambda$ or $2^\lambda=\lambda^+$, then $\reg(\cf(\lambda))\s\spec(\lambda^+)$;
\item If $\cf(\lambda)=\mu^+$ is a successor cardinal, then $\reg(\mu)\s\spec(\lambda^+)$.
If, in addition, $2^{<\cf(\lambda)}\le\lambda$, then $\reg(\cf(\lambda))\s\spec(\lambda^+)$.
\end{enumerate}
\end{thm}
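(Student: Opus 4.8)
The plan is to prove both clauses by producing, for each $\theta\in\reg(\cf(\lambda))$, a $C$-sequence $\vec C$ over $\lambda^+$ with $\chi(\vec C)=\theta$, using the technology of Corollary~\ref{connection}: it suffices to find a $\Sigma$-closed witness to $\U(\lambda^+,2,\theta,\theta)$ for some stationary $\Sigma\s E^{\lambda^+}_{\ge\theta}$, since then $\theta\in\spec(\lambda^+)$. By Lemma~\ref{successorlemma} we already know $\cf(\lambda)\in\spec(\lambda^+)$, so the content is really about the regular cardinals \emph{strictly below} $\cf(\lambda)$ (in Clause~(1)) or below $\mu$ (in Clause~(2)), together with upgrading $\cf(\lambda)$ itself under the extra cardinal-arithmetic hypothesis of Clause~(2). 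The natural route is to build, under the stated cardinal-arithmetic assumptions, a suitable square-like or stationary-guessing object on $\lambda^+$ from which the required closed colorings can be walked out, much as in Corollary~\ref{cor521}(3); here the hypotheses $2^{\cf(\lambda)}<\lambda$, $2^\lambda=\lambda^+$, or $2^{<\cf(\lambda)}\le\lambda$ are exactly what is needed to run a scale or $\square^*_\lambda$-type construction (the former two are the classical hypotheses under which one has a good scale and suitable club-guessing on $E^{\lambda^+}_{\cf(\lambda)}$, e.g. by Shelah; the last is what makes the relevant products of collapses or the relevant tree constructions go through at cofinality $\cf(\lambda)$).

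First I would handle Clause~(1) under $2^\lambda=\lambda^+$: fix a scale $\langle f_\alpha\mid\alpha<\lambda^+\rangle$ of length $\lambda^+$ in $\prod_{i<\cf(\lambda)}\lambda_i$ for some increasing sequence $\langle\lambda_i\mid i<\cf(\lambda)\rangle$ cofinal in $\lambda$ with each $\lambda_i$ regular and $>\theta$, and use the set of good points (together with club-guessing on $E^{\lambda^+}_\theta$, which is available since $\theta\ne\cf(\lambda)$) to define, for each $\beta\in E^{\lambda^+}_{\ge\theta}$, a closed cofinal $C_\beta\s\beta$ whose order type at good points reflects the scale; then one walks along this $C$-sequence and defines a coloring $c$ by reading off, along the walk, the least coordinate $i<\cf(\lambda)$ witnessing $f$-domination, truncated into $\theta$ via club-guessing. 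The point is that $\Sigma$-closedness of $c$ on the guessed stationary set $\Sigma\s E^{\lambda^+}_\theta$ follows from the closure of the $C_\beta$'s exactly as in the proof of Theorem~\ref{thm411}, and the $\U(\lambda^+,2,\theta,\theta)$ property follows because the scale is unbounded modulo the relevant ideal. Under the alternative hypothesis $2^{\cf(\lambda)}<\lambda$ one argues similarly but can afford to carry full power-set information at cofinality $\cf(\lambda)$, which lets one get club-guessing of the needed strength on all the relevant $E^{\lambda^+}_\theta$ with $\theta<\cf(\lambda)$ directly; either way the end product is a closed witness to $\U(\lambda^+,\lambda^+,\theta,\theta)$ via Lemma~\ref{lemma310}, hence $\theta\in\spec(\lambda^+)$ by Corollary~\ref{connection}.

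For Clause~(2), where $\cf(\lambda)=\mu^+$, the extra information is that $E^{\lambda^+}_{\mu^+}$-club-guessing combined with the regularity of $\mu^+$ gives, for \emph{every} $\theta\in\reg(\mu)$, a non-reflecting stationary subset of $E^{\lambda^+}_{\mu^+}\s E^{\lambda^+}_{\ge\theta}$ by the standard Shelah club-guessing phenomenon at successors of regulars (this is the $\cf(\lambda)=\mu^+$-analogue of what already gave $\reg(\lambda)\s\spec(\lambda^+)$ for regular $\lambda$). Then by Corollary~\ref{cor515}, applied with $\chi$ ranging over $\reg(\mu)$ (or rather with the non-reflecting set sitting inside $E^{\lambda^+}_{\ge\chi}$), one gets $\reg(\chi^+)\s\spec(\lambda^+)$ for all such $\chi$, and letting $\chi\to\mu$ yields $\reg(\mu)\s\spec(\lambda^+)$; strictly, I would invoke \cite[Corollary~4.10]{paper34} as in the proof of Corollary~\ref{cor515} to get closed witnesses to $\U(\lambda^+,\lambda^+,\theta,\chi)$ directly for every $\theta\in\reg(\lambda^+)$ and $\chi\in\reg(\mu)$, then pass to $\theta\in\spec(\lambda^+)$ for $\theta\in\reg(\mu)$ via Corollary~\ref{connection}. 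The additional clause "$\cf(\lambda)\in\spec(\lambda^+)$ if $2^{<\cf(\lambda)}\le\lambda$" is immediate from Lemma~\ref{successorlemma} (which gives it unconditionally!), but combined with the preceding it upgrades $\reg(\mu)\cup\{\mu^+\}=\reg(\mu^+)=\reg(\cf(\lambda))$ into $\spec(\lambda^+)$ — so the genuine content of that half is just collecting $\mu^+=\cf(\lambda)$ with the $\reg(\mu)$ already obtained; where $2^{<\cf(\lambda)}\le\lambda$ is actually used is in making the club-guessing on $E^{\lambda^+}_{\mu^+}$ strong enough to run the argument uniformly across all $\theta<\mu^+$ at once rather than one $\theta$ at a time, i.e. to get the non-reflecting sets inside $E^{\lambda^+}_{\ge\chi}$ for all relevant $\chi$ simultaneously.

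\textbf{Main obstacle.} The hard part will be the precise club-guessing input: one must verify, under each of the three arithmetic hypotheses, exactly which $E^{\lambda^+}_\theta$ carry club-guessing sequences of the flavor needed (closed guessing, or guessing by sets of order type exactly $\theta$), since the closedness of the resulting coloring and the $\U(\ldots)$-largeness both hinge on it; this is where the singular cardinal combinatorics (good scales, the behavior of $\square^*_\lambda$, and the $2^{<\cf(\lambda)}$ hypothesis) genuinely enters, rather than being a routine transfer of Corollary~\ref{cor521}'s argument. A secondary technical point is checking that the walks-based coloring on the scale-derived $C$-sequence really is $\Sigma$-closed for a \emph{stationary} $\Sigma$ rather than merely somewhere-defined, which requires that the guessed clubs land in the accumulation points of the $C_\beta$'s — this is handled by Fact~\ref{lambda2} exactly as in Theorem~\ref{thm411} and Lemma~\ref{lemma310}, so it should not be a real difficulty once the guessing is set up correctly.
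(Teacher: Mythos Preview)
Your route through Corollary~\ref{connection} is correct in spirit, and for Clause~(1) the paper simply cites \cite[Theorem~B]{paper34} to obtain the closed witness to $\U(\lambda^+,\lambda^+,\theta,\theta)$, so your sketch there is an attempt to re-derive that external result; it is vague but not obviously wrong.

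For Clause~(2), however, there is a genuine gap. You assert that Shelah club-guessing on $E^{\lambda^+}_{\mu^+}$ yields a \emph{non-reflecting} stationary subset of $E^{\lambda^+}_{\cf(\lambda)}$, and then invoke Corollary~\ref{cor515}. Club-guessing does \emph{not} give non-reflection: it is consistent (from large cardinals) that every stationary subset of $\lambda^+$ reflects even when $\lambda$ is singular, yet the conclusion of Clause~(2) is a $\zfc$ theorem. So your approach cannot work as stated. You also misidentify the role of the hypothesis $2^{<\cf(\lambda)}\le\lambda$: since $\reg(\cf(\lambda))=\reg(\mu^+)$ consists of regular cardinals \emph{strictly below} $\mu^+$, the extra content beyond $\reg(\mu)$ is the single cardinal $\mu$ (when regular), not $\mu^+=\cf(\lambda)$; the latter is already in $\spec(\lambda^+)$ unconditionally by Lemma~\ref{successorlemma}, but it is not an element of $\reg(\cf(\lambda))$.

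The paper's actual argument for Clause~(2) is quite different. It fixes a club-guessing sequence $\vec e$ on $\Delta\s E^{\lambda^+}_{\cf(\lambda)}$ (this is $\zfc$), defines the associated ideal $\mathcal I$ from \cite[Theorem~4.21]{paper34}, and shows that $\mathcal I$ is not weakly $\theta$-saturated, which is what that theorem needs to produce the closed witness. Non-saturation is proved by combining an Ulam matrix over $\nu=\mu^+$ with a family $\mathcal C\s[\nu]^\theta$ of size at most $\lambda$ that is cofinal (under $\supseteq$) in the clubs of $\nu$. The cardinal $\mathcal C(\nu,\theta)$ controlling the size of such a family equals $\nu$ when $\theta<\mu$ (by \cite[Lemma~3.1]{paper38}), giving the first sentence of Clause~(2) outright; for $\theta=\mu$ one only has $\mathcal C(\nu,\mu)\le\nu^\mu=2^\mu=2^{<\cf(\lambda)}$, and this is where the hypothesis $2^{<\cf(\lambda)}\le\lambda$ is genuinely used.
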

\begin{proof}
Let $\theta\in\reg(\cf(\lambda))$ be arbitrary.
By Corollary~\ref{connection}, to show that $\theta\in\spec(\lambda^+)$,
it suffices to prove that there exists a closed witness to $\U(\lambda^+,\lambda^+,\theta,\theta)$.

(1) If $2^{\cf(\lambda)}<\lambda$ or $2^\lambda=\lambda^+$ then by \cite[Theorem~B]{paper34},
there indeed exists a closed witness to $\U(\lambda^+,\lambda^+,\theta,\theta)$.

(2) Suppose that $\nu:=\cf(\lambda)$ is a successor cardinal, say $\nu=\mu^+$.
Recalling the proof of \cite[Theorem~4.21]{paper34},
to prove that there exists a closed witness to $\U(\lambda^+,\lambda^+,\theta,\theta)$,
it suffices to prove that the ideal $\mathcal I$ defined there in ``Case 1: Uncountable cofinality''
is not weakly $\theta$-saturated. For this, let us recall the definition of the ideal $\mathcal I$.
We first fix a stationary subset $\Delta\s E^{\lambda^+}_{\cf(\lambda)}$
  and a sequence $\vec e=\langle e_\delta\mid \delta\in \Delta\rangle$ such that
  \begin{itemize}
    \item for every $\delta\in \Delta$, $e_\delta$ is a club in $\delta$ of order type $\cf(\lambda)$;
    \item for every $\delta\in \Delta$, $\langle \cf(\gamma)\mid \gamma\in \nacc(e_\delta)\rangle$
      is strictly increasing and converging to $\lambda$;
    \item for every club $D$ in $\lambda^+$, there exists $\delta\in \Delta$ such that $e_\delta\s D$.
  \end{itemize}
  Then, the ideal $\mathcal I$ consists of all $\Gamma\s\lambda^+$ for which
  there exists a club $D\s\lambda^+$ such that $\sup(\nacc(e_\delta)\cap D\cap \Gamma)<\delta$
  for every $\delta\in \Delta\cap D$.

The upcoming analysis of the saturation degree of $\mathcal I$ is inspired by the work in \cite[\S3]{paper38}.
Let $\langle A_{i,j}\mid i<\mu, j<\nu\rangle$ be an Ulam matrix over $\nu$, that is:
\begin{itemize}
\item for all $i<\mu$ and $j<\nu$, $A_{i,j}\s\nu$;
\item for all $j<\nu$, $|\nu\setminus\bigcup_{i<\mu}A_{i,j}|\le\mu$;
\item for all $i<\mu$ and $j<j'<\nu$, $A_{i,j}\cap A_{i,j'}=\emptyset$.
\end{itemize}
Fix a club $\Lambda$ in $\lambda$ of order-type $\nu$.
For any subset $A\s\nu$, let $$(A)_\delta:=\{ \gamma\in\nacc(e_\delta)\mid\otp(\Lambda\cap\cf(\gamma))\in A\}.$$

\begin{claim}\label{5221}
There exists $i<\mu$ satisfying the following:
For every club $D\s\lambda^+$, there exist $\delta\in\Delta$ with $e_\delta\s D$ such that $\sup((A_{i,j})_\delta)=\delta$ for cofinally many $j<\nu$.
\end{claim}
\begin{cproof} Suppose not. Then, for every $i<\mu$, fix a club $D_i\s\lambda^+$ with the property that,
for every $\delta\in\Delta$, either $e_\delta\nsubseteq D_i$ or $\sup\{ j<\mu\mid \sup((A_{i,j})_\delta)=\delta\}<\mu$.
Let $D:=\bigcap_{i<\mu}D_i$. Pick $\delta\in\Delta$ such that $e_\delta\s D$.
It follows that, for all $i<\mu$, there exists $j_i<\nu$, such that, for all $j\in(j_i,\nu)$, $\sup((A_{i,j})_\delta)<\delta$.
Let $j:=(\sup_{i<\mu}j_i)+1$. Then, $j<\nu$ and for every $i<\mu$, $\sup((A_{i,j})_\delta)<\delta$.
As $\cf(\delta)=\nu>\mu$, it follows that $\eta:=\sup_{i<\mu}\sup((A_{i,j})_\delta)$ is below $\delta$.
Consequently,
\begin{align*}\sup\{ \gamma\in\nacc(e_\delta)\mid\otp(\Lambda\cap\cf(\gamma))\in \bigcup_{i<\mu}A_{i,j}\}=\\
\sup_{i<\mu}\sup\{ \gamma\in\nacc(e_\delta)\mid\otp(\Lambda\cap\cf(\gamma))\in A_{i,j}\}=\\
\sup_{i<\mu}\sup((A_{i,j})_\delta)=\eta.
\end{align*}
Fix $\varepsilon<\nu$ such that $\varepsilon\cup\bigcup_{i<\mu}A_{i,j}=\nu$.
Pick $\epsilon\in\Lambda$ with $\otp(\Lambda\cap\epsilon)>\varepsilon$.
Finally, pick $\gamma\in\nacc(e_\delta)$ with $\gamma>\eta$ and $\cf(\gamma)>\epsilon$.
As $\otp(\Lambda\cap\cf(\gamma))>\varepsilon$, we have $\otp(\Lambda\cap\cf(\gamma))\in\bigcup_{i<\mu}A_{i,j}$.
Pick $i<\mu$ such that $\otp(\Lambda\cap\cf(\gamma))\in A_{i,j}$.
Then $\gamma\in (A_{i,j})_\delta$, contradicting the fact that $\gamma>\eta$.
\end{cproof}

Let $\mathcal C(\nu,\theta)$ denote the least size of a subfamily $\mathcal C\s[\nu]^\theta$
with the property that for every club $b$ in $\nu$, there is $c\in\mathcal C$ with $c\s b$.
By the third bullet of \cite[Lemma~3.1]{paper38}, if $\theta\in\reg(\mu)$, then $\mathcal C(\nu,\theta)=\nu$.
In addition, it is clear that $\mathcal C(\nu,\theta)\le\nu^\theta$, so that in case $\theta=\mu$,
$\nu^\theta=(\mu^+)^\mu=2^\mu=2^{<\cf(\lambda)}$. Thus, in any case, we may
let $\mathcal C$ be a $\lambda$-sized subfamily of $\{ c\s\nu\mid \otp(c)=\theta\}$ with the property that for every club $b$ in $\nu$, there is $c\in\mathcal C$ with $c\s b$.

Let $i^*$ be given by Claim~\ref{5221}.
For every $c\in\mathcal C$, define a function $h_c:\lambda^+\rightarrow\theta$, as follows.
Given $\gamma<\lambda^+$, if there exists $j<\sup(c)$ such that $\otp(\Lambda\cap\cf(\gamma))\in A_{i^*,j}$,
then $j$ is unique, and we let $h_c(\gamma):=\sup(\otp(c\cap j))$. Otherwise, let $h_c(\gamma):=0$.

For every $c\in\mathcal C$ and every $\tau<\theta$, let $\Gamma_c^\tau:=\{\gamma<\lambda^+\mid h_c(\gamma)=\tau\}$.

\begin{claim} There exists $c\in\mathcal C$ such that, for all $\tau<\theta$, $\Gamma_c^\tau$ is $\mathcal I$-positive.
\end{claim}
\begin{proof} Suppose not. Then, for every $c\in\mathcal C$, we may find $\tau_c<\theta$ and a club $D_c\s\lambda^+$
such that $\sup(\nacc(e_\delta)\cap D_c\cap\Gamma_c^{\tau_c})<\delta$ for every $\delta\in\Delta\cap D_c$.
Let $D:=\bigcap_{c\in\mathcal C}D_c$.
By the choice of $i^*$, let us pick $\delta\in\Delta$ with $e_\delta\s D$ for which $J:=\{j<\nu\mid \sup((A_{i^*,j})_\delta)=\delta\}$ is cofinal in $\nu$.
In particular, $\delta\in\acc(D)\s D$.

As $\acc^+(J)$ is a club in $\nu$, we may find $c\in\mathcal C$ with $c\s\acc^+(J)$.
As $\otp(c)=\theta>\tau_c$, we may let $j'$ denote the unique element of $c$ to satisfy $\otp(c\cap j')=\tau_c$.
Now, let $j:=\min(J\setminus(j'+1))$.
As $c\s\acc^+(J)$, we know that $[j',j)\cap c=\{j'\}$, so that $\otp(c\cap j)=\otp(c\cap(j'+1))=\tau_c+1$.
As $c\s\acc^+(J)$, we also know that $j<\sup(c)$.

As $\delta\in\Delta\cap D\s\Delta\cap D_c$, we infer that $\sup(\nacc(e_\delta)\cap D_c\cap \Gamma^{\tau_c}_c)<\delta=\sup((A_{i^*,j})_\delta)$.
so we may pick $\gamma\in (A_{i^*,j})_\delta$ above $\sup(\nacc(e_\delta)\cap D_c\cap \Gamma^{\tau_c}_c)$.
Recall that the former means that $\gamma\in\nacc(e_\delta)$ and $\otp(\Lambda\cap\cf(\gamma))\in A_{i^*,j}$.
In effect, $h_c(\gamma)=\sup(\otp(c\cap j))=\sup(\tau_c+1)=\tau_c$. So, $\gamma\in\nacc(e_\delta)\cap \Gamma^{\tau_c}_c$.
Recalling that $e_\delta\s D\s D_c$, we infer that $\gamma\in\nacc(e_\delta)\cap D_c \cap\Gamma^{\tau_c}_c$,
contradicting the choice of $\gamma$ to be above $\sup(\nacc(e_\delta)\cap D_c\cap \Gamma^{\tau_c}_c)$.
\end{proof}

Let $c$ be given the preceding claim. Then $\langle \Gamma^\tau_c\mid \tau<\theta\rangle$ is a partition of $\lambda^+$
into $\theta$ many $\mathcal I$-positive sets, witnessing that $\mathcal I$ is indeed not weakly $\theta$-saturated.
\end{proof}

\section{Concluding remarks}

We end with some some questions that remain open,
followed by a couple of brief remarks connecting the topics of this paper with previous works.
First, we present a conjecture of a connection between the $C$-sequence number and
the infinite productivity of the $\kappa$-Knaster property.

\begin{conj} For any regular uncountable cardinal $\kappa$, the following are equivalent:
\begin{itemize}
\item $\chi(\kappa) \leq 1$;
\item for every $\kappa$-Knaster poset $\mathbb P$, $\mathbb P^\omega$ is $\kappa$-Knaster, as well.
\end{itemize}
\end{conj}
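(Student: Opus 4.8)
The forward direction of the conjecture already follows from the results of this paper: if $\mathbb P^\omega$ is $\kappa$-Knaster for every $\kappa$-Knaster poset $\mathbb P$, then Theorem~A(1) gives $\chi(\kappa)\le 1$. Concretely, were $\chi(\kappa)>1$, then $\chi(\kappa)\ge\omega$, so Lemma~\ref{cnontrivial} would supply a closed witness to $\U(\kappa,\kappa,\omega,\chi(\kappa))$; this is in particular a closed witness to $\U(\kappa,\kappa,\omega,\omega)$, and feeding it into Fact~\ref{knasterlemma} (with $\mu:=\kappa$ and $\theta:=\chi:=\omega$) yields a poset $\mathbb P$ all of whose finite powers have precaliber $\kappa$---so that $\mathbb P$ is $\kappa$-Knaster---while $\mathbb P^\omega$ fails to be $\kappa$-cc. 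The substantive content of the conjecture is therefore the implication that $\chi(\kappa)\le 1$ forces the $\kappa$-Knaster property to be closed under $\omega$-powers. This holds in all situations in which $\chi(\kappa)\le 1$ is presently known to be consistent: when $\chi(\kappa)=0$, i.e.\ $\kappa$ is weakly compact, it is subsumed by property~(P3) from the Introduction (read $\mathbb P^\omega$ as an $\omega_1$-support product of $\omega$ copies of $\mathbb P$, and note $\omega_1<\kappa$); and in the Kunen and Cox--L\"{u}cke models of Theorem~\ref{kunenmodel}, where $\chi(\kappa)=1$, it follows from \cite{MR3620068}. In both cases, however, one exploits the ambient large-cardinal structure rather than using $\chi(\kappa)\le 1$ as a hypothesis.

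The plan for the general implication is a proof by contradiction. Assume $\mathbb P$ is $\kappa$-Knaster and $\mathbb P^\omega$ is not, and fix a witnessing sequence $\langle \bar p_\alpha\mid\alpha<\kappa\rangle$ with $\bar p_\alpha=\langle p^n_\alpha\mid n<\omega\rangle$; since $\mathbb P$ is $\kappa$-Knaster, for each $n<\omega$ every $Z\in[\kappa]^\kappa$ admits some $Z'\in[Z]^\kappa$ with $\{p^n_\alpha\mid\alpha\in Z'\}$ pairwise compatible, whereas no single $Z\in[\kappa]^\kappa$ works simultaneously for all $n$. The aim is to extract from this data a coloring $c:[\kappa]^2\rightarrow\omega$ which is somewhere-closed and witnesses $\U(\kappa,2,\theta,\omega)$ for some infinite $\theta<\kappa$: by Corollary~\ref{lemma33} this would force $\chi(\kappa)\ge\omega$, contradicting the hypothesis. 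The somewhere-closedness should come from a reflection argument: passing to elementary submodels $M\prec H_\Upsilon$ with $\mathbb P,\langle \bar p_\alpha\mid\alpha<\kappa\rangle\in M$ and $\beta:=M\cap\kappa\in\kappa$, one wants the restricted poset $\mathbb P\cap M$ to be $\beta$-Knaster while $(\mathbb P\cap M)^\omega$ fails to be $\beta$-Knaster, with the coordinatewise incompatibility patterns of $\langle \bar p_\alpha\mid\alpha<\beta\rangle$ cohering as $\beta$ varies over a club. The great Mahloness of $\kappa$ furnished by Lemma~\ref{lemma29}---which is exactly what $\chi(\kappa)\le 1$ buys---is the natural tool for promoting these $\beta$-level failures back up to a $\kappa$-level obstruction, via a Fodor-style pasting of the reflected data.

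The principal obstacle, and the reason the statement is left as a conjecture, is the extraction step itself: a $\kappa$-Knaster poset is an abstract object carrying no canonical combinatorial core, and it is unclear how to manufacture from the bare failure of $\omega$-productivity a closed coloring---equivalently, by Corollary~\ref{connection}, an amenable $C$-sequence $\vec C$ over $\kappa$ with $\chi(\vec C)\ge\omega$. A satisfactory resolution would seem to require first isolating a purely combinatorial equivalent of ``the $\kappa$-Knaster property is $\omega$-productive'', in the spirit of the equivalences recorded in Corollary~\ref{connection} and Fact~\ref{knasterlemma}; the expected answer is that $\omega$-productivity is equivalent to $\omega\notin\spec(\kappa)$, i.e.\ to $\chi(\kappa)\le 1$, and it is precisely the nontrivial half of such an equivalence that is at issue. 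A further, seemingly technical, point to handle along the way is that $\mathbb P$ need not have size $\kappa$, so some care is needed when restricting the problem to submodels of $H_\Upsilon$.
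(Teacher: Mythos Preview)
The statement you are analyzing is a \emph{Conjecture} from Section~6 (Concluding remarks), not a theorem: the paper offers no proof and explicitly presents it among the ``questions that remain open.'' There is therefore no proof in the paper against which to compare your proposal.

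That said, your analysis of the situation is accurate and aligns with the paper's treatment. The implication from the second bullet to the first is indeed Theorem~A(1), and your reconstruction of its proof via Lemma~\ref{cnontrivial} and Fact~\ref{knasterlemma} is precisely the argument the paper records in the proof block following Theorem~A. Your observation that the reverse implication holds in the known models where $\chi(\kappa)\le 1$ (weakly compact $\kappa$, and the Kunen/Cox--L\"{u}cke model of Theorem~\ref{kunenmodel}) is also correct and matches the paper's discussion.

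Your speculative plan for the open direction---extracting a somewhere-closed witness to $\U(\kappa,2,\theta,\omega)$ from a failure of $\omega$-productivity and invoking Corollary~\ref{lemma33}---is a reasonable heuristic, and you correctly identify the obstacle: there is no known mechanism for converting an abstract $\kappa$-Knaster poset whose $\omega$-power fails the $\kappa$-cc into a closed coloring or an amenable $C$-sequence. This is precisely why the statement remains a conjecture. Your write-up is thus not a proof but an honest assessment of what is known and what is missing, which is appropriate given the status of the statement.
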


A number of fundamental questions about the $C$-sequence number and
$C$-sequence spectrum remain open. In addition to the above conjecture, here
are a few that we find especially interesting. The first set of questions concerns
the structure of $\spec(\kappa)$.

\begin{q}
  Must $\spec(\kappa)$ be an interval? Must $\spec(\kappa)$ be closed?
  If $\theta \in \reg(\kappa)$ and $\theta^+ \in \spec(\kappa)$, must we have
  $\theta \in \spec(\kappa)$?
  If $\theta\in\spec(\kappa)$ is an uncountable limit cardinal, must it be an
  accumulation point of $\spec(\kappa)$?
\end{q}

Our next question deal with the connections between the $C$-sequence spectrum
and $\U(\ldots)$.

\begin{q}
  Must it be the case that $\U(\kappa, \kappa, \chi(\kappa), \sup(\reg(\kappa)))$
  holds?
  If $\theta, \chi \in \spec(\kappa)$, must it be the case that
  $\U(\kappa, \kappa, \theta, \chi)$ holds?
\end{q}

The next question deal with the connections between the $C$-sequence numbers and $\kappa$-Aronszajn trees.

\begin{q}
  Suppose $\kappa$ is strongly inaccessible.
  If $\chi(\kappa) = 1$, must there be a coherent $\kappa$-Aronszajn tree?
  If $1 < \chi(\kappa) < \kappa$, must there
  be a $\kappa$-Aronszajn tree with a $\chi(\kappa)$-ascent path?
\end{q}

Another question has to do with a singular value for the $C$-sequence number.
\begin{q} Suppose that $\chi(\kappa)$ is singular. Must it be the case that $\cf(\chi(\kappa))=\cf(\sup(\reg(\kappa)))$?
\end{q}
The main result of \cite{paper15} states that if $\theta$ and $\kappa$ are regular cardinals, $\kappa>\theta^+$, and $E^\kappa_{\ge\theta}$ admits a non-reflecting stationary set,
then $\pr_1(\kappa,\kappa,\kappa,\theta)$ holds.
Theorem~\ref{thm51} above shows that this result is optimal.

\begin{cor}
  Suppose that $\kappa$ is a weakly compact cardinal, and $\theta\in\reg(\kappa)$.
  Then there is a cofinality-preserving forcing extension in which:
  \begin{enumerate}
    \item $\kappa$ is strongly inaccessible;
    \item $E^\kappa_\theta$ admits a non-reflecting stationary set, so $\pr_1(\kappa,\kappa,\kappa,\theta)$ holds;
    \item $\pr_1(\kappa, \kappa, \theta^+, \theta^+)$ fails.
  \end{enumerate}
\end{cor}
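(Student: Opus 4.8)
The plan is to produce the desired model by invoking Theorem~\ref{thm51} for the given cardinal $\theta\in\reg(\kappa)$, and then to read off the three clauses from the properties listed there. Let $V[G]$ be the cofinality-preserving extension supplied by Theorem~\ref{thm51}. Clause~(1) is literally Theorem~\ref{thm51}(1). For Clause~(2), I would use Theorem~\ref{thm51}(5) to fix a non-reflecting stationary set $S\s E^\kappa_\theta$; since $\kappa$ is strongly inaccessible, hence a limit cardinal, we have $\theta^+<\kappa$, so $\theta$ and $\kappa$ are regular cardinals with $\kappa>\theta^+$ and $S\s E^\kappa_{\ge\theta}$ witnesses that $E^\kappa_{\ge\theta}$ carries a non-reflecting stationary set. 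The main result of \cite{paper15} then yields $\pr_1(\kappa,\kappa,\kappa,\theta)$, completing Clause~(2).

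For Clause~(3) the idea is that $\pr_1(\kappa,\kappa,\theta^+,\theta^+)$ already entails $\U(\kappa,2,\theta^+,\theta^+)$, and the latter has been destroyed in $V[G]$ by Theorem~\ref{thm51}(6). In detail: suppose toward a contradiction that $c:[\kappa]^2\rightarrow\theta^+$ witnesses $\pr_1(\kappa,\kappa,\theta^+,\theta^+)$. Given $\chi'<\theta^+$, a family $\mathcal A\s[\kappa]^{\chi'}$ consisting of $\kappa$-many pairwise disjoint sets, and a color $i<\theta^+$, note that $\mathcal A\s[\kappa]^{<\theta^+}$ and that $i+1<\theta^+$; hence the defining property of $c$ yields $(a,b)\in[\mathcal A]^2$ with $c[a\times b]=\{i+1\}$, so that $\min(c[a\times b])>i$ and the pair $\mathcal B:=\{a,b\}$ is as required in the definition of $\U$. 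Thus $c$ witnesses $\U(\kappa,2,\theta^+,\theta^+)$. But $\theta^+$ is a regular cardinal and, again since $\kappa$ is a limit cardinal, $\theta^+\in\reg(\kappa)\setminus\{\theta\}$; so by Theorem~\ref{thm51}(6), $\U(\kappa,2,\theta^+,\theta^+)$ fails in $V[G]$, a contradiction. Hence $\pr_1(\kappa,\kappa,\theta^+,\theta^+)$ fails.

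There is no genuine obstacle here; the proof is essentially bookkeeping, so the main thing to be careful about is getting the parameters of $\pr_1$ and $\U$ to line up: that the fourth parameter $\theta^+$ of $\pr_1(\kappa,\kappa,\theta^+,\theta^+)$ is a \emph{strict} bound on set sizes, so that families of sets of any fixed size $\chi'<\theta^+$ qualify, that the color-parameters match (both equal to $\theta^+$), and that it is precisely the strong inaccessibility of $\kappa$ that guarantees $\theta^+<\kappa$, which is used both to fit the hypothesis of \cite{paper15} in Clause~(2) and to place $\theta^+$ in $\reg(\kappa)\setminus\{\theta\}$ for the appeal to Theorem~\ref{thm51}(6) in Clause~(3). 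One should also note that this argument covers the edge case $\theta=\omega$ uniformly, whereas the ``Furthermore'' part of Theorem~\ref{thm51}(6) would be vacuous there since $\reg(\omega)=\emptyset$.
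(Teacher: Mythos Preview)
Your proposal is correct and follows essentially the same approach as the paper: both work in the model of Theorem~\ref{thm51} and derive Clause~(3) from its Clause~(6), using that $\pr_1(\kappa,\kappa,\theta^+,\theta^+)$ implies $\U(\kappa,2,\theta^+,\theta^+)$. The paper's proof is a single sentence (``By its Clause~(6), $\pr_1(\kappa,\kappa,\theta^+,\theta^+)$ fails''), leaving the $\pr_1\Rightarrow\U$ implication implicit; you have simply unpacked this step explicitly.
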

\begin{proof} Work in the model of Theorem~\ref{thm51}. By its Clause~(6), $\pr_1(\kappa,\kappa,\theta^+,\theta^+)$ fails.
\end{proof}

Finally, we note that the combination of Fact~\ref{rho2_closed}, Lemma~\ref{lemma33} and Theorem~\ref{t55} implies that,
in the statement of Theorem~6.3.6 of \cite{todorcevic_book}, ``of size $<\kappa$'' should have been ``of size $<\cf(\kappa)$''.

\section*{Acknowledgments}
The results of this paper were presented by the first author at the CUNY Set Theory Seminar in March 2019 and
by the second author at the \emph{The 15th International Luminy Workshop in Set Theory}, held in September 2019.
We thank the organizers for the warm hospitality.

\end{document}